\begin{document}
\newcommand\Mand{\ \text{and}\ }
\newcommand\Mor{\ \text{or}\ }
\newcommand\Mfor{\ \text{for}\ }
\newcommand\Real{\mathbb{R}}
\newcommand\RR{\mathbb{R}}
\newcommand\im{\operatorname{Im}}
\newcommand\re{\operatorname{Re}}
\newcommand\sign{\operatorname{sign}}
\newcommand\sphere{\mathbb{S}}
\newcommand\BB{\mathbb{B}}
\newcommand\TT{\mathbb{T}}
\newcommand\HH{\mathbb{H}}
\newcommand\dS{\mathrm{dS}}
\newcommand\ZZ{\mathbb{Z}}
\newcommand\NN{\mathbb{N}}
\newcommand\codim{\operatorname{codim}}
\newcommand\Sym{\operatorname{Sym}}
\newcommand\End{\operatorname{End}}
\newcommand\Span{\operatorname{span}}
\newcommand\Ran{\operatorname{Ran}}
\newcommand\ep{\epsilon}
\newcommand\Cinf{\cC^\infty}
\newcommand\dCinf{\dot \cC^\infty}
\newcommand\CI{\cC^\infty}
\newcommand\dCI{\dot \cC^\infty}
\newcommand\Cx{\mathbb{C}}
\newcommand\Nat{\mathbb{N}}
\newcommand\dist{\cC^{-\infty}}
\newcommand\ddist{\dot \cC^{-\infty}}
\newcommand\pa{\partial}
\newcommand\Card{\mathrm{Card}}
\renewcommand\Box{{\square}}
\newcommand\Ell{\mathrm{Ell}}
\newcommand\Char{\mathrm{Char}}
\newcommand\WF{\mathrm{WF}}
\newcommand\WFh{\mathrm{WF}_\semi}
\newcommand\WFb{\mathrm{WF}_\bl}
\newcommand\WFsc{\mathrm{WF}_\scl}
\newcommand\WFscb{\mathrm{WF}_{\scl,\bl}}
\newcommand\Vf{\mathcal{V}}
\newcommand\Vb{\mathcal{V}_\bl}
\newcommand\Vsc{\mathcal{V}_\scl}
\newcommand\Vscsus{\mathcal{V}_\scsus}
\newcommand\Vz{\mathcal{V}_0}
\newcommand\Hb{H_{\bl}}
\newcommand\bHb{\bar H_{\bl}}
\newcommand\dHb{\dot H_{\bl}}
\newcommand\Hbb{\tilde H_{\bl}}
\newcommand\Hsc{H_{\scl}}
\newcommand\Hscb{H_{\scbl}}
\newcommand\bHscb{\bar H_{\scbl}}
\newcommand\dHscb{\dot H_{\scbl}}
\newcommand\Hscsus{H_{\scsus}}
\newcommand\Ker{\mathrm{Ker}}
\newcommand\Range{\mathrm{Ran}}
\newcommand\Hom{\mathrm{Hom}}
\newcommand\Id{\mathrm{Id}}
\newcommand\sgn{\operatorname{sgn}}
\newcommand\ff{\mathrm{ff}}
\newcommand\tf{\mathrm{tf}}
\newcommand\esssupp{\operatorname{esssupp}}
\newcommand\supp{\operatorname{supp}}
\newcommand\vol{\mathrm{vol}}
\newcommand\Diff{\mathrm{Diff}}
\newcommand\Diffd{\mathrm{Diff}_{\dagger}}
\newcommand\Diffs{\mathrm{Diff}_{\sharp}}
\newcommand\Diffb{\mathrm{Diff}_\bl}
\newcommand\Diffsc{\mathrm{Diff}_\scl}
\newcommand\Diffscsus{\mathrm{Diff}_\scsus}
\newcommand\DiffbI{\mathrm{Diff}_{\bl,I}}
\newcommand\Diffbeven{\mathrm{Diff}_{\bl,\even}}
\newcommand\Diffz{\mathrm{Diff}_0}
\newcommand\Psih{\Psi_{\semi}}
\newcommand\Psihcl{\Psi_{\semi,\cl}}
\newcommand\Psisc{\Psi_\scl}
\newcommand\Psiscc{\Psi_\sccl}
\newcommand\Psiscb{\Psi_\scbl}
\newcommand\Psib{\Psi_\bl}
\newcommand\Psibc{\Psi_{\mathrm{bc}}}
\newcommand\Psibcdelta{\Psi_{\mathrm{bc},\delta}}
\newcommand\TbC{{}^{\bl,\Cx} T}
\newcommand\Tb{{}^{\bl} T}
\newcommand\Sb{{}^{\bl} S}
\newcommand\Tsc{{}^{\scl} T}
\newcommand\Tscsus{{}^{\scsus} T}
\newcommand\Ssc{{}^{\scl} S}
\newcommand\Sscsus{{}^{\scsus} S}
\newcommand\Lambdab{{}^{\bl} \Lambda}
\newcommand\zT{{}^{0} T}
\newcommand\Tz{{}^{0} T}
\newcommand\zS{{}^{0} S}
\newcommand\dom{\mathcal{D}}
\newcommand\cA{\mathcal{A}}
\newcommand\cB{\mathcal{B}}
\newcommand\cE{\mathcal{E}}
\newcommand\cG{\mathcal{G}}
\newcommand\cH{\mathcal{H}}
\newcommand\cU{\mathcal{U}}
\newcommand\cO{\mathcal{O}}
\newcommand\cF{\mathcal{F}}
\newcommand\cM{\mathcal{M}}
\newcommand\cQ{\mathcal{Q}}
\newcommand\cR{\mathcal{R}}
\newcommand\cI{\mathcal{I}}
\newcommand\cL{\mathcal{L}}
\newcommand\cK{\mathcal{K}}
\newcommand\cC{\mathcal{C}}
\newcommand\cX{\mathcal{X}}
\newcommand\cY{\mathcal{Y}}
\newcommand\cXsus{\mathcal{X}_\sus}
\newcommand\cYsus{\mathcal{Y}_\sus}
\newcommand\cP{\mathcal{P}}
\newcommand\cS{\mathcal{S}}
\newcommand\cZ{\mathcal{Z}}
\newcommand\cW{\mathcal{W}}
\newcommand\Ptil{\tilde P}
\newcommand\ptil{\tilde p}
\newcommand\chit{\tilde \chi}
\newcommand\yt{\tilde y}
\newcommand\zetat{\tilde \zeta}
\newcommand\xit{\tilde \xi}
\newcommand\taut{{\tilde \tau}}
\newcommand\phit{{\tilde \phi}}
\newcommand\mut{{\tilde \mu}}
\newcommand\taubsemi{\tau_{\bl,\hbar}}
\newcommand\lambdasemi{\lambda_{\hbar}}
\newcommand\sigmat{{\tilde \sigma}}
\newcommand\sigmah{\hat\sigma}
\newcommand\zetah{\hat\zeta}
\newcommand\etah{\hat\eta}
\newcommand\taub{\tau_{\bl}}
\newcommand\mub{\mu_{\bl}}
\newcommand\taubh{\hat\tau_{\bl}}
\newcommand\mubh{\hat\mu_{\bl}}
\newcommand\nuh{\hat\nu}
\newcommand\loc{\mathrm{loc}}
\newcommand\compl{\mathrm{comp}}
\newcommand\reg{\mathrm{reg}}
\newcommand\GBB{\textsf{GBB}}
\newcommand\GBBsp{\textsf{GBB}\ }
\newcommand\bl{{\mathrm b}}
\newcommand\scl{{\mathrm{sc}}}
\newcommand\scbl{{\mathrm{sc,b}}}
\newcommand\sccl{{\mathrm{scc}}}
\newcommand\scsus{{\mathrm{sc-sus}}}
\newcommand\sus{{\mathrm{sus}}}
\newcommand{\sH}{\mathsf{H}}
\newcommand{\cte}{\digamma}
\newcommand\cl{\mathrm{cl}}
\newcommand\hsf{\mathcal{S}}
\newcommand\Div{\operatorname{div}}
\newcommand\hilbert{\mathfrak{X}}
\newcommand\smooth{\mathcal{J}}
\newcommand\decay{\ell}
\newcommand\symb{j}

\newcommand\Hh{H_{\semi}}

\newcommand\bM{\bar M}
\newcommand\Xext{X_{-\delta_0}}

\newcommand\xib{{\underline{\xi}}}
\newcommand\etab{{\underline{\eta}}}
\newcommand\zetab{{\underline{\zeta}}}

\newcommand\xibh{{\underline{\hat \xi}}}
\newcommand\etabh{{\underline{\hat \eta}}}
\newcommand\zetabh{{\underline{\hat \zeta}}}

\newcommand\zn{z}
\newcommand\sigman{\sigma}
\newcommand\psit{\tilde\psi}
\newcommand\rhot{{\tilde\rho}}

\newcommand\hM{\hat M}

\newcommand\Op{\operatorname{Op}}
\newcommand\Oph{\operatorname{Op_{\semi}}}

\newcommand\innr{{\mathrm{inner}}}
\newcommand\outr{{\mathrm{outer}}}
\newcommand\full{{\mathrm{full}}}
\newcommand\semi{\hbar}

\newcommand\Feynman{\mathrm{Feynman}}
\newcommand\future{\mathrm{future}}
\newcommand\past{\mathrm{past}}

\newcommand\elliptic{\mathrm{ell}}
\newcommand\diffordgen{k}
\newcommand\difford{2}
\newcommand\diffordm{1}
\newcommand\diffordmpar{1}
\newcommand\even{\mathrm{even}}
\newcommand\dimn{n}
\newcommand\dimnpar{n}
\newcommand\dimnm{n-1}
\newcommand\dimnp{n+1}
\newcommand\dimnppar{(n+1)}
\newcommand\dimnppp{n+3}
\newcommand\dimnppppar{n+3}

\newcommand\sob{s}

\newtheorem{lemma}{Lemma}[section]
\newtheorem{prop}[lemma]{Proposition}
\newtheorem{thm}[lemma]{Theorem}
\newtheorem{cor}[lemma]{Corollary}
\newtheorem{result}[lemma]{Result}
\newtheorem*{thm*}{Theorem}
\newtheorem*{prop*}{Proposition}
\newtheorem*{cor*}{Corollary}
\newtheorem*{conj*}{Conjecture}
\numberwithin{equation}{section}
\theoremstyle{remark}
\newtheorem{rem}[lemma]{Remark}
\newtheorem*{rem*}{Remark}
\theoremstyle{definition}
\newtheorem{Def}[lemma]{Definition}
\newtheorem*{Def*}{Definition}

\newcommand{\mar}[1]{{\marginpar{\sffamily{\scriptsize #1}}}}
\newcommand\av[1]{\mar{AV:#1}}

\renewcommand{\theenumi}{\roman{enumi}}
\renewcommand{\labelenumi}{(\theenumi)}

\title[Resolvent near zero energy]{Resolvent near zero energy on Riemannian scattering (asymptotically conic) spaces}
\author[Andras Vasy]{Andr\'as Vasy}
\address{Department of Mathematics, Stanford University, CA 94305-2125, USA}

\email{andras@math.stanford.edu}

\subjclass[2000]{Primary 58J50; Secondary 58J40, 35P25, 58J47}

\thanks{The author gratefully
  acknowledges partial support from the NSF under grant numbers
  DMS-1361432 and DMS-1664683 and from a Simons Fellowship.}
\date{August 18, 2018}

\begin{abstract}
We give resolvent estimates near zero energy on Riemannian scattering, i.e.\
asymptotically conic, spaces, and their generalizations, using a
uniform microlocal Fredholm analysis framework.
\end{abstract}

\maketitle

\section{Introduction and results}
In this paper we consider geometric generalizations of Euclidean low energy resolvent
estimates, such as for the resolvent of the Euclidean
Laplacian plus a decaying potential, in a Fredholm framework. Such an analysis is relevant for
instance for the asymptotic behavior of the solutions of the wave
equation, though this connection will be pursued elsewhere as it
requires some additional ingredients. Indeed,
one motivation for the present paper is understanding waves on Kerr spacetimes.
However, another motivation for the present work is even just the
description of
Euclidean phenomena, namely
what is {\em really} happening for low energies. In one sense this has been addressed by Guillarmou and Hassell in a series of
works
\cite{Guillarmou-Hassell:Resolvent-I,Guillarmou-Hassell:Resolvent-II}
via constructing a parametrix for the resolvent family; here we
proceed by {\em directly} obtaining Fredholm estimates, which are
actually less technically involved, and also are not straightforward
to read off from the parametrix result, especially as we need to work
on variable order, or anisotropic, Sobolev spaces. A forthcoming
companion paper will give a slightly different perspective that is
even more amenable towards the study of wave propagation, though
somewhat less so for direct spectral theory applications.

Let us start by recalling the Euclidean results.
For this purpose, we initially let
$g_0$ be the Euclidean metric, 
$g$ metric on $\RR^n$ with $g-g_0\in S^{-\delta}$,\ $\delta>0$ (i.e.\
$g_{ij}-(g_0)_{ij}\in S^{-\delta}$), $g$ positive definite,
$V\in S^{-\delta}$, real.
Recall here that $S^m(\RR^n_z)$ is the  space of symbols of order $m$:
for all $\alpha$
$$
|D_z^\alpha a(z)|\leq
C_\alpha\langle z\rangle^{m-|\alpha|};\qquad\langle z\rangle=(1+|z|^2)^{1/2}.
$$
In order to make a connection with the upcoming generalization, we
also give these estimates in a different form (away from origin): for all $k$
and sequences $i_1,\ldots,i_k$, $j_1,\ldots,j_k$,
$$
|W_1\ldots W_k a(z)|\leq C_k \langle z\rangle^m,
$$
where
$W_r=z_{i_r} D_{z_{j_r}}$; of course, localized to the region where
$|z_n|>c|z_j|$ for $j\neq n$ ($c>0$) and $|z|>1$, it suffices to consider $W_r=z_n D_{z_{j_r}}$.
We note that $V$ non-real, but $\im V\in S^{-1-\delta}$ (so a slightly
stronger constraint), would only cause some possible finite rank issues below.

Under these assumptions
$$
H=\Delta_g+V
$$
is self-adjoint on $L^2(\RR^n)$, so $H-\lambda$,
$\lambda\in\Cx\setminus\RR$ is invertible, e.g.\ as a map
$$
H-\lambda:H^{s+2,l}\to H^{s,l},\ s,l\in\RR;
$$
standard elliptic theory implies that the inverse is independent of
the particular space chosen for the inversion (in the sense that e.g.\
the restriction of the inverse to the dense subspace given by Schwartz
functions is the same).
Moreover, the spectrum in $(-\infty,0)$ is discrete, with $0$ a
possible accumulation point (this happens e.g.\ for negative
Coulomb-like potentials, which are Coulomb-like in terms of decay at infinity);
$[0,\infty)$ the essential spectrum.
Here $H^{s,l}=\langle z\rangle^{-l} H^s$, where $H^s$ is the standard Sobolev space.

While $H-\lambda$ will no longer be invertible between the weighted
Sobolev spaces when $\lambda>0$, the limiting absorption
principle states that
\begin{equation}\label{eq:Euclidean-LAP}
(H-(\lambda\pm i0))^{-1}=\lim_{\ep \to 0}(H-(\lambda\pm i\ep))^{-1}
\end{equation}
exist e.g.\ as strong limits (indeed, norm limits) in $\cL(H^{s,l},H^{s+2,l'})$,
$l>\frac{1}{2}$, $l'<-\frac{1}{2}$ (so $l-l'>1$).
Under stronger assumptions (and $n\geq 3$, which will be assumed from
now), which are necessary in view of Coulomb-like potentials, $V\in S^{-2-\delta}$,
$\delta>0$, $0$ is not an accumulation point of the spectrum, and
under stronger restrictions on $l,l'$, in particular $l-l'>2$,
$(H-(\lambda\pm i0))^{-1}$ is uniformly bounded between the weighted
Sobolev spaces as $\lambda\to 0$ if there are no 0-energy bound states
($L^2$ nullspace of $H$) or half-bound states; the latter are elements
of the nullspace of $H$ on a larger space of distributions that will
be discussed later, see the discussion around \eqref{eq:P-zero-Fredholm}.
Such results go back to Jensen and Kato \cite{Jensen-Kato:Spectral};
the stated results are from recent works of Bony and H\"afner
\cite{Bony-Haefner:Low}, though potentials are not
considered there, just divergence form second order operators, and
Rodnianski and Tao \cite{Rodnianski-Tao:Effective} who do consider
such potentials and indeed more generally asymptotically conic
manifolds. We also refer to the work of M\"uller and Strohmaier
\cite{Muller-Strohmaier:Hahn} for a discussion of the analytic
continuation, near zero energy,
of the resolvent of the Laplacian on spatially compact perturbations
of cones using the theory of Hahn meromorphic
functions they develop.

We remark here that there are also high energy estimates under
assumptions on the geodesic flow, with the best case scenario if all
geodesics are backward and forward non-trapped, i.e.\ tend to infinity
in both directions, see
\cite{Robert-Tamura:Semiclassical,Gerard-Martinez:Principe,Gerard:Semiclassical-CPDE,Wang:Time-decay},
and also \cite{Vasy-Zworski:Semiclassical} for a general
asymptotically conic result.

It is natural to ask what kind of structure of Euclidean space is
involved in these results. One way to
address this is via constructing geometric generalizations. Another natural
question is whether one can make the function spaces more precise. For instance, can one
fit these estimates into a Fredholm (here typically invertible) statement? Such
frameworks are necessarily sharp in a sense: one can only
significantly (in not an essentially finite dimensional way) change the
domain if one changes the target space and vice versa.

Note that the estimates
above are 
{\em lossy}: the actual phenomenon is a version of real principal type 
propagation, with radial points (discussed below), here in terms of decay, so the 
difference between the two decay indices should be $1$ for the
positive energy estimates (i.e\ we have a loss of $\ep>0$) -- it is the 
radial points that prevent the optimal choice {\em if one works with 
  constant powers of $\langle z\rangle$ as the weights}.

In this paper we address these questions.
A natural geometric generalization is asymptotically
conic spaces.
A conic metric, with cross section a Riemannian manifold $(Y,h)$, is
the metric $g_0=dr^2+r^2 h$ on $\RR^+_r\times Y$; Euclidean space is a
cone over the sphere.
In local coordinates, such a metric is a linear combination, with
$\CI(Y)$-coefficients, of $dr$ and $r\,dy_j$, $y_j$ local coordinates
on $Y$, with some restrictions.

We want to generalize these coefficients to relax the warped product
structure of the conic metric.
Identifying a coordinate chart in $Y$ with a coordinate chart on the
sphere $\sphere^{n-1}$, we are working in an open conic subset $O$ of $\RR^n$
near infinity. (So this is a natural generalization of
the process of going from $\RR^n$ to say compact manifolds without boundary.)
From this perspective, below we replace the coefficients $\CI(Y)$ by
a {\em symbolic} statement, namely coefficients in $S^0(O)$, differing
from elements of $\CI(Y)$ by $S^{-\delta}(O)$, $\delta>0$; this
parallels the setting of Rodnianski and Tao \cite{Rodnianski-Tao:Effective}.

Let us rephrase this in a compactified notation introduced by Melrose
\cite{RBMSpec}, where `scattering', or sc-structures were discussed.
So let $x=\frac{1}{r}$, and add $x=0$ as an ideal boundary at infinity,
i.e.\ work with $[0,\infty)_x\times Y$. The metrics considered above
are then a linear
combination of symmetric tensors formed from $\frac{dx}{x^2}$ and $\frac{dy_j}{x}$.
Then being an element of
$\CI([0,\infty)_x\times Y)$ near the ideal boundary means exactly
being a
{\em classical symbol of order $0$} (on $\RR^n$ or under the
local conic identification), i.e.\ having an asymptotic expansion in non-positive
integer powers of $r$ (asymptotic in the symbolic sense), with the expansion being
just Taylor series at the boundary, i.e.\ an asymptotic expansion in non-negative integer
powers of $x$. A straightforward computation shows 
that if $Y$ is the sphere, i.e.\ we are discussing $\RR^n\setminus
\{0\}$, with an ideal boundary at infinity added, then linear combinations
of $\frac{dx}{x^2}$ and $\frac{dy_j}{x}$ with $\CI([0,\infty)_x\times
Y)$, resp.\ symbolic (on the complement of the origin), coefficients, are exactly the same as linear
combinations of the coordinate 1-forms $dz_j$ with the same kind of
coefficients; this property persists for general $Y$ locally on open subsets $O$ of
the kind considered above.

In Melrose's notation, if $X$ is a manifold with
boundary which near $\pa X$ is of the form $[0,x_0)_x\times Y$, one
would say that a metric with such coefficients, namely an {\em sc-metric}, is a $\CI$ section of
$\Tsc^* X\otimes_s\Tsc^* X$, with $\Tsc^* X$ being locally spanned
by $\frac{dx}{x^2}$ and $\frac{dy_j}{x}$. (See
Section~\ref{sec:2-micro} for additional discussion.)
In general $a\in S^m$ becomes $(x\pa_x)^\alpha \pa_y^\beta
a\in x^{-m}L^\infty$ from this perspective (cf.\ the earlier linear
vector field characterization); more invariantly this can be replaced
by $W_1\ldots W_k a\in x^{-m} L^\infty$ for all $k$ and
$W_j\in\Vb(X)$, where $\Vb(X)$ is the set of all {\em smooth vector
  fields tangent to $\pa X$}. (Thus, in the
non-compactified notation, locally and near $\pa X$, i.e.\ in an asymptotically
conic set near infinity in $\RR^n_z$, a spanning set for $\Vb(X)$ is given by linear vector
fields $z_i D_{z_j}$, $i,j=1,\ldots,n$; in the region where $|z_i|<C|z_n|$, $|z|>1$,
this means that one can use the local basis $z_n D_{z_j}$, $j=1,\ldots,n$, again with classical
symbols of order $0$, cf.\ above, as coefficients.) A symbolic section of
$\Tsc^* X^{\otimes_s^2}$ is denoted by
$S^m(X,\Tsc^*X^{\otimes^2_s})$.

So in general
let $X$ be an $n$-dimensional manifold with
boundary, $n\geq 3$, with $X^\circ$ equipped with a metric $g$ which
is asymptotically conic (near $\pa X$) in the following
precise sense. We consider $X$ equipped with a sc-metric $g$, i.e.\ a
symbolic of order $0$
section of $\Tsc^*X\otimes_s\Tsc^*X$ which at
$\pa X$ (i.e.\ `metric infinity') is conic:
$$
g-g_0\in S^{-\delta}(X;\Tsc^*X),\ g_0=x^{-4}\,dx^2+x^{-2} g_{\pa X},\ \delta>0,
$$
$g_{\pa X}$ a metric on $\pa X$,
i.e.\ $g$ is asymptotic to a conic metric $g_0$ on $(0,\infty)\times
\pa X$.
A special case of the operators we then consider is the spectral
family of the Laplacian plus a decaying potential,
$$
P(\sigma)=\Delta_g+V-\sigma^2,
$$
where $V\in S^{-2-\delta}(X)$,
and we are interested in the $\sigma\to 0$ limit. (For $\sigma$
bounded away from $0$, $V\in S^{-\delta}(X)$ real-valued suffices.)

Thus, $P(\sigma)$ is a family of (conormal/symbolic) scattering differential
operators (elements of $S^0\Diffsc(X)$), i.e.\ is a finite sum of products (possibly empty) of scattering vector fields $\Vsc(X)=x\Vb(X)$, with
local basis $x^2 D_x,xD_{y_j}$, dual to the scattering covectors
discussed above, with symbolic of order $0$ coefficients. (See
Section~\ref{sec:2-micro} for more detail.) In a non-compactified
notation, over conic subsets of $\RR^n_z$, this means that it is a linear combination
of $D_z^\alpha$ with coefficients which are symbols of order
$0$. In fact, modulo $S^{-\delta}$ the coefficients are equal to a
{\em classical symbol}. (This holds even for $V\in S^{-\delta}(X)$.) (The
notation $\Diffsc(X)$ is reserved for $\CI(X)$, i.e.\ classical
symbolic of order $0$, coefficients.) This scattering structure is
what is required for the non-zero $\sigma$ limiting absorption
principle considerations, which is the reason $V\in S^{-\delta}(X)$ is
acceptable for that purpose.

However, our $P(\sigma)$ has a stronger structure. Recall that $\Diffb(X)$
is the differential operator algebra generated by $\Vb(X)$ with
$\CI(X)$ coefficients; $S^m\Diffb(X)$ means that the coefficients are
symbols of order $m$. Then
$$
\Delta_g\in x^2 S^0\Diffb^2(X)=S^{-2}\Diffb^2(X),\ V\in
S^{-2-\delta}\Diffb^0(X),
$$
so
$$
P(\sigma)+\sigma^2\in S^{-2}\Diffb^2(X),
$$
with a classical symbolic leading term. This stronger structure plays
a key role in the zero energy limit considered in this paper.

More generally, though the results are interesting even without this
extension, with $\delta>0$, we
consider families of the form
\begin{equation}\label{eq:P-sigma-form}
P(\sigma)=P(0)+\sigma Q-\sigma^2,\qquad P(0)\in x^2\Diffb^2(X),\ Q\in
S^{-2-\delta}\Diffb^1(X)
\end{equation}
and $P(0)$ elliptic, $P(\sigma)$ is symmetric for $\sigma$ real, and
$P(0)-\Delta_g\in S^{-2-\delta}\Diffb^2(X)$, with the b-differential operators
discussed below.
Such a generalization is useful for Kerr-type spaces, and it does not
affect any arguments. For actual Kerr spacetimes, near the event horizon we lose
ellipticity as well, but it is straightforward to modify arguments
using the Kerr-de Sitter results from
\cite{Vasy-Dyatlov:Microlocal-Kerr}, and we do that here in the penultimate
Section~\ref{sec:Kerr}. (We explicitly discuss the asymptotic form
of the Kerr wave operator in that section.) {\em Indeed, $Q$ can be
replaced by any family of operators $Q(\sigma)$ of the same class, depending smoothly on
$\sigma$, for all considerations below.} Note that there is extensive
literature on related aspects of the Kerr setting, including
\cite{Dafermos-Rodnianski:Axi, Dafermos-Rodnianski-Shlapentokh:Decay,
  Shlapentokh-Rothman:Quantitative,
  Dafermos-Holzegel-Rodnianski:Linear-Schwarzschild,Andersson-Blue:Uniform,Hung-Keller-Wang:Linear,Tataru:Local,
  Tataru-Tohaneanu:Local} and many other papers, most of which is for the actual
spacetimes, rather than the Fourier transformed problem; here our
interest is in a robust treatment of a general class of such
operators, so we restrict to brief remarks for the changes necessary
in Section~\ref{sec:Kerr}, and we refer to the introduction of \cite{Hintz-Vasy:Stability}
for a more extensive discussion.

\begin{figure}[ht]
\begin{center}
\includegraphics[width=75mm]{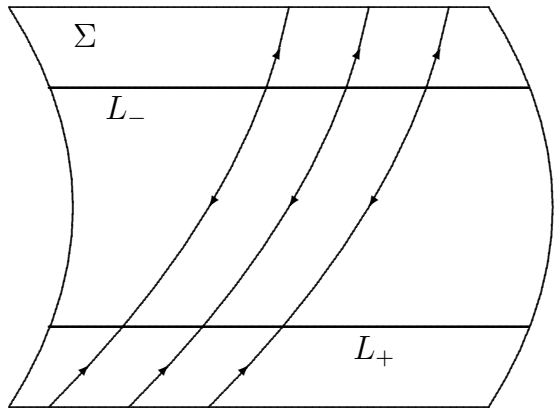}
\end{center}
\caption{The characteristic set $\Sigma$, here a $\TT^2$ (with
  opposite sides on the figure identified), and source/sink manifolds
  $L_-$, resp. $L_+$, of the Hamilton flow.}
\label{fig:char-set-flow}
\end{figure}

We are then interested in limiting absorption resolvent estimates along the spectrum (it
is straightforward to add an imaginary part of the correct sign,
corresponding to $\pm i0$) as the spectral
parameter $\sigma$ tends to $0$. We first recall the basic limiting
absorption principle, for $\sigma\neq 0$, proved in this geometric setting by Melrose
\cite{RBMSpec}, generalizing \eqref{eq:Euclidean-LAP}, but in a {\em
  different, Fredholm setting}, in which it was shown in
\cite[Section~5.4.8]{Vasy:Minicourse}, see also \cite[Section~4]{Vasy:Positive}. This setting involves
variable order (or anisotropic) scattering Sobolev spaces; such spaces in the standard microlocal setting have
been used by Unterberger \cite{Unterberger:Resolution} and Duistermaat
\cite{Duistermaat:Carleman}, but here it is the {\em weight}, i.e.\
the {\em decay order} that is variable, corresponding to the operator $P(\sigma)$
being elliptic in the standard, differential order, sense, thus the
differential order of the Sobolev space playing no role. (We recall
that such variable order Sobolev spaces played a key role in recent
advances in dynamical systems, see e.g.\ \cite{Faure-Sjostrand:Upper,Dyatlov-Zworski:Dynamical}.) Via the
identification of open subsets of $X$
near $\pa X$ with asymptotically conic subsets of $\RR^n$, such spaces (locally)
reduce to variable order Sobolev spaces on $\RR^n$, where again it is
the decay order that varies. (Via the Fourier transform, however,
microlocally these can be reduced to spaces with variable differential order!)
These Sobolev spaces are defined using pseudodifferential operators of
variable order. Now, in regions where the operator is microlocally
elliptic, the order of the Sobolev space is immaterial (all orders
work equally well for Fredholm theory); only at the characteristic
set, which in this case is $G-\sigma^2=0$, $G$ the dual metric
function on $\Tsc^*X$, is it relevant. Within the characteristic set,
the Hamilton flow tends to, i.e.\ the $H_G$ integral curves tend to,
sink/source manifolds ($L_-$, resp.\ $L_+$, see Figure~\ref{fig:char-set-flow}), called {\em radial
  sets}, in the forward/backward direction. The variable order is a
function $r$ that needs to be monotone along the $H_G$-flow
(corresponding to propagation of singularities, or rather of regularity),
and it needs to be greater than, resp.\ less than, a threshold value,
here $-1/2$, at one, resp.\ the other, radial set. The choice is thus
whether the decay order is $>-1/2$ at the source manifold or the sink
manifold; these two possibilities correspond to the incoming/outgoing limiting resolvents. The Fredholm
statement is that for any $s$ and for variable order $r$ satisfying
these requirements
$$
P(\sigma): \{u\in \Hsc^{s+2,r}: P(\sigma)u\in \Hsc^{s,r+1}\}\to \Hsc^{s,r+1}
$$
is Fredholm, indeed in this case invertible.
Notice that the loss of $\ep>0$ in the usual way the limiting absorption 
principle is stated, cf.\ \eqref{eq:Euclidean-LAP}, {\em disappears} with these variable order spaces 
($r+1$ vs. $r$, exactly the real principal type numerology). (We
remark that, with essentially the same proof, one also has lossless high energy
estimates on large parameter, or semiclassical, scattering Sobolev spaces.)

Concretely, with $\tau,\mu$ the fiber coordinates, in the sc-notation,
on $\Tsc^* X$, i.e.\ writing covectors as
$\tau\,\frac{dx}{x^2}+\mu\cdot\frac{dy}{x}$, one can take for $\beta>0$
$$
r=-\frac{1}{2}\pm\beta\frac{\tau}{\sqrt{\tau^2+|\mu|^2_{g_{\pa X}}}},
$$
which in the Euclidean case just means
$$
r=-\frac{1}{2}\mp \beta\frac{z\cdot\zeta}{|z||\zeta|},
$$
as the variable order.

These Fredholm estimates, i.e.\ the propagation of singularities
estimates, including radial points, are proved by positive commutator
estimates, which become degenerate (quadratic vanishing of symbol) at
0 energy. However, it turns out that phrasing these estimates as b-estimates one
can make them uniform as $\sigma\to 0$.

We already introduced the b-differential operators, $\Diffb(X)$; there
is a pseudodifferential algebra $\Psib(X)$ microlocalizing it,
originating in Melrose's work, see \cite{Melrose:Atiyah} for a
detailed treatment, and
recalled in Section~\ref{sec:background}. The b-Sobolev spaces are then
based on these operators; recalling that the symbolic estimates amount
to iterative regularity with respect to elements of $\Vb(X)$ relative
to $L^\infty$, we see that these Sobolev spaces are thus essentially a finite
regularity version of $L^2$-based (rather than $L^\infty$-based)
symbol classes. As an aside, b-structures
are typically associated with `cylindrical ends' in the Riemannian
geometry literature, but as we discussed $P(0)$ can be considered as $x^2$
times an unweighted second order b-differential operator, i.e.\
`conformally unweighted b'.

Let $\Hb^{\tilde r,l}$ be the b-Sobolev space of differential order
$\tilde r$, considered as a function on $\Sb^*X=(\Tb^*X\setminus
o_\bl)/\RR^+$ (equivalently, a homogeneous degree zero function on $\Tb^*X\setminus
o_\bl$, $o_\bl$ the zero section), and weight $l$, {\em
  relative to the scattering $L^2$-space $L^2=L^2_{\scl}$}; this is a notion
we recall and explain presently. Thus, in this b-setting {\em the
  differential order, not the decay order, is variable}, unlike in the sc-setting.
If $\tilde r$ is a positive integer $m$, 
then
$$
\Hb^{\tilde r,l}=\{u\in \Hb^{0,l}=x^l L^2:\ k\leq
m,\ W_1,\ldots,W_k\in\Vb(X)\Rightarrow  W_1\ldots W_k u\in
x^l L^2\}.
$$
In local coordinates, the regularity statement is simply that
$$
j+|\alpha|\leq
m\Rightarrow  (xD_x)^j D_y^\alpha u\in
x^l L^2.
$$
Notice that, for suitable values of $\tilde r$ (constant) and $l$, the
$j=m$ control, i.e.\ $(xD_x)^m u\in x^lL^2$, can be used to estimate $u$ in
$x^l L^2$ by Hardy-type inequalities, so in some cases these b-Sobolev
spaces are, in case of compactified $\RR^n$, the usual (weighted)
homogeneous Sobolev spaces.
For negative integers $\tilde r$, the spaces can be defined either via 
duality or using elliptic b-pseudodifferential operators. 
In general, for a function $\tilde r$, the Sobolev space $\Hb^{\tilde r,l}$ is defined by taking
$-N<\inf\tilde r$, and $\Lambda\in\Psib^{\tilde r,l}$ elliptic, e.g.\
with principal symbol $x^{-l}(\taub^2+|\mub|^2)^{\tilde r/2}$,
$|\mub|$ e.g.\ the length with respect to $g_{\pa X}$ (but all choices
are equivalent),
$$
\Hb^{\tilde r,l}=\{u\in \Hb^{-N,l}:\ \Lambda u\in L^2\}.
$$

Melrose \cite{Melrose:Atiyah}, see also Guillarmou and Hassell \cite[Theorem~2.1]{Guillarmou-Hassell:Resolvent-I} for
explicitly this statement when $\tilde r$ is constant, showed that
\begin{equation}\label{eq:P-zero-Fredholm}
P(0):\Hb^{\tilde r,l}\to\Hb^{\tilde r-2,l+2}
\end{equation}
is Fredholm of index $0$ when
$|l+1|<\frac{n-2}{2}$; cf.\ Section~5.6 of \cite{Vasy:Minicourse} for
a general $\tilde r$ statement, and see the comments below. Note that $l=-1$ corresponds to the weights of
the domain and target spaces being symmetric relative to $L^2$ (in the
sense of being $L^2$-duals of each other, considering just these
weights, not the differential orders). By the
ellipticity of $P(0)$, $\tilde r$ is actually irrelevant for the
nullspace considerations (i.e.\ the nullspace is independent of this choice),
and, by indicial root considerations, in the range
$|l+1|<\frac{n-2}{2}$ invertibility is also independent of $l$, i.e.\
when $P(0)$ is considered as such an operator, for all $\ep>0$,
\begin{equation}\label{eq:Ker-P0-sp}
\Ker P(0)\subset\Hb^{\infty,-1+\frac{n-2}{2}-\ep}=\Hb^{\infty,\frac{n-4}{2}-\ep}.
\end{equation}

Note
that when $n\leq 4$, this Fredholm, of index 0, range excludes $l=0$, thus the choice of the standard
$L^2$ space, so for our purposes it is {\em not} just the $L^2$ nullspace that
matters, rather the nullspace on a bigger function space,
$\Hb^{\infty,\frac{n-4}{2}-\ep}$, where $\ep>0$ can be taken
arbitrarily small (and should be $<n-2$). Non-$L^2$ elements of this
nullspace, if they exist, are called
{\em half bound states}; $L^2$ elements are called {\em bound states}. On
the other hand, for $n>4$, necessarily any such zero eigenvalue is an
$L^2$-eigenvalue. Also note that if $P(0)=\Delta_g$, then the maximum
principle implies that the nullspace is trivial for all $n\geq 3$
since any element $u$ of this nullspace is in a weighted, infinite order
differentiability, b-Sobolev space that is {\em decaying} relative to
$L^2_{\bl}$ (as $L^2=x^{n/2}L^2_\bl$, $L^2_\bl$ being the $L^2$ space
relative to a {\em b-density}, e.g.\ in local coordinates
$\frac{dx}{x}\,dy$, and we have membership of $u$ in
$x^{(n-4)/2-\ep}L^2$ for all $\ep>0$), and thus decaying at infinity in the $\sup$ sense by
Sobolev embedding. (There are also direct $L^2$-based arguments available using
integration by parts, which needs to be justified, but which can be done.)

In order to proceed, write the b-dual variable of $x$ as
$\taub$, that of $y_j$ as $(\mub)_j$, so covectors are written as
$$
\taub\,\frac{dx}{x}+\sum_j(\mub)_j\,dy_j.
$$
Then a typical symbol of a b-pseudodifferential operator of order
$\tilde r$, where $\tilde r$ may be a function on $\Sb^*X$, is
$(\taub^2+|\mub|^2)^{\tilde r/2}$, with $|\mub|$ being the length with
respect to $g_{\pa X}$.
A typical
example of a variable differential order that we consider is
\begin{equation*}
\tilde r=\frac{1}{2}-(l+1)\pm\beta\frac{\taub}{(\taub^2+|\mub|^2)^{1/2}},
\end{equation*}
where $\beta>0$, and $\pm$ gives rather different function spaces that
will correspond to the $\pm i0$ limit of the resolvent.

We now
explain the connection to the scattering spaces. The symbol of a
representative (for our purposes) elliptic b-pseudodifferential
operator in $\Psib^{\tilde r,l}$, which maps $\Hb^{\tilde r,l}$ to $L^2$,
is $x^{-l}(\taub^2+|\mub|^2)^{\tilde r/2}$. Now,
\begin{equation}\label{eq:b-to-sc-conv}
\taub\,\frac{dx}{x}+\sum_j(\mub)_j\,dy_j=(x\taub)\,
\frac{dx}{x^2}+\sum_j( x\mub)_j\,\frac{dy_j}{x},
\end{equation}
i.e.\ the $\Tsc^*X$ fiber coordinates are $\tau=x\taub$, $\mu=x\mub$,
shows that in terms of the scattering cotangent bundle coordinates,
this symbol is
$$
x^{-l}(\taub^2+|\mub|^2)^{\tilde r/2}=x^{-l-\tilde r} (\tau^2+|\mu|^2)^{\tilde r/2},
$$
which is, away from the 0-section (where it is singular) the symbol of
an elliptic sc-differential operator in $\Psisc^{\tilde r,l+\tilde
  r}$. Recalling that, due to ellipticity, the sc-differential order is
irrelevant for the limiting absorption principle, we see that
microlocally near the sc-characteristic set, for $\sigma\neq 0$ (so
the characteristic set is away from the zero section), these Sobolev
spaces correspond to $\Hsc^{*,r}$,
$$
r=l+\tilde r=-\frac{1}{2}\pm\beta\frac{\tau}{(\tau^2+|\mu|^2)^{1/2}},
$$
in agreement with the orders that appeared in the limiting absorption
principle above!

Now, for $\sigma\neq 0$, $P(\sigma)$ does not map
$\Hb^{\tilde r,l}\to\Hb^{\tilde r-2,l+2}$. We shall instead consider
$P(\sigma)$ as a map
$$
\{u\in\Hb^{\tilde r,l}:\ P(\sigma)u\in \Hb^{\tilde r-1,l+2}\}=\cX=\cX_\sigma\to\Hb^{\tilde r-1,l+2},
$$
with $\cX$ equipped with the squared norm
$$
\|u\|^2_{\cX}=\|u\|^2_{\Hb^{\tilde r,l}}+\|P(\sigma)u\|^2_{\Hb^{\tilde r-1,l+2}},
$$
which makes it into a Hilbert space. While this might be a somewhat
strange space, the space $\dCI(X)$ of $\CI$ functions vanishing to infinite
order at $\pa X$ (i.e.\ Schwartz functions) is dense in it, as
follows by approximating $u\in\cX$ via $u_\ep=\Lambda_\ep u$,
$\Lambda_\ep\in\Psib^{-\infty,-\infty}(X)$, $\ep\to 0$, where
$\Lambda_\ep$ is uniformly bounded in
$\Psib^{0,0}(X)$, converging to $\Id$ in $\Psib^{\delta',\delta'}(X)$
for $\delta'>0$; then
$u_\ep\to u$ in $\Hb^{\tilde r,l}$ and moreover
$$
P(\sigma)u_\ep-P(\sigma)u=P(\sigma)(\Lambda_\ep-\Id)
u=[P(\sigma),\Lambda_\ep] u+(\Lambda_\ep-\Id)P(\sigma)u\to 0
$$
in $\Hb^{\tilde r-1,l+2}$ as is immediate in case of the second term
(for $P(\sigma)u\in \Hb^{\tilde r-1,l+2}$) and in the case of the
first term as $[P(\sigma),\Lambda_\ep]=[P(0)+\sigma Q,\Lambda_\ep]$ is
uniformly bounded in $\Psib^{1,-2}$, converging to $0$ in
$\Psib^{1+\delta',-2+\delta'}$ for $\delta'>0$. This observation uses that
$\sigma^2$ commutes with every operator, thus with $\Lambda_\ep$,
which fact will also play a key role below.

Our main theorem concerns a uniform estimate about $P(\sigma)^{-1}$ in
the above sense as $\sigma\to 0$. While $\cX=\cX_\sigma$ depends on
$\sigma$, the interesting part of the bound on $P(\sigma)^{-1}f$ is
its $\Hb^{\tilde r,l}$ norm (after all, $P(\sigma)$ applied to this is
simply $f$); this is how we state the bound below.

\begin{thm}[See Theorem~\ref{thm:main-improved} for the general
  allowed $\tilde r$]\label{thm:main}
Suppose that $|l+1|<\frac{n-2}{2}$, and suppose that
$P(0):\Hb^{\infty,l}\to\Hb^{\infty,l+2}$ has trivial nullspace, an
assumption independent of $l$ in this range; see \eqref{eq:Ker-P0-sp}.

There exist variable order b-Sobolev spaces $\Hb^{\tilde r,l}$ and
$\sigma_0>0$ such
that
$$
P(\sigma):\{u\in\Hb^{\tilde r,l}:\ P(\sigma)u\in \Hb^{\tilde r-1,l+2}\}\to\Hb^{\tilde r-1,l+2}
$$
is invertible for $0\leq\sigma\leq\sigma_0$, with this inverse being the $\pm i0$ resolvent of
$P(\sigma)$ (with choice corresponding to that of $\tilde r$), and the
norm of $P(\sigma)^{-1}$ as an element of $\cL(\Hb^{\tilde
  r-1,l+2},\Hb^{\tilde r,l})$ is uniformly bounded in $[0,\sigma_0]$.

For instance, one can take any $\beta>0$,
\begin{equation}\label{eq:b-symbol-weight-choice-0}
\tilde r=\hat r_\pm(\beta)=\frac{1}{2}-(l+1)\pm\beta\frac{\taub}{(\taub^2+|\mub|^2)^{1/2}},
\end{equation}
where $\taub,\mub$ are the b-dual variables defined above, $|.|$
the dual metric function of $g_{\pa X}$, and $\pm$
corresponds to the $\pm i0$ limits.

Indeed, the estimates hold in $|\sigma|\leq \sigma_0$, $\im(\sigma^2)\geq
0$ ($+$ case), resp.\ $|\sigma|\leq \sigma_0$, $\im(\sigma^2)\leq
0$ ($-$ case).
\end{thm}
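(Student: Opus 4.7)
The plan is to carry out a microlocal Fredholm analysis on the variable-order b-Sobolev spaces $\Hb^{\tilde r,l}$, treating $P(\sigma)$ as a family in $\sigma\in[0,\sigma_0]$ and establishing estimates uniform in $\sigma$ down to $\sigma=0$. The conceptual point is that, although $P(\sigma)$ for $\sigma\neq 0$ would naturally be viewed as an element of $\Psisc$, the structure $P(\sigma)=P(0)+\sigma Q-\sigma^2$ with $P(0)\in x^2\Diffb^2(X)$ and $Q\in S^{-2-\delta}\Diffb^1(X)$ lets one stay inside the b-calculus uniformly: the scalar $\sigma^2$ commutes with every pseudodifferential operator, so for any $A\in\Psib(X)$,
\begin{equation*}
[P(\sigma),A]=[P(0),A]+\sigma[Q,A]
\end{equation*}
is $\sigma$-uniformly two b-orders and two b-weights below $P(\sigma)$. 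This makes the positive-commutator estimates below $\sigma$-uniform.

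The main step is to prove the uniform Fredholm estimate
\begin{equation*}
\|u\|_{\Hb^{\tilde r,l}}\le C\bigl(\|P(\sigma)u\|_{\Hb^{\tilde r-1,l+2}}+\|u\|_{\Hb^{-N,l-\eta}}\bigr)
\end{equation*}
for some $\eta>0$ and $N$ large, with $C$ independent of $\sigma\in[0,\sigma_0]$, together with the adjoint estimate. Since the inclusion $\Hb^{\tilde r,l}\hookrightarrow\Hb^{-N,l-\eta}$ is b-compact, this makes $P(\sigma):\cX_\sigma\to\Hb^{\tilde r-1,l+2}$ Fredholm with $\sigma$-uniform bounds. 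I would obtain it by combining three ingredients: elliptic b-regularity at fiber infinity of $\Tb^*X$, where $P(\sigma)$ is elliptic in $S^{-2}\Diffb^2$ (the sc-characteristic locus lies in the b-zero section over $\pa X$, not at fiber infinity); real principal type propagation along the b-Hamilton flow of the rescaled b-symbol, with a commutant whose variable order is monotone along the flow, which is precisely the property built into $\hat r_\pm$ in \eqref{eq:b-symbol-weight-choice-0}; and radial point estimates at the source/sink manifolds $L_\pm$, which for $\sigma>0$ sit in the b-zero section above $\pa X$ and correspond under $\tau=x\taub$, $\mu=x\mub$ to the sc-radial sets. The sc-decay threshold $-\tfrac12$ translates into the b-differential threshold $\tilde r=-\tfrac12-l$, which is exactly the constant part of $\hat r_\pm$, with the sign of $\beta(\cdot)$ picking out the outgoing versus incoming side.

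Invertibility with uniform inverse norm then follows by compactness/contradiction. If the bound failed there would exist $\sigma_j\to\sigma_\infty\in[0,\sigma_0]$ and $u_j\in\cX_{\sigma_j}$ with $\|u_j\|_{\Hb^{\tilde r,l}}=1$ and $\|P(\sigma_j)u_j\|_{\Hb^{\tilde r-1,l+2}}\to 0$. The uniform estimate and b-Rellich compactness extract a weak limit $u_\infty\in\Hb^{\tilde r,l}$ with $P(\sigma_\infty)u_\infty=0$, and the task is to show $u_\infty=0$. At $\sigma_\infty=0$ this reduces to the hypothesis on $\Ker P(0)$ via \eqref{eq:Ker-P0-sp}, since $|l+1|<\tfrac{n-2}{2}$ places $u_\infty$ inside the known-trivial nullspace. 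For $\sigma_\infty\in(0,\sigma_0]$, $u_\infty$ is a purely outgoing, resp.\ incoming, eigenfunction of the self-adjoint $P(\sigma_\infty)$, and a boundary-pairing/Rellich uniqueness argument---using that the chosen sign of $\hat r_\pm$ places $u_\infty$ strictly above the $L^2$ threshold at the appropriate radial set---forces $u_\infty=0$. Strong convergence $u_j\to 0$ in $\Hb^{-N,l-\eta}$ then contradicts $\|u_j\|_{\Hb^{\tilde r,l}}=1$ via the uniform estimate. That the inverse is the $\pm i0$ resolvent follows because, for $\im\sigma^2>0$ resp.\ $<0$, this Fredholm inverse agrees with the usual $L^2$ resolvent on $\dCI(X)$, which is dense, and continuity up to the real axis delivers the limit.

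The main obstacle is the uniformity of the radial point analysis at $L_\pm$ as $\sigma\to 0$. For $\sigma>0$ the sets $L_\pm$ are smooth sc-Legendrians on which the radial point estimate is essentially standard; at $\sigma=0$ they collapse into the b-zero section above $\pa X$, precisely where the sc- and b-calculi disagree. Designing the commutant together with $\tilde r$ so that a single positive commutator identity is uniformly signed on $[0,\sigma_0]$ and realizes the correct threshold behavior in both regimes is the central technical task; a Schwartz-density step of the kind sketched with the regularizers $\Lambda_\ep\in\Psib^{-\infty,-\infty}(X)$ in the excerpt is also needed to legitimize the integration by parts on the non-dense spaces $\cX_\sigma$.
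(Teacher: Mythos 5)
There is a genuine gap at the heart of your argument: you claim that elliptic regularity at fiber infinity, real principal type propagation, and radial point estimates combine to give
\begin{equation*}
\|u\|_{\Hb^{\tilde r,l}}\le C\bigl(\|P(\sigma)u\|_{\Hb^{\tilde r-1,l+2}}+\|u\|_{\Hb^{-N,l-\eta}}\bigr)
\end{equation*}
with $\eta>0$, i.e.\ with a \emph{compact} error. But all three of those ingredients are principal-symbol (positive commutator) arguments in the b-calculus, and the b-calculus gains nothing in the decay order under commutators: $[A,A']\in\Psibc^{m+m'-1,l+l'}$, with improvement only in the differential order. Consequently the symbolic argument alone yields only an error term $\|u\|_{\Hb^{\tilde r-K,l}}$ with the \emph{same} weight $l$ (this is exactly Proposition~\ref{prop:symbolic-b-est}), and the inclusion $\Hb^{\tilde r,l}\hookrightarrow\Hb^{\tilde r-K,l}$ is not compact. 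To weaken the weight to $l-\delta$ one must additionally prove that the \emph{normal operator} of the commutator (equivalently, of $\tilde N(P(\sigma))=N(P(0))-\sigma^2$) is positive, uniformly in $\sigma$; this is a separate, non-symbolic computation carried out on the Mellin transform/spectral side of $\Delta_{\pa X}$, and it is where the hypothesis $|l+1|<\frac{n-2}{2}$ actually enters the estimates (via the requirement that $\im\log\bigl((\taub+i(l+1))^2+\lambda+(\frac{n-2}{2})^2\bigr)$ stay in a compact subinterval of $(-\pi/2,\pi/2)$). Your proposal uses $|l+1|<\frac{n-2}{2}$ only to identify the weak limit with an element of $\Ker P(0)$, which is a sign that the decay-gaining mechanism is missing entirely. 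Without the normal operator step the contradiction argument does not close: weak convergence in $\Hb^{\tilde r,l}$ plus strong convergence in $\Hb^{\tilde r-K,l}$ does not follow from a non-compact inclusion, so you cannot conclude $u_\infty\neq 0$ from the estimate.

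The rest of the architecture (uniformity via $\sigma^2$ commuting with everything, the sign of $\im(\sigma^2)$ in the twisted term, Schwartz density of $\dCI(X)$ in $\cX_\sigma$, identification of the inverse with the $\pm i0$ resolvent) matches the paper and is fine. But to repair the proof you need the analogue of Proposition~\ref{prop:eff-norm-op-b-est-base}: choose the commutant $A$ so that its normal operator depends on $\pa X$ only through $\Delta_{\pa X}$ and on $x$ only through conjugation by powers, reduce the positivity of the modified commutator's normal operator to the positivity of a scalar function of $(\taub,\lambda)$ on the Mellin/spectral side, and verify that positivity by the argument (phase) estimates of Section~\ref{sec:normal}. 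There is also a secondary functional-analytic wrinkle you have not addressed: the normal operator estimate loses one derivative relative to the elliptic numerology, so combining it with the symbolic estimate to reach \eqref{eq:impr-b-est} requires either a single combined symbol-plus-normal-operator commutator argument or the duality/finite-rank bootstrapping of the proof of Proposition~\ref{prop:impr-b-est}.
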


\begin{rem}
For the general orders $\tilde r$, see Theorem~\ref{thm:main-improved}.
\end{rem}

\begin{rem}
Notice that this theorem is lossless in terms of decay orders: for
$\sigma=0$ the condition $P(\sigma)u\in \Hb^{\tilde r-1,l+2}$ together
with $u\in\Hb^{\tilde r,l}$ implies $u\in\Hb^{\tilde r+1,l}$, and $P(0):\Hb^{\tilde r+1,l}\to \Hb^{\tilde r-1,l+2}$ is invertible. Thus,
in this sense this is a very precise estimate. Also, the loss of one
differentiability order relative to elliptic estimates is a standard
real principal type/radial point phenomenon; while $P(0)$ is elliptic
in $\Psib^{2,-2}$, $P(\sigma)$ is not so for $\sigma\neq 0$ in
$\Psib^{2,0}$. For such $\sigma$, $P(\sigma)$ has real principal
type/radial point estimates, mentioned above, due to a
non-trivial characteristic set disjoint from the zero section of the
$\Tsc^*X$. This amounts to an
infinite b-frequency phenomenon in view of \eqref{eq:b-to-sc-conv}, thus a b-differentiable order loss in
terms of the $\Hb$ spaces. See Section~\ref{sec:2-micro} for the
{\em second microlocal scattering} version.
\end{rem}

\begin{rem}
In fact, $\sigma_0>0$ could be arbitrary if $P(\sigma)$ had no
$L^2$-elements of its nullspace for any $\sigma$; this is a potential issue for
$\sigma^2<0$ (thus not an issue for $\sigma$ real). The statement with arbitrary $\sigma_0$, under this
additional assumption, follows easily from this theorem, and the
limiting absorption principle for $\sigma^2>0$, resp.\ the elliptic
theory for $\sigma^2<0$; or indeed it is a direct consequence of the
proof of the Theorem.
\end{rem}

\begin{rem}
We also have a version of the theorem with non-trivial nullspace. The result is then highly 
dimension dependent with the case $n\geq 5$ amounting to more or less 
standard Fredholm perturbation theory. We take this up in the final 
section of this paper, mostly concentrating on the most delicate $n=3$
case; here in the introduction we merely refer to Theorems~\ref{thm:asymp-Eucl-resonances} and \ref{thm:Kerr-resonances}. 
\end{rem}

\begin{rem}\label{rem:bd-state-stable}
By standard Fredholm perturbation theory, if $P_b(\sigma)$ is a family
of operators of the form discussed above for $P(\sigma)$ continuously
depending on a parameter $b\in\RR^k$ (i.e.\ $g$ depends continuously
on $b$ in the class of asymptotically conic metrics, etc.), and $P_0(0)$ is invertible, then
there is $\ep>0$ such that
the same holds for $P_b(0)$ for $|b|<\ep$, i.e.\ the hypothesis of the
theorem is perturbation stable.
\end{rem}

\begin{rem}\label{rem:bundles}
The theorem is stated for operators acting on scalar functions. It
also applies, with essentially the same proofs, if
$P(\sigma)=P(0)+\sigma Q-\sigma^2$ is
acting on sections of a vector bundle when
$x^{-(n+2)/2}P(0)x^{(n-2)/2}$ (cf.\ \eqref{eq:P-sigma-form} and
\eqref{eq:b-Lap}) is elliptic with positive scalar principal symbol and modulo $S^{-2-\delta}\Diffb^2$ (acting on sections
  of the bundle) is of the form
\begin{equation}\label{eq:tensor-op-form}
(xD_x)^2\otimes\Id+\tilde P_{\pa X}+\Big(\frac{n-2}{2}\Big)^2
\end{equation}
with $\tilde P_{\pa X}$ being an elliptic differential operator on $\pa X$, which
is non-negative, self-adjoint with respect to a Hermitian inner
product, and $Q$ is of the class $S^{-2-\delta}\Diffb^1$, acting
on sections of the bundle. Moreover, if $\frac{n-2}{2}$ is replaced by
another constant, the same constant can be used in the statement of
Theorem~\ref{thm:main} in place of $\frac{n-2}{2}$.

Indeed, allowing $Q=Q_0+Q_1$, $Q_0\in x^2\Diffb^1$ and
$Q_1\in S^{-2-\delta}\Diffb^1$ (so the coefficients of $Q$ do
not decay relative to those of $x^{-(n+2)/2}P(0)x^{(n-2)/2}$) leaves
the theorem valid, again with the proofs requiring only minor
modification. While such
a $Q$ affects the (skew-adjoint part of the extended) b-subprincipal symbol at the boundary, hence the
argument of Section~\ref{sec:symbolic}, see the term
$(P(\sigma)^*-P(\sigma))A^*A$ in \eqref{eq:commutator-basic-b-est}, due to the prefactor $\sigma$,
the subprincipal symbol can be made arbitrarily small (depending on
the desired $l$ and $\tilde r$) by choosing $\sigma_0$ small, and then
(a small constant times)
the term in $a\sH_p a$ corresponding to the first term of the right
hand side of either of \eqref{eq:H-p-a-expand} and
\eqref{eq:H-p-a-expand-mod} dominates the new term in the principal
symbol of $(P(\sigma)^*-P(\sigma))A^*A$, so the rest of the argument is
unaffected. Moreover, $Q_0$ now appears in the normal operator
argument, but for the same reason as in the above symbolic argument, the
principal symbol of the
normal family has uniform estimates (in $\taub$) of the same kind as
if $Q_0$ vanished, which in particular gives the positivity/negativity
of \eqref{eq:normal-op-computation} for large $\taub$ on the Mellin
transform side, and then finally the positivity/negativity of \eqref{eq:normal-op-computation} for
finite $\taub$ follows from the vanishing $Q_0$ case by the stability
of the property of being positive/negative.

Indeed, an analogous argument shows that the conclusion of the theorem
also holds if
$x^{-(n+2)/2}P(0)x^{(n-2)/2}$ differs from
\eqref{eq:tensor-op-form} by an operator $R=R_0+R_1$,
$R_0\in x^2\Diffb^1$, $R_1\in S^{-2-\delta}\Diffb^2$, without formal
self-adjointness assumptions on $R_0$ but instead assuming that $R_0$
is small in a suitable seminorm on $x^2\Diffb^1$. (Smallness needs to
be assumed since there is no small parameter in front of $P(0)$, unlike $\sigma$ in the case
of $Q$.)
\end{rem}

\begin{rem}
It is straightforward to add an appropriate conormality statement
(conormal to $\sigma=0$) to
this using the commutation relation of $xD_x+\sigma D_\sigma$ with
$P(\sigma)$. Notice that this is not quite a statement of
$P(\sigma)^{-1}$ being conormal to $0$ in weighted b-Sobolev spaces,
rather a `twisted version' of it.
\end{rem}

\begin{rem}
This result can be extended to complex scaled operators, see \cite{Sjostrand-Zworski:Complex}. Then it gives
a similar uniform bound for the complex scaled resolvent down to $0$,
again under the assumption of $P(0)$ having trivial nullspace as above.
\end{rem}

Since one can take $\beta>0$ arbitrarily small, this in particular
implies a (generalization of a) result of Bony and H\"afner
\cite{Bony-Haefner:Low}, see the end of the paper for the
conversion to the scattering, thus on Euclidean space standard
weighted Sobolev, function spaces:

\begin{cor}\label{cor:2-micro-spaces}
On long-range asymptotically Euclidean spaces, or more generally on
Riemannian scattering spaces, of dimension $\geq 3$, the resolvent of
the Laplacian satisfies that
$$
R(\sigma^2\pm i0):\Hb^{1/2-(l+1)-1+\beta,l+2}\to\Hb^{1/2-(l+1)-\beta,l}
$$
is uniformly bounded (as a function of $\sigma$) on $[0,\sigma_0]$ for
$|l+1|<\frac{n-2}{2}$ and $\beta>0$, and thus for all $s\in\RR$ and
for all $\beta>0$,
$$
R(\sigma^2\pm i0):\Hsc^{s,1+\beta}\to\Hsc^{s+2,-1-\beta}
$$
is uniformly bounded in the same sense, and more precisely, for the
second microlocal spaces of Section~\ref{sec:2-micro}, for $\beta>0$,
$s\in\RR$, $|l+1|<\frac{n-2}{2}$,
$$
R(\sigma^2\pm i0):\Hsc^{s,1/2+\beta,l+2}\to\Hsc^{s+2,-1/2-\beta,l}.
$$
\end{cor}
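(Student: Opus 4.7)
The plan is to apply Theorem~\ref{thm:main} with the variable order $\tilde r = \hat r_\pm(\beta)$ of \eqref{eq:b-symbol-weight-choice-0} and then deduce the three stated bounds by combining constant-order b-Sobolev embeddings, the b-to-sc symbol identification of \eqref{eq:b-to-sc-conv}, and the second microlocal refinement of Section~\ref{sec:2-micro}. For the first assertion, Theorem~\ref{thm:main} yields uniform boundedness $R(\sigma^2 \pm i0) : \Hb^{\tilde r - 1, l+2} \to \Hb^{\tilde r, l}$ on $[0, \sigma_0]$. The factor $\taub/(\taub^2+|\mub|^2)^{1/2}$ in \eqref{eq:b-symbol-weight-choice-0} takes values in $[-1,1]$ on $\Sb^*X$, so $\tilde r - 1 \leq \frac{1}{2}-(l+1)-1+\beta$ and $\tilde r \geq \frac{1}{2}-(l+1)-\beta$ pointwise. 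The constant-order inclusions $\Hb^{\frac{1}{2}-(l+1)-1+\beta, l+2} \hookrightarrow \Hb^{\tilde r - 1, l+2}$ and $\Hb^{\tilde r, l} \hookrightarrow \Hb^{\frac{1}{2}-(l+1)-\beta, l}$ then chain with the above to yield the first claim.

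The second microlocal statement is the conceptual heart of the remaining two conclusions. The variable symbol $\tilde r$ depends only on $\taub/(\taub^2+|\mub|^2)^{1/2}$, which attains its extremal values $\pm 1$ at the radial sets (over $\mub = 0$); the corresponding extremal values of $\tilde r$ are $\frac{1}{2}-(l+1)\pm\beta$, and these correspond via the shift $r_{\scl} = l + \tilde r$ implicit in \eqref{eq:b-to-sc-conv} to the constant sc-decay orders $\pm\frac{1}{2}\pm\beta$. In the 2-micro scattering calculus of Section~\ref{sec:2-micro}, which microlocally resolves the corner of the compactified b-sphere bundle where $\mub = 0$ and $\taub \to \pm\infty$, a variable-order b-Sobolev space of this specific form is, up to equivalent norms, a constant-order 2-micro scattering Sobolev space with sc-decay $\pm\frac{1}{2}\pm\beta$, b-decay $l$ (respectively $l+2$), and sc-differential order $s$ (respectively $s+2$), where the sc-differential order is essentially free thanks to ellipticity of $P(\sigma)$ off the radial sets. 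Rewriting Theorem~\ref{thm:main} through this identification produces the third assertion.

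The plain sc assertion is then obtained from the 2-micro one by forgetting the b-decay label, which costs $\tfrac{1}{2}$ in the sc-decay order on each side (sending $\pm\tfrac{1}{2}\pm\beta$ to $\pm 1\pm\beta$) via the standard embedding of 2-micro scattering spaces into plain scattering spaces; the arbitrariness of $s$ is inherited from the 2-micro bound. The main obstacle in the plan is thus the 2-micro identification in the middle step --- that is, carefully setting up the isomorphism between variable-order b-Sobolev spaces with these directional weights and constant-order 2-micro scattering Sobolev spaces so that the uniform $\sigma$-bound of Theorem~\ref{thm:main} transfers cleanly; the other two steps are routine bookkeeping of orders and Sobolev embeddings.
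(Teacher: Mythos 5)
Your derivation of the first (b-Sobolev) bound is correct and is exactly the paper's route: apply Theorem~\ref{thm:main} with $\tilde r=\hat r_\pm(\beta)$ and use that $\taub/(\taub^2+|\mub|^2)^{1/2}\in[-1,1]$ to sandwich the variable order between the two constant orders. The gap is in your middle step. The identification you assert is false: by Section~\ref{sec:2-micro} one has $\Hb^{\tilde r,l}=\Hscb^{\tilde r,\tilde r+l,l}$, so the variable-order b-space is the second microlocal space whose sc-\emph{differential} order is the bounded variable function $\tilde r$ and whose sc-decay order is the variable function $r=\tilde r+l$ (ranging over $[-\frac12-\beta,-\frac12+\beta]$); it is \emph{not}, up to equivalent norms, a constant-order space $\Hscb^{s,\pm\frac12\pm\beta,\,\cdot}$ with $s$ arbitrary. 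Inclusions between these spaces go only one way and only help with the decay indices: they can never upgrade the output to sc-differential order $s+2$ (that would need $\tilde r\ge s+2$, false for large $s$), nor accept an input with only $s$ sc-derivatives when $s<\tilde r-1$. So the second and third displayed bounds do not follow from Theorem~\ref{thm:main} by bookkeeping.

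What makes ``the sc-differential order is free by ellipticity'' precise is an additional, uniform-in-$\sigma$ elliptic estimate near fiber infinity, formulated in the second microlocal calculus and glued to the b-estimate away from fiber infinity; this is exactly \eqref{eq:scb-elliptic} and Proposition~\ref{prop:sc-b-Riem}, combined with the b-result in Proposition~\ref{prop:impr-scb-est} and Theorem~\ref{thm:2-micro-spaces}. The elliptic estimate itself is not deep (near fiber infinity the 2-micro spaces reduce to sc-spaces and the principal symbol is $\sigma$-independent), but one needs the mixed $\Hscb^{s,r,l}$ spaces even to state the combined estimate, and --- since the corollary claims all $s\in\RR$ --- one also needs the duality argument in the second half of the proof of Proposition~\ref{prop:impr-scb-est} to reach sc-differential orders below the threshold $r-l+1$. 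This is the analytic content your proposal is missing; the paper derives the corollary from Theorem~\ref{thm:2-micro-spaces}, not from Theorem~\ref{thm:main} alone. Your final step, passing from the 2-micro statement to the plain sc-statement by choosing $l$ suitably and dropping the b-index at the cost of $\frac12$ in the sc-weight on each side, is fine once the 2-micro statement is in hand.
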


In fact, in Section~\ref{sec:2-micro} we show an even sharper version,
with variable order weights, in Theorem~\ref{thm:2-micro-spaces}.

\begin{rem}
Other closely related works, which we only briefly mention here,
include those of Bony and H\"afner
\cite{Bony-Haefner:Decay, Bony-Haefner:Improved} and of Vasy and
Wunsch \cite{Vasy-Wunsch:Positive}.
\end{rem}

The basic idea of the proof of our main theorem is a positive (or negative)
(twisted) commutator estimate in the b-pseudodifferential
setting. This is a positivity result modulo compact operators, whose
proof requires two ingredients. First, a positive principal symbol,
capturing the b-differential order part of compactness;
this has very much the flavor of the usual radial point estimate
results on variable order Sobolev spaces, and is discussed in
Section~\ref{sec:symbolic}. Second, a positive normal operator,
capturing the decay part of compactness. In general, this is a
computation in a
non-commutative algebra, namely an operator algebra on $\pa
X$. However, one can arrange that the only operators involved are
$\Delta_{\pa X}$, $x$ and $xD_x$, and $x$ only enters via overall
multiplication by its powers,
which means that in the commutator computation the effect will be a
shift of $xD_x$ by an imaginary constant thanks to a conjugation. Thus, finally,
on the Mellin transform side, using the spectral representation of
$\Delta_{\pa X}$, one has to check the positivity of a
{\em scalar function}, i.e.\ positivity in a commutative
algebra. While this may be somewhat involved, it is in principle
straightforward; we do this in Section~\ref{sec:normal}. This
computation is
simplified by a large parameter symbolic treatment (akin to
semiclassical rescaling), which is how we proceed in
Section~\ref{sec:normal}. We remark that as we prove the general
differential order $\tilde r$ version of the main theorem in
Theorem~\ref{thm:main-improved}, but we prove (and use) the normal operator
positivity only for the special differential order,
\eqref{eq:b-symbol-weight-choice-0}, there is a slightly involved
functional analytic
argument in the first part of Section~\ref{sec:normal} that could be
simplified if only the orders \eqref{eq:b-symbol-weight-choice-0} were considered.

While this is a relatively standard approach, what makes it somewhat
unusual in this case is that $P(\sigma)$ is regarded as effectively an
element of $x^2\Diffb^2(X)\subset\Psib^{2,-2}(X)$, {\em even though the
spectral parameter does not lie in this space!} However, in the
(twisted) positive commutator estimate, it either disappears (since it
is a multiple of the identity, thus commutes with every operator), or
gives the correct sign (in the `twisted term', involving
$P(\sigma)-P(\sigma)^*$) when $\im(\sigma^2)$ is non-zero but has the
correct sign.

In
Section~\ref{sec:2-micro}, we connect the results to scattering Sobolev
spaces, which are the natural spaces for the limiting resolvents for
positive spectral parameters. The `interpolation' between the
scattering and b- Sobolev spaces is given by second microlocalized
scattering Sobolev spaces, with the second microlocalization taking
place at the zero section; this is what brings the `b-picture' into
the scattering problem. We emphasize that this part is {\em not
  necessary} for proving Theorem~\ref{thm:main}; rather it provides a
single framework for understanding Theorem~\ref{thm:main} and the more
common positive energy limiting absorption results, and thus proves
Corollary~\ref{cor:2-micro-spaces} along the way in a strengthened
form in Theorem~\ref{thm:2-micro-spaces}.

Section~\ref{sec:Kerr} notes the changes that are necessary to
adapt these arguments to Kerr-like spaces; the point being the lack of
ellipticity in the interior of $X$.
Finally, in Section~\ref{sec:resonances} we discuss the estimates
obtained when there are zero resonances both in the asymptotically
Euclidean and in the Kerr settings.

The author is very grateful to Kiril Datchev, Dietrich H\"afner, Rafe Mazzeo, Richard Melrose, Maciej
Zworski and especially Peter Hintz and Jared Wunsch for fruitful discussions.

\section{Background}\label{sec:background}
In this background section we discuss b-pseudodifferential
operators. These are treated in great detail in Melrose's book
\cite{Melrose:Atiyah} by describing their Schwartz kernels on a
resolved double space. Here we follow \cite[Section~5.6]{Vasy:Minicourse} by
reducing their study to H\"ormander's uniform class in local coordinates,
with some modifications.

Thus, near a point on $\pa X$, consider local coordinates $(x,y)$,
$x\geq 0$ near $0$, $y$ in an open
subset of $\RR^{n-1}$; we in fact take all symbols to be compactly
supported in the chart. In general, if one
introduces logarithmic coordinates, $t=-\log x$, b-pseudodifferential
operators are just H\"ormander's uniform pseudodifferential class
locally in the corresponding cylindrical regions (positive real line in $t$ times a
compact set in the $y$ variables). Thus, to define $\Psibc^{m,0}(X)$, locally one considers, with $\psi$ compactly
supported, identically $1$ near $0$, 
operators of the form
\begin{equation}\label{eq:b-symbolic-term}
Bu(t,y)=(2\pi)^{-n}\int e^{i[-(t-t')\taub+(y-y')\cdot\mub]}\psi(t-t')b(t,y,\taub,\mub)\,u(t',y')\,dt'\,dy',
\end{equation}
where $b\in S^m_\infty$, i.e.
$$
|\pa_t^k\pa_y^\alpha\pa_{\taub}^j\pa_{\mub}^\beta b|\leq C_{k\alpha j\beta}\langle(\taub,\mub)\rangle^{m-j-|\beta|}.
$$
(The minus sign in front of $(t-t')$ in the phase is added for 
consistency with the compactified notation below.) 
The localizer $\psi$ plays an important role; without it the Schwartz
kernel would only decay polynomially in $|t-t'|$, and to work on
Sobolev spaces with polynomial gain in $x$ one needs Schwartz kernels
decaying faster than $e^{-M|t-t'|}$ for all $M$. Notice that $\pa_t$
can be replaced by $x\pa_x$ in this definition, thus this is {\em exactly} the
symbol space on $\Tb^*X$ as discussed in Section~\ref{sec:2-micro}
ahead of Lemma~\ref{lemma:symbol-space-blowup}.

Notice that in terms of $x$, one could equivalently consider
$$
\tilde Bu(x,y)=(2\pi)^{-n}\int
e^{i[\frac{x-x'}{x}\taub+(y-y')\cdot\mub]}\tilde \psi\Big(\frac{x-x'}{x}\Big)\tilde b(x,y,\taub,\mub)\,u(x',y')\,dt'\,dy',
$$
where $\tilde\psi$ is supported in $(-1/2,1/2)$, say, and identically $1$
near $0$, and where the estimates on $\tilde b$ are those on $b$ pulled back via the
map $(x,y,\taub,\mub)\mapsto(-\log x,y,\taub,\mub)$, so
$\pa_t=-x\pa_x$, and this is simply the conormal estimate for $\tilde
b$. Indeed, $\frac{x-x'}{x}=1-e^{t-t'}$, so the support condition on
$\tilde\psi$ is equivalent to a support condition on $\psi$ stated
above, and similarly in the phase function
$\frac{x-x'}{x}=1-e^{t-t'}$is equivalent to $-(t-t')$ above, in the strong
sense that at the critical set, $t=t'$, the differentials are the same.

Then one adds to
these residual in the symbolic, but not in the decay, sense terms $R$
satisfying estimates on their Schwartz kernel $K_R$
$$
|\pa_t^k\pa_y^\alpha\pa_{t'}^l\pa_{y'}^\gamma
K_R(t,y,t',y')|\leq C_{k\alpha l\gamma
  NM} \langle y-y'\rangle^{-N}e^{-M|t-t'|}
$$
for all $l,k,\alpha,\gamma,M,N$; these are elements of the
H\"ormander class $\Psi^{-\infty}_\infty$, but we impose stronger,
exponential, decay in $|t-t'|$. The full local version of
$\Psibc^{m,0}$ then consists of operators $A$ of the form $A=B+R$. One can then transplant these to a
manifold with boundary via local coordinates, much as in the standard
setting; we refer to \cite[Section~5.6]{Vasy:Minicourse} for a
detailed discussion.

We also define
$\Psibc^{m,l}(X)=x^{-l}\Psibc^{m,0}(X)$. Note that this is the
opposite, in terms of the sign of $l$, of Melrose's order convention,
but it is helpful as the space of operators gets larger with
increasing $m$ as well as with increasing $l$.

Recall that in the standard sense, namely considering H\"ormander's
uniform class with symbolic behavior in the dual variables, a symbol $b$
of order $m$
is called {\em classical} if it has an asymptotic expansion in
homogeneous (with respect to dilations in $(\taub,\mub)$)
symbols of order $m-j$ ($j\in\NN$), in the sense
that the difference between the terms of the expansion with $j\leq k-1$ and
$b$ is in $S^{m-k}_\infty$. This can also be defined instead as a
classical conormality statement when the fibers of $\Tb^*X$ (which are
vector spaces) are
radially compactified (as discussed in the introduction for Euclidean
space; the fibers are such). If $m=0$
classicality is thus a uniform version (in the base variables $(t,y)$)
of smoothness on $\overline{\Tb^*}_{X^\circ}X$. The general case
reduces to this after factoring out the $-m$th power of a defining function of the new
boundary, {\em fiber infinity}; one can take this defining function
(away from the zero section) to
be
$(\taub^2+|\mub|^2)^{-1/2}$, with $|\mub|^2$ the squared length with
respect any dual metric on $\pa X$, much as $r^{-1}$ could be taken as
the defining function of the boundary of the radial compactification
(away from $0$). (If one wants to allow the zero section, resp.\ the
origin, one can take $(\taub^2+|\mub|^2+1)^{-1/2}$, resp.\ $(r^2+1)^{-1/2}$.) From the perspective
of $X$, i.e.\ using $x$ in place of $t$, this amounts to conormality
at $\pa X$ (more precisely at $\overline{\Tb^*}_{\pa X}X$), with
values in classical symbols; thus, $x\pa_x$ and $\pa_{y_j}$ preserve
the smoothness in the fibers.

However, since $X$ itself has a boundary it also makes sense of
classicality at $\pa X$.
The {\em fully classical} operators (i.e.\ classical both in the symbolic
and in the boundary asymptotic expansion sense) are then ones
possessing an expansion in Taylor series in $x$, i.e.\ in terms of
exponentials $e^{-t}$. For operators of order $(0,0)$ thus full
classicality amounts to both $b$ being $\CI$ in the local coordinate
version of $\overline{\Tb^*}X$, and the residual term (which is
trivial in the symbolic sense) possessing an expansion in Taylor series in $x$.

Just like there was a partial classicality in the symbolic sense, with
conormal behavior in the base, there is also a partial classicality in
the base sense, with symbolic behavior in the differential sense; for
elements of $\Psibc^{m,0}$ this amounts to $b$ being smooth (with
values in symbols) in $(x,y)$, i.e.\ having a Taylor series expansion
in $x$, plus again the residual term being smooth in $x$. A limited
version of this partial conormality plays a role in defining the normal
operator below.

The space $\Psibc^{\infty,\infty}(X)=\cup_{m,l\in\RR}\Psibc^{m,l}(X)$ is a
filtered $*$-algebra, with $A\in\Psibc^{m,l}(X)$,
$A'\in\Psibc^{m',l'}(X)$ implying $AA'\in\Psib^{m+m',l+l'}(X)$ and
$A^*\in\Psibc^{m,l}(X)$ where the adjoint is taken with respect to any
non-degenerate (positive)
b-density, or indeed any non-degenerate polynomial multiple of this. In local
coordinates, a b-density has the form
$a\frac{|dx\,dy_1\,\ldots\,dy_{n-1}|}{x}$, with $a>0$ (on $X$,
including at $\pa X$) meaning that
this b-density is positive; notice that this is
$a\,|dt\,dy_1,\ldots\,dy_{n-1}|$; the polynomial multiples take the
form  $ax^p\frac{|dx\,dy_1\,\ldots\,dy_{n-1}|}{x}$ for some $p\in\RR$,
which include scattering densities, such as densities of
asymptotically conic metrics, for which $p=-n$. (So the scattering
$L^2$-space is $L^2_\scl(X)=x^{n/2} L^2_\bl(X)$.)

Furthermore, there is a principal symbol map
$$
\sigma_{m,l}:\Psibc^{m,l}\to S^{m,l}/S^{m-1,l};
$$
for classical operators one often identifies $\sigma_{m,l}(A)$ with a
homogeneous degree $m$ function on $\Tb^*X\setminus o$ without further
comments.  The principal symbol captures $\Psibc^{m,l}(X)$ modulo
$\Psibc^{m-1,l}(X)$, so if the principal symbol of $A\in
\Psibc^{m,l}(X)$ vanishes, then $A\in\Psibc^{m-1,l}(X)$. As usual, this
principal symbol is a $*$-algebra homomorphism, so
$$
\sigma_{m+m',l+l'}(AA')=\sigma_{m,l}(A)\sigma_{m',l'}(A').
$$
This algebra is thus commutative to leading order in the differential
sense, i.e.\ $[A,A']\in\Psibc^{m+m'-1,l+l'}(X)$, but there is no gain
in the decay order. Furthermore, one can compute the principal symbol
of $[A,A']$ as an element of $\Psibc^{m+m'-1,l+l'}(X)$ (rather than
just as an element of $[A,A']\in\Psibc^{m+m',l+l'}(X)$, in which sense
it vanishes by the algebra homomorphism property); it is given by the
usual Hamilton vector field expression:
$$
\sigma_{m+m'-1,l+l'}([A,A'])=\frac{1}{i}\sH_a a',\ a=\sigma_m(A),\ a'=\sigma_{m'}(A').
$$
For $l=0$, $\sH_a$ is a b-vector field on $\Tb^*X$, i.e.\ is tangent
to $\Tb^*_{\pa X}X$ (and in general it
simply has an extra weight factor); indeed in local coordinates it
takes the form
\begin{equation*}\begin{aligned}
&(\pa_{\taub} a) (x\pa_x)-(x\pa_x a) \pa_{\taub}+\sum 
_j\big((\pa_{(\mub)_j}a)\pa_{y_j}-(\pa_{y_j}a)\pa_{(\mub)_j}\big) \\
&\qquad=(-\pa_{\taub} a) \pa_t-\pa_t a (-\pa_{\taub})+\sum 
_j\big((\pa_{(\mub)_j}a)\pa_{y_j}-(\pa_{y_j}a)\pa_{(\mub)_j}\big),
\end{aligned}\end{equation*}
where the $-$ signs in the $\pa_t$-version correspond to
$\taub\,\frac{dx}{x}=-\taub\,dt$; notice that the second line is the
standard form of the Hamilton vector field taking into account that
$\taub$ is the {\em negative} of the canonical dual coordinate of $t$.

In order to capture decay, one can use the {\em normal operator} of
$A\in\Psibc^{m,0}(X)$, which is defined if $A$ is classical in the
base variable $x$ modulo $\Psibc^{m,-\delta}(X)$, i.e.\ $A$ differs
from a dilation invariant operator $N(A)$ on $(0,\infty)\times\pa X$,
with a neighborhood $[0,x_0)\times\pa X$ of $\pa X$ being identified
with this model locally, by an element of
$\Psibc^{m,-\delta}(X)$. The main advantage of this is that $N(A)$
can be analyzed via the Mellin transform in $x$, i.e.\ the Fourier
transform in $-t$, thanks to its dilation invariance; these reduce the
analysis to that of a {\em family} of pseudodifferential operators
$\hat N(A)$ on
$\pa X$. Note that the latter form a non-commutative algebra, thus
this a more complicated object than the principal symbol. For general
$l$, one can instead consider the {\em rescaled normal operator} $N(x^l A)$ for similar effect. For
example, if $g$ is a sc-metric which is asymptotically conic, the
normal operator of $\Delta_g$ is simply that of the Laplacian of the
conic model metric.

When working with variable (differential, in this case) order
operators, i.e.\ $m$ is a smooth function of $(x,y,\taub,\mub)$ which is homogeneous of degree zero
in $(\taub,\mub)$, it is also necessary to 
generalize the definition by allowing losses $\delta\in[0,1/2)$: 
$$
|\pa_t^k\pa_y^\alpha\pa_{\taub}^j\pa_{\mub}^\beta b|\leq C_{k\alpha j\beta}\langle(\taub,\mub)\rangle^{m-j-|\beta|+\delta(k+|\alpha|+j+|\beta|)};
$$
this class of symbols would have a subscript $\delta$, as would do the
corresponding class of pseudodifferential operators $\Psibcdelta^{m,0}$; see
\cite{Vasy:Minicourse} where this is discussed throughout the paper in
various settings.
The need for these (or slightly different versions) arises from the
logarithmic terms coming from differentiating expressions like
$\langle(\taub,\mub)\rangle^{m(x,y,\taub,\mub)}$; the latter is an
example of an elliptic order $m$ symbol.
These symbols and the corresponding operators work completely analogously to the $\delta=0$ case considered 
above, except that the principal symbol is defined in
$S^{m,l}_\delta/S^{m-1+2\delta,l}_\delta$, and the commutator of two operators as
above is in $\Psibcdelta^{m+m'-1+2\delta,l+l'}(X)$, and thus its
principal symbol is computed modulo
$S^{m+m'-1+4\delta,l+l'}_\delta$. It is important to keep in mind that
for our purposes $\delta>0$ can always be taken arbitrarily small.

\section{Symbolic estimate}\label{sec:symbolic}
We start the proof of the main theorem by proving a uniform (in
$\sigma$) symbolic estimate, which does not yet come with
compact error terms. We first show it with a special choice of $\tilde
r$, which makes the computation completely explicit, see Proposition~\ref{prop:symbolic-b-est}, and later on we
generalize to a more geometric condition on the differential order
$\tilde r$, see Proposition~\ref{prop:symbolic-b-est-more}.

\begin{prop}\label{prop:symbolic-b-est}
Let $l\in\RR$, $\beta>0$ and let
\begin{equation*}
\tilde r=\frac{1}{2}-(l+1)\pm\beta\frac{\taub}{(\taub^2+|\mub|^2)^{1/2}},
\end{equation*}
where $|.|$ is
the dual metric function of $g_{\pa X}$.

For $\sigma$ in a compact subset of $[0,\infty)$ and $0<K<\beta$ there exists $C>0$
such that for all $u\in \Hb^{\tilde r-K,l}$ with $P(\sigma)u\in
\Hb^{\tilde r-1,l+2}$ , we have $u\in\Hb^{\tilde r,l}$ and
\begin{equation}\label{eq:basic-b-est-0}
\|u\|_{\Hb^{\tilde r,l}}\leq
C(\|P(\sigma)u\|_{\Hb^{\tilde r-1,l+2}}+\|u\|_{\Hb^{\tilde r-K,l}}).
\end{equation}
\end{prop}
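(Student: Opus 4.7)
The plan is to prove a uniform (in $\sigma$) positive commutator estimate in the b-pseudodifferential calculus, reducing to a radial-point estimate for $P(0)$ that absorbs the $\sigma$-dependence harmlessly. The crucial observation is that $-\sigma^2$ is a scalar and hence commutes with every operator, so
$$
i[P(\sigma), A^*A] = i[P(0), A^*A] + \sigma\cdot i[Q, A^*A],
$$
with the leading part $i[P(0), A^*A]$ independent of $\sigma$ and the $\sigma Q$ piece (with $Q\in\Psib^{1,-2-\delta}$) of lower order in the decay weight. Hence the symbolic estimate essentially reduces to a radial-point estimate for $P(0)\in\Psib^{2,-2}$, uniform in $\sigma$.

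First I would construct commutants $A\in\Psibcdelta^{\tilde r - 1/2,\,l+1}$ (with small loss $\delta > 0$ to accommodate the variable order) with principal symbols $a = \chi\,x^{-(l+1)}\rho^{\tilde r - 1/2}$, where $\rho = (\taub^2 + |\mub|^2)^{1/2}$ and $\chi \geq 0$ is a smooth microlocal cutoff on $\Sb^*X$. Using different cutoffs, one handles: (i) the radial sets $R_\pm = \{\mub = 0,\,x = 0,\,\taub/\rho = \pm 1\}$ with the appropriate flow-in/flow-out shape; (ii) regions at $x = 0$ off $R_\pm$ where the b-Hamilton flow of $p_0 = \sigma_{2,-2}(P(0)) = G$ is non-degenerate (real principal type); and (iii) the region off $\pa X$, covered directly by the $\Psib^{2,-2}$-ellipticity of $P(0)$. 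At $R_\pm$, where $H_{p_0}$ reduces to $2\taub\,x\pa_x$, the principal symbol $2 a H_{p_0}(a)$ of $i[P(0), A^*A]$ picks up the ``kinetic'' contribution $-2(l+1)\taub\cdot a^2$ from the $x^{-(l+1)}$ factor; combined with the effective sub-principal contribution from $P(0)$ (arising via the conjugation $x^{-(n-2)/2}P(0)x^{(n-2)/2}$ that exposes the indicial structure, cf.\ \eqref{eq:tensor-op-form}), this sets the threshold at $1/2 - (l+1)$, and the choice $\tilde r = 1/2 - (l+1) \pm \beta\,\taub/\rho$ places $\tilde r$ above threshold at the source (resp.\ below it at the sink), producing a dominant margin term of definite sign.

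Next, the estimate is extracted from the identity
$$
\langle i[P(\sigma), A^*A]u, u\rangle = -2\im\langle Au, AP(\sigma)u\rangle + i\langle(P(\sigma) - P(\sigma)^*)A^*Au, u\rangle,
$$
where $P(\sigma) - P(\sigma)^* = (\sigma Q - \bar\sigma Q^*) - 2i\im(\sigma^2)$; for real $\sigma$ this second term vanishes, while for complex $\sigma$ (needed for the full Theorem~\ref{thm:main}) the $-2i\im(\sigma^2)$ piece contributes $2\im(\sigma^2)\|Au\|^2$, nonnegative in the $+$ case when $\im(\sigma^2)\geq 0$ (symmetrically in the $-$ case), and the $\sigma Q - \bar\sigma Q^*$ piece is lower order. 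The first term is controlled by Cauchy--Schwarz using the $L^2$-duality $(\Hb^{1/2,-1})^* = \Hb^{-1/2, 1}$: since $A:\Hb^{\tilde r, l}\to\Hb^{1/2,-1}$ and $A:\Hb^{\tilde r-1, l+2}\to\Hb^{-1/2, 1}$, one has $|\langle Au, AP(\sigma)u\rangle| \leq C\|u\|_{\Hb^{\tilde r, l}}\|P(\sigma)u\|_{\Hb^{\tilde r - 1, l + 2}}$. Young's inequality then absorbs a small multiple of $\|u\|_{\Hb^{\tilde r, l}}^2$ on the left; residual errors (from $\chi$-derivatives, the $\delta$-loss, and $\sigma Q$) are controlled by $\|u\|_{\Hb^{\tilde r - K, l}}$ for any $K < \beta$, since the variable-order margin $\beta$ supplies exactly that much room. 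The a priori claim $u\in\Hb^{\tilde r, l}$ (given only $u\in\Hb^{\tilde r - K, l}$) is then justified by the standard regularization with $\Lambda_\ep\in\Psib^{-\infty,-\infty}$ bounded in $\Psib^{0,0}$ converging to $\mathrm{Id}$.

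The hardest step will be verifying the definite sign of the principal symbol of $i[P(0), A^*A]$ at the radial sets $R_\pm$: one must carefully balance the kinetic contribution $-2(l+1)\taub$ from the weight, the sub-principal contribution from $P(0)$ that sets the threshold to $1/2 - (l+1)$, and the variable-order slope contribution $\pm\beta$, and then verify that the slope dominates and cleanly yields the required positive (resp.\ negative) definiteness for the source (resp.\ sink) radial-point estimate. Patching via a microlocal partition of unity on $\Sb^*X$ with the real principal-type and elliptic estimates then gives the global bound claimed.
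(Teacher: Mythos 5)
Your proposal follows essentially the same route as the paper: a positive commutator estimate in the b-calculus with commutant of principal symbol $x^{-(l+1)}(\taub^2+|\mub|^2)^{(\tilde r-1/2)/2}$ times cutoffs, exploiting that $\sigma^2$ is scalar so only $P(0)+\sigma Q$ enters the commutator, patching radial-point, real-principal-type and interior elliptic estimates, regularizing with a margin limited by $\beta$ (whence $K<\beta$), and closing via Cauchy--Schwarz with the $\Hb^{1/2,-1}$/$\Hb^{-1/2,1}$ pairing and the sign of $\im(\sigma^2)$ in the complex case.

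One point of your account is misattributed, though it does not derail the argument: the threshold $\frac12-(l+1)$ does \emph{not} come from a subprincipal contribution of $P(0)$ exposed by the conjugation $x^{-(n+2)/2}P(0)x^{(n-2)/2}$ (that indicial structure, and the constraint $|l+1|<\frac{n-2}{2}$, only enter the normal operator estimate of Section~\ref{sec:normal}; note the proposition places no condition on $l$). It is a pure principal-symbol computation: in $\sH_{p_\pa}a$ the $x\pa_x$-derivative of the weight $x^{-(l+1)}$ contributes $-(l+1)\taub$ and the $\pa_{\taub}$-derivative of $(\taub^2+|\mub|^2)^{(\tilde r-1/2)/2}$ contributes $-(\tilde r-\frac12)\taub$; these cancel exactly at $\tilde r=\frac12-(l+1)$, and the $\pm\beta$ slope (together with the logarithmic term from differentiating the variable exponent) supplies the definite sign, cf.\ \eqref{eq:H-pa-p-form}--\eqref{eq:H-p-a-expand}. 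Carrying out the balance you describe with this correct bookkeeping, your argument goes through as in the paper.
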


\begin{rem}
Unlike in the main theorem, Theorem~\ref{thm:main}, there are {\em no
  conditions} on the decay order $l$. The latter only enter if one
wants to improve (weaken) the decay order $l$ of the error term,
$\|u\|_{\Hb^{\tilde r-K,l}}$, as we do in the next section.
\end{rem}

\begin{rem}\label{rem:symbolic-b-est}
By the density of $\dCI(X)$ in $\cX=\cX_\sigma$, it in fact suffices
to prove the uniform estimate \eqref{eq:basic-b-est-0} for
$u\in\dCI(X)$ for the purposes of proving the main theorem.
Here we prove the significantly stronger statement made above; this
has the flavor of propagation of singularities (really,
regularity). We note that if we only want to prove the estimate for
$u\in\dCI(X)$, the regularization argument below is in fact
unnecessary, shortening the proof.
\end{rem}

\begin{rem}\label{rem:symbolic-b-est-improved-error}
Note that for any $\tilde r'$, $\|u\|_{\Hb^{\tilde r-K,l}}$ can be
bounded by an arbitrarily small
multiple of $\|u\|_{\Hb^{\tilde r,l}}$ plus a large multiple of
$\|u\|_{\Hb^{\tilde r',l}}$, and the former can be absorbed into the
left hand side of \eqref{eq:basic-b-est-0}, so in fact
\eqref{eq:basic-b-est-0} holds with $\|u\|_{\Hb^{\tilde r-K,l}}$
replaced by $\|u\|_{\Hb^{\tilde r',l}}$. Nonetheless, this {\em does
  not change} the need for the a priori assumption $u\in \Hb^{\tilde
  r-K,l}$ for some $0<K<\beta$; in the form \eqref{eq:basic-b-est-0} is stated, one can
make it implicit by saying that the inequality holds provided the
right hand side is finite. Thus, in the setting of
Remark~\ref{rem:symbolic-b-est}, thus working with $\cX$, it actually
suffices to prove the estimate \eqref{eq:basic-b-est-0} for any single
value of $K>0$, and then the stated version follows immediately.
\end{rem}

{\em The rest of this section, until the statement of Proposition~\ref{prop:symbolic-b-est-more}, consists of the proof of Proposition~\ref{prop:symbolic-b-est}.}

The estimate \eqref{eq:basic-b-est-0}
arises from a positive commutator argument. Although
$P(\sigma)\in\Diffb^2(X)$ only, it is of the form
\begin{equation}\label{eq:P-sigma-form-0}
P(\sigma)=P(0)+\sigma Q-\sigma^2,\qquad P(0)\in x^2\Diffb^2(X),\ Q\in
S^{-2-\delta}\Diffb^1(X)
\end{equation}
and
$\sigma^2$, by virtue of being a multiple of the identity operator,
commutes with every operator. Thus, the argument works as if $P(\sigma)$
were in $x^2\Diffb^2(X)\subset\Psib^{2,-2}(X)$ (in a uniform manner in
$\sigma$ with $\sigma$ in a fixed compact subset of $\Cx$), though
with a bit of care. Moreover, not only is $\sigma Q$ subprincipal (by
being a first order operator), but
it has an extra $x^\delta$ vanishing at $\pa X$, so {\em its} principal
symbol vanishes at $\pa X$.

While dealing with $P(0)$ it
is usually convenient to conjugate it by $x^{(n-1)/2}$ and multiply
by $x^{-2}$ from the left; this gives a formally self-adjoint operator
relative to the b-density $x^n\,dg$. However, in view of $\sigma^2$,
we avoid this conjugation, for $\sigma^2$ is both symmetric relative
to $dg$ and is a multiple of the identity operator. We thus use $dg$
for the density defining the inner product, and use it also for the
$L^2$-space even in the b-algebra below; notice that thus
$$
L^2=x^{n/2}L^2_\bl,
$$
with $L^2_\bl$ defined with respect to any
non-degenerate smooth b-density (i.e.\ locally a positive smooth
multiple of $\frac{dx}{x}\,dy$) since such a density is, up to an overall
positive smooth factor, $x^n$ times $dg$. Thus, $P(\sigma)$
is effectively (though not actually: actually it is merely in
$\Psib^{2,0}(X)$ due to $\sigma^2$) in $\Psib^{2,-2}(X)$; taking
$A\in\Psib^{\tilde r-1/2,l+1}(X)$, with $A^*=A$, we have
$i[P(\sigma),A^*A]\in\Psib^{2\tilde r,2l}(X)$; we also have for $\sigma\in\RR$,
$P(\sigma)=P(\sigma)^*$. In fact, if we allow $\im\sigma\neq 0$, then
depending on the sign of $\im\sigma$, we obtain $i$ times a positive
or negative term; this sign matches the sign below for the matching
choice of weight, and thus function space; see the end of the section. Thus,
\begin{equation}\begin{aligned}\label{eq:commutator-basic-b-est}
i&(P(\sigma)^*A^*A-A^*AP(\sigma))=i(P(\sigma)^*-P(\sigma))A^*A+i[P(\sigma),A^*A],\\
&[P(\sigma),A^*A]\in\Psib^{2\tilde
  r,2l}(X),
\end{aligned}\end{equation}
with principal symbol
$2a\sH_p a$, where $p$ is the principal symbol of $P(0)$. Below we arrange
that this has a definite sign.

This is done by a global version of real principal type and radial
point estimates. These were discussed, in the scattering setting, by
Melrose in \cite{RBMSpec}; the present version essentially presents a
globalized version of
\cite{Vasy:Minicourse}, which in turn is based on
\cite{Vasy-Dyatlov:Microlocal-Kerr} and
\cite{Baskin-Vasy-Wunsch:Radiation}.

Recall that the b-dual variable of $x$ is written as
$\taub$, that of $y_j$ as $(\mub)_j$, so covectors are written as
$$
\taub\,\frac{dx}{x}+\sum_j(\mub)_j\,dy_j,
$$
and a defining function of fiber
infinity of $\overline{\Tb^*}X$ is $\tilde\rho=(\taub^2+|\mub|^2)^{-1/2}$, with
$|\mub|^2$ the squared length with respect to any (dual) metric on
$\pa X$, but here we take this to be $g_{\pa X}$, see
Section~\ref{sec:background} and also Section~\ref{sec:2-micro}. Then we take
\begin{equation}\label{eq:b-symbol-choice}
a=x^{-l-1}(\taub^2+|\mub|^2)^{(\tilde r-1/2)/2}\psi(x),
\end{equation}
where $\tilde r$ is a function of the homogeneous degree zero
expression $\taub/(\taub^2+|\mub|^2)^{1/2}$ on $\Tb^*X$ (minus the zero
section), which is monotone along the Hamilton flow of $p$ at $\pa
X$, and where $\psi\geq 0$ is identically $1$ near $0$, and is supported
sufficiently close to $0$ (so that the collar neighborhood
decomposition is still valid, and later on so that the dynamical
behavior is unchanged). Concretely, depending on the incoming/outgoing choice ($\pm i0$
limits), we take, with $\beta>0$,
\begin{equation}\label{eq:b-symbol-weight-choice}
\tilde r=\frac{1}{2}-(l+1)\pm\beta\frac{\taub}{(\taub^2+|\mub|^2)^{1/2}}.
\end{equation}
Notice that on $\supp d\psi$, $x$ is bounded away from $0$, thus we
have a priori elliptic estimates. Also, $a$ is to be understood as an
amplitude, cut off from the zero section, but for a symbolic argument,
such as those below, any additional term generated by this cutoff is
of order $-\infty$, thus can be absorbed into the error.

With $p_\pa=x^2(\taub^2+|\mub|^2)$ the restriction of $p$ to $\pa X$,
extended using the local product structure as an $x$-independent
function,
$\sH_p-\sH_{p_\pa}$ is $x^{2+\delta}$ times a homogeneous (with
respect to dilations in the fibers of $\Tb^*X$) degree $1$
vector field tangent (in the sense of being a b-vector field with
symbolic order $0$ coefficients) to $\Tb^*_{\pa X}X$, i.e.\ is
$x^{2+\delta}$ times a linear
combination of $x\pa_x,\pa_{y_j}$ with homogeneous degree $1$, and
$\pa_{\taub},\pa_{(\mub)_j}$ with homogeneous degree $2$
coefficients, with these coefficients symbolic of order $0$, i.e.\
remain bounded under applications of $x\pa_x$,
$\pa_{y_j}$, $\pa_{\taub}$, $\pa_{(\mub)_j}$. (Here homogeneity is
used to encode the regularity at fiber infinity.) Furthermore,
\begin{equation*}\begin{aligned}
\sH_{p_\pa}&=(\pa_{\taub} p)x\pa_x-(x\pa_x
p)\pa_{\taub}+x^2\sH_{|\mub|^2}\\
&=2x^2\taub x\pa_x-2x^2(\taub^2+|\mub|^2)\pa_{\taub}+x^2\sH_{|\mub|^2},
\end{aligned}\end{equation*}
so
\begin{equation*}\begin{aligned}
\sH_p=2x^2(\taub+x^\delta q_0) x\pa_x&-2x^2(\taub^2+|\mub|^2+x^\delta
\tilde q_0)\pa_{\taub}\\
&+x^2\sH_{|\mub|^2}+\sum_{j=1}^{n-1} x^{2+\delta}
(q_j\pa_{y_j}+\tilde q_j\pa_{(\mub)_j}),
\end{aligned}\end{equation*}
with $q_j$ homogeneous of degree $1$, $\tilde q_j$ homogeneous of
degree $2$, symbolic of order $0$ in $x$.
As $\sH_{|\mub|^2}$ annihilates $a$ for the above choices,
\begin{equation}\begin{aligned}\label{eq:H-pa-p-form}
\sH_{p_\pa} a=2x^2 x^{-l-1}\Big(&-(l+1)\taub (\taub^2+|\mub|^2)^{(\tilde
  r-1/2)/2}\psi(x)\\
&-(\tilde r-1/2)\taub (\taub^2+|\mub|^2)^{(\tilde
  r-1/2)/2}\psi(x)\\
&-(\pa_{\taub} \tilde r) (\taub^2+|\mub|^2)^{1+(\tilde
  r-1/2)/2}\log (\taub^2+|\mub|^2) \psi(x)\\
&+x\psi'(x) \taub (\taub^2+|\mub|^2)^{(\tilde
  r-1/2)/2}\Big),
\end{aligned}\end{equation}
\begin{equation}\begin{aligned}\label{eq:tilde-r-deriv}
\pa_{\taub}\tilde r&=(\pm\beta)
(\taub^2+|\mub|^2)^{-3/2}(\taub^2+|\mub|^2-\taub^2)\\
&=(\pm \beta)|\mub|^2 (\taub^2+|\mub|^2)^{-3/2}.
\end{aligned}\end{equation}
On the other hand,
\begin{equation*}\begin{aligned}
\sH_p a-\sH_{p_\pa} a=2x^2 x^{-l-1} x^\delta\Big((\taub^2&+|\mub|^2)^{(\tilde
  r-1/2)/2}(f_0\psi+\tilde f_0 x\psi')\\
&+\psi f_1 (\taub^2+|\mub|^2)^{(\tilde
  r-1/2)/2-1}\\
&+\psi  f_2 (\taub^2+|\mub|^2)^{(\tilde
  r-1/2)/2}\log (\taub^2+|\mub|^2)\Big),
\end{aligned}\end{equation*}
with  $f_0,\tilde f_0$ homogeneous of degree $1$, $f_1$ homogeneous of
degree $3$, $f_2$ homogeneous of degree $1$.
Thus
\begin{equation}\begin{aligned}\label{eq:H-p-a-expand}
\sH_p a&=-2(\pm\beta)x^{-l+1}\psi(x)(\taub^2+|\mub|^2)^{(\tilde
  r-3/2)/2}\\
&\qquad\qquad\qquad\cdot\Big(\taub^2+x^\delta f^\sharp_1+(|\mub|^2+x^\delta f^\sharp_2) \log (\taub^2+|\mub|^2)\Big)+e,\\
e&=2x^2 x^{-l-1} x\psi'(x) (\taub+x^\delta f^\sharp_0) (\taub^2+|\mub|^2)^{(\tilde
  r-1/2)/2}
\end{aligned}\end{equation}
with $f^\sharp_0$ homogeneous of degree $1$, $f^\sharp_1,f^\sharp_2$
homogeneous of degree $2$.
The first term in $\sH_p a$ is the `main term', while the second term,
$e$, is the `error term' and it is controlled by a priori elliptic
estimates in $X^\circ$ near $\pa X$, so its sign is irrelevant for
considerations below. Returning to the main term,
for $\taub^2+|\mub|^2>2$ and $x$ sufficiently small, which can be
arranged by making $\supp\psi$ sufficiently small, it is negative, resp.\ positive, depending on
whether the sign in front of $\beta$ is positive, resp.\ negative,
since
\begin{equation}\label{eq:est-xdelta-terms}
\taub^2+x^\delta f^\sharp_1+(|\mub|^2+x^\delta f^\sharp_2) \log
(\taub^2+|\mub|^2)\geq \taub^2+|\mub|^2-Cx^\delta(\taub^2+|\mub|^2)
\end{equation}
there.
Notice
that this is slightly stronger, away from the radial sets (where
$\mub=0$), than the standard positive commutator estimate due to the
presence of the logarithmic factor, but we do not need to explicitly
take advantage of this, though we do need to point out that
the expression in the big parentheses is a symbol of order $2+\delta'$  for all $\delta'>0$, and thus the
usual arguments go through, as discussed in the non-radial point
setting in e.g.\ in the appendix of
\cite{Baskin-Vasy-Wunsch:Radiation}, see also
\cite{Unterberger:Resolution} for earlier work in which the
logarithmic improvement played an important role. We explicitly point
out the expression in the big parentheses in \eqref{eq:H-p-a-expand}
is not only a symbol of order $2+\delta'$ for all $\delta'>0$, but
corresponding to \eqref{eq:est-xdelta-terms} it has
`elliptic' positive lower bounds in an order $2$ sense, thus the standard argument
shows that its positive square root is a symbol of order $1+\delta'$
for all $\delta'$ with an order $1$ `elliptic' positive lower bound,
hence the corresponding term in $2a\sH_p a$ is
\begin{equation}\begin{aligned}\label{eq:H-p-a-sqrt}
b_0^2=4\beta x^{-2l}\psi(x)^2(\taub^2+|\mub|^2)^{\tilde
  r-1}\Big(\taub^2+x^\delta f^\sharp_1+(|\mub|^2+x^\delta f^\sharp_2) \log (\taub^2+|\mub|^2)\Big)
\end{aligned}\end{equation}
with $b_0$ the non-negative square root which is in $x^{-l}S^{\tilde
  r+\delta'}$ for all $\delta'>0$ with a positive elliptic lower bound
in $x^{-l}S^{\tilde r}$.

In fact, as usual, cf.\ \cite{RBMSpec,Vasy-Dyatlov:Microlocal-Kerr},
in order to prove Proposition~\ref{prop:symbolic-b-est} (as opposed
to the weaker version stated in Remark~\ref{rem:symbolic-b-est}) one needs a family of operators
$A_\ep$, where $\ep\in[0,1]$ is a regularization parameter, with, for
$0<K<\beta$ as in the statement of the proposition, $A_\ep\in\Psib^{\tilde r-1/2-K,l+1}(X)$ for $\ep>0$,
$\{A_\ep:\ \ep\in[0,1]\}$ uniformly bounded in $\Psib^{\tilde r-1/2,l+1}(X)$,
converging to $A_0$ as $\ep\to 0$ in slightly weaker topologies, that
of $\Psib^{\tilde r-1/2+\delta',l+1}(X)$, for all $\delta'>0$. (This
convergence implies strong convergence in bounded operators between
b-Sobolev spaces of appropriately shifted orders.)
Concretely, we can take
\begin{equation}\label{eq:a-ep-regularize}
a_\ep=a\phi_\ep(\tilde\rho^{-1}),\qquad \phi_\ep(s)=(1+\ep s)^{-K},\qquad \tilde\rho=(\taub^2+|\mub|^2)^{-1/2}.
\end{equation}
Note that
$$
\phi_\ep'(s)=-K\ep(1+\ep s)^{-1}\phi_\ep,
$$
so, with $f^\sharp_3$ homogeneous of degree $2$ with symbolic order
$0$ coefficients (in the same sense as above),
\begin{equation*}\begin{aligned}
\sH_p \phi_\ep(\tilde\rho^{-1})&=-K\ep(1+\ep
\tilde\rho^{-1})^{-1}\phi_\ep(\tilde\rho^{-1})\sH_p\tilde\rho^{-1}\\
&=x^2K\ep(1+\ep
(\taub^2+|\mub|^2)^{1/2})^{-1}(-2) \big( (\taub^2+|\mub|^2)^{1/2}\taub+x^\delta
f^\sharp_3\big) \phi_\ep(\tilde\rho^{-1}).
\end{aligned}\end{equation*}
Correspondingly
\begin{equation}\begin{aligned}\label{eq:Hp-a-ep}
&\sH_p a_\ep =(\sH_p a)\phi_\ep+a(\sH_p\phi_\ep(\tilde\rho^{-1}))\\
&=-2x^{-l+1}\psi(x)\phi_\ep(\tilde\rho^{-1})\Big((\pm\beta)(\taub^2+|\mub|^2)^{(\tilde
  r-3/2)/2}\big(\taub^2 +x^\delta f^\sharp_1+\big(|\mub|^2 +x^\delta
f^\sharp_2\big) \log (\taub^2+|\mub|^2)\big)\\
&\qquad\qquad+(\taub^2+|\mub|^2)^{(\tilde
  r-1/2)/2}K\ep(1+\ep
(\taub^2+|\mub|^2)^{1/2})^{-1}\big((\taub^2+|\mub|^2)^{1/2}\taub
+x^\delta f^\sharp_3\big)\Big)\\
&\qquad\qquad+e \phi_\ep(\tilde\rho^{-1})\\
&=-2x^{-l+1}\psi(x)\phi_\ep(\tilde\rho^{-1}) (\taub^2+|\mub|^2)^{(\tilde
  r-3/2)/2}\Big((\pm\beta)\big(\taub^2 +x^\delta f^\sharp_1+\big(|\mub|^2 +x^\delta f^\sharp_2\big) \log (\taub^2+|\mub|^2)\big)\\
&\qquad\qquad+K \big(\taub (\taub^2+|\mub|^2)^{1/2}+x^\delta f^\sharp_3\big)\frac{\ep (\taub^2+|\mub|^2)^{1/2}}{1+\ep
(\taub^2+|\mub|^2)^{1/2}}\Big) +e \phi_\ep(\tilde\rho^{-1}).
\end{aligned}\end{equation}
We point out that
$$
\frac{\ep (\taub^2+|\mub|^2)^{1/2}}{1+\ep
(\taub^2+|\mub|^2)^{1/2}}
$$
is uniformly bounded in symbols of order $0$ (for $\ep\in[0,1]$), with
its supremum bounded by $1$, and
tends to $0$ as $\ep\to 0$ in symbols of positive order.

Notice that in the
expression after the last equality in \eqref{eq:Hp-a-ep}, for the $+$ sign in $\pm$, the
two terms in the big parentheses have the same sign in $\taub>0$ and
opposite signs in $\taub<0$ (for $x$ small, when the $x^\delta$ terms
can be absorbed in the others, as in the above discussion around \eqref{eq:est-xdelta-terms}), while for the $-$ sign in $\pm$ the roles
of $\taub>0$ and $\taub<0$ reverse. Since for sufficiently large
$\taub^2+|\mub|^2$, $\log (\taub^2+|\mub|^2)$ is bounded below by any
pre-specified constant $C_0$, we have that $\taub^2+|\mub|^2 \log
(\taub^2+|\mub|^2)\geq\taub^2+C_0|\mub|^2$ there. Correspondingly, for
any $C_1>0$, in the region where in addition $|\taub|\leq C_1|\mub|$,
$\taub (\taub^2+|\mub|^2)^{1/2}\leq\taub^2+\frac{1}{2}|\mub|^2\leq
(C_1^2+1)|\mub|^2$ shows that in this region, the second term can be
absorbed into, say, $\frac{1}{2}$ of the first term by choosing $C_0$ large. Taking the $+$
sign in $\pm$ for definiteness, in the complement of
this region either $\taub>0$, and thus the two terms have the same
sign, or $\taub<0$ and $|\mub|<C_1^{-1}|\taub|$; in this region thus
the second term is bounded by $K(1+C_1^{-2})\taub^2$. Here $C_1>0$ can
be chosen arbitrarily large, thus as $K<\beta$, the second term can
be absorbed into the first. This is exactly the limit of
regularization one can do, i.e. {\em this step is the
  cause of the $K<\beta$ restriction in the statement of the proposition}. Correspondingly, for the $+$ sign in $\pm$
(for the $-$ sign, the overall $-$ sign on the right hand side of the
next equation is removed),
\begin{equation}\begin{aligned}\label{eq:Hp-a-ep-pos}
2a_\ep\sH_p a_\ep
&=-\phi_\ep(\tilde\rho^{-1})^2(b^2+b_1^2+b_{2,\ep}^2)+
\phi_\ep(\tilde\rho^{-1})^2 ae
\end{aligned}\end{equation}
with
\begin{equation*}
b^2=x^{-2l}\psi(x)^2(\beta-K) (\taub^2+|\mub|^2)^{(\tilde
  r-1/2)/2+(\tilde r-3/2)/2+1},
\end{equation*}
thus
$$
b=x^{-l}\sqrt{\beta-K} \psi(x)(\taub^2+|\mub|^2)^{\tilde 
  r},
$$
and appropriate choices of $b_1$ and $b_{2,\ep}$.

Let $B$, $B_1$,
$B_{2,\ep}$, $S_\ep$, $E_\ep$ have principal symbols $b,b_1,b_{2,\ep},\phi_\ep, \phi_\ep(\tilde\rho^{-1})^2 ae$
respectively, and so that $S_\ep\in\Psib^{-K,0}(X)$ uniformly bounded in
$\Psib^{0,0}(X)$, converging to $\Id$ in $\Psib^{\delta',0}(X)$ (with
$\delta'>0$ arbitrary), $B,B_{2,\ep}\in\Psib^{\tilde r,l}(X)$, with $B_{2,\ep}$
uniformly bounded in this space, $B_1\in\Psib^{\tilde r+\delta',l}(X)$
for all $\delta'>0$ (arising from the variable order), while $E_\ep$
is uniformly bounded in $\Psib^{2\tilde r,2l}(X)$ and supported away
from $\pa X$. Therefore, as
can be seen by
computing the principal symbol of both sides, which agree,
one then has
$$
i(P(\sigma)^*A_\ep^*A_\ep-A_\ep^*A_\ep P(\sigma))=-S_\ep^* (B^*B+B_1^*B_1+B_{2,\ep}^*B_{2,\ep}) S_\ep+E_\ep+F_\ep,
$$
with $F_\ep$ uniformly bounded in
$\Psib^{2\tilde r-1+\delta',2l}(X)$ for all $\delta'>0$. Then 
$$
\langle i(P(\sigma)^*A_\ep^*A_\ep-A_\ep^*A_\ep P(\sigma))u,u\rangle=-\|BS_\ep
u\|^2-\|B_{2,\ep}S_{\ep} u\|^2+\langle E_\ep u,u\rangle+\langle F_\ep u,u\rangle.
$$
Now,
$$
\langle i(P(\sigma)^*A_\ep^*A_\ep-A_\ep^*A_\ep
P(\sigma))u,u\rangle=\langle i A_\ep u,A_\ep P(\sigma) u\rangle-\langle
iA_\ep P(\sigma)u,A_\ep u\rangle,
$$
where the moving of the adjoint over to the other side of the pairing
actually requires an additional, straightforward, regularization
(without the limitations on the amount of regularization, as in the
case of $K$ above), see
the proof of the radial point estimates in \cite[Section~5.4.7]{Vasy:Minicourse}.
Using Cauchy-Schwartz (with an elliptic operator used to shift the
orders in the two slots of the pairing), estimating the product by
a sum of squares, with a small constant $\digamma^{-1}>0$ in front of the
$A_\ep u$ term, and absorbing the $A_\ep u$, one deduces the
regularized version of the estimate for $\ep>0$:
$$
\|BS_\ep u\|^2\leq \digamma \|A_\ep
P(\sigma)u\|^2_{\Hb^{-1/2,1}(X)}+\langle E_\ep u,u\rangle+\langle F_\ep u,u\rangle.
$$
Let $\tilde\psi=\tilde\psi(x)$ be a $\CI$ function, $\equiv 1$ on
$\supp\psi$ still supported in a small collar neighborhood of $\pa X$.
Now, {\em for $u\in \Hb^{\tilde r-1/2+\delta',l}$ with $P(\sigma)\in
  \Hb^{\tilde r-1,l+2}$} all terms on the right hand side remain uniformly bounded as
$\ep\to 0$ by the a priori
assumptions, and by elliptic estimates in $x>0$, namely the latter
gives, with a uniform constant,
$$
|\langle E_\ep u,u\rangle|\leq C'\|\tilde\psi P(\sigma)u\|^2_{\Hb^{\tilde r-2,l'}}+\|u\|^2_{\Hb^{\tilde r',l'}}
$$
for any $\tilde r',\tilde l'$. Thus, letting $\ep\to 0$, using the (sequential!) weak-* compactness
of the unit ball in $L^2$ (so $BS_\ep u$ subsequentially converges) as
well as that $BS_\ep u\to Bu$ in tempered distributions, proves that
for $0<K'<\min(K,1/2)$ and $\delta'>0$ there exists $C''>0$ such that for
$u\in \Hb^{\tilde r-K',l}$ with $P(\sigma)u\in \Hb^{\tilde r-1,l+2}$ we have
$$
\|\psi u\|_{\Hb^{\tilde r,l}}\leq
C''(\|\tilde\psi P(\sigma)u\|_{\Hb^{\tilde r-1,l+2}}+\|u\|_{\Hb^{\tilde r-1/2+\delta',l}}).
$$
Then an iterative argument, of step size $<1/2$, improves $\tilde r-K'$ in the norm on
the right hand side to any $\tilde r'$, under the assumption $u\in \Hb^{\tilde r-K,l}$.

Finally, as $P(\sigma)$ is elliptic away from $\pa X$, for any $\chi$
compactly supported in $X^\circ$ and $\tilde\chi$ also compactly
supported there but $\equiv 1$ on $\supp\chi$, one has the
elliptic estimate
\begin{equation}\label{eq:b-symbol-elliptic}
\|\chi u\|_{\Hb^{\tilde r,l}}\leq C'(\|\tilde\chi
P(\sigma)u\|_{\Hb^{\tilde r-1,l+2}}+\|u\|_{\Hb^{\tilde r-K,l}}),
\end{equation}
where the decay order actually does not matter (as all supports are
away from $\pa X$, except for the last error term -- but even that
could be further localized).

In combination, for any $K<\beta$ there exists $C>0$ such that for
$u\in \Hb^{\tilde r-K,l}$ with $P(\sigma)u\in \Hb^{\tilde r-1,l+2}$ we have
$$
\|u\|_{\Hb^{\tilde r,l}}\leq
C(\| P(\sigma)u\|_{\Hb^{\tilde r-1,l+2}}+\|u\|_{\Hb^{\tilde r-K,l}}).
$$
This thus proves \eqref{eq:basic-b-est-0}.

Finally, if we allow $\sigma$ complex, with $\im\sigma^2$ of the correct
sign, matching $\pm$ in \eqref{eq:b-symbol-weight-choice-0}, then in
\eqref{eq:commutator-basic-b-est}, we have the extra term
$i(P(\sigma)^*-P(\sigma))A^*A$. We only consider the case of the
spectral family, when this is
$i(\sigma^2-\bar\sigma^2)a^2=-2\im(\sigma^2) a^2$, thus matches the
sign of $2a\sH_p a$ in the $\im(\sigma^2)\geq 0$, $+\beta$ choice for
$\tilde r$ case, as
well as in the $\im(\sigma^2)\leq 0$, $-\beta$ choice for $\tilde r$
case, thus can simply be dropped from the estimate.

Proposition~\ref{prop:symbolic-b-est} has the following strengthened
more geometric version:

\begin{prop}\label{prop:symbolic-b-est-more}
Suppose that $\tilde r$ is a homogeneous degree $0$
function on $\Tb^*X\setminus o$ (i.e.\ a smooth function on $\Sb^*X$)
with $\tilde r>\frac{1}{2}-(l+1)$ at the source, $\{\taub>0,\mub=0\}$,
$\tilde r<\frac{1}{2}-(l+1)$ at the sink, $\{\taub<0,\mub=0\}$, and 
$
-(\taub^2+|\mub|^2)^{-1/2}x^{-2}\sH_{p_\pa}\tilde r
$
is non-negative.
Let $K>0$ be such that $\tilde r>\frac{1}{2}-(l+1)+K$ in a
neighborhood of the source.

For $\sigma$ in a compact subset of $[0,\infty)$ there exists $C>0$
such that for all $u\in \Hb^{\tilde r-K,l}$ with $P(\sigma)u\in
\Hb^{\tilde r-1,l+2}$ , we have $u\in\Hb^{\tilde r,l}$ and
\begin{equation}\label{eq:basic-b-est-0-more}
\|u\|_{\Hb^{\tilde r,l}}\leq
C(\|P(\sigma)u\|_{\Hb^{\tilde r-1,l+2}}+\|u\|_{\Hb^{\tilde r-K,l}}).
\end{equation}

The analogous result also holds with the source and sink switched, and the
positivity of $-(\taub^2+|\mub|^2)^{-1/2}x^{-2}\sH_{p_\pa}\tilde r$ is
replaced by that of $(\taub^2+|\mub|^2)^{-1/2}x^{-2}\sH_{p_\pa}\tilde r$.
\end{prop}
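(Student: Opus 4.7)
The strategy is the positive commutator argument of Proposition~\ref{prop:symbolic-b-est} with the same commutant
$a=x^{-l-1}(\taub^2+|\mub|^2)^{(\tilde r-1/2)/2}\psi(x)$
and the same regularization $a_\ep=a\,\phi_\ep(\tilde\rho^{-1})$, but now allowing $\tilde r$ to be any weight satisfying the geometric hypotheses rather than the explicit formula \eqref{eq:b-symbol-weight-choice}. These hypotheses are precisely the two ingredients that made the special weight work in the earlier proof: above-threshold at the source, below-threshold at the sink, and non-strict monotonicity along the $\sH_{p_\pa}$-flow.

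The first step is to redo the computation of $\sH_{p_\pa}a$ without using the specific form of $\tilde r$. Exploiting $x\pa_x\tilde r=0$ and the Hamilton-vector-field identity $x^{-2}\sH_{p_\pa}\tilde r=-2(\taub^2+|\mub|^2)\pa_{\taub}\tilde r+\sH_{|\mub|^2}\tilde r$, a direct chain-rule calculation gives, modulo an error supported on $\supp\psi'$ (controlled by the interior elliptic estimate \eqref{eq:b-symbol-elliptic}) and the $x^\delta$-errors from $\sH_p-\sH_{p_\pa}$ (absorbed as in \eqref{eq:est-xdelta-terms}),
\[
\sH_{p_\pa}a=2x^2 x^{-l-1}\psi\,(\taub^2+|\mub|^2)^{(\tilde r-1/2)/2}\Bigl\{-\bigl[\tilde r-(\tfrac12-(l+1))\bigr]\taub+\tfrac14\log(\taub^2+|\mub|^2)\,x^{-2}\sH_{p_\pa}\tilde r\Bigr\}.
\]
The monotonicity hypothesis makes the second term in the braces non-positive on the region $\taub^2+|\mub|^2>1$, while the threshold hypotheses make the first term non-positive in conic neighborhoods of the source $\{\taub>0,\mub=0\}$ and the sink $\{\taub<0,\mub=0\}$. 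Together these give $2a\sH_p a$ the correct sign on the full b-characteristic set over $\pa X$.

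Because monotonicity is only assumed weakly, the two contributions need not combine pointwise into a single square. I would therefore localize $a$ with an additional conic cutoff and run three microlocal positive-commutator arguments. First, a radial source estimate on a neighborhood of $\{\taub>0,\mub=0\}$ where $\tilde r>\tfrac12-(l+1)+K$, mimicking Proposition~\ref{prop:symbolic-b-est} verbatim and using the a priori assumption $u\in\Hb^{\tilde r-K,l}$; the regularization ceiling on $K$ arises from this gap exactly as the $K<\beta$ constraint did before. Second, a radial sink estimate at $\{\taub<0,\mub=0\}$, which requires no a priori assumption at the sink because $\tilde r$ is below threshold there. Third, in the non-radial region, a real principal type propagation estimate driven entirely by the log-term, valid because $\tilde r$ is non-increasing along $\sH_{p_\pa}$. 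A partition of unity patches the three estimates, the orientations being compatible since the bicharacteristics run from source to sink.

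The main obstacle is the patching rather than any individual step: one must choose the microlocalizations so that the three estimates combine into \eqref{eq:basic-b-est-0-more} using only the single a priori control $u\in\Hb^{\tilde r-K,l}$ stated in the proposition, rather than separate microlocal assumptions at each radial set and at each point of the intermediate flow. The final $\ep\to 0$ limit and the appending of the elliptic estimate \eqref{eq:b-symbol-elliptic} in $X^\circ$ are unchanged from the proof of Proposition~\ref{prop:symbolic-b-est}. The version with source and sink switched then follows by the same arguments with the sign of the commutant and the direction of propagation reversed, which is exactly what the alternative hypothesis $(\taub^2+|\mub|^2)^{-1/2}x^{-2}\sH_{p_\pa}\tilde r\geq 0$ in the last sentence of the statement encodes.
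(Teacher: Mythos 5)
Your strategy is genuinely different from the paper's. The paper keeps a \emph{single global} positive commutator: it modifies the commutant \eqref{eq:b-symbol-choice} by the extra factor $\exp\big(\pm\frac{\tilde\beta}{2}\frac{\taub}{(\taub^2+|\mub|^2)^{1/2}}\big)$ with $\tilde\beta>0$ large, cf.\ \eqref{eq:a-choice-mod}. This produces the term $\frac{\tilde\beta}{2}|\mub|^2$ in \eqref{eq:H-p-a-expand-mod}, which is bounded below by a positive multiple of $\taub^2+|\mub|^2$ on any compact set disjoint from the radial sets and hence, for $\tilde\beta$ large, dominates the indefinite term $\big(\tilde r-\frac{1}{2}+(l+1)\big)\taub\,(\taub^2+|\mub|^2)^{1/2}$ there; near the radial sets that term is itself positive by the threshold hypotheses, and the log-term is non-negative by the monotonicity hypothesis. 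One then gets a single square plus controllable errors, with one regularization and no patching, and the same commutant is recycled in the normal operator computation \eqref{eq:a-choice}. Your route --- separate radial-source, radial-sink and intermediate propagation estimates glued by a microlocal partition of unity --- is the standard alternative and can be made to work; the bookkeeping you flag as the main obstacle (propagating the a priori control from the source neighborhood through the flow to the sink, with a $<1/2$-step iteration in the differential order) is routine but real.

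There is, however, one concrete error in your intermediate step. With the unmodified commutant, the bracket you compute contains $-\big[\tilde r-(\tfrac12-(l+1))\big]\taub$, which is negative only near the source and the sink; in the intermediate region it is indefinite (e.g.\ $\taub>0$ with $\tilde r<\tfrac12-(l+1)$ is allowed there), and the log-term cannot compensate since $\sH_{p_\pa}\tilde r$ is permitted to vanish. So your assertion that ``together these give $2a\sH_p a$ the correct sign on the full b-characteristic set over $\pa X$'' is false, and the propagation estimate in the non-radial region cannot be ``driven entirely by the log-term''. What drives the standard variable-order propagation estimate there is the derivative of the conic localizer along the flow, made dominant over the indefinite $\taub$-term by the usual exponential-weight (or flow-box iteration) device; the monotonicity of $\tilde r$ is needed only so that the log-term does not acquire the wrong sign. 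Once you replace that sentence by an appeal to the correctly formulated propagation estimate, your patching argument closes --- and the exponential weight you would then insert into the localizer is precisely the factor $\exp\big(\pm\frac{\tilde\beta}{2}\taub(\taub^2+|\mub|^2)^{-1/2}\big)$ that the paper uses globally to avoid the decomposition altogether.
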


\begin{rem}
Note that the $+$ sign in $\pm\beta$ in
Proposition~\ref{prop:symbolic-b-est} corresponds to the first case of
this proposition, while the $-$ sign corresponds to the second case
(`analogous result').
\end{rem}

\begin{proof}
We can follow the proof of Proposition~\ref{prop:symbolic-b-est}
very closely, however we modify the commutant slightly and consider,
for $\tilde\beta>0$ to be specified and with $\pm$ meaning $+$ for the
first case of the proposition, $-$ for the second (`analogous') case,
\begin{equation}\begin{aligned}\label{eq:a-choice-mod}
a=x^{-l-1}(\taub^2+|\mub|^2)^{(\tilde r-1/2)/2}\psi(x) \exp\Big(\pm\frac{\tilde\beta}{2}\frac{\taub}{(\taub^2+|\mub|^2)^{1/2}}\Big), 
\end{aligned}\end{equation}
which is
\begin{equation}\label{eq:overall-weight}
\exp\Big(\pm\frac{\tilde\beta}{2}\frac{\taub}{(\taub^2+|\mub|^2)^{1/2}}\Big)
\end{equation}
times the principal symbol used for our symbolic computation in
\eqref{eq:b-symbol-choice}, except that our choice of $\tilde r$ has
been generalized from \eqref{eq:b-symbol-weight-choice}. Notice that this new factor is a smooth
function on $\Sb^*X$, thus does {\em not} change the order of $a$, but
it does affect the principal symbol. Its role is to obtain positivity
of the commutator even away from the radial points even when $\tilde
r$ has an indefinite derivative along $H_{p_\pa}$; indeed note that
this is the exponential of the non-constant part of the choice of
$\tilde r$ employed in \eqref{eq:b-symbol-weight-choice}. We remark
that \eqref{eq:a-choice-mod} will be the principal symbol of our commutant choice for the
normal operator computation in \eqref{eq:a-choice} (albeit with the
special choice of $\tilde r$ from \eqref{eq:b-symbol-weight-choice});
we added the $\frac{1}{2}$ factor in front of $\tilde\beta$ for consistency of notation with the
next section.

Below we prove the first case of the proposition; the analogous case
is completely similar. Thus, we take the sign in front of
$\tilde\beta$ to be positive.
In \eqref{eq:H-pa-p-form}, there is now a new overall factor
\eqref{eq:overall-weight} on the first line,
the third line is replaced by
$$
+\frac{1}{2}x^{-2}(\sH_{p_\pa} \tilde r) (\taub^2+|\mub|^2)^{(\tilde
  r-1/2)/2}\log (\taub^2+|\mub|^2)\psi(x),
$$
and there is a new line, inserted between the third and fourth:
$$
-(\taub^2+|\mub|^2)^{1+(\tilde r-1/2)/2}\frac{\tilde\beta}{2}\pa_{\taub} \big(\frac{\taub}{(\taub^2+|\mub|^2)^{1/2}}\big)\psi(x),
$$
which is
$$
-\frac{\tilde\beta}{2}|\mub|^2 (\taub^2+|\mub|^2)^{-1/2+(\tilde r-1/2)/2}\psi(x),
$$
cf.\ \eqref{eq:tilde-r-deriv}.

Thus \eqref{eq:H-p-a-expand} is replaced by
\begin{equation}\begin{aligned}\label{eq:H-p-a-expand-mod}
\sH_p a&=-2x^{-l+1}\psi(x)(\taub^2+|\mub|^2)^{(\tilde
  r-3/2)/2}\\
&\qquad\qquad\qquad\cdot\Big(\big(\big(\tilde r-\frac{1}{2}+(l+1)\big)\taub+\frac{\tilde\beta}{2}|\mub|^2
+x^\delta f^\sharp_1\big)  (\taub^2+|\mub|^2)^{1/2}\\
&\qquad\qquad\qquad\qquad+(-\frac{1}{2}(\taub^2+|\mub|^2)^{1/2} x^{-2}\sH_{p_\pa} \tilde r+x^\delta f^\sharp_2) \log (\taub^2+|\mub|^2)\Big)+e,\\
e&=2x^2 x^{-l-1} x\psi'(x) (\taub+x^\delta f^\sharp_0) (\taub^2+|\mub|^2)^{(\tilde
  r-1/2)/2}
\end{aligned}\end{equation}
with $f^\sharp_0$ homogeneous of degree $1$, $f^\sharp_1,f^\sharp_2$
homogeneous of degree $2$. Now
\begin{equation}\label{eq:undiff-tilde-r}
\big(\tilde
r-\frac{1}{2}+(l+1)\big)\taub(\taub^2+|\mub|^2)^{1/2}
\end{equation}
is bounded below by a positive multiple
of $\taub^2+|\mub|^2$ near the sources and sinks,
while $\frac{\tilde\beta}{2}|\mub|^2$ is similarly bounded below on
any compact set disjoint
from the sources and sinks, such as on the complement of a set on
which the first term had the desired positive bound, and is
nonnegative everywhere. Correspondingly, choosing
$\tilde\beta>0$ sufficiently large, the sum of these two terms is
bounded below by a positive multiple of $\taub^2+|\mub|^2$.
Also,
$-\frac{1}{2}(\taub^2+|\mub|^2)^{1/2} x^{-2}\sH_{p_\pa} \tilde r$ is
non-negative.
One can then complete the argument as
in the proof of Proposition~\ref{prop:symbolic-b-est} when one has
$u\in \Hb^{\tilde r,l}$ a priori (thus there is no need for
regularization). In the general case, the regularization again
proceeds almost as before since the regularization factor is
unchanged, and now increasing $\tilde\beta$ can dominates this
regularization factor
away from the sources and sinks,
while near the source the two terms have the same
sign, while near the sink, $|\mub|<C_1^{-1}|\taub|$ where $C_1>0$ can
be chosen large; in this region thus
the second, regularization, term of the big parentheses of
\eqref{eq:Hp-a-ep} is bounded by $K(1+C_1^{-2})\taub^2$, and can be
absorbed into the
$$
\big(\tilde r-\frac{1}{2}+(l+1)\big)\taub+\frac{\tilde\beta}{2}|\mub|^2
+x^\delta f^\sharp_1\big)  (\taub^2+|\mub|^2)^{1/2}
$$
term of
\eqref{eq:H-p-a-expand-mod} as $\tilde r-\frac{1}{2}+(l+1)-K>0$. This
completes the proof in general.
\end{proof}

\section{The normal operator and proof of the main theorem}\label{sec:normal}

\subsection{Proof of the main theorem from an estimate from a normal
  operator estimate}

We now improve on \eqref{eq:basic-b-est-0} and \eqref{eq:basic-b-est-0-more},
\begin{equation}\label{eq:basic-b-est}
\|u\|_{\Hb^{\tilde r,l}}\leq
C(\|P(\sigma)u\|_{\Hb^{\tilde r-1,l+2}}+\|u\|_{\Hb^{\tilde r-K,l}}),
\end{equation}
valid for $u\in\Hb^{\tilde r-K,l}$, $K>0$ with $\tilde
r>\frac{1}{2}-(l+1)+K$ at the source in case $\tilde r$ is monotone
decreasing in the sense of Proposition~\ref{prop:symbolic-b-est-more}, with $P(\sigma)u\in
\Hb^{\tilde r-1,l+2}$, and an analogous statement with source replaced
by sink if $\tilde r$ is monotone
increasing. Recall from Remark~\ref{rem:symbolic-b-est-improved-error} that in fact
$\|u\|_{\Hb^{\tilde r-K,l}}$ can be replaced by $\|u\|_{\Hb^{-N,l}}$ for any $-N$, as long as one keeps in mind that one
needs to have $u\in\Hb^{\tilde r-K,l}$, $K>0$ as above, a priori.

Namely, we make the error term on the right
hand side, $\|u\|_{\Hb^{\tilde r-K,l}}$, replaced by a compact error. Directly
we prove:

\begin{prop}\label{prop:impr-b-est}
Let $S\subset[0,\infty)$ compact, and suppose that $\tilde r$ is as in
Proposition~\ref{prop:symbolic-b-est-more}:
$\tilde r$ is a homogeneous degree $0$
function on $\Tb^*X\setminus o$ (i.e.\ a smooth function on $\Sb^*X$)
with $\tilde r>\frac{1}{2}-(l+1)$ at the source, $\{\taub>0,\mub=0\}$,
$\tilde r<\frac{1}{2}-(l+1)$ at the sink, $\{\taub<0,\mub=0\}$, and 
$
-(\taub^2+|\mub|^2)^{-1/2}x^{-2}\sH_{p_\pa}\tilde r
$
is non-negative.
Let $K>0$ be such that $\tilde r>\frac{1}{2}-(l+1)+K$ in a
neighborhood of the source.

There exists $C$ such that
for $u\in\Hb^{\tilde r-K,l}$ with $P(\sigma)u\in \Hb^{\tilde r-1,l+2}$, the estimate
\begin{equation}\label{eq:impr-b-est}
\|u\|_{\Hb^{\tilde r,l}}\leq
C(\|P(\sigma)u\|_{\Hb^{\tilde r-1,l+2}}+\|u\|_{\Hb^{\tilde
    r-K,l-\delta}})
\end{equation}
holds for $\sigma\in S$.

The analogous result also holds with the source and sink switched, and the
positivity of $-(\taub^2+|\mub|^2)^{-1/2}x^{-2}\sH_{p_\pa}\tilde r$ is
replaced by that of $(\taub^2+|\mub|^2)^{-1/2}x^{-2}\sH_{p_\pa}\tilde r$.
\end{prop}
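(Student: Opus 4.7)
The plan is to improve Proposition~\ref{prop:symbolic-b-est-more} by trading the b-Sobolev error $\|u\|_{\Hb^{\tilde r-K,l}}$ for the strictly weaker (compactly embedded) error $\|u\|_{\Hb^{\tilde r-K,l-\delta}}$ via a normal operator estimate at $\pa X$. The hypothesis $|l+1|<\frac{n-2}{2}$ together with the triviality of the nullspace of $P(0):\Hb^{\infty,l}\to\Hb^{\infty,l+2}$ assumed in Theorem~\ref{thm:main} ensure that the Mellin-transformed normal operator family $\widehat{N(P(0))}(\taub)$ is invertible on $L^2(\pa X)$ uniformly for $\im\taub$ in a horizontal strip containing the contours corresponding to both weights $l$ and $l-\delta$; this is the key analytic input and will occupy the remainder of Section~\ref{sec:normal}.

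Concretely, it suffices to establish
\begin{equation*}
\|u\|_{\Hb^{\tilde r-K,l}}\leq C\bigl(\|P(\sigma)u\|_{\Hb^{\tilde r-K-1,l+2}}+\|u\|_{\Hb^{\tilde r-K,l-\delta}}\bigr)
\end{equation*}
uniformly for $\sigma\in S$, since adding this to Proposition~\ref{prop:symbolic-b-est-more} and discarding $\Hb^{\tilde r-K-1,l+2}$ in favor of $\Hb^{\tilde r-1,l+2}$ immediately yields \eqref{eq:impr-b-est}. Introducing a cutoff $\chi\equiv 1$ in a fixed collar of $\pa X$, the piece $(1-\chi)u$ is controlled by standard elliptic b-estimates for $P(\sigma)$ in $X^\circ$ (with error of arbitrarily negative decay and differential order), reducing matters to bounding $\chi u$.

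For $\chi u$ I would write
\begin{equation*}
P(\sigma)(\chi u)=N(P(0))(\chi u)+R(\sigma)(\chi u)+[P(\sigma),\chi]u,\quad R(\sigma)=(P(0)-N(P(0)))+\sigma Q-\sigma^2.
\end{equation*}
The commutator $[P(\sigma),\chi]$ is supported in $X^\circ$, hence absorbed by the interior estimate. The classicality of the coefficients of $P(0)$ gives $P(0)-N(P(0))\in x^{2+\delta''}\Diffb^2$ for some $\delta''>0$, and by hypothesis $\sigma Q\in\sigma\cdot x^{2+\delta}\Diffb^1$; both therefore gain a power of $x$ over what $N(P(0))$ alone delivers, mapping $\Hb^{\tilde r-K,l}$ into a space with decay index strictly larger than $l+2$. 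The delicate term is $-\sigma^2\chi u$, which gains no decay; but since $\sigma^2$ is a scalar multiple of the identity it commutes with every operator, and on the Mellin side it merely shifts the normal operator family to $\widehat{N(P(0))}(\taub)-\sigma^2$. Uniform invertibility of the shifted family on the relevant Mellin strip persists for $\sigma\in S$ small: in the bounded-$\taub$ regime it is a Neumann perturbation of the $\sigma=0$ invertibility, and for $|\taub|$ large the constant $\sigma^2$ is subprincipal relative to the growing elliptic symbol of $\widehat{N(P(0))}(\taub)$.

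Given this uniform invertibility, the target estimate follows by Mellin--Plancherel: represent $\|\chi u\|_{\Hb^{\tilde r-K,l}}$ as an integral along the $l$-contour, invert $\widehat{N(P(0))}(\taub)-\sigma^2$ against the (better-decaying) right-hand side coming from $\widehat{P(\sigma)(\chi u)}$ and the $x^{\delta''}$-improved $R$-terms, and shift the Mellin contour from the $l$-line to the $(l-\delta)$-line. The strip condition ensures no poles are crossed, and the residual on the lower contour is exactly $\|\chi u\|_{\Hb^{\tilde r-K,l-\delta}}$. The main obstacle is precisely the uniform (in $\sigma\in S$ and across the full Mellin strip) invertibility of $\widehat{N(P(0))}(\taub)-\sigma^2$ — particularly its large-$\taub$ asymptotics, which amounts to a semiclassical/large-parameter ellipticity statement for the Mellin family with effective small parameter $\taub^{-1}$. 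Reducing this to a scalar positivity computation via the spectral expansion in $\Delta_{\pa X}$ (as previewed in the introduction) is what makes the argument tractable, and is the content of the remainder of this section.
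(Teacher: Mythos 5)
Your overall reduction is reasonable in spirit (control the error term of the symbolic estimate by a normal operator estimate near $\pa X$), but the normal operator step contains a fatal error. You propose to invert $\tilde N(P(\sigma))=N(P(0))-\sigma^2$ by passing to the Mellin transform and inverting the family $\widehat{N(P(0))}(\taub)-\sigma^2$ along a contour, with a contour shift from the $l$-line to the $(l-\delta)$-line. This cannot work: $N(P(0))$ is $x^2$ times a dilation-invariant operator (cf.\ \eqref{eq:Delta-sc-def} and \eqref{eq:b-Lap}), whereas $-\sigma^2$ is dilation-invariant of weight $0$. The Mellin transform diagonalizes dilation-invariant operators only; writing $N(P(0))=x^2L$ with $L$ dilation-invariant, the equation $(x^2L-\sigma^2)v=f$ becomes on the Mellin side $\hat L(\taub)\hat v(\taub)-\sigma^2\hat v(\taub\mp 2i)=\widehat{x^{-2}f}(\taub)$, a second-order \emph{difference} equation in $\taub$, not multiplication by an operator family. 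So the object ``$\widehat{N(P(0))}(\taub)-\sigma^2$'' does not represent $\tilde N(P(\sigma))$, and the claimed ``Neumann perturbation of the $\sigma=0$ invertibility'' plus contour shift has no meaning. This mismatch of dilation weights between the model operator and the spectral parameter is precisely the source of the low-energy difficulty (it is why the $\sigma=0$ Fredholm theory \eqref{eq:P-zero-Fredholm} does not trivially extend), and your argument in effect only re-proves the $\sigma=0$ case. The paper instead proves the normal operator estimate \eqref{eq:eff-norm-op-b-est-base} by a twisted positive commutator argument in which $\sigma^2$ either cancels (it commutes with the commutant) or contributes a term of definite sign; the Mellin transform is applied only to the \emph{commutator}, which, after conjugating out the powers of $x$, genuinely reduces to the scalar positivity of \eqref{eq:normal-op-computation}.

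Two secondary points. First, you invoke triviality of $\Ker P(0)$; Proposition~\ref{prop:impr-b-est} must hold \emph{without} that hypothesis (it is a compact-error, semi-Fredholm estimate; the nullspace assumption enters only in Proposition~\ref{prop:remove-cpt-error}), and indeed the invertibility of the indicial family $\taub^2+\Delta_{\pa X}+(\tfrac{n-2}{2})^2$ on the strip $|\im\taub|<\tfrac{n-2}{2}$ is automatic and unrelated to the global nullspace. Second, even granting a normal operator estimate, it is available in the paper only for the special order $\hat r_\pm(\beta)$ and with a one-derivative loss, which is why the actual proof requires the bootstrap through $\tilde r>\hat r+1$, a dual estimate via finite-rank-corrected operators and Hahn--Banach, the case $\tilde r<\hat r-1$, and finally general $\tilde r$; your proposal assumes the estimate directly at the order $\tilde r-K$ and skips all of this.
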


\begin{rem}
As an example, $\tilde r=\hat r_\pm$ satisfies the requirements where
\begin{equation}\label{eq:b-symbol-weight-choice-special}
\hat r_\pm=\hat
r_\pm(\beta)=\frac{1}{2}-(l+1)\pm\beta\frac{\taub}{(\taub^2+|\mub|^2)^{1/2}},\qquad\beta>0;
\end{equation}
the $+$ sign corresponds to the first case, the $-$ sign to the second
(`analogous') case.
\end{rem}

\begin{rem}\label{rem:compact-error-irrelev-norm}
The improvement in the differential order of the Sobolev norm
of $u$ on the right hand side is actually arbitrary, i.e.\ $\tilde
r-K$ could be replaced by $-N$, $N$ arbitrary, with the understanding that one needs
membership of $u$ in $\Hb^{\tilde r-K,l}$ for some $0<K<\beta$, cf.\
Remark~\ref{rem:symbolic-b-est-improved-error}. In fact, by similar
considerations the decay order is also arbitrary, but we still need
the $\Hb^{\tilde r-K,l}$ membership of $u$ as stated.
\end{rem}

A standard argument allows one to conclude from this a uniform estimate
without a compact error under an injectivity hypothesis; note that
this immediately implies Theorem~\ref{thm:main}, which we restate in
the present stronger version below.

\begin{prop}\label{prop:remove-cpt-error}
With the notation of Proposition~\ref{prop:impr-b-est}.
suppose that $P(0):\Hb^{\tilde r,l}\to\Hb^{\tilde r-2,l+2}$ has trivial
nullspace. There exist
$\sigma_0>0$ and $C'>0$ such that for $|\sigma|<\sigma_0$,
\begin{equation}\label{eq:res-b-est}
\|u\|_{\Hb^{\tilde r,l}}\leq
C'\|P(\sigma)u\|_{\Hb^{\tilde r-1,l+2}}
\end{equation}
for $u\in\Hb^{\tilde r,l}$ with $P(\sigma)u\in
\Hb^{\tilde r-1,l+2}$.
\end{prop}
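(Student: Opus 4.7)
\emph{Plan.} This is the standard upgrade from a Fredholm-type estimate with compact error, \eqref{eq:impr-b-est}, to a true a priori estimate, using the triviality of the nullspace of $P(0)$ together with a contradiction/compactness argument. The key inputs are (a) the boundedness of the inclusion $\Hb^{\tilde r,l}\hookrightarrow\Hb^{\tilde r-K,l-\delta}$ being compact (which uses $K>0$ and $\delta>0$), and (b) the continuity of $P(\sigma)$ in $\sigma$ as a family of maps between the relevant b-Sobolev spaces.

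The plan is to argue by contradiction. Suppose no such $\sigma_0,C'$ exist. Then there are sequences $\sigma_j\to\sigma_\infty$ with $|\sigma_\infty|$ arbitrarily small and $u_j\in\Hb^{\tilde r,l}$ with $P(\sigma_j)u_j\in \Hb^{\tilde r-1,l+2}$ satisfying
\begin{equation*}
\|u_j\|_{\Hb^{\tilde r,l}}=1,\qquad \|P(\sigma_j)u_j\|_{\Hb^{\tilde r-1,l+2}}\to 0.
\end{equation*}
By a preliminary Bolzano-Weierstrass step I pick $\sigma_\infty$ with $|\sigma_\infty|\leq\sigma_0$, and by choosing $\sigma_0$ small enough I may assume $\sigma_\infty=0$. (If one allows complex $\sigma$ with the correct sign of $\im\sigma^2$, the same argument applies.) Feeding $(u_j,\sigma_j)$ into \eqref{eq:impr-b-est} I get $1\leq C(\|P(\sigma_j)u_j\|_{\Hb^{\tilde r-1,l+2}}+\|u_j\|_{\Hb^{\tilde r-K,l-\delta}})$, and since the first term tends to $0$, eventually
\begin{equation*}
\|u_j\|_{\Hb^{\tilde r-K,l-\delta}}\geq\frac{1}{2C}>0.
\end{equation*}

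Next I extract a weakly convergent subsequence $u_j\rightharpoonup u$ in the Hilbert space $\Hb^{\tilde r,l}$, so that $u\in\Hb^{\tilde r,l}$. By compactness of the inclusion $\Hb^{\tilde r,l}\hookrightarrow\Hb^{\tilde r-K,l-\delta}$ this subsequence converges strongly in $\Hb^{\tilde r-K,l-\delta}$, whence $\|u\|_{\Hb^{\tilde r-K,l-\delta}}\geq 1/(2C)$, in particular $u\neq 0$. It remains to show $P(0)u=0$: then the triviality of the nullspace of $P(0)$ on $\Hb^{\tilde r,l}$ forces $u=0$, a contradiction. To pass to the limit, I write $P(\sigma_j)u_j=P(0)u_j+\sigma_j Qu_j-\sigma_j^2 u_j$. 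Since $P(0):\Hb^{\tilde r,l}\to\Hb^{\tilde r-2,l+2}$ is bounded, $P(0)u_j\rightharpoonup P(0)u$ weakly in $\Hb^{\tilde r-2,l+2}$, hence also weakly in the weaker space $\Hb^{\tilde r-2,l}$. On the other hand, $\sigma_j Q u_j\to 0$ in $\Hb^{\tilde r-1,l+2+\delta}$ (since $Q\in S^{-2-\delta}\Diffb^1$ and $\sigma_j\to 0$), $\sigma_j^2 u_j\to 0$ in $\Hb^{\tilde r,l}$, and $P(\sigma_j)u_j\to 0$ in $\Hb^{\tilde r-1,l+2}$; each of these spaces embeds continuously in $\Hb^{\tilde r-2,l}$, so $P(0)u_j\to 0$ strongly in $\Hb^{\tilde r-2,l}$. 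Uniqueness of weak limits gives $P(0)u=0$, finishing the contradiction.

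The main obstacle is the step where I must pass to the limit in $P(\sigma_j)u_j$: the three perturbative pieces $P(\sigma_j)u_j$, $\sigma_j Q u_j$, $\sigma_j^2 u_j$ live a priori in different b-Sobolev spaces, and $\sigma^2$ carries no decay gain, so I must find a common (weaker) target space, here $\Hb^{\tilde r-2,l}$, in which all three tend to $0$ and $P(0)u_j$ still makes weak-limit sense. The other point that deserves care is verifying that the weak limit $u$ in $\Hb^{\tilde r,l}$ (not merely in $\Hb^{\tilde r-K,l-\delta}$) actually belongs to $\Hb^{\tilde r,l}$, which is automatic because the $u_j$ form a bounded sequence in the Hilbert space $\Hb^{\tilde r,l}$; this is what makes the nullspace hypothesis on $\Hb^{\tilde r,l}$ directly applicable and yields the desired contradiction.
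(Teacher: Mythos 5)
Your proposal is correct and follows essentially the same compactness/contradiction argument as the paper: normalize a putative counterexample sequence, use \eqref{eq:impr-b-est} plus the compact inclusion $\Hb^{\tilde r,l}\hookrightarrow\Hb^{\tilde r-K,l-\delta}$ to extract a nonzero weak limit $u$, and pass to the limit to conclude $P(0)u=0$, contradicting trivial nullspace. The only cosmetic difference is in the limit-passing step, where the paper uses operator-norm convergence $P(\sigma_j)\to P(0)$ in $\cL(\Hb^{\tilde r-K,l-\delta},\Hb^{\tilde r-K-2,l-\delta})$ while you compare weak and strong limits of $P(0)u_j$ in a common weaker space; both are valid.
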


\begin{proof}[Proof of Proposition~\ref{prop:remove-cpt-error} given Proposition~\ref{prop:impr-b-est}]
Indeed, if \eqref{eq:res-b-est} is not true, there is a sequence $u_j$, which one may
assume has unit norm in $\Hb^{\tilde r,l}$, and with
$P(\sigma_j)u_j\in \Hb^{\tilde r-1,l+2}$, and a sequence $\sigma_j\to
0$ such that $P(\sigma_j)u_j\to 0$ in $\Hb^{\tilde r-1,l+2}$. By
taking a subsequence (not shown in notation), using the sequential
compactness of the unit ball in $\Hb^{\tilde r,l}$ in the weak
topology, and the compactness of the inclusion
$\Hb^{\tilde r,l}\to \Hb^{\tilde r-K,l-\delta}$, $K>0$, one may assume that
there is $u\in \Hb^{\tilde r,l}$ such that $u_j\to u$ weakly in
$\Hb^{\tilde r,l}$ and strongly in $\Hb^{\tilde r-K,l-\delta}$. By
\eqref{eq:impr-b-est} then $\liminf \|u_j\|_{\Hb^{\tilde
    r-K,l-\delta}}\geq C^{-1}>0$, so $u\neq 0$ by the strong convergence. On the other hand,
$P(\sigma_j)u_j\to P(0) u$ in $\Hb^{\tilde r-K-2,l-\delta}$ as
$$
P(\sigma_j)u_j-P(0)u=(P(\sigma_j)-P(0))u_j+P(0)(u_j-u)
$$
since $P(\sigma_j)\to P(0)$ as bounded operators in $\cL(\Hb^{\tilde
  r-K,l-\delta},\Hb^{\tilde r-K-2,l-\delta})$ and $u_j$ converges to $u$
(thus is bounded) in $\Hb^{\tilde
  r-K,l-\delta}$. Thus, $P(0)u=0$, so $u$ is a non-trivial element of the
nullspace of $P(0)$ on $\Hb^{\tilde r,l}$, which contradicts our
assumptions. This proves \eqref{eq:res-b-est}, and thus the
proposition.
\end{proof}

Since the nullspace of (the elliptic in $x^2\Diffb^2(X)$!) $P(0)$ is independent of the differential order
$\tilde r$, this immediately implies our main theorem, which we now
state in a slightly strengthened version:

\begin{thm}\label{thm:main-improved}
Suppose that $\tilde r$ is as in
Proposition~\ref{prop:symbolic-b-est-more}:
$\tilde r$ is a homogeneous degree $0$
function on $\Tb^*X\setminus o$ (i.e.\ a smooth function on $\Sb^*X$)
with $\tilde r>\frac{1}{2}-(l+1)$ at the source, $\{\taub>0,\mub=0\}$,
$\tilde r<\frac{1}{2}-(l+1)$ at the sink, $\{\taub<0,\mub=0\}$, and 
$
-(\taub^2+|\mub|^2)^{-1/2}x^{-2}\sH_{p_\pa}\tilde r
$
is non-negative.

Suppose also that $|l+1|<\frac{n-2}{2}$ and $P(0):\Hb^{\infty,l}\to\Hb^{\infty,l+2}$ has trivial
nullspace.

Then there exist
$\sigma_0>0$ and $C>0$ such that for $|\sigma|<\sigma_0$,
$\im(\sigma^2)\geq 0$,
\begin{equation}\label{eq:res-b-est-2}
\|P(\sigma)^{-1}f\|_{\Hb^{\tilde r,l}}\leq
C\|f\|_{\Hb^{\tilde r-1,l+2}}
\end{equation}
for $f\in
\Hb^{\tilde r-1,l+2}$.

The analogous result also holds with the source and sink switched, and the
positivity of $-(\taub^2+|\mub|^2)^{-1/2}x^{-2}\sH_{p_\pa}\tilde r$ is
replaced by that of $(\taub^2+|\mub|^2)^{-1/2}x^{-2}\sH_{p_\pa}\tilde
r$ provided one also changes the condition on $\sigma^2$ to $\im(\sigma^2)\leq 0$.
\end{thm}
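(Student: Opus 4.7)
The plan is to deduce Theorem~\ref{thm:main-improved} from Proposition~\ref{prop:remove-cpt-error} in two steps: transferring the nullspace hypothesis to the variable-order setting, and upgrading the resulting injectivity-plus-estimate to invertibility via a dual argument.

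First, I would observe that $P(0)\in x^2\Diffb^2(X)\subset\Psib^{2,-2}(X)$ is b-elliptic, so the nullspace of $P(0)$ on any $\Hb^{\tilde r,l}$ coincides with its nullspace on $\Hb^{\infty,l}$ by b-elliptic regularity. The hypothesis therefore guarantees triviality of $\Ker P(0)$ on the variable-order space $\Hb^{\tilde r,l}$ demanded by Proposition~\ref{prop:remove-cpt-error}. That proposition now supplies the uniform estimate \eqref{eq:res-b-est} for $|\sigma|\leq\sigma_0$, which is precisely \eqref{eq:res-b-est-2}; this yields injectivity of $P(\sigma):\cX_\sigma\to\Hb^{\tilde r-1,l+2}$ together with closed range.

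For surjectivity I would apply the same machinery to the $L^2=L^2_\scl$-adjoint $P(\bar\sigma)$. Since $(\Hb^{s,l})^*=\Hb^{-s,-l}$ under the $L^2_\scl$ pairing, the adjoint maps $\Hb^{1-\tilde r,-l-2}\to\Hb^{-\tilde r,-l}$, fitting the schema of the theorem with $l'=-l-2$ and $\tilde r'=1-\tilde r$. One checks directly that $|l'+1|=|l+1|<\frac{n-2}{2}$ so the nullspace hypothesis is preserved; the inequalities $\tilde r\gtrless\frac{1}{2}-(l+1)$ at the source/sink become $\tilde r'\lessgtr\frac{1}{2}-(l'+1)$ at the \emph{same} fiber-positive/fiber-negative radial manifolds, i.e.\ source and sink are interchanged for $\tilde r'$; the monotonicity $\sH_{p_\pa}\tilde r'=-\sH_{p_\pa}\tilde r$ acquires the opposite sign; and $\im(\bar\sigma^2)=-\im(\sigma^2)\leq 0$ matches the sign convention of the ``analogous'' case. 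The analogous form of Proposition~\ref{prop:remove-cpt-error} therefore delivers an adjoint estimate which, combined with the forward closed-range property, rules out a nontrivial cokernel and so upgrades injectivity to invertibility of $P(\sigma):\cX_\sigma\to\Hb^{\tilde r-1,l+2}$, with norm of $P(\sigma)^{-1}$ controlled by \eqref{eq:res-b-est-2}.

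Finally, to identify this inverse with the $+i0$ resolvent of $H$, I would note that for $\sigma$ with $\im(\sigma^2)>0$ strictly the operator $P(\sigma)$ is invertible by standard self-adjoint functional calculus, the inverse being the holomorphic resolvent $(H-\sigma^2)^{-1}$; the uniform bound just established together with the density of $\dCI(X)$ in $\cX_\sigma$ (noted in the introduction) allows passage to the boundary limit $\im(\sigma^2)\to 0^+$, identifying $P(\sigma)^{-1}$ with the $+i0$ resolvent. The $-i0$ case is entirely symmetric. The main obstacle I anticipate is the bookkeeping in the duality step: verifying that the $L^2_\scl$ weight conventions align correctly, that the source/sink reversal for $\tilde r'=1-\tilde r$ exactly matches the ``analogous case'' hypotheses of Propositions~\ref{prop:impr-b-est}--\ref{prop:remove-cpt-error}, and that the $\im(\sigma^2)$ sign conventions are compatible on both sides. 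Once this is done, the rest is the standard packaging of a propagation estimate with its dual into a Fredholm inverse.
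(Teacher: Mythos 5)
Your argument is correct and follows the paper's own route: the paper likewise deduces Theorem~\ref{thm:main-improved} from Proposition~\ref{prop:remove-cpt-error} by noting that b-ellipticity of $P(0)$ makes its nullspace independent of the differential order $\tilde r$ (and, via \eqref{eq:Ker-P0-sp}, of $l$ in the range $|l+1|<\frac{n-2}{2}$), with the cokernel controlled by the dual semi-Fredholm estimates already established inside the proof of Proposition~\ref{prop:impr-b-est}. Your duality bookkeeping ($\tilde r'=1-\tilde r$, $l'=-l-2$, source/sink and $\im(\sigma^2)$ signs reversed) matches the paper's exactly; the only caveat is that your appeal to ``self-adjoint functional calculus'' for $\im(\sigma^2)>0$ should be replaced by the elliptic/Fredholm invertibility statement for the general family \eqref{eq:P-sigma-form}, since for $Q\neq 0$ the operator $P(\sigma)$ is not literally a resolvent family.
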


Thus, it suffices to prove Proposition~\ref{prop:impr-b-est}, i.e.\
\eqref{eq:impr-b-est}, which we means we need to gain decay for the
error term of \eqref{eq:basic-b-est} on the right hand side.

In general, decay is
controlled by the normal operator of a b-differential operator,
which arises by setting $x=0$ in its coefficients after factoring out an
overall weight, and where one thinks of it as acting on functions on
$[0,\infty)_x\times\pa X$, of which $[0,\delta_0)_x\times\pa X$ is
identified with a neighborhood of $\pa X$ in $X$. Now, $P(\sigma)\in\Psib^{-2,0}$ only, and in the usual
sense the normal operator is simply $-\sigma^2$ as
$P(\sigma)+\sigma^2\in\Psib^{-2,2}$. Thus, we instead consider the
`effective normal operator', which from \eqref{eq:P-sigma-form}, namely
\begin{equation*}
P(\sigma)=P(0)+\sigma Q-\sigma^2,\qquad P(0)\in x^2\Diffb^2(X),\ Q\in
x^3\Diffb^1(X),
\end{equation*}
is
$$
\tilde N(P(\sigma))=N(P(0))-\sigma^2,
$$
so
$$
P(\sigma)-\tilde N(P(\sigma))\in x^{2+\delta}S^0\Diffb^2(X).
$$
For a normally long-range asymptotically Euclidean metric, we have
\begin{equation}\label{eq:Delta-sc-def}
\tilde N(P(\sigma))=\Delta_{\scl}-\sigma^2,\ \text{where}\ \Delta_{\scl}=x^{n+1} D_x x^{-n-1} x^4D_x+x^2\Delta_{\pa X}
\end{equation}
is the model scattering Laplacian at infinity.

Typically in b-problems one proceeds by obtaining separate principal
symbol and normal operator estimates. In the present case, in Section~\ref{subsec:positivity-reduction}, we prove a
normal operator estimate with the special case of the differential
order $\hat r=\hat r_\pm(\beta)$:

\begin{prop}\label{prop:eff-norm-op-b-est-base}
For $S\subset [0,\infty)$ compact,
the effective normal operator $\tilde N(P(\sigma))$ satisfies
\begin{equation}\label{eq:eff-norm-op-b-est-base}
\|v\|_{\Hb^{\hat r,l}}\leq C\|\tilde N(P(\sigma))v\|_{\Hb^{\hat r-1,l+2}},
\end{equation}
with $\hat r=\hat r_\pm(\beta)$, $\beta>0$, as in \eqref{eq:b-symbol-weight-choice-special}.
\end{prop}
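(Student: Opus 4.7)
\emph{Plan of the proof of Proposition~\ref{prop:eff-norm-op-b-est-base}.} The proof is a positive (respectively negative) commutator argument carried out at the level of the normal operator on the model space $[0,\infty)_x \times \pa X$, taking advantage of two special features of $\tilde N(P(\sigma))$: it is dilation-invariant in $x$, and the only $\pa X$ operator appearing in it is $\Delta_{\pa X}$. My plan is to take as commutant an operator $A$ whose principal symbol is modeled on \eqref{eq:a-choice-mod} with the special choice $\tilde r = \hat r_\pm(\beta)$ of \eqref{eq:b-symbol-weight-choice-special}, and to form $i[\tilde N(P(\sigma)), A^*A]$. Since the coefficients on the model are genuinely $x$-homogeneous, none of the $x^\delta$ error terms that appeared in \eqref{eq:H-p-a-expand-mod} arise; the symbolic expression for the commutator is therefore an \emph{exact} negative/positive square modulo a compactly supported cutoff error, with no perturbative terms to absorb.

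Next I would conjugate by $x^{l+1}$ (so that the overall weight becomes a shift of $xD_x$ by an imaginary constant) and take the Mellin transform in $x$. Dilation invariance converts the identity $i[\tilde N(P(\sigma)), A^*A]$ into a pointwise-in-$\hat\taub$ identity for a family of operators on $\pa X$ depending on the Mellin dual variable $\hat\taub$ and on $\sigma$. Since the only $\pa X$-ingredient is $\Delta_{\pa X}$, I would then simultaneously diagonalize via the spectral decomposition of $\Delta_{\pa X}$, whereupon on each eigenspace of eigenvalue $\lambda \geq 0$ the commutator is represented by a scalar function of $(\hat\taub, \lambda, \sigma)$. In this way the positivity that has to be checked is reduced to positivity in a commutative algebra, in line with the strategy outlined at the end of the introduction.

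The main obstacle is verifying this scalar positivity uniformly in $\sigma \in S$, including at $\sigma = 0$. I would handle this in two regimes. For $\hat\taub^2 + \lambda$ large, a large-parameter symbolic treatment with $(\hat\taub^2 + \lambda)^{-1/2}$ as the semiclassical parameter reduces the question to the same computation as in \eqref{eq:H-p-a-expand-mod}: the term $\bigl(\hat r_\pm - \tfrac12 + (l+1)\bigr)\taub (\taub^2+|\mub|^2)^{1/2}$, which equals $\pm\beta\,\taub^2$ on the radial directions, together with the $\frac{\tilde\beta}{2}|\mub|^2$ contribution from the weight factor \eqref{eq:overall-weight} of the commutant, provides the required positive lower bound of order $\taub^2 + |\mub|^2$ once $\tilde\beta$ is taken large enough. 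For bounded $\hat\taub$ and $\lambda$, the hypothesis $|l+1| < \tfrac{n-2}{2}$ keeps the shifted Mellin contour away from the indicial roots of $P(0)$, so $\widehat{\tilde N}(P(\sigma))(\hat\taub)$ is uniformly invertible by continuous perturbation off the $\sigma = 0$ case, where Melrose's theorem \eqref{eq:P-zero-Fredholm} applies.

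Once the scalar positivity is established, the estimate \eqref{eq:eff-norm-op-b-est-base} follows from the standard pairing manipulation: one pairs $\langle i[\tilde N(P(\sigma)), A^*A] v, v\rangle$ with $\langle i Av, A\tilde N(P(\sigma))v\rangle - \langle i A\tilde N(P(\sigma))v, Av\rangle$, applies Cauchy--Schwartz in the shifted Sobolev pairing to absorb the $Av$ term, and concludes exactly as in the symbolic argument of Section~\ref{sec:symbolic}. The regularization scheme \eqref{eq:a-ep-regularize} is used to justify the manipulation under only the a priori membership implicit in the right-hand side of \eqref{eq:eff-norm-op-b-est-base} being finite; density of $\dCinf$ in the corresponding domain space handles the passage from smooth to general $v$.
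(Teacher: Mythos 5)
Your overall strategy --- a dilation-invariant commutator on the model space, conjugation by powers of $x$ so that only imaginary shifts of $xD_x$ appear, and reduction via the Mellin transform and the spectral decomposition of $\Delta_{\pa X}$ to a scalar positivity statement --- is exactly the paper's strategy, and your large-frequency analysis is essentially the symbolic computation of Proposition~\ref{prop:symbolic-b-est-more} transplanted to the normal operator. The gap is in your treatment of the bounded region $\hat\taub^2+\lambda\lesssim 1$. First, there is no well-defined family $\widehat{\tilde N}(P(\sigma))(\hat\taub)$ to invert: $\tilde N(P(\sigma))=N(P(0))-\sigma^2$ with $N(P(0))\in x^2\Diffb^2$, so the dilation-invariant part carries an $x^2$ prefactor that shifts the Mellin contour by $2i$, and for $\sigma\neq 0$ the operator couples different contours rather than acting as a multiplier in $\hat\taub$. (This is precisely why the paper Mellin-transforms only the commutator, after multiplying by weights chosen to make the total power of $x$ equal to $0$, and never $\tilde N(P(\sigma))$ itself.) Second, ``continuous perturbation off the $\sigma=0$ case'' is exactly the step that fails uniformly as $\sigma\to 0$: $-\sigma^2\in\Psib^{0,0}$ while $N(P(0))\in\Psib^{2,-2}$, so the perturbation is not small in any operator topology compatible with the weighted spaces, and the uniformity down to $\sigma=0$ is the entire content of the proposition. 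Third, even if one had a bounded-frequency invertibility statement, it would not splice with a large-frequency positivity statement into the errorless estimate \eqref{eq:eff-norm-op-b-est-base}, because one cannot microlocalize in the Mellin variable for the non-dilation-invariant operator, and the inclusion of b-Sobolev spaces with fixed decay order on the non-compact model is not compact.

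What the paper does instead is prove positivity (resp.\ negativity) of the scalar function \eqref{eq:commutator-imag-version} at \emph{all} finite $(\taub,\lambda)$, not just asymptotically; this occupies the second half of Section~\ref{sec:normal} and is where all the real work lies. The commutant \eqref{eq:tilde-A-choice} carries three large parameters $\tilde\digamma\geq\digamma\geq\check\digamma>1$ and the prefactor $\tilde\beta=\frac{\pi}{2}\tilde\digamma$ on the order-zero exponential factor; the point of that factor is not to dominate $|\mub|^2$ terms at infinity (the logarithmic second term already does that, cf.\ Corollary~\ref{cor:tilde-A-princ-symbol}) but to contribute approximately $\pi/2$ to the argument of \eqref{eq:commutator-imag-version} precisely on the compact region in $(\taub,\nu)$ where the remaining terms do not by themselves give a sign (Lemma~\ref{lemma:first-term-est} and Corollary~\ref{cor:total-est}). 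The hypothesis $|l+1|<\frac{n-2}{2}$ enters through Lemma~\ref{lemma:complex-Lap-arg}: it guarantees $\re\big((\taub+i(l+1))^2+\lambda+(\frac{n-2}{2})^2\big)>0$, so that the argument of the shifted indicial factor stays in a compact subinterval of $(-\pi/2,\pi/2)$ and the total argument can be pushed into $(0,\pi)$ --- a positivity mechanism, not a contour-avoidance/invertibility mechanism. You need to supply an argument of this kind (or some substitute) for finite frequencies before the pairing step at the end of your proposal can be run without an error term.
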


Due to the lack of ellipticity, namely the loss of one derivative in
\eqref{eq:basic-b-est} on $P(\sigma)u$ relative to the elliptic shift
of the order of the norm on the left hand side,
this will give a somewhat weaker estimate than needed for
Proposition~\ref{prop:impr-b-est} even in the special case $\tilde
r=\hat r$, but we improve on it by using the symbolic estimate yet
again. In the special case $\tilde
r=\hat r$ this could be avoided by doing {\em both} the symbolic and
normal operator positivity argument {\em in a single step} (rather
than broken up into two steps, as done in the previous and the current
sections), but as we
intend to
prove the general
differential order $\tilde r$ version of the main theorem in
Theorem~\ref{thm:main-improved}, yet prove (and use) the normal operator
positivity only for the special differential order $\hat r$, we give a
unified, but slightly involved,
functional analytic
argument below.

We now proceed to prove Proposition~\ref{prop:impr-b-est}, given Proposition~\ref{prop:eff-norm-op-b-est-base}.

\begin{proof}[Proof of Proposition~\ref{prop:impr-b-est} given Proposition~\ref{prop:eff-norm-op-b-est-base}.]

Suppose \eqref{eq:eff-norm-op-b-est-base} holds.

{\em We first show the
estimate \eqref{eq:impr-b-est} of
Proposition~\ref{prop:impr-b-est} under the additional assumption that
$\tilde r>\hat r+1$, where $\hat r=\hat r_+(\beta)$ with $\beta$
fixed.} Notice that this can only be satisfied, in view of the low
regularity radial point estimate, if $\beta>1$, but in that case it
indeed can be satisfied e.g.\ by $\tilde r=\hat r+1+\ep$, $0<\ep<\beta-1$. We point out that under the `analogous'
second part of the statement of the proposition, the same arguments
prove \eqref{eq:impr-b-est} under the additional assumption that
$\tilde r>\hat r_-(\beta)+1$; {\em this will be used later in duality arguments}.

We start with \eqref{eq:basic-b-est}:
\begin{equation*}
\|u\|_{\Hb^{\tilde r,l}}\leq
C(\|P(\sigma)u\|_{\Hb^{\tilde r-1,l+2}}+\|u\|_{\Hb^{-N,l}}),
\end{equation*}
where we may assume that $u\in\dCI(X)$, so membership in various
spaces below is automatic, and where we take $N>\beta$. It thus remains to estimate the error term $\|u\|_{\Hb^{-N,l}}$.

Let $\chi$ be a
cutoff supported in a collar neighborhood of the boundary, identically
$1$ on a smaller neighborhood. Then
$$
\|u\|_{\Hb^{-N,l}}\leq\|\chi u\|_{\Hb^{-N,l}}+\|(1-\chi)u\|_{\Hb^{-N,l}}
$$
shows that it suffices to estimate the $\chi u$ term (for the other is
compactly supported in the interior, so can be absorbed into the
second term of the right hand side of \eqref{eq:impr-b-est}). This in
turn is estimated, thanks to \eqref{eq:eff-norm-op-b-est-base}, by
$$
\|\chi u\|_{\Hb^{-N,l}}\leq\|\chi u\|_{\Hb^{\hat r,l}}\leq C\|\hat N(P(\sigma))(\chi
u)\|_{\Hb^{\hat r-1,l+2}}.
$$
Further (with $l'$ arbitrary in the compactly supported term below)
\begin{equation*}\begin{aligned}
&\|\tilde N(P(\sigma))(\chi u)\|_{\Hb^{\hat r-1,l+2}}\\
&\leq
\|P(\sigma) u\|_{\Hb^{\hat r-1,l+2}}
+\|P(\sigma)(1-\chi)u\|_{\Hb^{\hat r-1,l+2}}+\|(P(\sigma)-\tilde N(P(\sigma)))(\chi u)\|_{\Hb^{\hat r-1,l+2}}\\
&\leq \|P(\sigma) u\|_{\Hb^{\hat r-1,l+2}}+C'\|u\|_{\Hb^{\hat r+1,l'}}+C'\|\chi u\|_{\Hb^{\hat r+1,l-\delta}}\\
&\leq \|P(\sigma) u\|_{\Hb^{\hat r-1,l+2}}+C\|u\|_{\Hb^{\hat r+1,l-\delta}}.
\end{aligned}\end{equation*}
Thus,
\begin{equation*}
\|u\|_{\Hb^{\tilde r,l}}\leq
C(\|P(\sigma)u\|_{\Hb^{\tilde r-1,l+2}}+\|P(\sigma) u\|_{\Hb^{\hat r-1,l+2}}+\|u\|_{\Hb^{\hat r+1,l-\delta}}),
\end{equation*}
As $\hat r-1\leq\tilde r-1$ and $\hat r+1<\tilde r$, this gives
\begin{equation}\begin{aligned}
\|u\|_{\Hb^{\tilde r,l}}&\leq
C(\|P(\sigma)u\|_{\Hb^{\tilde r-1,l+2}}+\|u\|_{\Hb^{\hat
    r+1,l-\delta}})\\
&\leq C\|P(\sigma)u\|_{\Hb^{\tilde r-1,l+2}}+\ep \|u\|_{\Hb^{\tilde
    r,l-\delta}}+ C_\ep\|u\|_{\Hb^{\tilde r-K,l-\delta}},
\end{aligned}\end{equation}
and now the second term on the right hand side can be absorbed into
the left hand side to yield
\begin{equation}\label{eq:impr-b-est-8}
\|u\|_{\Hb^{\tilde r,l}}\leq C(\|P(\sigma)u\|_{\Hb^{\tilde
    r-1,l+2}}+\|u\|_{\Hb^{\tilde r-K,l-\delta}}),
\end{equation}
{\em This proves the estimate \eqref{eq:impr-b-est} of
Proposition~\ref{prop:impr-b-est} in the case of $\tilde
r>\hat r+1$ (which, again, requires $\beta>1$ to be non-vacuous),
completing our first goal.}

As already mentioned at the beginning of the proof, this also proves
the  estimate \eqref{eq:impr-b-est}  in the second part of  Proposition~\ref{prop:impr-b-est} in the case of $\tilde
r>\hat r_-(\beta)+1$.

Before proceeding, we remark that using the regularity estimate
\eqref{eq:basic-b-est}, valid under just the assumption that its right
hand side is finite (i.e.\ that $u$ and $P(\sigma)u$ are in the
appropriate spaces), we in fact obtain that {\em \eqref{eq:impr-b-est-8}
holds if $P(\sigma)u\in\Hb^{\tilde
    r-1,l+2}$ and $u\in\Hb^{\tilde r-K,l}$. Notice that this is a
stronger condition than the right hand side of \eqref{eq:impr-b-est-8}
being finite;} the reason is that while in Section~\ref{sec:symbolic}
we did a regularization argument for the differential order, we do not
perform a similar argument for the decay order in the present section.

\begin{figure}
\begin{center}
\includegraphics[width=100mm]{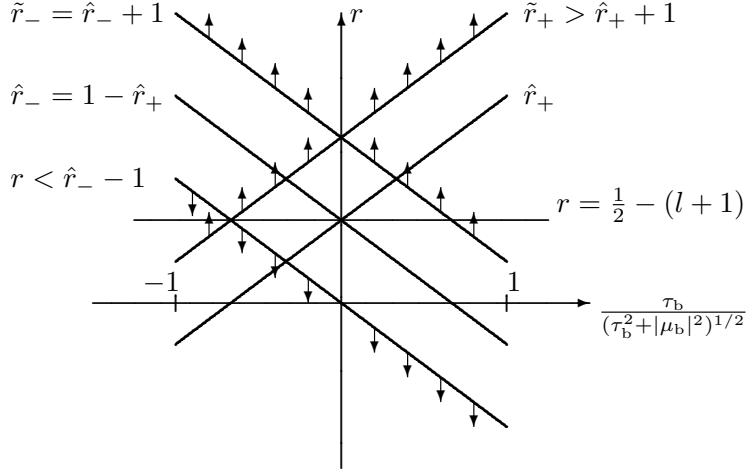}
\end{center}
\caption{The variable order functions used in the proof: in order for
  the symbolic estimate \eqref{eq:basic-b-est} to apply one needs a
  weight $r$ which is monotone and is strictly below, resp.\ above, the
  critical line $r=\frac{1}{2}-(l+1)$ at {\em exactly} one of $-1$ and
  $1$. (Of course, the symbolic order need not be a function of
  $\frac{\taub}{(\taub^2+|\mub|^2)^{1/2}}$ only.) On the
  $\tilde r_+= \hat r_++1$ line, the arrows indicate the region $\tilde r_+>\hat r_++1$ in
  which the graph of $\tilde r_+$ must lie for the first step of our
  proof to apply. Then $r<\hat r_--1$ indicates the region where
  the graph of the order on the {\em dual} space must correspondingly
  lie to create a Fredholm estimate; this is however not admissible
  for our proof of \eqref{eq:impr-b-est-8} in this first step to hold. Finally, the line $\tilde
  r_-=\hat r_-+1$ shows the dual order that is actually used in the
  second step; the argument of the first step applies to this when
  replacing $P(\sigma)$ by $P(\sigma)^*$ and $\hat r_+$ by $\hat r_-$
  --- the arrows indicate the region in which the graph of $\tilde r_-$
  must be for the argument of the first step to apply.}
\label{fig:b-weights}
\end{figure}

Equation \eqref{eq:impr-b-est-8} is only one half of a Fredholm
estimate: it implies closed range and a finite dimensional nullspace.
To see the other half, which gives that the range is finite
codimensional, we need to dualize and work with $P(\sigma)^*$. In order to keep the
notation clear, write $\hat r_\pm$ corresponding to the sign $\pm$ in the definition of
$\hat r$, and similarly write $\tilde r_\pm$ with $\tilde
r_\pm>\hat r_\pm+1$. Now, the dual of $\Hb^{\tilde r_+,l}$ is
$\Hb^{-\tilde r_+,-l}$ thus that of $\Hb^{\tilde
    r_+-1,l+2}$ is $\Hb^{1-\tilde
    r_+,-l-2}$, so correspondingly we would like to have an
estimate
\begin{equation}\label{eq:impr-b-est-dual}
\|v\|_{\Hb^{1-\tilde r_+,-l-2}}\leq C(\|P(\sigma)^*v\|_{\Hb^{-\tilde
    r_+,-l}}+\|v\|_{\Hb^{1-\tilde r_+-K,-l-2-\delta}})
\end{equation}
to complete the argument.

{\em Our next goal is to prove \eqref{eq:impr-b-est-dual} in the case $\tilde
r_+>\hat r_++1$.}
Notice that $-\tilde r_+=(1-\tilde r_+)-1$,
$-l=(-l-2)+2$, so the arithmetic relationship between the spaces on
the left and right hand side of the estimate is the {\em same} as in \eqref{eq:impr-b-est-8}. Now, $-l-2$ satisfies the same assumptions
as $l$, namely $|(-l-2)+1|<\frac{n-2}{2}$. Moreover,
\begin{equation*}
\tilde r_+>\frac{1}{2}-(l+1),\ \text{resp.}\ \tilde
r_+<\frac{1}{2}-(l+1)
\end{equation*}
microlocally at the source or sink are
equivalent to
\begin{equation*} 
1-\tilde
r_+<\frac{1}{2}-(-(l+2)+1),\ \text{resp.}\ 1-\tilde
r_+>\frac{1}{2}-(-(l+2)+1),
\end{equation*}
i.e.\ for $\tilde r_+,l$ satisfying the
conditions for the symbolic estimates for the $+$ choice of sign, the
dual spaces of order $1-\tilde r_+,-l-2$ satisfy the analogous
conditions with the location of high and low regularity reversed. Thus, as
$P(\sigma)^*$ satisfies the same assumptions as $P(\sigma)$, \eqref{eq:impr-b-est-dual} is
indeed the same kind of estimate as \eqref{eq:impr-b-est-8}, but with monotonicity direction (increase/decrease) along the $H_p$
flow is reversed. The only issue, due to which \eqref{eq:impr-b-est-8} (with $P(\sigma)^*$ in place of $P(\sigma)$)
does not immediately yield \eqref{eq:impr-b-est-dual}, is that in \eqref{eq:impr-b-est-8}
(now with $P(\sigma)^*$ in place of $P(\sigma)$) we had the additional
restriction $\tilde r>\hat r+1$ which for
\eqref{eq:impr-b-est-dual} would require $1-\tilde r_+>\hat
r_-+1$, i.e.\ as $1-\hat r_+=\hat r_-$, the requirement translates to $\tilde r_+<\hat r_+-1$,
which is incompatible with the restriction $\tilde r_+>\hat r_++1$
for \eqref{eq:impr-b-est-8}  applying (to $P(\sigma)$) with $\tilde
r=\tilde r_+$, i.e.\ \eqref{eq:impr-b-est-8} does not directly give
matching semi-Fredholm estimates.

To fix this, for $\beta>1$, we apply \eqref{eq:impr-b-est-8} to $P(\sigma)^*$ with $\tilde r=\tilde
r_-=\hat r_-+1+\ep=(1-\hat r_+)+1+\ep=2+\ep-\hat r_+$, $0<\ep<\beta-1$, and with $l$ replaced by
$-l-2$. This satisfies all the requirements for
\eqref{eq:impr-b-est-8}, and gives
for $v\in \Hb^{\tilde r_--K,-l-2}$ with $P(\sigma)^*v\in \Hb^{\tilde
    r_--1,-l}$ (see the remarks in the second paragraph after \eqref{eq:impr-b-est-8})
that
\begin{equation}\label{eq:impr-b-est-dual-actual}
\|v\|_{\Hb^{\tilde r_-,-l-2}}\leq C(\|P(\sigma)^*v\|_{\Hb^{\tilde
    r_--1,-l}}+\|v\|_{\Hb^{\tilde r_--K,-l-2-\delta}}).
\end{equation}
Notice that $\tilde r_-\geq
(1-\tilde r_+)+2$, so this is in {\em stronger} spaces (in terms of
differential order) than the
desired \eqref{eq:impr-b-est-dual}. This estimate gives, via
approximating the compact inclusion map by a finite rank map, that
there are finitely many linear functionals $\ell_1,\ldots,\ell_M\in
(\Hb^{\tilde r_-,-l-2})^*=\Hb^{-\tilde r_-,l+2}$ (identified via the
sesquilinear $L^2$-pairing) such that
\eqref{eq:impr-b-est-dual-actual} can be replaced (under unchanged
conditions for $v$) by
\begin{equation}\label{eq:impr-b-est-dual-finite-rank}
\|v\|_{\Hb^{\tilde r_-,-l-2}}\leq C(\|P(\sigma)^*v\|_{\Hb^{\tilde
    r_--1,-l}}+\sum_{j=1}^M|\ell_j(v)|);
\end{equation}
indeed one may assume (by approximating the finite rank operator) that $\ell_j\in\dCI(X)$.

In order to proceed, it is useful to rewrite
\eqref{eq:impr-b-est-dual-finite-rank} as an estimate without an error
term for a slightly different operator. So let, with $R$ in the
differential order at this point arbitrary,
$$
\tilde P(\sigma): \Hb^{R,l}\oplus\Cx^M\to\Hb^{R-2,l+2}
$$
be given by
$$
\tilde P(\sigma)(u,c)=P(\sigma)u+\sum c_j\ell_j,
$$
so the formal adjoint is
$$
\tilde P(\sigma)^*v=(P(\sigma)^*v,\ell(v)),
$$
where $\ell=(\ell_1,\ldots,\ell_M)$. Our estimate is, for any $v\in \Hb^{\tilde r_--K,-l-2}$
with $\tilde P(\sigma)^*v\in \Hb^{\tilde
    r_--1,-l}\oplus\Cx^M$,
\begin{equation}\label{eq:impr-b-est-dual-finite-rank-compl}
\|v\|_{\Hb^{\tilde r_-,-l-2}}\leq C\|\tilde P(\sigma)^*v\|_{\Hb^{\tilde
    r_--1,-l}\oplus\Cx^M}.
\end{equation}
In particular, this is valid for all $v\in \Hb^{\tilde r_-+1,-l-2}$,
the set of which is dense in $\Hb^{\tilde r_-,-l-2}$.
Thus, by duality, namely Hahn-Banach in a Hilbert space-setting, defining a continuous linear
functional on the range of $\tilde P(\sigma)^*$ on $\Hb^{\tilde
  r_-+1,-l-2}$, extended uniquely to the closure of the range in $\Hb^{\tilde
    r_--1,-l}\oplus\Cx^M$ (without
changing the constant of the estimate), and then the whole space using
an orthogonal projection (thus again without changing the constant), one can solve
$\tilde P(\sigma)(u,c)=f\in \Hb^{-\tilde r_-,l+2}$ with a uniform bound:
$$
\|(u,c)\|_{\Hb^{1-\tilde r_-,l}\oplus\Cx^M}\leq C\|f\|_{\Hb^{-\tilde r_-,l+2}},
$$
with $C$ independent of $\sigma$ in a compact set. But this means that
$$
P(\sigma)u=f-\sum_j c_j\ell_j
$$
and
\begin{equation}\label{eq:complemented-est-8}
\|u\|_{\Hb^{1-\tilde r_-,l}}\leq C\|f\|_{\Hb^{-\tilde r_-,l+2}},\qquad
|c_j|\leq C\|f\|_{\Hb^{-\tilde r_-,l+2}}
\end{equation}
(with $C$ independent of $f$ and of $\sigma$).

Now, if $f\in
\Hb^{\tilde r_+-1,l+2}$ (note that
$\Hb^{\tilde r_+-1,l+2}\subset\Hb^{-\tilde r_-,l+2}$ as
$\tilde r_-\geq
(1-\tilde r_+)+2\geq 1-\tilde r_+$)
then the regularity estimates of Section~\ref{sec:symbolic}, see
Proposition~\ref{prop:symbolic-b-est-more}, which
required the regularization argument, apply, and as $\sum c_j\ell_j\in\dCI(X)$, show that $u\in \Hb^{\tilde
  r_+,l}$, and indeed
\begin{equation*}\begin{aligned}
\|u\|_{\Hb^{\tilde
  r_+,l}}&\leq C'(\|f-\sum c_j\ell_j\|_{\Hb^{\tilde r_+-1,l+2}}+\|u\|_{\Hb^{1-\tilde
    r_-,l}})\\
&\leq C'(\|f\|_{\Hb^{\tilde r_+-1,l+2}}+\sum_j|c_j|+\|u\|_{\Hb^{1-\tilde
    r_-,l}}).
\end{aligned}\end{equation*}
Hence, by \eqref{eq:complemented-est-8}
$$
\|u\|_{\Hb^{\tilde
  r_+,l}}\leq C''(\|f\|_{\Hb^{\tilde r_+-1,l+2}}+\|f\|_{\Hb^{-\tilde
  r_-,l+2}})\leq C''' \|f\|_{\Hb^{\tilde r_+-1,l+2}}
$$
with $C'''$ independent of $f$ and $\sigma$.

Now, for $v\in\Hb^{1-\tilde r_+,-l-2}$ with $P(\sigma)^*v\in\Hb^{-\tilde r_+,-l}$,
\begin{equation*}\begin{aligned}
|\langle f,v\rangle|&=|\langle (u,c),\tilde P(\sigma)^* v\rangle|\leq \|(u,c)\|_{\Hb^{\tilde
  r_+,l}}\|\tilde P(\sigma)^* v\|_{\Hb^{-\tilde
  r_+,-l}}\\
&\leq C''' \|f\|_{\Hb^{\tilde r_+-1,l+2}}\|\tilde P(\sigma)^* v\|_{\Hb^{-\tilde
  r_+,-l}},
\end{aligned}\end{equation*}
where the equality of the first two expressions is justified by a
simple regularization argument in the decay order for $v$.
Since $f$ is arbitrary in $\Hb^{\tilde r_+-1,l+2}$, this gives (for $v\in\Hb^{1-\tilde r_+,-l-2}$ with $P(\sigma)^*v\in\Hb^{-\tilde r_+,-l}$)
\begin{equation}\begin{aligned}\label{eq:impr-b-est-dual-proved}
\|v\|_{\Hb^{1-\tilde r_+,-l-2}}&\leq C''' \|\tilde P(\sigma)^* v\|_{\Hb^{-\tilde
  r_+,-l}}\\
&\leq C(\|P(\sigma)^* v\|_{\Hb^{-\tilde
  r_+,-l}}+\sum_j|\ell(v_j)|),
\end{aligned}\end{equation}
which immediately implies the desired estimate \eqref{eq:impr-b-est-dual}.
{\em Thus, we achieved our second goal and proved \eqref{eq:impr-b-est-dual} in the case $\tilde
r_+>\hat r+1$.} In particular, {\em given \eqref{eq:eff-norm-op-b-est-base}, this
  proves Proposition~\ref{prop:impr-b-est} in case $\tilde r>\hat r+1$.}

It remains to extend to range of $\tilde r$ to all functions allowed
in the statement of Proposition~\ref{prop:impr-b-est}. {\em The final
  step of the proof of Proposition~\ref{prop:impr-b-est} is to remove
  the restriction $\tilde r>\hat r+1$.}

We break this final part into two steps. {\em We next prove
  Proposition~\ref{prop:impr-b-est} in case $\tilde r<\hat r-1$.}

As we already discussed, $P(\sigma)$ and $P(\sigma)^*$ have analogous
properties as pseudodifferential operators, so we can interpret
\eqref{eq:impr-b-est-dual} as an estimate for $P(\sigma)$,
namely
\begin{equation*}
\|u\|_{\Hb^{1-\tilde r_+,-l-2}}\leq
C(\|P(\sigma)u\|_{\Hb^{-\tilde r_+,-l}}+\|u\|_{\Hb^{1-\tilde
    r_+-K,-l-2-\delta}}),
\end{equation*}
which, with $\tilde r_-=1-\tilde r_+$ is the estimate
\begin{equation*}
\|u\|_{\Hb^{\tilde r_-,-l-2}}\leq
C(\|P(\sigma)u\|_{\Hb^{\tilde r_--1,-l}}+\|u\|_{\Hb^{\tilde
    r_--K,-l-2-\delta}});
\end{equation*}
now $\tilde r_-<-\hat r_+=\hat r_--1$, i.e.\ (recall that the role
of the $\pm$ sign choices is completely symmetric) the estimate
\eqref{eq:impr-b-est}
\begin{equation}\label{eq:impr-b-est-low}
\|u\|_{\Hb^{\tilde r,l}}\leq
C(\|P(\sigma)u\|_{\Hb^{\tilde r-1,l+2}}+\|u\|_{\Hb^{\tilde
    r-K,l-\delta}});
\end{equation}
in the low-regularity case $\tilde r<\hat
r-1$. Since in this case we already have the dual semi-Fredholm
estimate, {\em this proves Proposition~\ref{prop:impr-b-est} if $\tilde r<\hat
r-1$.}

{\em We now remove all restrictions on $\tilde r$ beyond those of the statement of
  Proposition~\ref{prop:impr-b-est}.}
With $\tilde r=\tilde r_+$ arbitrary in Proposition~\ref{prop:impr-b-est}, we are going to go through the first part of the argument again,
starting with
\eqref{eq:basic-b-est}:
\begin{equation*}
\|u\|_{\Hb^{\tilde r_+,l}}\leq
C(\|P(\sigma)u\|_{\Hb^{\tilde r_+-1,l+2}}+\|u\|_{\Hb^{-N,l}}),
\end{equation*}
with $N>\beta+1$, $\beta>1$ and $-N<\tilde r_+$. But now we estimate the error term
$\|u\|_{\Hb^{-N,l}}$ using \eqref{eq:impr-b-est-low} with $-N\leq\tilde
r<\min(\hat r_+(\beta)-1,\tilde r_+)$ to obtain
\begin{equation*}
\|u\|_{\Hb^{\tilde r_+,l}}\leq
C(\|P(\sigma)u\|_{\Hb^{\tilde r_+-1,l+2}}+\|P(\sigma)u\|_{\Hb^{\tilde
    r-1,l+2}}+\|u\|_{\Hb^{\tilde r-K,l-\delta}}),
\end{equation*}
and thus the general case of the estimate \eqref{eq:impr-b-est} of
Proposition~\ref{prop:impr-b-est}. {\em Since this also applies to
$P(\sigma)^*$ on the dual spaces, this completes the proof of Proposition~\ref{prop:impr-b-est}.}
\end{proof}

\subsection{Reduction of the normal operator estimate, Proposition~\ref{prop:eff-norm-op-b-est-base}, to a positivity computation}\label{subsec:positivity-reduction}
Thus, it suffices to prove
Proposition~\ref{prop:eff-norm-op-b-est-base}, which we recall is the estimate
\begin{equation}\label{eq:eff-norm-op-b-est-base-recall}
\|v\|_{\Hb^{\hat r,l}}\leq C\|\tilde N(P(\sigma))v\|_{\Hb^{\hat r-1,l+2}},
\end{equation}
where
\begin{equation}\label{eq:b-symbol-weight-choice-restate}
\hat r=\frac{1}{2}-(l+1)\pm\beta\frac{\taub}{(\taub^2+|\mub|^2)^{1/2}},\qquad\beta>0.
\end{equation}

For this purpose
it is more convenient to work with $L^2_{\bl}$, so we set $\Hbb$
to be the b-Sobolev space relative to $L^2_{\bl}$, here this really is
of interest in $[0,\infty)\times\pa X$, with density
$\frac{dx}{x}\,dg_{\pa X}$. Our argument will
proceed by showing that a modification of the commutator \eqref{eq:commutator-basic-b-est} in $\Psib^{2\hat r,2l}$ that we considered in the proof of
\eqref{eq:basic-b-est} actually can be arranged to have a positive normal operator as
well, relative to $L^2$, which statement is equivalent to saying that
$x^{-n}$ times this normal operator is positive relative to
$L^2_{\bl}$ since the quadratic form on $L^2_\bl$ is $\langle x^{n}\cdot,\cdot\rangle_g$. This will immediately imply
\eqref{eq:eff-norm-op-b-est-base-recall}. Now, if we write
$$
N(A)=x^{n/2}\tilde A
x^{-n/2} x^{-l-1},
$$
$\tilde A\in\Psib^{\hat r-1/2,0}$ on the model space
$[0,\infty)\times\pa X$, dilation invariant, then we need to compute
\begin{equation}\begin{aligned}\label{eq:normal-op-b-commutator}
&i(\tilde N(P(\sigma)^*-P(\sigma)))N(A^*A)+i[\tilde
N(P(\sigma)),N(A^*A)]\\
&\qquad=-2\im(\sigma^2)x^{-n/2-l-1}\tilde A^*x^n\tilde A
x^{-n/2-l-1} +i[\Delta_{\scl},x^{-n/2-l-1}\tilde A^* x^n\tilde A
x^{-n/2-l-1}],
\end{aligned}\end{equation}
to the extent that we can show its positivity (or negativity) on
$L^2$ (more precisely a lower bound for $\pm$ this operator by $C x^{-2l}$, $C>0$). As mentioned above, this is equivalent to
$$
-2\im(\sigma^2)x^{-n}x^{-n/2-l-1}\tilde A^*x^n\tilde A
x^{-n/2-l-1} +ix^{-n}[\Delta_{\scl},x^{-n/2-l-1}\tilde A^*x^n\tilde A
x^{-n/2-l-1}]
$$
having the corresponding sign
on $L^2_{\bl}$, with the explicit lower bound now being $C x^{-2(l+n/2)}$. The reason for the extra factor $x^{-n/2}$ conjugating
$\tilde A$ in our definition of $\tilde A$ is that {\em if $\tilde A$ is symmetric relative to the
$L^2_\bl$ inner product, as we arrange to simplify our arguments, then
$x^{n/2}\tilde A x^{-n/2}$ is symmetric with respect to the
$L^2$-inner product}, so in fact we have to compute
\begin{equation}\begin{aligned}\label{eq:twisted-normal-comm-comp}
&-2\im(\sigma^2)x^{-n}x^{-l-1}(x^{n/2}\tilde A x^{-n/2})^2
x^{-l-1}+ix^{-n}[\Delta_{\scl},x^{-l-1}(x^{n/2}\tilde A x^{-n/2})^2
x^{-l-1}]\\
&=-2\im(\sigma^2)x^{-n/2-l-1}\tilde A^2
x^{-n/2-l-1}+ix^{-n}[\Delta_{\scl},x^{n/2-l-1}\tilde A^2 
x^{-n/2-l-1}].
\end{aligned}\end{equation}
Notice that the first term here is a negative operator (in an
indefinite sense) if
$\im(\sigma^2)\geq 0$, and a positive operator if $\im(\sigma^2)\leq
0$, so if in the first case we arrange that the second term is
negative definite (in an appropriate sense), while in the second case
we arrange that the second term is positive definite, in an estimate
one may simply drop the first term, i.e.\ allowing complex $\sigma$
with $\sigma^2$ of the correct imaginary part does not affect the
argument below.

Let
$$
\Delta_{\bl}=x^{-(n+2)/2}\Delta_{\scl}x^{(n-2)/2}\in\Diffb^2,
$$
as $x^{-1}\Delta_{\scl} x^{-1}$ is symmetric with respect to the
$L^2$-inner product, $\Delta_{\bl}$ is symmetric with respect to the
$L^2_\bl$ inner product. Explicitly,
\begin{equation}\begin{aligned}\label{eq:b-Lap}
\Delta_{\bl}&=x^{n/2}D_x x^{-n+3}D_xx^{n/2-1}+\Delta_{\pa X}\\
&=(D_x x+i\frac{n}{2}) x^{-n/2+2}D_xx^{n/2-1}+\Delta_{\pa X}\\
&=(D_x x+i\frac{n}{2}) (xD_x-i\frac{n-2}{2})+\Delta_{\pa X}\\
&=(xD_x)^2 +\Delta_{\pa X}+\Big(\frac{n-2}{2}\Big)^2;
\end{aligned}\end{equation}
notice that this is a positive definite operator on $L^2_\bl$ for
$n\geq 3$, since on the
Mellin transform side it is multiplication by a positive function.

Now
\begin{equation*}\begin{aligned}
&ix^{-n}[\Delta_{\scl},x^{n/2-l-1}\tilde A^2 
x^{-n/2-l-1}]\\
&\qquad=ix^{-n}x^{\frac{n+2}{2}}\Delta_\bl x^{-\frac{n-2}{2}}x^{n/2-l-1}\tilde A^2 
x^{-n/2-l-1}\\
&\qquad\qquad-ix^{-n}x^{n/2-l-1}\tilde A^2 
x^{-n/2-l-1}x^{\frac{n+2}{2}}\Delta_\bl x^{-\frac{n-2}{2}},
\end{aligned}\end{equation*}
whose positivity is equivalent to that of the operator obtained by
multiplying from both sides by $x^{l+n/2}$ (chosen to make the total weight
$x^0$, this also changes the desired lower bound to a positive constant):
$$
ix^{l+1}\Delta_\bl x^{-l}\tilde A^2 
x^{-1}-ix^{-1}\tilde A^2 
x^{-l}\Delta_\bl x^{l+1}\in\Psib^{2\hat r,0}.
$$
But $\Delta_\bl$ and $\tilde A$ are both dilation invariant
pseudodifferential operators, so the effect of conjugating them by
$x^k$ (i.e.\ multiplying by this from the right, and by its inverse
from the left) is replacing $xD_x$ by $xD_x-ik$, or on the Mellin
transform side replacing $\taub$ by $\taub-ik$. Writing such a change by
affixing $(\cdot-ik)$ to the operator, we need to compute
\begin{equation*}\begin{aligned}
&ix^{l+1}\Delta_\bl x^{-l}\tilde A^2 
x^{-1}-ix^{-1}\tilde A^2 
x^{-l}\Delta_\bl x^{l+1}\\
&\qquad=i\Delta_\bl(\cdot+i(l+1))\tilde A(\cdot+i)^2-i \tilde A(\cdot-i)^2\Delta_\bl(\cdot-i(l+1)).
\end{aligned}\end{equation*}
Now all operators on the right hand side are multiplication operators
on the Mellin transform side (no $x$ dependence), so {\em if we choose
  $\tilde A$ to depend on $y$ and its b-dual variables only through
  $\Delta_{\pa X}$, and still symmetric with respect to the
  $L^2_\bl$-inner product}, then the positivity or negativity of this
expression is a commutative calculation, and the expression can be
written as (with imaginary part on the second line meaning
skew-adjoint part, which becomes the imaginary part on the Mellin
transform/spectral side)
\begin{equation}\begin{aligned}\label{eq:normal-op-computation}
&i \tilde A(\cdot+i)^2\Delta_\bl(\cdot+i(l+1))-i \tilde
A(\cdot-i)^2\Delta_\bl(\cdot-i(l+1))\\
&=-2\im \tilde A(\cdot+i)^2\Delta_\bl(\cdot+i(l+1)).
\end{aligned}\end{equation}

Notice that if \eqref{eq:normal-op-computation} is positive (with the
negative case essentially the same), i.e.\ on
the Mellin transform side, replacing $\Delta_{\pa X}$ by its spectral
parameter $\lambda\geq 0$, is given by multiplication by a positive
function, which is the square of a positive
elliptic symbol $\tilde b$ of
order $\hat r$ in terms of $(\taub,\sqrt{\lambda})$ (with the square
root present due to $\Delta_{\pa X}$ being second order), then one has (for $\im(\sigma^2)\leq 0$; or $\im(\sigma^2)\geq
0$ in the negative case)
$$
x^{-n}x^{l+n/2}\Big((\tilde N(P(\sigma)^*-P(\sigma)))N(A^*A)+[\tilde
N(P(\sigma)),N(A^*A)]\Big) x^{l+n/2}\geq \tilde B^{*_\bl}\tilde B,
$$
with $\tilde B\in\Psib^{\hat r,0}$, with $\tilde B^{*_\bl}$ the
adjoint in $L^2_\bl$ (and is $=\tilde B$), namely $\tilde B$ has
Mellin transformed normal operator given by the functional calculus of
$\Delta_{\pa X}$ for $\tilde b$. Here the inequality is due to both merely
using a lower bound for \eqref{eq:normal-op-computation} and to dropping the $\im(\sigma^2)$ term,
which has a sign matching that of \eqref{eq:normal-op-computation}. Thus, with the inner product now the $L^2$-based, so
$\tilde B^*=x^n\tilde B^{*_\bl} x^{-n}=x^n\tilde B x^{-n}$, and with
$B=x^{n/2}\tilde B x^{-l-n/2}$, so $B^*=x^{-l-n/2}\tilde B^*
x^{n/2}=x^{n}x^{-l-n/2}\tilde B^{*_\bl} x^{n/2} x^{-n}$, we have
\begin{equation}\begin{aligned}\label{eq:pos-comm-to-est-normal}
\|B u\|^2&\leq\langle iN(A) u,N(A)\tilde N(P(\sigma))u\rangle-\langle
iN(A)\tilde N(P(\sigma))u,N(A) u\rangle\\
&\leq C\|N(A) u\|_{\Hb^{1/2,-1}}\|N(A)\tilde
N(P(\sigma))u\|_{\Hb^{-1/2,1}}\\
&\leq C\|u\|_{\Hb^{\hat r,l}}\|\tilde
N(P(\sigma))u\|_{\Hb^{\hat r-1,l+2}}.
\end{aligned}\end{equation}
But by the positivity of $\tilde B$, $\|Bu\|$ is equivalent to
$\|u\|_{\Hb^{\hat r,l}}$, so dividing
\eqref{eq:pos-comm-to-est-normal} by the latter proves the weaker estimate
\eqref{eq:eff-norm-op-b-est-base} instead of the desired
\eqref{eq:impr-b-est}.

{\em In summary, we have proved
  \eqref{eq:eff-norm-op-b-est-base-recall}, thus
  Proposition~\ref{prop:eff-norm-op-b-est-base} , and hence Proposition~\ref{prop:impr-b-est}, if we show that
  we can choose $A$ so
  that the positivity of
\eqref{eq:normal-op-computation} holds, with a lower bound by the
square of a positive elliptic symbol of order $\hat r$ on
the Mellin transform/spectral side.}

\subsection{Choice of the operator $\tilde A$ and completion of the
  proof of Proposition~\ref{prop:eff-norm-op-b-est-base}}
In order to obtain a positive \eqref{eq:normal-op-computation},
we arrange below that the principal symbol of $A$ is, for
suitable $\tilde\beta>0$ to be chosen, and with $\psi$ identically $1$
near $0$, of sufficiently small support as in \eqref{eq:b-symbol-choice},
\begin{equation}\begin{aligned}\label{eq:a-choice}
a=x^{-l-1}\psi(x)\exp\Big(&\pm\frac{\tilde\beta}{2}\frac{\taub}{(\taub^2+|\mub|^2)^{1/2}}\\
&\qquad+\Big(\pm\frac{\beta}{2}\frac{\taub}{(\taub^2+|\mub|^2)^{1/2}}\Big)\log\big(\taub^2+|\mub|^2\big)\\
&\qquad-\frac{l+1}{2}\log\big(\taub^2+|\mub|^2\big)\Big),
\end{aligned}\end{equation}
which is
$$
\exp\Big(\pm\frac{\tilde\beta}{2}\frac{\taub}{(\taub^2+|\mub|^2)^{1/2}}\Big)
$$
times the principal symbol used for our symbolic computation in
\eqref{eq:b-symbol-choice} (which uses the same $\tilde r=\hat r$ as here), and is the same as that employed in the more general
computation \eqref{eq:a-choice-mod}. Much as in the more general
$\tilde r$ setting of Proposition~\ref{prop:symbolic-b-est-more}, this extra factor will give us sufficient
flexibility in order to ensure the normal operator positivity. {\em Notice
that it automatically ensures the symbolic positivity of the normal
operator, i.e.\ that it is positive modulo compact terms, and moreover
it can be bounded below by the square of a pseudodifferential operator
modulo compact terms, in accordance with the proof of
Proposition~\ref{prop:symbolic-b-est-more}, cf.\ the discussion around
\eqref{eq:H-p-a-sqrt} for dropping logarithmic terms.}

With this motivation,
in accordance with
\eqref{eq:a-choice}, we may
{\em almost} take $\tilde A$ to be the Mellin conjugate of
\begin{equation}\begin{aligned}\label{eq:tilde-A-choice}
\exp\Big(&\pm\frac{\tilde\beta}{2}\frac{\taub}{(\taub^2+\Delta_{\pa
    X}+\tilde\digamma^2)^{1/2}}\\
&\qquad+\Big(\pm\frac{\beta}{2}\frac{\taub}{(\taub^2+\Delta_{\pa
    X}+\digamma^2)^{1/2}}\Big)\log\big(\taub^2+\Delta_{\pa X}+\digamma^2\big)\\
&\qquad-\frac{l+1}{2}\log\big(\taub^2+\Delta_{\pa 
  X}+\check\digamma^2\big)\Big),
\end{aligned}\end{equation}
where
$$
\tilde\digamma\geq \digamma\geq\check\digamma>1,\ \tilde\beta\geq 0
$$
are parameters, and where the square root and
the logarithm are defined with branch cuts along the negative real
axis, and are real for positive arguments. Here `almost' refers to that
with this choice we do not have an entire (operator-valued) function on the Mellin transform
side, rather simply holomorphic in a strip, of width $2\check\digamma$ around the real
axis; this is easily remedied, see Lemma~\ref{lemma:convolve}. Indeed, in the spectral representation of $\Delta_{\pa X}\geq 0$
it may be replaced by a non-negative spectral parameter $\lambda$, and
then, for complex $\taub$,
$$
\re(\taub^2+\lambda+\check\digamma^2)\geq\check\digamma^2-(\im\taub)^2,
$$
so is positive in $|\im\taub|<\check\digamma$, and similarly for
$\check\digamma$ replaced by $\digamma,\tilde\digamma$, and thus
\eqref{eq:tilde-A-choice} indeed gives a holomorphic function in this
strip. Note that \eqref{eq:normal-op-computation} makes sense for all $\check\digamma>1$ in view of 
the domain of holomorphy. Notice also that \eqref{eq:tilde-A-choice}
{\em does} have the correct principal symbol, i.e.\ behavior as
$(\taub,\mub)\to\infty$, namely that given by \eqref{eq:a-choice}.

While this formula looks complicated, we
mention already now that in the important case of $l+1=0$, which we
discuss separately below, it
simplifies significantly: one can take $\tilde\beta=0$, so only the
second term in the exponent is non-trivial. Otherwise, in general, we
take $\tilde\beta$ to be actually positive, indeed
$$
\tilde\beta=\frac{\pi}{2}\tilde\digamma 
$$
when $l+1\neq 0$. We also remark that while the $\tilde\beta$ term
(being a symbol of order $0$, thus bounded)
does not affect the {\em order} of $\tilde A$, it does affect the {\em
  principal
symbol} when $\tilde\beta>0$; cf.\ Corollary~\ref{cor:tilde-A-princ-symbol}.

We start by remarking that \eqref{eq:tilde-A-choice} is defined via
the functional calculus, which gives a family of pseudodifferential
operators due to the Cauchy-Stokes formula of Helffer-Sj\"ostrand
\cite{Helffer-Sjostrand:Schrodinger} that expresses the function as an
integral involving the resolvent and an almost analytic extension of
the function, see \cite{Hassell-Vasy:Symbolic} for a treatment. Indeed, it is the quantization of a symbol of class
$S^{\tilde r-1/2}_{1-\delta',\delta'}$ for all $\delta'\in(0,1)$, with
principal symbol
\begin{equation}\begin{aligned}\label{eq:tilde-A-choice-symbol}
\exp\Big(&\pm\frac{\tilde\beta}{2}\frac{\taub}{(\taub^2+|\mub|^2+\tilde\digamma^2)^{1/2}}\\
&\qquad+\Big(\pm\frac{\beta}{2}\frac{\taub}{(\taub^2+|\mub|^2+\digamma^2)^{1/2}}\Big)\log\big(\taub^2+|\mub|^2+\digamma^2\big)\\
&\qquad-\frac{l+1}{2}\log\big(\taub^2+|\mub|^2+\check\digamma^2\big)\Big),
\end{aligned}\end{equation}
which is symbolic {\em jointly} in $(\taub,\mub)$ in
$|\im\taub|<\check\digamma$. In fact, if we regard $\check\digamma,\digamma,\tilde\digamma$
as large parameters, notice that the three terms arising by taking the
logarithm have joint symbolic properties. Namely
\begin{equation}\label{eq:log-symbol-tilde}
\frac{\taub}{(\taub^2+|\mub|^2+\tilde\digamma^2)^{1/2}}
\end{equation}
is symbolic jointly in $(\taub,\mub,\tilde\digamma)$ (with $y$ as the
`base variable' which is in a compact set, entering via the dual
metric) of order zero, and similarly
\begin{equation}\label{eq:log-symbol-plain}
\frac{\taub}{(\taub^2+|\mub|^2+\digamma^2)^{1/2}}\log\big(\taub^2+|\mub|^2+\digamma^2\big)
\end{equation}
is symbolic jointly in $(\taub,\mub,\digamma)$ of any positive (indeed logarithmic) 
order, and
\begin{equation}\label{eq:log-symbol-0}
\log\big(\taub^2+|\mub|^2+\check\digamma^2\big),\ 
\end{equation}
is symbolic jointly in $(\taub,\mub,\check\digamma)$ of any positive
order. In addition:

\begin{lemma}\label{lemma:imag-order-minus-one}
In $|\im\taub|<\check\digamma$,
the imaginary part of \eqref{eq:log-symbol-tilde}, resp.\
\eqref{eq:log-symbol-plain}, resp.\ \eqref{eq:log-symbol-0}, is, for all $\delta'>0$, a symbol of order $-1+\delta'$ in
$(\taub,\mub,\tilde\digamma)$, resp.\ $(\taub,\mub,\digamma)$, resp.\
$(\taub,\mub,\check\digamma)$ (in fact, in the first and last cases order $-1$).

Furthermore, on the line $\im\taub=1$, the principal symbol of the
imaginary part of \eqref{eq:log-symbol-tilde}, resp.\ \eqref{eq:log-symbol-plain}, resp.\ \eqref{eq:log-symbol-0}, is
\begin{equation}\label{eq:log-symbol-tilde-pr}
(\taub^2+|\mub|^2+\tilde\digamma^2)^{-3/2}(|\mub|^2+\tilde\digamma^2), \ \text{modulo} \ S^{-3}.
\end{equation}
resp.
\begin{equation}\label{eq:log-symbol-plain-pr}
\big(\taub^2+|\mub|^2+\digamma^2\big)^{-3/2}\big((|\mub|^2+\digamma^2)\log\big(\taub^2+|\mub|^2+\digamma^2\big)+2\taub^2\big), \ \text{modulo} \ S^{-3+\delta'},
\end{equation}
resp.
\begin{equation}\label{eq:log-symbol-0-pr}
2\taub\big(\taub^2+|\mub|^2+\check\digamma^2\big)^{-1},\ \text{modulo} \ S^{-3}.
\end{equation}
Thus, the principal symbols at $\im\taub=1$, when regarded as a
symbol in $(\taub,\mub)$ (i.e.\ $\check\digamma,\digamma,\tilde\digamma$ are fixed or
in a compact set), are
\begin{equation}\label{eq:log-symbol-tilde-pr-cl}
(\taub^2+|\mub|^2)^{-3/2}|\mub|^2
\end{equation}
resp.
\begin{equation}\label{eq:log-symbol-plain-pr-cl}
\big(\taub^2+|\mub|^2\big)^{-3/2}\big(|\mub|^2\log\big(\taub^2+|\mub|^2\big)+2\taub^2\big),
\end{equation}
resp.
\begin{equation}\label{eq:log-symbol-0-pr-cl}
2\taub\big(\taub^2+|\mub|^2\big)^{-1},
\end{equation}
modulo $S^{-3}$, resp.\ $S^{-3+\delta'}$, resp.\ $S^{-3}$.
\end{lemma}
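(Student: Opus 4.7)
The strategy is to exploit that each of the three functions is real for real $\taub$ and holomorphic in the strip where the argument of its square root or logarithm has positive real part (which contains $|\im\taub|<\check\digamma$ since $\tilde\digamma\geq\digamma\geq\check\digamma>1$). Writing $\taub=\sigma+it$ with $\sigma\in\RR$ and $|t|<\check\digamma$, Taylor expansion in $\taub$ around real $\sigma$ gives
\begin{equation*}
\im f(\sigma+it)=\sum_{k=0}^{\infty}\frac{(-1)^k t^{2k+1}}{(2k+1)!}\,\pa_\taub^{2k+1} f(\sigma),
\end{equation*}
so the task reduces to understanding the joint symbolic behavior of the real-axis $\taub$-derivatives of $f$ and then specializing to $t=1$.

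The key symbolic observation is that each $\taub$-differentiation lowers the joint order in $(\taub,\mub,\tilde\digamma)$, resp.\ $(\taub,\mub,\digamma)$, resp.\ $(\taub,\mub,\check\digamma)$, by exactly $1$ for \eqref{eq:log-symbol-tilde} and \eqref{eq:log-symbol-0}, and by $1$ modulo an overall $\delta'$-loss for \eqref{eq:log-symbol-plain} (the logarithmic factor, though raising the order of the base symbol by $\delta'$, has derivatives bounded like order $-1$). Hence the $k=0$ Taylor term is of joint order $-1$ (resp.\ $-1+\delta'$), while the $k\ge 1$ contributions collectively yield a remainder of joint order $-3$ (resp.\ $-3+\delta'$); uniform convergence of the tail at $|t|\leq 1$ follows from Cauchy's estimates applied in the strip $|\im\taub|<\check\digamma$, which is strictly wider than $|\im\taub|\leq 1$. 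This gives the first assertion of the lemma.

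For the explicit principal symbols at $\im\taub=1$, one directly computes $\pa_\taub f$. For \eqref{eq:log-symbol-tilde} the quotient rule produces $(|\mub|^2+\tilde\digamma^2)(\taub^2+|\mub|^2+\tilde\digamma^2)^{-3/2}$, matching \eqref{eq:log-symbol-tilde-pr}. For \eqref{eq:log-symbol-0} one obtains $2\taub(\taub^2+|\mub|^2+\check\digamma^2)^{-1}$, matching \eqref{eq:log-symbol-0-pr}. For \eqref{eq:log-symbol-plain} the product rule gives
\begin{equation*}
(\taub^2+|\mub|^2+\digamma^2)^{-3/2}\bigl[(|\mub|^2+\digamma^2)\log(\taub^2+|\mub|^2+\digamma^2)+2\taub^2\bigr],
\end{equation*}
matching \eqref{eq:log-symbol-plain-pr}, with the errors modulo $S^{-3}$ (resp.\ $S^{-3+\delta'}$) absorbed into the $k\ge 1$ Taylor terms. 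The final assertions \eqref{eq:log-symbol-tilde-pr-cl}--\eqref{eq:log-symbol-0-pr-cl} are obtained by observing that at fixed bounded values of $\tilde\digamma,\digamma,\check\digamma$ the parameter-dependent additive constants in the denominators and numerators contribute only lower-order terms in $(\taub,\mub)$ alone.

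The main obstacle is purely bookkeeping: for \eqref{eq:log-symbol-tilde} and \eqref{eq:log-symbol-0} joint symbolicity in the parameter is immediate because $\pa_\taub$ only touches the $\taub$ slot, but for \eqref{eq:log-symbol-plain} one must track the logarithm carefully, noting that $\log(\taub^2+|\mub|^2+\digamma^2)$ has joint symbolic order $\delta'$ for every $\delta'>0$, so the product rule applied to the logarithm produces the extra $\delta'$-loss that is unavoidable throughout the argument. Once this is kept straight, the computation is entirely explicit.
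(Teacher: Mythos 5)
Your proposal is correct in substance and follows essentially the same route as the paper: exploit that each function is holomorphic in the strip and real on the real axis, expand in the imaginary direction so that only odd $\taub$-derivatives contribute to the imaginary part, read off the principal symbol from the explicit first derivative, and control the rest at two orders lower. The one place where your write-up is thinner than it should be is the claim that the $k\geq 1$ Taylor tail is a joint symbol of order $-3$ (resp.\ $-3+\delta'$): Cauchy's estimates applied to $f$ itself in the strip only give convergence of $\sum_k \frac{1}{(2k+1)!}\pa_{\taub}^{2k+1}f$ with a bound of order $0$ (resp.\ $\delta'$), not order $-3$. To get the stated order you must apply the Cauchy estimates to $\pa_{\taub}^{3}f$ — which is the object of order $-3$, resp.\ $-3+\delta'$, jointly in the strip — writing $\pa_{\taub}^{2k+1}f=\pa_{\taub}^{2k-2}(\pa_{\taub}^{3}f)$ on circles of radius $\rho\in(1,\check\digamma)$, and repeat this for every symbol seminorm (derivatives in $\sigma$, $\mub$ and the large parameter) to sum the seminorms over $k$. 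This works for these explicit functions, but the paper sidesteps the infinite summation entirely by using the symmetric second-order Taylor formula $g(\taub+i)-g(\taub-i)$ with an integral remainder $\frac{1}{2}\int_0^1(1-s)^2\big(\pa^3_{\im\taub}g(\taub+is)+\pa^3_{\im\taub}g(\taub-is)\big)\,ds$: the even-order terms cancel exactly, and the remainder is a single integral over a compact interval of a symbol of order $-3$ (resp.\ $-3+\delta'$), uniformly in $s$, so no tail estimate is needed. Your computations of the first derivatives and the specialization to fixed $\digamma,\tilde\digamma,\check\digamma$ are correct.
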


\begin{rem}
The relevance of $\im\taub=1$ is due to the $+i$ in
\eqref{eq:normal-op-computation}.

In addition, as the principal symbols already indicate, in fact the
symbol is only logarithmically bigger than one of order $-1$.
\end{rem}

\begin{proof}
Since the imaginary part vanishes for real $\taub$ for all these
holomorphic functions, we express it as
the integral, from the real axis along the imaginary direction, of
their derivative; this is an integral over an interval of bounded
length ($=1$). But the derivative is a symbol of order
$-1+\delta'$ for all $\delta'>0$, so
the conclusion of being a symbol of this order follows immediately. The actual
principal symbol arises by simply integrating the principal symbol of
the derivative. To see the more
precise conclusion regarding the error, namely that it is (almost) two
orders lower than the principal term, note that for $g$ real on the
reals, writing $\pa_{\im\taub}=i\pa_{\taub}$ for the derivative
along the imaginary direction, by Taylor's formula,
\begin{equation}\begin{aligned}\label{eq:symm-Taylor}
&2i\im
g(\taub+i)=g(\taub+i)-g(\taub-i)=(g(\taub+i)-g(\taub))-(g(\taub-i)-g(\taub))\\
&\qquad=\big(\pa_{\im\taub}g(\taub)+\frac{1}{2}\pa^2_{\im\taub}g(\taub)+\frac{1}{2}\int_0^1
(1-s)^2\, \pa^3_{\im\taub}g(\taub+is)\,ds\big)\\
&\qquad\qquad-\big(-\pa_{\im\taub}g(\taub)+\frac{1}{2}\pa^2_{\im\taub}g(\taub)-\frac{1}{2}\int_0^1
(1-s)^2\, \pa^3_{\im\taub}g(\taub-is)\,ds\big)\\
&\qquad=2\big(\pa_{\im\taub}g(\taub)+\frac{1}{2}\int_0^1
(1-s)^2\, (\pa^3_{\im\taub}g(\taub+is)+\pa^3_{\im\taub}g(\taub-is))\,ds,
\end{aligned}\end{equation}
and the third derivatives listed are in $S^{-3+\delta'}$ in all cases
and in $S^{-3}$ in the first and last cases.
\end{proof}

The aforementioned limited domain of holomorphy actually suffices for the argument by choosing sufficiently
large $\check\digamma>1$ and working with a large b-calculus, with still
sufficient decay at the left and right boundaries (given by $\pa X$)
so that the departure from the small calculus is irrelevant. However,
it is in any case straightforward to fix this absence of being entire:
simply convolve \eqref{eq:tilde-A-choice} with a Gaussian,
$\frac{1}{\sqrt{\pi s}}e^{-\taub^2/(2s)}$, which {\em does not change the
principal symbol}:

\begin{lemma}\label{lemma:convolve}
Suppose that with $\tilde A$ replaced by the Mellin conjugate
  of multiplication by
  \eqref{eq:tilde-A-choice}, \eqref{eq:normal-op-computation} is
  positive, resp.\ negative, with a lower bound given by the square of a symbol on the Mellin transform/spectral side. Then, for sufficiently small $s>0$,
  letting $\tilde A$ be the Mellin conjugate
  of multiplication by the convolution of
  \eqref{eq:tilde-A-choice} with
  $\frac{1}{\sqrt{\pi s}}e^{-\taub^2/(2s)}$, $\tilde
  A\in\Psib^{\tilde r-1/2,0}$ and the same
  positivity, resp.\ negativity, along with the symbolic property, holds.
\end{lemma}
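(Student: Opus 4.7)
The plan has three steps: (i) check that convolving \eqref{eq:tilde-A-choice} with the Gaussian $G_s(\taub)=\frac{1}{\sqrt{\pi s}}e^{-\taub^2/(2s)}$ produces an entire function of $\taub$, thereby removing the strip-of-holomorphy restriction; (ii) check that the convolved symbol still lies in the same class with the same principal symbol, so that $\tilde A\in\Psib^{\hat r-1/2,0}$; and (iii) check that the positivity/negativity lower bound for \eqref{eq:normal-op-computation} is preserved for $s>0$ sufficiently small.

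Step (i) is immediate from the entirety of $G_s$ together with its Schwartz decay along every horizontal line, which lets one differentiate under the integral sign in $(a\ast G_s)(z,\mub)=\int_{\RR} a(\taub',\mub)G_s(z-\taub')\,d\taub'$, and, by Cauchy's theorem, shift the contour freely inside the strip $|\im\taub'|<\check\digamma$ where $a$ (the symbol of \eqref{eq:tilde-A-choice-symbol}) is holomorphic. For (ii), since $G_s$ is even with $\int G_s=1$ and $\int(\taub')^{2k}G_s\,d\taub'=O(s^k)$, Taylor expansion yields
\begin{equation*}
a\ast G_s-a=\sum_{k=1}^{N}\frac{s^k}{k!\,2^k}\pa_\taub^{2k}a+R_N(\,\cdot\,;s),
\end{equation*}
with $R_N$ controlled by derivatives of $a$ of order $2N+2$. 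The joint-symbolic estimates already used to prove Lemma~\ref{lemma:imag-order-minus-one} show that $\pa_\taub^{2k}a$ is a symbol of order $\hat r-1/2-2k+2k\delta'$, with seminorms uniform in the parameters $(\check\digamma,\digamma,\tilde\digamma)$. Hence each term in the sum is strictly of lower order than $a$ with size $O(s^k)$, and $\tilde A_s-\tilde A_0\in\Psib^{\hat r-5/2+2\delta',0}$ with $s$-small seminorms; in particular $\tilde A_s\in\Psib^{\hat r-1/2,0}$ with the same principal symbol as $\tilde A_0$.

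Step (iii) contains the real content. By hypothesis, the Mellin-spectral side of \eqref{eq:normal-op-computation} for $\tilde A_0$ is bounded below (or above) by $\tilde b_0^2$ for an elliptic symbol $\tilde b_0$ of order $\hat r$ in $(\taub,\lambda)$, $\lambda\geq 0$ the spectral parameter of $\Delta_{\pa X}$. Substituting $\tilde A_s$ changes this expression by (twice the imaginary part of) $\bigl[\tilde A_s(\cdot+i)^2-\tilde A_0(\cdot+i)^2\bigr]\Delta_\bl(\cdot+i(l+1))$, which by (ii) is a symbol of order $\leq 2\hat r-2+2\delta'$ with seminorms going to $0$ as $s\to 0$, hence strictly of lower order than $\tilde b_0^2\in S^{2\hat r}$. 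The main obstacle is converting this symbolic perturbation bound into the pointwise domination by $\tfrac12\tilde b_0^2$ needed to preserve the lower bound: for $(\taub,\lambda)$ with $\taub^2+\lambda$ large, ellipticity of $\tilde b_0$ absorbs any symbol of strictly lower order, while for $(\taub,\lambda)$ in a compact set the uniform $O(s)$ smallness of the perturbation suffices. Choosing $s>0$ small enough that both regimes are controlled, the perturbed expression is still bounded below by $\tfrac12\tilde b_0^2$, giving the positivity/negativity in the stated form with $\tilde b_0$ replaced by $\tilde b_0/\sqrt{2}$.
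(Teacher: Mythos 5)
Your proposal is essentially correct and reaches the same two conclusions (membership in the calculus with unchanged principal symbol, and persistence of the sign for small $s$) by a route that differs from the paper's mainly in where the work is done. The paper argues on the Schwartz-kernel side: convolution in $\taub$ is multiplication of the kernel by a Gaussian in $t-t'=\log(x/x')$, which is $1$ at the diagonal (so the principal symbol is untouched) and superexponentially decaying (so the residual term satisfies the $e^{-M|t-t'|}$ bounds for all $M$ required of $\Psibc^{m,0}$, which is the entire point of the convolution). You instead work on the symbol side via the moment expansion of $a\ast G_s-a$; that cleanly gives the symbol class and the $O(s)$ lower-order difference on the real line, and your step (iii) is then the same two-regime argument as the paper's (ellipticity of $\tilde b_0$ absorbs the lower-order perturbation uniformly in $s$ for $\taub^2+\lambda$ large; uniform convergence on compacta handles the rest for $s$ small). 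Two small points. First, the order bookkeeping in step (iii): $(\tilde A_s-\tilde A_0)(\tilde A_s+\tilde A_0)\Delta_\bl(\cdot+i(l+1))$ has order $2\hat r-1+2\delta'$, not $2\hat r-2+2\delta'$; this is still strictly below $2\hat r$ for $\delta'<1/2$, so nothing breaks. Second, and more substantively: membership of $\tilde A_s$ in the \emph{small} calculus $\Psib^{\hat r-1/2,0}$ requires not just entirety and symbol bounds on $\RR$ but polynomially bounded symbol estimates uniformly on every horizontal strip $|\im\taub|\leq M$ (equivalently, superexponential decay of the residual kernel in $|t-t'|$), which is precisely what the original choice \eqref{eq:tilde-A-choice} lacks outside $|\im\taub|<\check\digamma$. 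Your steps (i)--(ii) do not verify this, though it follows readily from the bound $|G_s(\taub+ic-\taub')|=\pi^{-1/2}s^{-1/2}e^{c^2/(2s)}e^{-(\taub-\taub')^2/(2s)}$, which shows the convolution is, on each line $\im z=c$, again a Gaussian average of the polynomially bounded $a|_{\RR}$. Adding that one line closes the gap; as written, the claim $\tilde A_s\in\Psib^{\hat r-1/2,0}$ is asserted from data that only controls a bounded strip.
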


Thus,
in order to prove Proposition~\ref{prop:eff-norm-op-b-est-base}, it suffices to show the positivity or negativity of 
  \eqref{eq:normal-op-computation} for the choice of multiplication by 
  \eqref{eq:tilde-A-choice} as the candidate for the Mellin conjugate 
  of $\tilde A$; the actual choice will arise by a convolution with a Gaussian.

\begin{proof}
On the Schwartz
kernel side, with boundary defining functions $x,x'$ in the left and
right factors, the convolution by $\frac{1}{\sqrt{\pi s}}e^{-\taub^2/(2s)}$ corresponds to multiplication by a function which,
being a Gaussian in terms of $\log(x/x')$, is
superexponentially decaying, and thus in terms of the defining
functions of the two `side faces', locally $x/x'$ and $x'/x$ in the
regions where these are bounded, is superpolynomially decaying. (See
Section~\ref{sec:background} for a discussion of the decay of the
residual terms; note that the symbolic term, \eqref{eq:b-symbolic-term}, has compact support in
$t-t'$, thus $x/x'$ and $x'/x$ are bounded above on it. Note also that
such multiplication leaves the principal symbol unaffected as the
multiplication is by a function that is $1$ at the diagonal.) While
this convolution changes \eqref{eq:tilde-A-choice}, the asymptotic
behavior (thus positivity) of the Mellin conjugate of \eqref{eq:normal-op-computation},  as $|(\taub,\lambda)|\to\infty$, is unaffected (for any value
of $s>0$, and indeed in a uniform sense as $s\to 0$), moreover letting
$s\to 0$ the convolution converges to \eqref{eq:tilde-A-choice}
uniformly on compact sets, so in view of the asymptotic positivity,
for sufficiently small $s>0$ the convolution results in a
function that is both entire (with appropriate estimates) and
positive, so that $\tilde A\in\Psib^{\tilde r-1/2,0}$ indeed and the desired
positivity holds.
\end{proof}

With the choice \eqref{eq:tilde-A-choice} on the Mellin transform
side, we work on the spectral side of $\Delta_{\pa X}$, and we
may replace the latter by its spectral parameter $\lambda\geq
0$. Then $\tilde A$ becomes a multiplication operator by
\begin{equation}\begin{aligned}\label{eq:tilde-A-choice-mult}
f(\taub)=f_\lambda(\taub)=\exp\Big(&\pm\frac{\tilde\beta}{2}\frac{\taub}{(\taub^2+\lambda+\tilde\digamma^2)^{1/2}}\\
&\qquad+\Big(\pm\frac{\beta}{2}\frac{\taub}{(\taub^2+\lambda+\digamma^2)^{1/2}}\Big)\log\big(\taub^2+\lambda+\digamma^2\big)\\
&\qquad -\frac{l+1}{2}\log\big(\taub^2+\lambda+\check\digamma^2\big)\Big),
\end{aligned}\end{equation}
where we consider $\lambda$ a non-negative parameter and suppress it in
the notation, together with the other parameters
$\beta,\tilde\beta,\digamma,\tilde\digamma,\check\digamma$.  {\em In
  order to show the positivity, resp.\ negativity, of
  \eqref{eq:normal-op-computation},
by \eqref{eq:normal-op-computation} and \eqref{eq:b-Lap}, it suffices
to show that
\begin{equation}\label{eq:commutator-imag-version}
-2\im \Big(f(\taub+i)^2\Big((\taub+i(l+1))^2+\lambda+\Big(\frac{n-2}{2}\Big)^2\Big)\Big)
\end{equation}
is positive, resp.\ negative, (everywhere) for appropriate choices of the
constants $\tilde\beta$, $\check\digamma,\digamma$ and $\tilde\digamma$.} This can be
achieved, via taking logarithms, by showing that
\begin{equation}\begin{aligned}\label{eq:tilde-A-choice-mult-log}
\im\Big(&\pm\tilde\beta\frac{\taub+i}{((\taub+i)^2+\lambda+\tilde\digamma^2)^{1/2}}\Big)\\
&\qquad+\im\Big(\Big(\pm\beta\frac{\taub+i}{((\taub+i)^2+\lambda+\digamma^2)^{1/2}}\Big)\log\big((\taub+i)^2+\lambda+\digamma^2\big)\Big)\\
&\qquad\qquad+\im\Big(-(l+1) \log\big((\taub+i)^2+\lambda+\check\digamma^2\big)\Big)\\
&\qquad\qquad\qquad+\im\log\Big((\taub+i(l+1))^2+\lambda+\Big(\frac{n-2}{2}\Big)^2\Big)
\end{aligned}\end{equation}
is in $(0,\pi)$ (for the negative sign conclusion in
\eqref{eq:commutator-imag-version}, corresponding to $\beta>0$), resp.\ in $(-\pi,0)$ (for the positive
sign conclusion in \eqref{eq:commutator-imag-version}, corresponding
to $\beta<0$).

For the last term, observe here that:

\begin{lemma}\label{lemma:complex-Lap-arg}
Suppose $|l+1|<\frac{n-2}{2}$. Let $\lambda=\nu^2$.
Then the last term of \eqref{eq:tilde-A-choice-mult-log},
\begin{equation}\label{eq:tilde-A-choice-mult-log-3rd}
\im\log\Big((\taub+i(l+1))^2+\nu^2+\Big(\frac{n-2}{2}\Big)^2\Big),
\end{equation}
is a symbol in $(\taub,\nu)$ of order $-1$, with principal symbol
$$
\frac{2(l+1)\taub}{\taub^2+\nu^2},\ \text{modulo}\ S^{-3}.
$$

In addition, there is
  $\alpha_0\in(0,\pi/2)$ such that \eqref{eq:tilde-A-choice-mult-log-3rd}
lies in $[-\alpha_0,\alpha_0]$.

Furthermore, if $l+1=0$, \eqref{eq:tilde-A-choice-mult-log-3rd} vanishes.
\end{lemma}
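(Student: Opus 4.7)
The plan is to compute the real and imaginary parts of the argument of the logarithm and exploit the hypothesis on $l$ to ensure we stay strictly in the open right half-plane, where the principal branch of $\log$ is well-behaved. Writing
$$
(\taub+i(l+1))^2+\nu^2+\Big(\frac{n-2}{2}\Big)^2=R+iI,
$$
I have $R=\taub^2+\nu^2+\big(\tfrac{n-2}{2}\big)^2-(l+1)^2$ and $I=2(l+1)\taub$. The hypothesis $|l+1|<\frac{n-2}{2}$ is precisely the statement that $c^2:=\big(\tfrac{n-2}{2}\big)^2-(l+1)^2>0$, so $R\geq c^2>0$ uniformly on $(\taub,\nu)\in\RR^2$. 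This is the decisive observation: $R+iI$ lies in the open right half-plane, so the principal branch gives $\im\log(R+iI)=\arctan(I/R)\in(-\pi/2,\pi/2)$.

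For the uniform bound I would note that $|I|/R=2|l+1|\,|\taub|/(\taub^2+\nu^2+c^2)$ is maximized at $\nu=0$, $|\taub|=c$, giving $\sup|I/R|=|l+1|/c<\infty$; hence $\alpha_0:=\arctan(|l+1|/c)\in[0,\pi/2)$ works. The case $l+1=0$ is immediate: then $I\equiv 0$, $R+iI=R>0$, and $\im\log R=0$, proving the last assertion (and showing $\alpha_0=0$ in that case, while $\alpha_0>0$ when $l+1\neq 0$).

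For the symbolic statement, since $R=(\taub^2+\nu^2)+c^2$ with $c^2>0$, a geometric expansion gives $R^{-1}=(\taub^2+\nu^2)^{-1}$ modulo $S^{-4}$ in $(\taub,\nu)$, while $I\in S^{1}$; whence $I/R=\frac{2(l+1)\taub}{\taub^2+\nu^2}$ modulo $S^{-3}$, and in particular $I/R\in S^{-1}$. Because $|I/R|$ is globally bounded by $|l+1|/c$, the series $\arctan(t)=t-t^3/3+\dots$ yields $\arctan(I/R)-I/R\in S^{-3}$ (each $(I/R)^k$ lying in $S^{-k}$). Combining, $\arctan(I/R)=\frac{2(l+1)\taub}{\taub^2+\nu^2}$ modulo $S^{-3}$, which is the claimed principal symbol.

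The only substantive obstacle is the positivity of $R$, and it is secured exactly by the hypothesis $|l+1|<\frac{n-2}{2}$; without it the value $R+iI$ could cross the negative real axis, the principal branch of the logarithm would be unavailable, and both the uniform bound by $\alpha_0<\pi/2$ and the clean Taylor expansion of $\arctan$ would break down. This is precisely why that range appears in the hypotheses of the main theorems.
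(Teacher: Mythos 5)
Your proof is correct, and it takes a genuinely different route from the paper's. You decompose the argument of the logarithm explicitly into $R+iI$ with $R=\taub^2+\nu^2+c^2$, $c^2=(\frac{n-2}{2})^2-(l+1)^2>0$, and $I=2(l+1)\taub$, and then work with $\arctan(I/R)$ directly; the paper instead treats \eqref{eq:tilde-A-choice-mult-log-3rd} as the imaginary part of a holomorphic function that is real on the real axis, recovers it by integrating the $\taub$-derivative $2\taub(\taub^2+\nu^2+(\frac{n-2}{2})^2)^{-1}$ from the real axis to $\im\taub=l+1$, and gets the improved $S^{-3}$ error from the symmetric Taylor expansion \eqref{eq:symm-Taylor}, with the $[-\alpha_0,\alpha_0]$ bound deduced from decay at infinity plus compactness. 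Your approach is more elementary and even yields a sharper, explicit constant $\alpha_0=\arctan(|l+1|/c)$, whereas the paper's fits this lemma into the same template as Lemma~\ref{lemma:imag-order-minus-one}, which it needs anyway for the other three terms of \eqref{eq:tilde-A-choice-mult-log} where no such clean closed form is available. One small point of rigor in your symbolic step: the Taylor series of $\arctan$ only converges for $|t|\le 1$, and $|I/R|$ may exceed $1$ on a compact set when $|l+1|/c>1$; the clean way to phrase it is that $\arctan(t)-t=t^3g(t)$ with $g$ smooth on $\RR$, so $\arctan(I/R)-I/R=(I/R)^3\,g(I/R)\in S^{-3}\cdot S^0\subset S^{-3}$ (the compact region being irrelevant for symbol estimates). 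With that rewording the argument is complete.
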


\begin{proof}
Since
$$
\re\Big((\taub+i(l+1))^2+\lambda+\Big(\frac{n-2}{2}\Big)^2\Big)\Big)=\taub^2-(l+1)^2+\Big(\frac{n-2}{2}\Big)^2>0,
$$
\eqref{eq:tilde-A-choice-mult-log-3rd}, which is the
argument of the complex number whose real part is displayed, is in
$\big(-\frac{\pi}{2},\frac{\pi}{2}\big)$.

In addition, much like in Lemma~\ref{lemma:imag-order-minus-one}, as
$\log\Big(\taub^2+\nu^2+\Big(\frac{n-2}{2}\Big)^2\Big)$ is
real on the real axis, is holomorphic in $|\im\taub|<\frac{n-2}{2}$,
and its derivative is a symbol of order $-1$ in $(\taub,\nu)$, its imaginary part is
also a
symbol of order $-1$ in this strip, thus tends to $0$ as
$|(\taub,\nu)|\to\infty$. Since the derivative is
$$
2\taub\Big(\taub^2+\nu^2+\Big(\frac{n-2}{2}\Big)^2\Big)^{-1},
$$
integrating from the real axis to the line with imaginary part $l+1$
gives the principal symbol claim (in this sense the constant term is
irrelevant). The improved error term (the modulo $S^{-3}$ statement) follows from \eqref{eq:symm-Taylor} in the proof
of
Lemma~\ref{lemma:imag-order-minus-one}.

Thus, \eqref{eq:tilde-A-choice-mult-log-3rd}
is bounded away from 
the endpoints of the interval $\big(-\frac{\pi}{2},\frac{\pi}{2}\big)$ since as $|(\taub,\nu)|\to\infty$
it tends to $0$, and in a compact region this boundedness claim
automatically holds.

The last part follows since in this case
\eqref{eq:tilde-A-choice-mult-log-3rd} is the argument of a positive number.
\end{proof}

As a consequence of Lemma~\ref{lemma:imag-order-minus-one} and
Lemma~\ref{lemma:complex-Lap-arg}, we have:

\begin{cor}\label{cor:tilde-A-princ-symbol}
For any $\tilde\digamma\geq\digamma\geq\check\digamma>1$,
the expression \eqref{eq:tilde-A-choice-mult-log} is, for all $\delta'>0$, a symbol of order
$-1+\delta'$ in $(\taub,\nu)$, with principal symbol
$$
\pm\beta(\taub^2+\nu^2)^{-3/2}\big(\nu^2\log\big(\taub^2+\nu^2\big)+2\taub^2\big)\pm\tilde\beta(\taub^2+\nu^2)^{-3/2}\nu^2,
$$
and is thus positive/negative depending on the $\pm$ sign. Thus, for
any such $\digamma,\tilde\digamma,\check\digamma$,
\eqref{eq:tilde-A-choice-mult-log} lies in $(0,\pi)$, resp.\
$(-\pi,0)$, indeed in an arbitrarily small specified one-sided
neighborhood of $0$, for sufficiently large $(\taub,\nu)$.
\end{cor}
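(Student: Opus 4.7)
My plan is to assemble the result by summing the contributions of the four terms in \eqref{eq:tilde-A-choice-mult-log} using Lemma~\ref{lemma:imag-order-minus-one} and Lemma~\ref{lemma:complex-Lap-arg}, treating $\tilde\digamma\geq\digamma\geq\check\digamma>1$ as fixed and writing $|\mub|^2=\nu^2$. The first summand is $\pm\tilde\beta$ times the imaginary part at $\im\taub=1$ of \eqref{eq:log-symbol-tilde}, which by Lemma~\ref{lemma:imag-order-minus-one} (see \eqref{eq:log-symbol-tilde-pr-cl}) lies in $S^{-1}$ with principal symbol $\nu^2(\taub^2+\nu^2)^{-3/2}$. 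The second is $\pm\beta$ times the imaginary part of \eqref{eq:log-symbol-plain}, which by \eqref{eq:log-symbol-plain-pr-cl} lies in $S^{-1+\delta'}$ for every $\delta'>0$ with principal symbol $(\taub^2+\nu^2)^{-3/2}\bigl(\nu^2\log(\taub^2+\nu^2)+2\taub^2\bigr)$. The third is $-(l+1)$ times the imaginary part of \eqref{eq:log-symbol-0}, which by \eqref{eq:log-symbol-0-pr-cl} lies in $S^{-1}$ with principal symbol $2\taub(\taub^2+\nu^2)^{-1}$, thus contributing $-2(l+1)\taub(\taub^2+\nu^2)^{-1}$. The fourth, by Lemma~\ref{lemma:complex-Lap-arg}, lies in $S^{-1}$ with principal symbol $+2(l+1)\taub(\taub^2+\nu^2)^{-1}$.

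The key observation is that the third and fourth principal symbols cancel exactly. Thus only the first two contributions survive, and adding them yields precisely the principal symbol claimed in the corollary. The overall symbolic order is $-1+\delta'$ (for every $\delta'>0$), the worst of the four being the second term on account of its logarithmic factor; the first, third, and fourth are genuinely order $-1$.

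For the sign statement, note that $\nu^2\geq 0$ and, once $\taub^2+\nu^2\geq 1$, also $\nu^2\log(\taub^2+\nu^2)+2\taub^2\geq 0$ with strict positivity for $(\taub,\nu)\neq 0$. Therefore, with $\beta>0$ and $\tilde\beta\geq 0$, the principal symbol of \eqref{eq:tilde-A-choice-mult-log} has the sign $\pm$ throughout the region $\taub^2+\nu^2\geq 1$, and is a non-negative (in absolute value) multiple of the log factor, which dominates the constant-$S^{-1}$ error. Since \eqref{eq:tilde-A-choice-mult-log} is itself symbolic of order $-1+\delta'$, it tends to $0$ as $|(\taub,\nu)|\to\infty$, so for sufficiently large $(\taub,\nu)$ it lies in an arbitrarily small one-sided neighborhood of $0$, in particular in $(0,\pi)$ respectively $(-\pi,0)$.

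The only mildly delicate point is the cancellation of the third- and fourth-term principal symbols; this is bookkeeping but essential. Structurally, it is exactly the statement that the weight $x^{-l-1}$ built into the commutant \eqref{eq:a-choice} is absorbed by the Mellin shift $\taub\mapsto\taub-i(l+1)$ arising from the $x^{l+1}\Delta_\bl x^{-l-1}$ conjugation in \eqref{eq:normal-op-computation}, which is why the sign of the principal symbol is driven by the genuine positive-commutator terms (controlled by $\beta$ and $\tilde\beta$) rather than by $l+1$.
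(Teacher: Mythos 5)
Your proposal is correct and follows essentially the same route as the paper: sum the principal symbols of the four summands via Lemma~\ref{lemma:imag-order-minus-one} and Lemma~\ref{lemma:complex-Lap-arg}, observe the exact cancellation of the $\mp 2(l+1)\taub(\taub^2+\nu^2)^{-1}$ contributions from the third and fourth terms, and conclude the sign and smallness at infinity from the fact that the surviving principal part has an elliptic order~$-1$ lower bound while the whole expression is a symbol of negative order. (Minor nitpick: the errors in the principal-symbol approximations are $S^{-3+\delta'}$, not $S^{-1}$, which only makes the dominance easier.)
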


\begin{rem}
We emphasize that we are computing the argument (imaginary part of the
logarithm), not the imaginary part of the function multiplication by
which gives the modified (by multiplication by powers of $x$) normal operator \eqref{eq:normal-op-computation} on the Mellin transform/spectral side. The
actual imaginary part of that function is a symbol of order $2\tilde
r+\delta'$ for all $\delta'>0$, corresponding
to the real part of the function being a symbol of order $2(\tilde
r-1/2)+2=2\tilde r+1$.
\end{rem}

\begin{proof}
We just need to observe that, by using the above expressions for the
principal symbols of the four summands, the principal symbol of \eqref{eq:tilde-A-choice-mult-log} is
\begin{equation*}\begin{aligned}
&\pm\tilde\beta(\taub^2+\nu^2)^{-3/2}\nu^2\\
&\qquad\pm\beta(\taub^2+\nu^2)^{-3/2}\big(\nu^2\log\big(\taub^2+\nu^2\big)+2\taub^2\big)\\
&\qquad -(l+1)2\taub\big(\taub^2+\nu^2\big)^{-1}\\
&\qquad
+\frac{2(l+1)\taub}{\taub^2+\nu^2}\\
&=\pm\beta(\taub^2+\nu^2)^{-3/2}\big(\nu^2\log\big(\taub^2+\nu^2\big)+2\taub^2\big) \pm\tilde\beta(\taub^2+\nu^2)^{-3/2}\nu^2,
\end{aligned}\end{equation*}
which is positive, up to the $\pm$ sign. The final part follows as
symbols of negative order tend to $0$ at infinity.
\end{proof}

Note that when $|l+1|$ is close to $\frac{n-2}{2}$,
\eqref{eq:tilde-A-choice-mult-log-3rd} can be arbitrarily close to
both of $\pi/2,-\pi/2$, and does so for $\taub$ very close to $0$, which
makes the treatment of this problem harder since the other terms in
\eqref{eq:tilde-A-choice-mult-log} need to be in a very precisely
controlled interval around $\pi/2$ so that the sum is in $(0,\pi)$.

Notice that $l+1=0$ corresponds to the
`symmetric' (in terms of weight) mapping $\Hb^{\tilde
  r,-1}\to\Hb^{\tilde r-1,1}$. The last statement in Lemma~\ref{lemma:complex-Lap-arg} makes the $l+1=0$ case a bit
simpler, so we discuss it first. (If one takes $\beta>0$ small, it can
{\em easily}
be made {\em completely explicit, without the use of large
  parameters}, though this deducts from the simplicity, and moreover
we need to allow $\beta>1$ in the statement of
Proposition~\ref{prop:eff-norm-op-b-est-base} in order to prove Proposition~\ref{prop:impr-b-est}.)

\subsubsection{The case $l+1=0$ and preliminary computations for the
  general case}\label{subsubsec:special-l}
If $l+1=0$, we  simply take $\tilde\beta=0$, so (in addition to the
third and fourth terms) the first term of \eqref{eq:tilde-A-choice-mult-log} also
vanishes (and $\tilde\digamma$ becomes irrelevant), and we need
to estimate the second term only. This will be useful for the general
case as well, so we state it as a lemma.

\begin{lemma}\label{lemma:tilde-A-choice-mult-log-2nd-est}
The second term of \eqref{eq:tilde-A-choice-mult-log}, with the
notation $\lambda=\nu^2$, and without the
$\beta-\tilde\beta$ prefactor,
\begin{equation}\label{eq:tilde-A-choice-mult-log-2nd-est}
II=\im\Big(\Big(\frac{\taub+i}{((\taub+i)^2+\nu^2+\digamma^2)^{1/2}}\Big)\log\big((\taub+i)^2+\nu^2+\digamma^2\big)\Big),
\end{equation}
satisfies the following: let $\varepsilon>0$. There exists $\digamma_*>0$
such that for $\digamma\geq\digamma_*$, $II\in (0,\varepsilon)$ for all $(\taub,\nu)$.
\end{lemma}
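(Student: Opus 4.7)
\textbf{Proof plan for Lemma~\ref{lemma:tilde-A-choice-mult-log-2nd-est}.}
The plan is to derive both the upper bound and the positivity from the symbolic structure already worked out in Lemma~\ref{lemma:imag-order-minus-one}, exploiting the joint symbolic behavior in $(\taub,\nu,\digamma)$ together with an explicit positive lower bound for the principal symbol.

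First I will dispose of the \emph{upper bound}. By Lemma~\ref{lemma:imag-order-minus-one}, the function $II$ is a symbol of order $-1+\delta'$ jointly in $(\taub,\nu,\digamma)$, for any $\delta'>0$. Consequently there is a constant $C>0$, independent of $\digamma$, such that
\begin{equation*}
|II(\taub,\nu)| \le C\,\langle(\taub,\nu,\digamma)\rangle^{-1+\delta'} \le C\,\digamma^{-1+\delta'},
\end{equation*}
so by choosing $\digamma_*$ large enough that $C\digamma_*^{-1+\delta'}<\varepsilon$, we get $II<\varepsilon$ uniformly in $(\taub,\nu)$.

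For the \emph{positivity} I would use the explicit principal symbol \eqref{eq:log-symbol-plain-pr-cl}, upgraded to its form with the parameter $\digamma$ included, namely
\begin{equation*}
F'(\taub)=(\taub^2+\nu^2+\digamma^2)^{-3/2}\Big[(\nu^2+\digamma^2)\log(\taub^2+\nu^2+\digamma^2)+2\taub^2\Big],
\end{equation*}
which is exactly the derivative obtained from the Taylor expansion \eqref{eq:symm-Taylor}; here $II=F'(\taub)+E$, where $E$ is the Taylor remainder coming from the third derivative and is therefore, by the joint symbolic estimate for the third derivative, bounded by $C'\langle(\taub,\nu,\digamma)\rangle^{-3+\delta'}$. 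Assuming $\digamma\ge\sqrt e$, one has $\taub^2+\nu^2+\digamma^2\ge e$, hence $\log(\taub^2+\nu^2+\digamma^2)\ge 1$, so
\begin{equation*}
F'(\taub)\ge (\taub^2+\nu^2+\digamma^2)^{-3/2}\Big[(\nu^2+\digamma^2)+2\taub^2\Big]\ge \langle(\taub,\nu,\digamma)\rangle^{-1},
\end{equation*}
using that $(\nu^2+\digamma^2)+2\taub^2\ge \taub^2+\nu^2+\digamma^2$.

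Combining the two displays,
\begin{equation*}
II \ge \langle(\taub,\nu,\digamma)\rangle^{-1}-C'\langle(\taub,\nu,\digamma)\rangle^{-3+\delta'}=\langle(\taub,\nu,\digamma)\rangle^{-1}\Big(1-C'\langle(\taub,\nu,\digamma)\rangle^{-2+\delta'}\Big),
\end{equation*}
and since $\langle(\taub,\nu,\digamma)\rangle\ge\digamma$, the parenthesized expression is bounded below by $1/2$ as soon as $\digamma\ge (2C')^{1/(2-\delta')}$, forcing $II>0$. Thus choosing $\digamma_*$ to exceed all the thresholds above yields $II\in(0,\varepsilon)$ uniformly on all of $(\taub,\nu)\in\RR\times[0,\infty)$, as required. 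The only mildly delicate step is the lower bound for $F'$; it requires recognizing that the logarithmic factor is at least $1$ in the relevant range, and then using $2\taub^2\ge\taub^2$ to absorb the $\taub^2$ contribution into the elliptic lower bound rather than fighting against it.
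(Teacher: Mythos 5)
Your proposal is correct and follows essentially the same route as the paper: split $II$ into the explicit derivative term (the large-parameter principal symbol, via the symmetric Taylor expansion \eqref{eq:symm-Taylor}) plus a remainder of order $-3+\delta'$, bound the principal part below by an elliptic order $-1$ quantity using $\log(\taub^2+\nu^2+\digamma^2)\geq 1$, and absorb the error by taking $\digamma$ large. The only cosmetic difference is that you get the upper bound directly from the joint order $-1+\delta'$ symbol estimate of Lemma~\ref{lemma:imag-order-minus-one}, whereas the paper bounds the principal part and the error separately; both yield the same conclusion.
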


\begin{cor}
For any $\beta\neq 0$ there exists $\digamma_*>0$ such that for
$\digamma\geq\digamma_*$, \eqref{eq:tilde-A-choice-mult-log} is in
$(0,\pi)$ if $\beta>0$, and \eqref{eq:tilde-A-choice-mult-log} is in
$(-\pi,0)$ if $\beta<0$.
\end{cor}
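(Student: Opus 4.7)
The plan is to combine the vanishing of most terms in \eqref{eq:tilde-A-choice-mult-log} in the special case $l+1=0$ with the quantitative control on the one surviving term provided by Lemma~\ref{lemma:tilde-A-choice-mult-log-2nd-est}. Since the corollary sits inside Section~\ref{subsubsec:special-l}, we are taking $l+1=0$ and $\tilde\beta=0$, so the first term of \eqref{eq:tilde-A-choice-mult-log} (which carries the factor $\tilde\beta$) vanishes identically, the third term (which carries the factor $(l+1)$) vanishes identically, and by the last assertion of Lemma~\ref{lemma:complex-Lap-arg} the fourth term $\im\log\bigl((\taub+i(l+1))^2+\lambda+((n-2)/2)^2\bigr)$ also vanishes identically.

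Hence only the second term survives. Factoring out the real constant $\pm\beta$, that term equals $\pm\beta\cdot II$ with $II$ as in Lemma~\ref{lemma:tilde-A-choice-mult-log-2nd-est}. The strategy is then to invoke that lemma with $\varepsilon := \pi/(2|\beta|)$ (for the fixed $\beta\neq 0$ appearing in the corollary), which produces a threshold $\digamma_*>0$ such that for every $\digamma\geq\digamma_*$ and every $(\taub,\nu)$,
\[
II\in(0,\varepsilon)=\bigl(0,\pi/(2|\beta|)\bigr).
\]

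Multiplying by $\pm\beta$, for the $+$ sign choice with $\beta>0$ the full expression \eqref{eq:tilde-A-choice-mult-log} lies in $(0,\pi/2)\subset(0,\pi)$, while for the $-$ sign choice with $\beta>0$ it lies in $(-\pi/2,0)\subset(-\pi,0)$; the two cases with $\beta<0$ are symmetric. In particular the sign of the surviving contribution is dictated by the sign of $\beta$ (together with the choice of $\pm$), which matches the statement. No obstacles are expected: the proof is just bookkeeping on the four terms in \eqref{eq:tilde-A-choice-mult-log}, and all the analytic work has already been done in Lemmas~\ref{lemma:complex-Lap-arg} and~\ref{lemma:tilde-A-choice-mult-log-2nd-est}. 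The only thing to be slightly careful about is choosing $\varepsilon$ small relative to $|\beta|$ (rather than relative to $\pi$) so that the factor $\pm\beta$ does not push the result out of $(-\pi,\pi)$.
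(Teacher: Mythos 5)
Your proof is correct and is essentially identical to the paper's own argument: with $l+1=0$ and $\tilde\beta=0$ the first, third, and fourth terms of \eqref{eq:tilde-A-choice-mult-log} vanish (the last by Lemma~\ref{lemma:complex-Lap-arg}), and applying Lemma~\ref{lemma:tilde-A-choice-mult-log-2nd-est} with $\varepsilon=\pi/(2|\beta|)$ places the surviving term in the required interval. No gaps.
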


\begin{proof}
Take $\varepsilon=\pi/(2|\beta|)$, and apply
Lemma~\ref{lemma:tilde-A-choice-mult-log-2nd-est}, noting that, with $\tilde\beta=0$, the
  only non-zero term of \eqref{eq:tilde-A-choice-mult-log} is the
  second one, which is in $(0,\pi)$.
\end{proof}

\begin{proof}[Proof of
  Lemma~\ref{lemma:tilde-A-choice-mult-log-2nd-est}]
In the large parameter sense, the principal symbol of
\eqref{eq:tilde-A-choice-mult-log-2nd-est} is
\begin{equation}\label{eq:tilde-A-choice-mult-log-2nd-est-princ}
(\taub^2+\nu^2+\digamma^2)^{-3/2}\big((\nu^2+\digamma^2)\log(\taub^2+\nu^2+\digamma^2)+2\tau^2\big)
\end{equation}
modulo $S^{-3+\delta'}$, $\delta'>0$,
thus for any $\delta'>0$, \eqref{eq:tilde-A-choice-mult-log-2nd-est}  differs from this principal symbol, in absolute value, by
\begin{equation}\label{eq:tilde-A-choice-mult-log-2nd-est-error}
\leq C_1(\taub^2+\nu^2+\digamma^2)^{-3/2+\delta'}\leq C_1\digamma^{-1+2\delta'}(\taub^2+\nu^2+\digamma^2)^{-1}
\end{equation}
with $C_1$ independent of $\digamma$.
On the other hand, for $\digamma\geq 2$, \eqref{eq:tilde-A-choice-mult-log-2nd-est-princ}
has a lower bound
$$
(\taub^2+\nu^2+\digamma^2)^{-3/2}\big((\nu^2+\digamma^2)\log(\taub^2+\nu^2+\digamma^2)+2\tau^2\big)\geq (\taub^2+\nu^2+\digamma^2)^{-1/2}
$$
since the logarithm is $\geq 2\log\digamma\geq 1$,
and an upper bound
$$
2(\taub^2+\nu^2+\digamma^2)^{-1/2}\log(\taub^2+\nu^2+\digamma^2)\leq C_2(\taub^2+\nu^2+\digamma^2)^{-1/4}.
$$
Thus, in view of \eqref{eq:tilde-A-choice-mult-log-2nd-est-error}, there is $\digamma_*>2$ such that for $\digamma>\digamma_*$,
\eqref{eq:tilde-A-choice-mult-log-2nd-est} has a lower bound
\begin{equation}\label{eq:tilde-A-choice-mult-log-2nd-est-LB}
\frac{1}{2}(\taub^2+\nu^2+\digamma^2)^{-1/2}>0
\end{equation}
and an upper bound
$$
3(\taub^2+\nu^2+\digamma^2)^{-1/2}\log(\taub^2+\nu^2+\digamma^2)\leq C_2\digamma^{-1/2}<\varepsilon.
$$
This proves the lemma.
\end{proof}

This corollary implies that for in case $l+1=0$, with $\beta>0$
  arbitrary (and $\tilde\beta=0$), with $\digamma$ chosen sufficiently large (depending on
  $\beta$)
  \eqref{eq:commutator-imag-version}, thus
  \eqref{eq:normal-op-computation}, are indeed negative.
{\em In particular, Proposition~\ref{prop:eff-norm-op-b-est-base} holds in
this case}, completing the proof of Theorem~\ref{thm:main} when $l+1=0$. 

Also note that the positivity result for $l+1=0$ (choosing $\digamma$
depending on $\beta$, and then allowing $l+1$ of size depending on
these two choices) proves an analogous
result for $|l+1|$ small since \eqref{eq:commutator-imag-version} will
have an unchanged sign.

\subsubsection{The case of general $l+1$}\label{subsubsec:general-l}

In the general case of
$|l+1|<\frac{n-2}{2}$
we first
choose $\check\digamma>1$ so that the sum of the last two terms is still
in a compact subinterval of $(-\pi/2,\pi/2)$, see Lemma~\ref{lemma:last-terms-est}, then choose $\digamma$
sufficiently large so that the same property remains when the second
term is added, and in addition for sufficiently large
$\taub^2+\nu^2$ this sum is positive, and indeed close to $0$, see
Corollary~\ref{cor:last-3-terms-est}, and
then let $\tilde\beta=\frac{\pi}{2}\tilde\digamma$ and choose $\tilde\digamma$ sufficiently large so that the first
term is close to the constant $\pi/2$ on the compact region of
$(\taub,\nu)$ where we had not established positivity and is bounded
below by a suitably small negative quantity
everywhere, bounded above by a constant slightly greater than $\pi/2$, so that the
total sum is in $(0,\pi)$, see Lemma~\ref{lemma:first-term-est} and Corollary~\ref{cor:total-est}.

We thus prove:

\begin{lemma}\label{lemma:last-terms-est}
There exists $\check\digamma_*>1$ such that for
$\check\digamma\geq\check\digamma_*$, the last two terms of
\eqref{eq:tilde-A-choice-mult-log} (with the notation $\lambda=\nu^2$),
\begin{equation}\begin{aligned}\label{eq:tilde-A-choice-mult-log-last-two}
&\im\Big(-(l+1) \log\big((\taub+i)^2+\nu^2+\check\digamma^2\big)\Big)\\
&\qquad+\im\log\Big((\taub+i(l+1))^2+\nu^2+\Big(\frac{n-2}{2}\Big)^2\Big),
\end{aligned}\end{equation}
have opposite signs (in the sense that if one is $\geq 0$, the other
is $\leq 0$), the sum
lies in a compact subinterval $[-\alpha_0,\alpha_0]$ of
$(-\pi/2,\pi/2)$, and is a symbol of order $-3$ in $(\taub,\nu)$.

Thus, there exist $C_0\geq 1$ such that for
$|(\taub,\nu)|\geq 1$, the absolute value of the sum is $\leq
C_0|(\taub,\nu)|^{-3}$.
\end{lemma}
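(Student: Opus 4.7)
The lemma is engineered so the two summands cancel to high order, and the proof will exploit this through three steps: an explicit sign analysis, a bound via the opposite-sign property, and cancellation of principal symbols coming from the two earlier lemmas.

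First, expanding,
\begin{align*}
(\taub+i)^2+\nu^2+\check\digamma^2 &= (\taub^2+\nu^2+\check\digamma^2-1)+2i\taub,\\
(\taub+i(l+1))^2+\nu^2+\Big(\tfrac{n-2}{2}\Big)^2 &= \Big(\taub^2+\nu^2+\Big(\tfrac{n-2}{2}\Big)^2-(l+1)^2\Big)+2i(l+1)\taub,
\end{align*}
which have strictly positive real parts for $\check\digamma>1$ and $|l+1|<\tfrac{n-2}{2}$, so the logarithms are single-valued on the principal branch. Writing the two summands of \eqref{eq:tilde-A-choice-mult-log-last-two} as $A_1$ and $A_2$, one then has
\[
A_1=-(l+1)\arctan\!\frac{2\taub}{\taub^2+\nu^2+\check\digamma^2-1},\qquad
A_2=\arctan\!\frac{2(l+1)\taub}{\taub^2+\nu^2+\big(\tfrac{n-2}{2}\big)^2-(l+1)^2}.
\]
The sign of $A_1$ is $-\sgn((l+1)\taub)$ and the sign of $A_2$ is $\sgn((l+1)\taub)$, so they are opposite (vanishing simultaneously when $(l+1)\taub=0$), giving the first assertion.

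Next, the opposite-sign property immediately implies $|A_1+A_2|\leq\max(|A_1|,|A_2|)$ pointwise. Lemma~\ref{lemma:complex-Lap-arg} supplies $\alpha_0\in(0,\pi/2)$ with $|A_2|\leq\alpha_0$. For $A_1$, the elementary AM--GM inequality $|2\taub|/(\taub^2+c)\leq 1/\sqrt{c}$ with $c=\check\digamma^2-1$ gives $|A_1|\leq|l+1|\arctan(1/\sqrt{\check\digamma^2-1})$, which tends to $0$ as $\check\digamma\to\infty$. Choosing $\check\digamma_*>1$ large enough that this upper bound is at most $\alpha_0$, the sum lies in $[-\alpha_0,\alpha_0]$ for every $\check\digamma\geq\check\digamma_*$.

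Finally, the symbolic decay follows from invoking the prior two lemmas. By Lemma~\ref{lemma:imag-order-minus-one}, formula \eqref{eq:log-symbol-0-pr-cl}, the classical principal symbol of $\im\log((\taub+i)^2+\nu^2+\check\digamma^2)$ in $(\taub,\nu)$ is $2\taub/(\taub^2+\nu^2)$ modulo $S^{-3}$; hence $A_1=-2(l+1)\taub/(\taub^2+\nu^2)$ modulo $S^{-3}$. By Lemma~\ref{lemma:complex-Lap-arg}, $A_2=2(l+1)\taub/(\taub^2+\nu^2)$ modulo $S^{-3}$. The $S^{-1}$ contributions cancel identically, so $A_1+A_2\in S^{-3}$ in $(\taub,\nu)$, and the pointwise bound $|A_1+A_2|\leq C_0|(\taub,\nu)|^{-3}$ for $|(\taub,\nu)|\geq 1$ is the standard $0$-th order symbolic estimate applied to this decay order. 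The essential structural input --- cancellation at leading order, enforced by the specific coefficients $-(l+1)$ and $+1$ in front of the two logarithms --- has already been arranged by the two earlier lemmas, so I expect no substantive obstacle beyond the assembly just described.
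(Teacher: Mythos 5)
Your proof is correct and follows essentially the same route as the paper's: opposite signs from the explicit imaginary parts, largeness of $\check\digamma$ to control the first term, and cancellation of the order $-1$ principal symbols (via Lemmas~\ref{lemma:imag-order-minus-one} and \ref{lemma:complex-Lap-arg}) to land in $S^{-3}$. Your use of $|A_1+A_2|\leq\max(|A_1|,|A_2|)\leq\alpha_0$ is a slightly more direct and quantitative way to get the compact subinterval than the paper's argument (which combines the opposite-sign observation with decay at infinity), but the substance is the same.
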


\begin{rem}\label{rem:constants-normalized-last-terms}
The upper bound in the last statement is only useful on
$|(\taub,\nu)|\geq R_0$ where $C_0R_0^{-3}=\alpha_0$, i.e.\
$R_0=C_0^{1/3}/\alpha_0^{1/3}$.
\end{rem}

\begin{proof}
Since $\im
\Big((\taub+i(l+1))^2+\nu^2+\Big(\frac{n-2}{2}\Big)^2\Big)=2(l+1)\taub$,
$$
\im\log\Big((\taub+i(l+1))^2+\nu^2+\Big(\frac{n-2}{2}\Big)^2\Big)\in(-\pi/2,\pi/2)
$$
is $\geq 0$, resp.\ $\leq 0$ when $2(l+1)\taub\geq 0$, resp.\
$2(l+1)\taub\leq 0$.

On the other hand,
$\im\big((\taub+i)^2+\nu^2+\check\digamma^2\big)=2\taub$, hence has the
sign of $\taub$, and thus, as
$\re\big((\taub+i)^2+\nu^2+\check\digamma^2\big)=\taub^2-1+\nu^2+\check\digamma^2>0$, 
$\im\log\big((\taub+i)^2+\nu^2+\check\digamma^2\big)$ is in $[0,\pi/2)$,
resp.\ $(-\pi/2,0]$ corresponding to whether $\taub\geq 0$ or
$\taub\leq 0$. Moreover, by choosing $\check\digamma>1$ sufficiently
large, one can arrange that
$|\im\log\big((\taub+i)^2+\nu^2+\check\digamma^2\big)|<\frac{\pi}{2(1+|l+1|)}$,
as follows from the large parameter symbolic considerations from
Lemma~\ref{lemma:imag-order-minus-one} namely that the left hand side is bounded by
$C\big(\taub^2+\nu^2+\check\digamma^2\big)^{-1/2}\leq C/\check\digamma $, with $C$ independent
of $\check\digamma$. (It is also straightforward to arrange this
explicitly.) Thus,
$$
\im\Big(-(l+1)
\log\big((\taub+i)^2+\nu^2+\check\digamma^2\big)\Big)\in \Big
(-\frac{\pi|l+1|}{2(1+|l+1|)},
\frac{\pi|l+1|}{2(1+|l+1|)}\Big)\subset\Big(-\frac{\pi}{2},\frac{\pi}{2}\Big),
$$
with sign matching that of $-(l+1)\taub$, thus indeed the opposite of
that of the second term of
\eqref{eq:tilde-A-choice-mult-log-last-two}.

Since the sum of two quantities of opposite sign, each of which is in
$(-\pi/2,\pi/2)$, is itself in $(-\pi/2,\pi/2)$, and since the asymptotic
behavior of each term is that of a symbol of order $-1$, thus the
terms decay to $0$, the conclusion regarding $\alpha_0$ follows. The
principal symbol as a symbol of order $-1$ is the sum of two principal
symbols, which are $\pm 2(l+1)\frac{\taub}{\taub^2+\nu^2}$, the sum
has vanishing principal symbol modulo $S^{-3}$, and thus it is a symbol of order
$-3$. The final statement is an immediate consequence of this symbolic property.
\end{proof}

Then:

\begin{cor}\label{cor:last-3-terms-est}
Let $\check\digamma_*>1$ be as in
Lemma~\ref{lemma:last-terms-est}. There exist $\digamma_*>0$ and
$\alpha_*\in(0,\pi/2)$ such
that for $\digamma\geq\digamma_*$ the sum of the last three terms of 
\eqref{eq:tilde-A-choice-mult-log} is in $[-\alpha_*,\alpha_*]$ and
there is $R>0$ such that for $\taub^2+\nu^2\geq R^2$, the sum of these
last three terms is $>\frac{1}{8}(\taub^2+\nu^2+\digamma^2)^{-1/2}>0$.
\end{cor}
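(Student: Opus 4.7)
The plan is to combine the two preceding lemmas: Lemma~\ref{lemma:last-terms-est} provides a uniform bound and a quantitative decay rate for the sum of the third and fourth terms of \eqref{eq:tilde-A-choice-mult-log}, while Lemma~\ref{lemma:tilde-A-choice-mult-log-2nd-est} (with its internal lower bound \eqref{eq:tilde-A-choice-mult-log-2nd-est-LB}) controls the second term from both above and below. The idea is that by choosing $\digamma$ large we make the second term $T_2$ uniformly small (so the total stays inside $[-\alpha_*,\alpha_*]$), yet at the same time $T_2$ dominates the leftover $|(\taub,\nu)|^{-3}$ tail of $T_{34}$ once $|(\taub,\nu)|$ is large enough, producing a positive lower bound on the whole sum. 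We treat the $+$ sign case; the $-$ sign case is symmetric.

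First, fix $\check\digamma\geq\check\digamma_*$ from Lemma~\ref{lemma:last-terms-est}, obtaining $\alpha_0\in(0,\pi/2)$ and $C_0\geq 1$ so that the sum $T_{34}$ of the last two terms of \eqref{eq:tilde-A-choice-mult-log} satisfies $|T_{34}|\leq\alpha_0$ everywhere and $|T_{34}|\leq C_0(\taub^2+\nu^2)^{-3/2}$ for $\taub^2+\nu^2\geq 1$. Next, apply Lemma~\ref{lemma:tilde-A-choice-mult-log-2nd-est} with the choice $\varepsilon = (\pi/2-\alpha_0)/(2\beta)$ to produce $\digamma_*>0$ such that for $\digamma\geq\digamma_*$, the second term $T_2 = \beta\,II$ of \eqref{eq:tilde-A-choice-mult-log} obeys $0<T_2<\beta\varepsilon=(\pi/2-\alpha_0)/2$, and simultaneously inherits from \eqref{eq:tilde-A-choice-mult-log-2nd-est-LB} the quantitative lower bound $T_2>\tfrac{\beta}{2}(\taub^2+\nu^2+\digamma^2)^{-1/2}$. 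Adding the two upper bounds,
\[
|T_2+T_{34}|\leq \beta\varepsilon+\alpha_0 = \tfrac{1}{2}(\pi/2+\alpha_0) =: \alpha_*\in(0,\pi/2),
\]
which is the first conclusion.

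For the lower bound at infinity, combine the lower estimate of $T_2$ with $T_{34}\geq -C_0(\taub^2+\nu^2)^{-3/2}$ (valid for $\taub^2+\nu^2\geq 1$) to get
\[
T_2+T_{34}\geq \tfrac{\beta}{2}(\taub^2+\nu^2+\digamma^2)^{-1/2}-C_0(\taub^2+\nu^2)^{-3/2}.
\]
The first term decays like $(\taub^2+\nu^2)^{-1/2}$ while the correction decays faster, like $(\taub^2+\nu^2)^{-3/2}$, so for $R$ chosen large enough (depending on $\beta$, $C_0$, and $\digamma$) the subtracted term is absorbed into a fixed fraction of the first, yielding the stated bound $>\tfrac{1}{8}(\taub^2+\nu^2+\digamma^2)^{-1/2}$ on $\taub^2+\nu^2\geq R^2$ (possibly absorbing a $\beta$-dependent factor into the threshold $R$).

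The main obstacle is the order in which the parameters must be chosen: $\check\digamma$ controls $\alpha_0$ and $C_0$; these in turn determine how large $\digamma$ must be to squeeze $T_2$ into a small positive window without destroying its quantitative lower bound; finally $R$ must outpace the residual $|(\taub,\nu)|^{-3}$ from the near-cancellation in Lemma~\ref{lemma:last-terms-est}. Once this hierarchy is respected the rest is bookkeeping, and no further cancellation arguments are needed since the $\tilde\beta$-term (the first term of \eqref{eq:tilde-A-choice-mult-log}) is deliberately not yet activated — it will be deployed in the next step to handle the remaining compact region in $(\taub,\nu)$.
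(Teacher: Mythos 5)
Your argument is correct and follows essentially the same route as the paper: apply Lemma~\ref{lemma:last-terms-est} first, then choose $\varepsilon$ proportional to $(\pi/2-\alpha_0)/|\beta|$ in Lemma~\ref{lemma:tilde-A-choice-mult-log-2nd-est} to pin down $\digamma_*$ and $\alpha_*$, and use the lower bound \eqref{eq:tilde-A-choice-mult-log-2nd-est-LB} against the $C_0|(\taub,\nu)|^{-3}$ tail to get positivity for large $|(\taub,\nu)|$. The only cosmetic wrinkle is the $\beta$ prefactor in the explicit constant $\frac{1}{8}$ (which cannot literally be absorbed into $R$ when $\beta$ is small), but the paper's own proof elides the same factor, and in the intended application $\beta>1$.
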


\begin{proof}
This is immediate using Lemma~\ref{lemma:last-terms-est} first, then
choosing $\varepsilon=(\pi/2-\alpha_0)/(4|\beta|)$ in
Lemma~\ref{lemma:tilde-A-choice-mult-log-2nd-est} to find
$\digamma_*$. Since the second term of
\eqref{eq:tilde-A-choice-mult-log} is positive and less than
$|\beta|\varepsilon=(\pi/2-\alpha_0)/4$, the sum of the last three terms
lies in $[-\alpha_*,\alpha_*]$ with
$0<\alpha_*=\alpha_0+(\pi/2-\alpha_0)/4<\pi/2$. Finally, due to
\eqref{eq:tilde-A-choice-mult-log-2nd-est-LB}, the second
term has a lower bound $\frac{1}{2}(\taub^2+\nu^2+\digamma^2)^{-1/2}$,
which is bounded from below by $\frac{1}{4}(\taub^2+\nu^2)^{-1/2}$ for $|(\taub,\nu)|\geq\digamma$,
while the sum of the last two terms has an upper bound in absolute
value $\leq
C_0|(\taub,\nu)|^{-3}$ (where $|(\taub,\nu)|>1$), so for sufficiently
large $|(\taub,\nu)|$ (namely,
$|(\taub,\nu)|\geq R=\max(\digamma,\sqrt{8C_0})$), the second term dominates,
and thus the last two terms can be absorbed into it as desired, giving
the statement of the corollary.
\end{proof}

Hence, at this point, we have that the sum of the last three terms is
contained in $(0,\pi)$ for $\taub^2+\nu^2\geq R^2$, and is contained
in a compact subinterval of $(-\pi/2,\pi/2)$ for all $(\taub,\nu)$.

\begin{lemma}\label{lemma:first-term-est}
There exists $\tilde\digamma_*\geq\digamma_*$ such that for
$\tilde\digamma\geq\tilde\digamma_*$, the first term of
\eqref{eq:tilde-A-choice-mult-log}, with the $\tilde\beta=\frac{\pi}{2}\tilde\digamma$ prefactor,
\begin{equation}\begin{aligned}\label{eq:tilde-A-choice-mult-log-1st}
\im\Big(&\tilde\beta\frac{\taub+i}{((\taub+i)^2+\nu^2+\tilde\digamma^2)^{1/2}}\Big),
\end{aligned}\end{equation}
is $\geq -\frac{1}{16}(\taub^2+\nu^2+\digamma^2)^{-1/2}$, is $<
\frac{\pi}{2}+\frac{1}{4}\big(\frac{\pi}{2}-\alpha_*\big)$ everywhere,
and is $>\alpha_*$ for $\taub^2+\nu^2\leq R^2$.
\end{lemma}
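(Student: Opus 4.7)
The plan is to compute $\im g(\taub+i)$ for $g(\taub)=\tilde\beta\,\taub\,(\taub^2+\nu^2+\tilde\digamma^2)^{-1/2}$ (with $\tilde\beta=\tfrac{\pi}{2}\tilde\digamma$) via exactly the Taylor-with-remainder identity used in the proof of Lemma~\ref{lemma:imag-order-minus-one}. Since $g$ is real on the real axis and holomorphic in the strip $|\im\taub|<\tilde\digamma$, that identity gives
$$
\im g(\taub+i) = g'(\taub) + E(\taub,\nu,\tilde\digamma), \qquad |E|\leq \tfrac{1}{2}\sup_{s\in[-1,1]}|g'''(\taub+is)|,
$$
so all three assertions reduce to bookkeeping between the principal term $g'(\taub)$ and the error $E$.

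A direct differentiation yields $g'(\taub)=\tilde\beta(\nu^2+\tilde\digamma^2)(\taub^2+\nu^2+\tilde\digamma^2)^{-3/2}$ and $|g'''(\taub)|\leq C\tilde\beta(\taub^2+\nu^2+\tilde\digamma^2)^{-3/2}$, the same bound on $g'''$ persisting uniformly for $|\im\taub|\leq 1$ once $\tilde\digamma^2>2$ (the real part of $(\taub+is)^2+\nu^2+\tilde\digamma^2$ is then $\geq \tilde\digamma^2-1$). With the scaling $\tilde\beta=\tfrac{\pi}{2}\tilde\digamma$, the elementary inequality $\nu^2+\tilde\digamma^2\leq \taub^2+\nu^2+\tilde\digamma^2$ gives $0\leq g'(\taub)\leq \pi/2$ on all of $\RR^2$, with equality at the origin; this sharp saturation is precisely why the prefactor $\tilde\beta=\tfrac{\pi}{2}\tilde\digamma$ is the correct choice, and why the claimed upper bound in the lemma is only slightly above $\pi/2$. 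In parallel, on the compact set $\taub^2+\nu^2\leq R^2$ we have $g'(\taub)\to \pi/2$ uniformly as $\tilde\digamma\to\infty$, and globally $|E|\leq C'\pi/(2\tilde\digamma^2)\to 0$.

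With these two ingredients, the three assertions follow by choosing $\tilde\digamma$ large enough. The uniform upper bound uses only $g'(\taub)\leq \pi/2$ and $|E|=O(\tilde\digamma^{-2})$. The bound $>\alpha_*$ on $\taub^2+\nu^2\leq R^2$ combines the uniform convergence $g'(\taub)\to\pi/2$ on that set with the error decay. The step that needs a touch of care is the global lower bound $\im g(\taub+i)\geq -\tfrac{1}{16}(\taub^2+\nu^2+\digamma^2)^{-1/2}$: here I split into the region $\taub^2+\nu^2\leq\tilde\digamma^2$, where
$$
g'(\taub)\geq \frac{\pi\tilde\digamma}{2}\cdot\frac{\tilde\digamma^2}{(2\tilde\digamma^2)^{3/2}}=\frac{\pi}{4\sqrt{2}}
$$
has an absolute positive lower bound that dominates $|E|=O(\tilde\digamma^{-2})$, and the complementary region $\taub^2+\nu^2>\tilde\digamma^2$, where $\taub^2+\nu^2+\digamma^2$ is comparable to $\taub^2+\nu^2+\tilde\digamma^2$, so $|E|\leq C''\tilde\digamma(\taub^2+\nu^2+\digamma^2)^{-3/2}$ is itself $\leq \tfrac{1}{16}(\taub^2+\nu^2+\digamma^2)^{-1/2}$ once $16C''\tilde\digamma\leq \tilde\digamma^2$, a condition forced by $\taub^2+\nu^2>\tilde\digamma^2$ and $\tilde\digamma$ large; since $g'(\taub)\geq 0$ everywhere, this completes the lower bound.

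The only obstacle is arranging that a single choice of $\tilde\digamma_*$ works simultaneously for all three bounds; since each produces its own threshold depending on $\digamma$, $\alpha_*$, $R$ and universal constants from the symbolic estimates, I simply take $\tilde\digamma_*$ to be the maximum of these finitely many values.
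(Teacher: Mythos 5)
Your proposal is correct and follows essentially the same route as the paper: decompose the imaginary part as the first imaginary-direction derivative on the real axis (the principal term $g'(\taub)=\tilde\beta(\nu^2+\tilde\digamma^2)(\taub^2+\nu^2+\tilde\digamma^2)^{-3/2}$, which the prefactor $\tilde\beta=\frac{\pi}{2}\tilde\digamma$ pins between $0$ and $\frac{\pi}{2}$) plus a third-derivative remainder of size $O(\tilde\digamma(\taub^2+\nu^2+\tilde\digamma^2)^{-3/2})=O(\tilde\digamma^{-1}(\taub^2+\nu^2+\digamma^2)^{-1/2})$, exactly as in \eqref{eq:symm-Taylor} and \eqref{eq:tilde-A-choice-mult-log-1st-est-princ-error}. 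The only cosmetic difference is in the bound $>\alpha_*$ on $\taub^2+\nu^2\leq R^2$, where the paper uses the exact integral representation \eqref{eq:tilde-A-choice-mult-log-1st-int} and an equicontinuity argument in the rescaled variables $\taubh,\nuh$, while you observe directly that $g'\to\frac{\pi}{2}$ uniformly on the fixed compact set; both are valid and amount to the same computation.
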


\begin{proof}
In the large parameter sense, the principal symbol of
\eqref{eq:tilde-A-choice-mult-log-1st} is, without the $\tilde\beta$
prefactor,
\begin{equation}\begin{aligned}\label{eq:tilde-A-choice-mult-log-1st-est-princ-0}
&(\taub^2+\nu^2+\tilde\digamma^2)^{-3/2}(\nu^2+\tilde\digamma^2)
\end{aligned}\end{equation}
modulo $S^{-3}$, which is non-negative; including the prefactor it
becomes, as a symbol of order $0$ now jointly in $(\taub,\nu,\tilde\digamma)$,
\begin{equation}\begin{aligned}\label{eq:tilde-A-choice-mult-log-1st-est-princ}
&\frac{\pi}{2}\tilde\digamma(\taub^2+\nu^2+\tilde\digamma^2)^{-3/2}(\nu^2+\tilde\digamma^2)
\end{aligned}\end{equation}
modulo $S^{-2}$. But, using $\tilde\digamma\geq\digamma$,
\begin{equation}\begin{aligned}\label{eq:tilde-A-choice-mult-log-1st-est-princ-error}
&\frac{\pi}{2}\tilde\digamma\Big|\im\Big(\frac{\taub+i}{((\taub+i)^2+\nu^2+\tilde\digamma^2)^{1/2}}\Big)-(\taub^2+\nu^2+\tilde\digamma^2)^{-3/2}(\nu^2+\tilde\digamma^2)\Big|\\
&\qquad\leq C\tilde\digamma (\taub^2+\nu^2+\tilde\digamma^2)^{-3/2}\leq C\tilde\digamma^{-1}(\taub^2+\nu^2+\digamma^2)^{-1/2},
\end{aligned}\end{equation}
which is $\leq \frac{1}{16}(\taub^2+\nu^2+\digamma^2)^{-1/2}$ for
sufficiently large $\tilde\digamma$ (say,
$\tilde\digamma\geq\tilde\digamma_*'$). In view of
\eqref{eq:tilde-A-choice-mult-log-1st-est-princ} being positive, this proves that
\eqref{eq:tilde-A-choice-mult-log-1st} is $\geq
-\frac{1}{16}(\taub^2+\nu^2+\digamma^2)^{-1/2}$.

The upper bound
$\frac{\pi}{2}+\frac{1}{4}\big(\frac{\pi}{2}-\alpha_*\big)$ is proved
similarly, for \eqref{eq:tilde-A-choice-mult-log-1st-est-princ} is
bounded from above by $\frac{\pi}{2}$ and
\eqref{eq:tilde-A-choice-mult-log-1st-est-princ-error} is bounded from
above by $C\tilde\digamma^{-1}$, so for sufficiently large
$\tilde\digamma$, say $\tilde\digamma\geq\tilde\digamma_*''$, is $<\frac{1}{4}\big(\frac{\pi}{2}-\alpha_*\big)$.

Finally, \eqref{eq:tilde-A-choice-mult-log-1st} is
\begin{equation}\label{eq:tilde-A-choice-mult-log-1st-int}
\frac{1}{2}\int_{-1}^1 \frac{\pi}{2}\tilde\digamma((\taub+is)^2+\nu^2+\tilde\digamma^2)^{-3/2}(\nu^2+\tilde\digamma^2)\,ds
\end{equation}
so with $\taubh=\taub/\tilde\digamma$, $\nuh=\nu/\tilde\digamma$,
$$
=\frac{\pi}{4}\int_{-1}^1 ((\taubh+is\tilde\digamma^{-1})^2+\nuh^2+1)^{-3/2}(\nuh^2+1)\,ds
$$
which is an equicontinuous family in $\tilde\digamma^{-1}<2^{-1/2}$ (say), taking
the value $\frac{\pi}{2}$ at $\taubh=\nuh=0$, $\tilde\digamma^{-1}=0$. Hence, for any
$\varepsilon'>0$, such as $\varepsilon'=(\pi/2-\alpha_*)/4$, there is $\hat R>0$, which one may assume to be $<R/2$, such that for
$\taubh^2+\nuh^2\leq\hat R^2$ (and any $\tilde\digamma$) it differs from
$\frac{\pi}{2}$ by $<\varepsilon'$, thus for $|(\taub,\nu)|\leq\hat
R\tilde\digamma$ the analogous conclusion holds.

Now simply pick
$\tilde\digamma'''_*=R/\hat R>2$, and let
$\tilde\digamma_*=\max(\tilde\digamma_*',\tilde\digamma_*'',\tilde\digamma_*''')$. Then
the $\geq -\frac{1}{16}(\taub^2+\nu^2+\digamma^2)^{-1/2}$,
$<\frac{\pi}{2}+\frac{1}{4}\big(\frac{\pi}{2}-\alpha_*)$, and
$>\alpha_*$ for $\taub^2+\nu^2\leq R^2$, claims follow immediately.
\end{proof}

\begin{cor}\label{cor:total-est}
With $\digamma\geq\digamma_*$, $\tilde\digamma\geq\tilde\digamma_*$,
$\tilde\beta=\frac{\pi}{2}\tilde\digamma$,
\eqref{eq:tilde-A-choice-mult-log} lies in $(0,\pi)$.
\end{cor}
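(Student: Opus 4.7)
The plan is simply to combine the bounds in Corollary~\ref{cor:last-3-terms-est} and Lemma~\ref{lemma:first-term-est} by splitting into the two regions $\{\taub^2+\nu^2\leq R^2\}$ and $\{\taub^2+\nu^2\geq R^2\}$ dictated by those statements, and checking the lower bound $0$ and the upper bound $\pi$ separately on each region. All four terms of \eqref{eq:tilde-A-choice-mult-log} have already been estimated individually, so the task reduces to arithmetic.

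For the lower bound, on the compact region $\taub^2+\nu^2\leq R^2$ Lemma~\ref{lemma:first-term-est} gives that the first term exceeds $\alpha_*$, while the sum of the last three is no less than $-\alpha_*$ by Corollary~\ref{cor:last-3-terms-est}, so the total is strictly positive. On the complementary region the sum of the last three is at least $\tfrac{1}{8}(\taub^2+\nu^2+\digamma^2)^{-1/2}$ while the first term is at worst $-\tfrac{1}{16}(\taub^2+\nu^2+\digamma^2)^{-1/2}$; the former dominates and the sum is again strictly positive. (This is the only place in the argument where the uniform weighted lower bound in Lemma~\ref{lemma:first-term-est} -- as opposed to just boundedness -- matters, since in the non-compact region the individual terms tend to zero.)

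For the upper bound, on either region the first term is bounded above by $\tfrac{\pi}{2}+\tfrac{1}{4}(\tfrac{\pi}{2}-\alpha_*)$ by Lemma~\ref{lemma:first-term-est}, and the sum of the last three is bounded above by $\alpha_*$ by Corollary~\ref{cor:last-3-terms-est}. Adding,
\[
\tfrac{\pi}{2}+\tfrac{1}{4}\bigl(\tfrac{\pi}{2}-\alpha_*\bigr)+\alpha_*
=\tfrac{5\pi}{8}+\tfrac{3\alpha_*}{4}<\tfrac{5\pi}{8}+\tfrac{3\pi}{8}=\pi,
\]
using $\alpha_*<\tfrac{\pi}{2}$. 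This gives the strict upper bound $\pi$.

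Combining the two bounds across both regions yields $\eqref{eq:tilde-A-choice-mult-log}\in(0,\pi)$, as claimed. There is no serious obstacle here: all the genuine work was done in the preceding three results, whose purpose was precisely to produce compatible numerical bounds for the four summands. The only point that requires a moment of care is making sure that the negative contribution of the first term in the far region is quantitatively dominated by the positive contribution of the second term there, which is why $\tilde\digamma_*$ was chosen large enough (in Lemma~\ref{lemma:first-term-est}) that the error $C\tilde\digamma^{-1}(\taub^2+\nu^2+\digamma^2)^{-1/2}$ is at most $\tfrac{1}{16}(\taub^2+\nu^2+\digamma^2)^{-1/2}$, comfortably smaller than the $\tfrac{1}{8}(\taub^2+\nu^2+\digamma^2)^{-1/2}$ lower bound from Corollary~\ref{cor:last-3-terms-est}.
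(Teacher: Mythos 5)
Your proof is correct and follows exactly the same route as the paper: combine Corollary~\ref{cor:last-3-terms-est} and Lemma~\ref{lemma:first-term-est}, splitting into $\taub^2+\nu^2\leq R^2$ and $\taub^2+\nu^2\geq R^2$ for the lower bound and using the uniform upper bounds for the bound by $\pi$. Your explicit arithmetic $\frac{5\pi}{8}+\frac{3\alpha_*}{4}<\pi$ is a slightly more careful rendering of the paper's ``certainly the sum is $<\pi$,'' and you correctly read ``sum of the last three terms'' where the paper's text has a small slip.
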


\begin{proof}
Since by Corollary~\ref{cor:last-3-terms-est} the sum of the last three terms lies in $(-\pi/2,\alpha_*]$,
$\alpha_*<\pi/2$, while
the first is $\leq\pi/2+(\pi/2-\alpha_*)/4$, certainly the sum is $<\pi$
everywhere. Moreover, for $\taub^2+\nu^2\leq R^2$, the sum of the
first three terms is $\geq-\alpha_*$, while the first term is
$>\alpha_*$, so the sum is positive. Finally, for $\taub^2+\nu^2\geq
R^2$, the sum of the last three terms is $\geq
\frac{1}{8}(\taub^2+\nu^2+\digamma^2)$, while the first is $\geq
-\frac{1}{16}(\taub^2+\nu^2+\digamma^2)$, so the sum is $\geq
\frac{1}{16}(\taub^2+\nu^2+\digamma^2)$, thus positive, as desired.
\end{proof}

This corollary implies that for $\beta>0$
  arbitrary, with $\tilde\digamma,\digamma,\check\digamma$ chosen sufficiently large (depending on
  $\beta$) {\em \eqref{eq:commutator-imag-version}, thus
  \eqref{eq:normal-op-computation}, are indeed negative}.
In particular, {\em this completes the proof of
  Proposition~\ref{prop:eff-norm-op-b-est-base}, and thus that of
  Theorem~\ref{thm:main}. as well.}

\section{Second microlocal analysis}\label{sec:2-micro}

We now consider a second microlocal scattering version of Theorem~\ref{thm:main-improved}. The reason
for this is that for $\sigma\neq 0$, it is simplest to consider the
limiting resolvent as a map between variable order scattering
spaces. One can extend such a picture to $\sigma=0$ if one resolves
the zero section by blowing it up; this gives rise to the second
microlocal statement below.

First, recall the basic sc-analysis ingredients from \cite{RBMSpec}, keeping in mind that
these arise by generalizing the structure of $\RR^n$ near infinity by
identifying asymptotically conic open sets first with open sets in the radial
compactification $\overline{\RR^n}$ of $\RR^n$, and then the latter
with open sets on the compact manifold with boundary $X$. This is
completely analogous to how pseudodifferential operators are
transplanted from $\RR^n$ to manifolds via coordinate charts; the
off-diagonal smooth contributions to the Schwartz kernel in that case
are replaced by Schwartz contributions in the present case (i.e.\
right-densities on $X^2$ which vanish to infinite order at $\pa
(X^2)$); see \cite[Section~5.3]{Vasy:Minicourse} for a complete treatment along these lines.

Thus, recall from the introduction that the set (and Lie algebra) of sc-vector fields, $\Vsc(X)$, is
$x\Vb(X)$, and is spanned by $x^2\pa_x,x\pa_{y_j}$ over $\CI(X)$, i.e.\ a
sc-vector field is a linear combination of $x^2\pa_x,x\pa_{y_j}$ with
$\CI(X)$ coefficients. Such vector fields are all smooth sections of a
vector bundle, $\Tsc X$, over $X$; a local basis for the latter is
$x^2\pa_x,x\pa_{y_j}$. If $X$ is the radial compactification of
$\RR^n$, i.e.\ $\RR^n$ is compactified by gluing $r=+\infty$ to it via
identifying $\RR^n\setminus \overline{B_1(0)}$ with
$(1,\infty)_r\times\sphere^{n-1}$ via spherical coordinates,
introducing $x=r^{-1}$ and adding $x=0$ to the space, so
$(1,\infty)_r\times\sphere^{n-1}$ is identified with
$(0,1)_x\times\sphere^{n-1}$ in $[0,1)_x\times\sphere^{n-1}$, then
$\Vsc(X)$ consists exactly of linear combinations of the coordinate
vector fields $\pa_{z_j}$ with classical symbolic of order $0$
coefficients. Correspondingly,
$\Tsc\overline{\RR^n}=\overline{\RR^n_z}\times\RR^n_v$, where over the
interior of $\RR^n_z$ one simply writes tangent vectors as $\sum_j
v_j\pa_{z_j}$, and the identification actually only uses the vector
space structure of $\RR^n$ (but not the basis, or having a metric).

Dually, the sc-cotangent bundle, $\Tsc^*X$, is spanned by
$\frac{dx}{x^2},\frac{dy_j}{x}$ in local coordinates. In case of
$X^\circ=\RR^n$, one obtains
$$
\Tsc^*\overline{\RR^n}=\overline{\RR^n_z}\times(\RR^n)^*_\zeta,
$$
i.e.\
sc-one-forms are of the form $\sum_j a_j\,dz_j$,
$a_j\in\CI(X)=S^0_\cl(\RR^n)$. One writes $(\tau,\mu_j)$ as local
coordinates.
Below it is sometimes convenient to compactify the fibers of $\Tsc^*X$
radially, which is possible by exactly the same construction as the
compactification of $\RR^n$; see Figure~\ref{fig:sc-comp}. Thus,
$$
\overline{\Tsc^*}\overline{\RR^n}=\overline{\RR^n_z}\times\overline{(\RR^n)^*_\zeta}.
$$

One defines $\Diffsc(X)$ as finite sums of products of elements of
$\Vsc(X)$ with $\CI(X)$ coefficients; in case of $\RR^n$ these are just of the form
$\sum_\alpha a_\alpha D^\alpha$,
$a_\alpha\in\CI(X)=S^0_\cl(\RR^n)$. We also consider general symbolic
coefficients, and write $S^l\Diffsc(X)$ for sc-differential operators
with symbolic coefficients of order $l$; in the case of $\RR^n$ again
this simply means that $a_\alpha\in S^l(\RR^n)$, the standard space of
symbols of order $l$.

\begin{figure}
\begin{center}
\includegraphics[width=125mm]{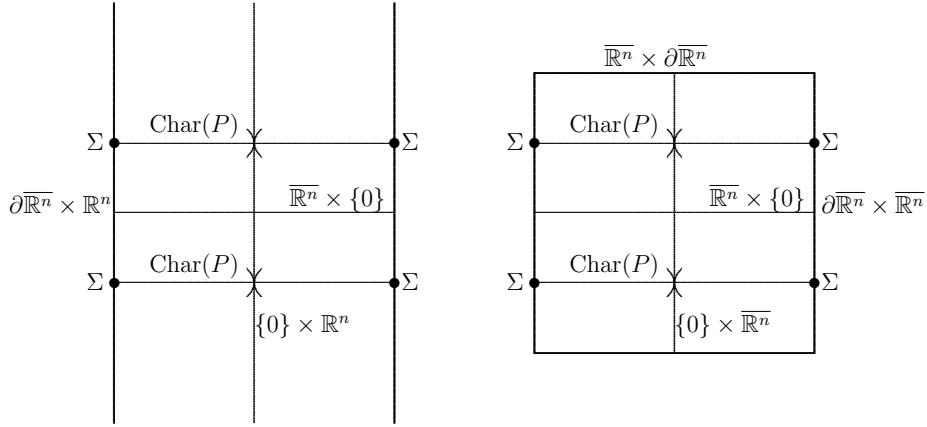}
\end{center}
\caption{The scattering cotangent bundle $\Tsc^*X$ (left) and its
  fiber-compactification $\overline{\Tsc^*}X$, with $X=\overline{\RR^n}$. Also shown is the
  characteristic set of $P(\sigma)$ for $\sigma\neq
  0$, both from the compactified perspective, as $\Sigma$, which is a
  subset of the boundary, and from the conic perspective, here conic
  in the base (i.e.\ the dilations are in the $\RR^n_z$ factor), as $\Char(P)$. The fiber of cotangent bundle over the
  origin, i.e.\ $\{0\}\times\overline{(\RR^n)^*}$, is also indicated;
  this is only special from the conic (dilation) perspective, in which
  it is the analogue of the zero section in standard microlocal analysis.}
\label{fig:sc-comp}
\end{figure}

Scattering pseudodifferential
operators are microlocalizations of these sc-differential operators. They arise as quantizations
of symbols on $\Tsc^*X$; classical symbols of order $(0,0)$ are smooth
functions on the fiber-compactification $\overline{\Tsc^*}X$. In case
of $X^\circ=\RR^n$, the latter are order $(0,0)$ classical symbols in
the standard sense, i.e.\ they have asymptotic expansions at both
boundary hypersurfaces. Conversely, their principal symbols are in
$S^{m,l}(\Tsc^*X)/S^{m-1,l-1}(\Tsc^*X)$, which for classical symbols
can be identified with functions on the boundary.

One has Sobolev spaces, including variable order Sobolev spaces,
corresponding to this structure; e.g.\
for $s$ a non-negative integer
$\Hsc^{s,r}$ consists of distributions such that for all
$Q\in\Diffsc^s(X)$, $Qu\in x^r L^2=x^r L^2_\scl$. Again, via the
identification via asymptotically conic coordinate charts in $\RR^n$,
this corresponds to the standard (albeit possibly variable order)
Sobolev spaces. We refer to \cite[Section~5.3.9]{Vasy:Minicourse} for
a detailed discussion.

We also recall, see \cite{RBMSpec} and \cite[Section~5.3]{Vasy:Minicourse}, that elliptic theory applies in this sc-setting, and
gives for instance the Fredholmness, and via formal self-adjointness
invertibility, of $P(\sigma)$, $\sigma\in\Cx\setminus\RR$, as a map
$$
P(\sigma):\Hsc^{s+2,r}\to\Hsc^{s,r},
$$
where $s,r$ can be taken to be {\em arbitrary} variable orders, and
the inverse is independent of the choices, e.g.\ in the sense that on
the dense Schwartz subspace (functions vanishing to infinite order at
$\pa X$) all inverse agree.

Then the limiting absorption principle, in the Fredholm setting,
for $\sigma\neq 0$, and
for suitable variable order $r$, namely one monotone along the
Hamilton flow and satisfying the appropriate inequalities at the
radial sets, with choice {\em irrelevant in the
  elliptic region} (by elliptic theory), and which can be taken
to be
\begin{equation}\label{eq:sc-symbol-weight-choice-0}
r=-\frac{1}{2}\pm\beta\frac{\tau}{(\tau^2+|\mu|^2)^{1/2}}
\end{equation}
near the characteristic set (which is disjoint from the zero section,
where \eqref{eq:sc-symbol-weight-choice-0} is singular),
and which in case of $\RR^n$ becomes
$$
r=-\frac{1}{2}\mp\beta \frac{z\cdot\zeta}{|z||\zeta|},
$$
is that
$$
P(\sigma):\{u\in\Hsc^{s+2,r}:\ P(\sigma)u\in\Hsc^{s,r+1}\}\to \Hsc^{s,r+1}
$$
is actually invertible; see \cite[Section~5.4.8]{Vasy:Minicourse}, and
concretely Proposition~5.28 there. Note that the order is irrelevant away from
the characteristic set, where the operator is elliptic.

In order to do second microlocal analysis at the zero section, which
is required for the $\sigma\to 0$ analysis as $P(\sigma)$ becomes degenerate from the sc-perspective
at $\sigma=0$, we need to discuss first resolving the zero section at
$\pa X$. Note that the weight \eqref{eq:sc-symbol-weight-choice-0}
indeed becomes singular at the zero section; resolving the zero
section removes this singularity.

From a slightly different perspective, the weight $r$ has to be
monotone along the Hamilton flow of the principal symbol, and the
Hamilton flow degenerates at the zero section since the principal
symbol has a quadratic zero there, thus a resolution is needed. We
recall that such a resolution, called a blowup, of a submanifold, here
the boundary of the zero section, consists of replacing it by its
inward pointing spherical normal bundle and naturally obtaining a
smooth manifold with corners. Thus, one keeps track from which {\em
  normal} (i.e.\ modulo tangential) direction one is approaching the
submanifold; correspondingly {\em projective coordinates}, like the
ones discussed below, are usually particular easy to use.

Thus, we blow up the boundary $o_{\pa X}$ of the zero section $o$ in
$\Tsc^*X$: $[\Tsc^*X;o_{\pa X}]$; see Figure~\ref{fig:2-micro}. Notice that near the interior of the
front face, $x,y,\tau/x,\mu/x$ are coordinates. As one forms are
written as
$$
\tau\,\frac{dx}{x^2}+\sum_j
\mu_j\,\frac{dy_j}{x}=\frac{\tau}{x}\,\frac{dx}{x}+\sum_j \frac{\mu_j}{x}\,dy_j,
$$
we have
\begin{equation}\label{eq:b-sc-covector-conv}
\taub=\frac{\tau}{x},\ \mub=\frac{\mu}{x}
\end{equation}
in terms of the earlier discussed b-coordinates, i.e.\ the front face
is exactly the b-cotangent bundle, $\Tb^*X$, over $\pa X$.

On the
other hand,
near the corner given by the boundary of the front face, $x/(\tau^2+|\mu|^2)^{1/2},\ (\tau^2+|\mu|^2)^{1/2}, y$
plus homogeneous degree zero coordinates on the $(\tau,\mu)$-sphere
(like $\tau/(\tau^2+|\mu|^2)^{1/2}$, etc.), give coordinates; note that
$x/(\tau^2+|\mu|^2)^{1/2}=(\taub^2+|\mub|^2)^{-1/2}$ is exactly the
defining function of the lift of $\Tsc^*_{\pa X}X$, while
$(\tau^2+|\mu|^2)^{1/2}$ is the (local!) defining function of the front
face, while the product of these two, $x$, is the total defining
function of these two faces.
The corner (i.e.\ the boundary of the front
face) is identifiable with $\Ssc^*X=(\Tsc^*X\setminus o)/\RR^+$ (the
quotient being dilations in the fibers of a vector bundle), the
sc-cosphere bundle, which is naturally identified in turn with
$\Sb^*X$, since the b- and sc- structures differ merely by a
conformal factor. Indeed, the order function $r$ of \eqref{eq:sc-symbol-weight-choice-0} is a
function on $\Ssc^*X$ (thus on $\Sb^*X$), thus on $\Tsc^*X\setminus o$, and the blow-up allows the use of $r$ down to the zero section. 

Near fiber infinity on
$\overline{\Tsc^*}X$, the blow-up does not change anything, so the
defining function of fiber infinity is $(\tau^2+\mu^2)^{-1/2}=x^{-1}(\taub^2+\mub^2)^{-1/2}$, while
that of the spatial boundary is $x$.

\begin{figure}[ht]
\begin{center}
\includegraphics[width=120mm]{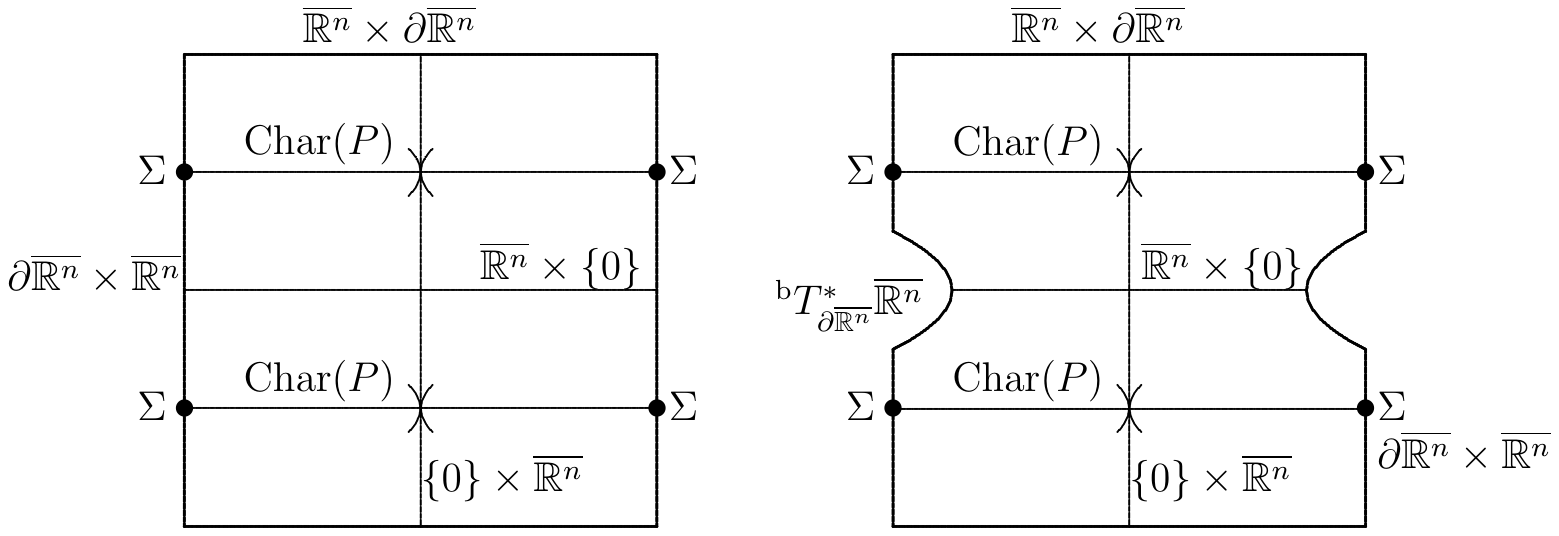}
\end{center}
\caption{Second microlocalized Euclidean space $\RR^{n}$. The left
  hand side is the fiber-compactified sc-cotangent bundle,
  $\overline{\Tsc^*}\overline{\RR^n}=\overline{\RR^n}\times\overline{(\RR^n)^*}$,
  the right hand side is its blow-up at the boundary of the zero
  section. The
  (interior of the) front
  face of the blow-up, shown by the curved arcs, can be identified
  with $\Tb^*_{\pa\overline{\RR^n}}\overline{\RR^n}$. The
  characteristic set of $P(\sigma)$, $\sigma\neq 0$, is also shown,
  both from the compactified perspective, as $\Sigma$, which is a
  subset of the boundary, and from the conic perspective, here conic
  in the base (i.e.\ the dilations are in the $\RR^n_z$ factor), as $\Char(P)$. The fiber of cotangent bundle over the
  origin, i.e.\ $\{0\}\times\overline{(\RR^n)^*}$, is also indicated;
  this is only special from the conic (dilation) perspective, in which
  it is the analogue of the zero section in standard microlocal analysis.}
\label{fig:2-micro}
\end{figure}

In order to actually do analysis, one needs pseudodifferential
operators corresponding to this 2-microlocal resolution. Formally, one
could say that one takes singular sc-symbols, namely ones that are
conormal (or potentially classical) on the blown-up space, and
quantizes these using the scattering quantization via local coordinate
charts. The problem with
this approach is that the contribution from the interior of the front
face is necessarily global in nature (corresponds to the global normal
operator in the b-setting), even modulo `trivial' (rapidly decaying
Schwartz kernel) terms, and thus it cannot literally make sense. This
singular symbol approach is most common in the standard
pseudodifferential setting, and requires significant effort to
justify, see \cite{Bony:Calcul}, and also
\cite{Sjostrand-Zworski:Fractal} and
\cite{Vasy-Wunsch:Semiclassical} in the technically simpler
semiclassical setting. In fact, even in the standard pseudodifferential setting it is
conceptually easier to work with paired Lagrangian distributions, see
\cite{Melrose-Uhlmann:Intersection,Guillemin-Uhlmann:Oscillatory} and
\cite{DUV:Diffraction} for a recent treatment of these, though second
microlocalization, which would correspond to the two Lagrangians being
the conormal bundle of the diagonal on the one hand, and the product
of the Lagrangian with itself (`primed', i.e.\ the covector sign reversed) on the other, has not been explicitly discussed using these (in
\cite{DUV:Diffraction} the singularities are lower order).

\begin{figure}[ht]
\begin{center}
\includegraphics[width=120mm]{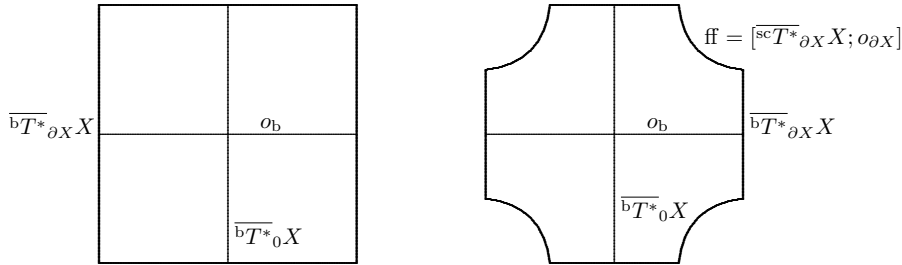}
\end{center}
\caption{The second microlocal space, on the right, obtained by blowing up the corner
  of $\overline{\Tb^*}X$, shown on the left.}
\label{fig:b-2-micro}
\end{figure}

Thus, we consider instead the `opposite' perspective, in which one
starts with the b-cotangent bundle, and blows up its corner, i.e.\
fiber-infinity at the boundary, see Figure~\ref{fig:b-2-micro}. A simple computation shows that the
resulting space is naturally diffeomorphic to the previously discussed
one:

\begin{lemma}
The identity map
from the interior, $T^*X$, extends smoothly to an
invertible map with smooth inverse,
$[\overline{\Tb^*}X;\pa\overline{\Tb^*}_{\pa X}
X]\to[\overline{\Tsc^*}X;o_{\pa X}]$.
\end{lemma}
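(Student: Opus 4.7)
The two spaces agree canonically with $T^*X^\circ$ over the interior $X^\circ$ (since $\Tb^*X$ and $\Tsc^*X$ both restrict to the ordinary cotangent bundle there), so the content of the lemma is that this identity map extends smoothly and bijectively across the boundary hypersurfaces introduced by the two blow-ups. The plan is to exhibit matching systems of projective coordinate charts on the two blow-ups and to check that, in each matching pair of charts, the map is literally the identity in coordinates, so that smoothness of both the extension and its inverse is automatic.

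Work in local coordinates $(x,y)$ near $\pa X$ with $x\geq 0$, and use respective fiber coordinates $(\taub,\mub)$ and $(\tau,\mu)=(x\taub,x\mub)$. Write $\rho_\bl=(\taub^2+|\mub|^2)^{-1/2}$ and $\rho_\scl=(\tau^2+|\mu|^2)^{-1/2}$ for the defining functions of fiber infinity on the two sides, and $R_\scl=\rho_\scl^{-1}$. The fundamental identity
$$
R_\scl=x/\rho_\bl,\qquad\text{equivalently}\qquad\rho_\scl=\rho_\bl/x,
$$
drives all of the matching below. Note that the two submanifolds being blown up are genuinely different --- the b-corner $\{x=0\}\cap\{\rho_\bl=0\}$ on one side, the submanifold $o_{\pa X}=\{x=0,\,(\tau,\mu)=0\}$ on the other --- but the two blow-ups end up diffeomorphic thanks to this identity.

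I then cover both blow-ups (modulo the interior $T^*X^\circ$, where the identity is obviously smooth) by three projective charts. Chart~1 uses coordinates $(x,y,\taub,\mub)$ with $(\taub,\mub)$ bounded: this covers the interior of the lifted b-face $\{x=0\}$, which is untouched by the b-blow-up; via the defining identity it coincides with the standard sc-projective chart $(x,y,\tau/x,\mu/x)$ near the interior of the sc-front face, and in these coordinates the map is the identity. Chart~2 uses $(\rho_\bl,y,x/\rho_\bl,\omega_\bl)$ with $x/\rho_\bl$ bounded, covering the interior of the b-front face together with its corner with the lift of $\{x=0\}$; with $\rho_\bl=x/R_\scl$, $x/\rho_\bl=R_\scl$ and $\omega_\bl=\omega_\scl$, this is exactly the sc-projective chart $(x/R_\scl,y,R_\scl,\omega_\scl)$ at the corner of the sc-front face with the lift of $\{x=0\}$, again with the map being the identity. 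Chart~3 uses $(x,y,\rho_\bl/x,\omega_\bl)$, covering the lift of b-fiber-infinity and its corner with the b-front face; since $\rho_\bl/x=\rho_\scl$, this is the standard sc-chart near the sc-corner $\{x=0\}\cap\{\rho_\scl=0\}$, which is \emph{unchanged} by the sc-blow-up because $o_{\pa X}$ is disjoint from sc-fiber-infinity. In each case the transition is the identity in coordinates, so the extension is a smooth diffeomorphism with smooth inverse.

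The main conceptual obstacle --- indeed the only point requiring care --- is the correct identification of which boundary hypersurface of one blow-up corresponds to which of the other. The slightly counterintuitive feature is a kind of swap: the interior of the b-face $\{x=0\}$ (which is \emph{not} the new face on the b-side) matches the sc-\emph{front} face (which \emph{is} the new face on the sc-side), while the new b-front face matches the lift of the sc-face $\{x=0\}$; b-fiber-infinity matches sc-fiber-infinity. Once this correspondence is read off from $R_\scl=x/\rho_\bl$, each of the chart verifications above is immediate.
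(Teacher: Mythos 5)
Your proof is correct and follows essentially the same route as the paper's: both cover the two blown-up spaces by the interior plus three boundary regions (near fiber infinity, near the interior of the sc-front face/b-face $\{x=0\}$, and near the corner), exhibit matching projective coordinates in each via $(\tau,\mu)=x(\taub,\mub)$, and observe that the identity map is literally the identity in these coordinates. Your explicit remark about the ``swap'' of boundary hypersurfaces (b-front face $\leftrightarrow$ lift of the sc-face $\{x=0\}$, lift of the b-face $\{x=0\}$ $\leftrightarrow$ sc-front face) is a correct and helpful reading of the same coordinate identities the paper uses.
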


\begin{proof}
We cover $[\overline{\Tb^*}X;\pa\overline{\Tb^*}_{\pa X}
X]$ and $[\overline{\Tsc^*}X;o_{\pa X}]$ by coordinate charts and show
that the identity map restricted to the {\em interior} of these charts
extends smoothly to the boundary in both directions. Concretely, we
will have four different regions to deal with: one over $X^\circ$ (which is straightforward), and three near
$\pa X$; for actual local coordinates we will then restrict to working
over local
coordinate charts $O$ in $\pa X$.

First of all, over $X^\circ$ both $\overline{\Tb^*}X$ and
$\overline{\Tsc^*}X$, thus their blow ups (which are over $\pa X$) are
naturally identified
with (in the same sense as above) $\overline{T^*}X$, i.e.\ the b- and
sc-structures are irrelevant. Thus, it suffices to work near $\pa
X$.

Next, in view of \eqref{eq:b-sc-covector-conv}, for $c>0$,
\begin{equation}\label{eq:fiber-infty-coords}
\{(\tau^2+|\mu|^2)^{1/2}>c\}=\{x(\taub^2+|\mub|^2)^{1/2}>c\},
\end{equation}
and the former is a neighborhood of fiber infinity in
$\overline{\Tsc^*X}$, thus in $[\overline{\Tsc^*}X;o_{\pa X}]$ (as the
blown up submanifold, $o_{\pa X}$, is disjoint from fiber infinity), while the
latter is a neighborhood of the lift of fiber infinity to
$[\overline{\Tb^*}X;\pa\overline{\Tb^*}_{\pa X}]$. If $y_j$ are
local coordinates on a set $O$ in $\pa X$ then $x,y_j,
(\tau^2+|\mu|^2)^{-1/2}$ and spherical coordinates in $(\tau,\mu_j)$ are
coordinates in the lift (inverse image under the blow-down map)
of $\overline{\Tsc^*}_O X$ in this region in $[\overline{\Tsc^*}X;o_{\pa
  X}]$, while $x,y_j, x^{-1}(\taub^2+|\mub|^2)^{-1/2}$ and spherical
coordinates in $(\taub,\mub)$ are coordinates in the lift of
$\overline{\Tb^*}_O X$ in this region in $[\overline{\Tb^*}X;\pa\overline{\Tb^*}_{\pa X}
X]$. Thus, $x^{-1}(\taub^2+|\mub|^2)^{-1/2}=(\tau^2+|\mu|^2)^{-1/2}$ and the
spherical equivalence of $(\tau,\mu)$ and $(\taub,\mub)$ shows the
equivalence of the two spaces (in the sense of smooth extension of the
identity map in either direction) in this region.

Next, consider the region
\begin{equation}\label{eq:b-face-coords}
\{x^{-1}(\tau^2+|\mu|^2)^{1/2}<c\}=\{(\taub^2+|\mub|^2)^{1/2}<c\},
\end{equation}
$c>0$. This is on the one hand a neighborhood of a compact subset of
the interior of the front face of $[\overline{\Tsc^*}X;o_{\pa X}]$, on the other hand
a neighborhood of a compact subset of (the lift of) $\Tb^*_{\pa X}X$ in $[\overline{\Tb^*}X;\pa\overline{\Tb^*}_{\pa X}
X]$. Again if $y_j$ are local coordinates on a set $O$ in $\pa X$,
then $x,y_j,\tau/x,\mu_j/x$ are (projective) coordinates in the lift of
$\overline{\Tsc^*}_O X$ in this region in $[\overline{\Tsc^*}X;o_{\pa
  X}]$, while $x,y_j,\taub,\mub$ are coordinates in the lift of
$\overline{\Tb^*}_O X$ in this region in $[\overline{\Tb^*}X;\pa\overline{\Tb^*}_{\pa X}
X]$ (note that this blow-up does not affect this region), so by \eqref{eq:b-sc-covector-conv}, the identity map extends to
be smooth in this region.

Finally, consider the region
\begin{equation}\label{eq:bsc-corner-coords}
\{c_1
x<(\tau^2+|\mu|^2)^{1/2}<c_2\}=\{c_1x<x(\taub^2+|\mub|^2)^{1/2}<c_2\}.
\end{equation}
This is on the one hand a neighborhood of the boundary of the front
face of $[\overline{\Tsc^*}X;o_{\pa X}]$ (as well as the new boundary of the lift of
$\overline{\Tsc^*}_{\pa X}X$), on the other hand
a neighborhood of the boundary of the lift of $\overline{\Tb^*}_{\pa
  X}X$ to $[\overline{\Tb^*}X;\pa\overline{\Tb^*}_{\pa X}
X]$. If $y_j$ are local coordinates on a set $O$ in $\pa X$,
then $x (\tau^2+|\mu|^2)^{-1/2},y_j,(\tau^2+|\mu|^2)^{1/2}$ and spherical coordinates in $(\tau,\mu_j)$ are coordinates in the lift of
$\overline{\Tsc^*}_O X$ in this region in $[\overline{\Tsc^*}X;o_{\pa
  X}]$, while $x (\taub^2+|\mub|^2)^{1/2},y_j, (\taub^2+|\mub|^2)^{-1/2}$ and spherical
coordinates in $(\taub,\mub)$ are coordinates in the lift of
$\overline{\Tb^*}_O X$ in this region in $[\overline{\Tb^*}X;\pa\overline{\Tb^*}_{\pa X}
X]$, so by \eqref{eq:b-sc-covector-conv} (with $(\tau^2+|\mu|^2)^{1/2}
=x (\taub^2+|\mub|^2)^{1/2}$ and $x (\tau^2+|\mu|^2)^{-1/2}=(\taub^2+|\mub|^2)^{-1/2}$) and the equivalence of the
spherical variables, the identity map extends to
be smooth in this region.

Since the regions listed (for various values of $c,c_1,c_2$) provide an open cover of $[\overline{\Tb^*}X;\pa\overline{\Tb^*}_{\pa X}
X]$ as well as $[\overline{\Tsc^*}X;o_{\pa X}]$, the lemma follows.
\end{proof}

The advantage of this
perspective is that the blow-up is at fiber infinity, where symbolic
improvements are available, and thus are easy to handle, i.e.\ the
real work was done in {\em defining} $\Psibc(X)$ and analyzing its
properties, cf.\ the perspective of paired Lagrangian distributions in
the case of standard microlocal analysis mentioned above.

Indeed,
recall that on a manifold with corners $\cM$ with boundary defining
functions $\rho_1,\ldots,\rho_k$, the symbol space
$S^{m_1,m_2,\ldots,m_k}(\cM)$ consists of $\CI$ functions on the
interior $\cM^\circ$ which are in
$\prod_{j=1}^k\rho_j^{-m_j}L^\infty$, and they remain so under
iterated applications of $\CI$ b-vector fields on $\cM$, i.e.\ vector fields
tangent to all boundary hypersurfaces. Suppose now that $\cM$ is the radial
compactification in the fibers of a rank $p$ vector bundle $\cY$ on a manifold with
corners $\cX$, where $\rho_1,\ldots,\rho_{k-1}$ are boundary defining
functions of $\cX$, and $\rho_k$ boundary defining function of fiber
infinity, i.e.\ the new boundary hypersurface obtained via the radial
compactification of the fibers. Thus, using local product decompositions given by local trivializations, with $\xi_i$
coordinates on the fibers of the local trivialization, $\rho_k$ can be
taken to be locally equivalent to $|\xi|^{-1}$, with the norm being,
say, the standard Euclidean norm.
Then on the one hand this description of the symbol place applies
directly on $\cM$,
and on the other hand the usual symbol space on $\cY$ is also defined via
product decompositions given by local trivializations and requiring
that iterated applications of combinations of b-vector fields on $\cX$
and both linear and constant coefficient vector fields in the fibers
(such as $\xi_i\pa_{\xi_j}$,  resp.\ $\pa_{\xi_j}$), preserve
estimates in
$\prod_{j=1}^{k-1}\rho_j^{-m_j}|\xi|^{m_k}L^\infty=\prod_{j=1}^k\rho_j^{-m_j}L^\infty$. These
two definitions are {\em equivalent} as is immediate in $|\xi|<C$,
$C>0$ fixed, and in the region $|\xi|>C'$, $C'>0$, as the
$\CI(\overline{\RR^p})$- (i.e.\ classical order $0$ symbol) span of
$\xi_i\pa_{\xi_j}$ is the same as $\Vb(\overline{\RR^p})$ away from
$0$ (thus in this region),
cf.\ the discussion in the introduction on
symbols and b-vector fields on the radial compactification of
Euclidean space.

Such a discussion applies in particular to $\overline{\Tb^*}X$; thus the
fully (in both differentiability and decay sense) conormal (as opposed
to classical or polyhomogeneous) symbol space consists of $\CI$
functions on the interior, $\Tb^*_{X^\circ} X$, which satisfy a bound
$|a|\leq Cx^{-l}|(\taub,\mub)|^{m}$ stable under the iterative
application of b-vector fields on $\overline{\Tb^*}X$. In local coordinates, say where
$\taub>c|\mub|$ ($c>0$), these are spanned
by $x\pa_x$, $\pa_{y_j}$, $\taub\,\pa_{\taub}$,
$\taub\,\pa_{(\mub)_j}$, here it is the fibers of the vector bundle in which
the analogous compactification is taking place.
This space is {\em unchanged} by this blow-up of the
corner in the following sense:

\begin{lemma}\label{lemma:symbol-space-blowup}
The pullback of $S^{m,l}(\overline{\Tb^*}X)$ to $[\overline{\Tb^*}X;\pa\overline{\Tb^*}_{\pa X}
X]$ by the blow-down map is $S^{m,l,m+l}([\overline{\Tb^*}X;\pa\overline{\Tb^*}_{\pa X}
X] )$, where the symbolic order at the lift of fiber
infinity is $m$, at the lift of $x=0$ is $l$, while at the new front face the {\em sum of these two
  orders}, $m+l$.
\end{lemma}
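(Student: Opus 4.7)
The plan is a direct local coordinate verification, essentially amounting to computing how the boundary defining functions behave under the blow-up. Begin by covering the blown-up space with two types of charts: charts that avoid the front face (where $\beta:[\overline{\Tb^*}X;\pa\overline{\Tb^*}_{\pa X}X]\to\overline{\Tb^*}X$ is a diffeomorphism onto its image and the claim is tautological), and charts that cover a neighborhood of the front face. A partition of unity subordinate to this cover reduces the statement to the front-face charts.

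Near the front face, the corner $\pa\overline{\Tb^*}_{\pa X}X$ is locally cut out by $x=0$ and $\tilde\rho=(\taub^2+|\mub|^2)^{-1/2}=0$, where $x,\tilde\rho$ are the two defining functions on $\overline{\Tb^*}X$ relevant to the orders $l$ and $m$ respectively. Introduce the two standard projective charts for the blow-up: in the first, take $\rho_\ff=x$, $\rho_\infty=\tilde\rho/x$ (and $\rho_\bl\equiv 1$), covering the part where $\rho_\bl$ is bounded away from zero; in the second, take $\rho_\ff=\tilde\rho$, $\rho_\bl=x/\tilde\rho$ (and $\rho_\infty\equiv 1$). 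In either chart one has $x=\rho_\ff\rho_\bl$ and $\tilde\rho=\rho_\ff\rho_\infty$ up to positive smooth factors, so $x^{-l}\tilde\rho^{-m}=\rho_\ff^{-(m+l)}\rho_\bl^{-l}\rho_\infty^{-m}$ upstairs, which immediately accounts for the orders in the statement.

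For the inclusion $\beta^*S^{m,l}\subset S^{m,l,m+l}$, the pointwise bound is immediate from the above identity. The iterated b-regularity follows since every smooth b-vector field on $\overline{\Tb^*}X$ lifts under $\beta$ to a smooth b-vector field on the blow-up; this is a standard fact for corner blow-ups and is checked directly in the projective charts (for instance $x\pa_x|_{\tilde\rho}$ and $\tilde\rho\pa_{\tilde\rho}|_x$ become b-vector fields in $(\rho_\ff,\rho_\infty)$, and the tangential b-vector fields along $\pa X$ and along fiber directions lift trivially). Thus the iterated estimates defining the downstairs symbol class transfer verbatim to iterated estimates upstairs against the upstairs b-vector fields, with the correct weights $\rho_\ff^{-(m+l)}\rho_\bl^{-l}\rho_\infty^{-m}$ being preserved.

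For the reverse inclusion, given $b\in S^{m,l,m+l}$ on the blow-up, the function $a=b\circ\beta^{-1}$ is well-defined and smooth on the interior $\Tb^*_{X^\circ}X$. The pointwise bound $|a|\le C x^{-l}\tilde\rho^{-m}$ is again obtained by the same exponent identity, which uses crucially that the order at the front face is exactly $m+l$ (without this compatibility, $a$ would not extend as a symbol). For the downstairs iterated regularity, one expresses any downstairs b-vector field as the push-forward of its (upstairs) lift and reduces to the upstairs symbol estimates on $b$. The main point, and really the only thing to verify, is the coordinate-level correspondence of b-vector fields under $\beta$; this is routine in the projective charts and I expect no serious obstacle beyond bookkeeping.
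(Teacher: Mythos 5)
Your proof is correct and follows essentially the same route as the paper's: the two projective charts for the blow-up of the corner, the exponent identity $x^{-l}\tilde\rho^{-m}=\rho_{\ff}^{-(m+l)}\rho_{\bl}^{-l}\rho_\infty^{-m}$, and the standard behavior of b-vector fields under blow-up of a boundary face. The only point worth making explicit is that the inclusion $\beta^*S^{m,l}\subset S^{m,l,m+l}$ needs not merely that downstairs b-vector fields lift to upstairs b-vector fields, but that these lifts \emph{span} the upstairs b-vector fields over $\CI$ of the blown-up space; the computation you sketch of the lifts of $x\pa_x$ and $\tilde\rho\pa_{\tilde\rho}$ in the projective charts supplies exactly this.
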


Thus, for instance, symbols of order $0$ in both senses pull back to
symbols of order $0$ in all three senses.

\begin{proof}
Local boundary defining functions of $[\overline{\Tb^*}X;\pa\overline{\Tb^*}_{\pa X}
X]$ can be
given as follows. In the complement of the lift of $x=0$ (and the
b-zero section), which is the region \eqref{eq:fiber-infty-coords} (if
one takes the union over various $c>0$),
$\rho_{\infty,\loc}=x^{-1}(\taub^2+|\mub|^2)^{-1/2}$ defines fiber infinity, $x$ defines
the front face, so $|(\taub,\mub)|=\rho_{\infty,\loc}^{-1} x^{-1}$,
and
$$
x^{-l}|(\taub,\mub)|^{m}L^\infty=x^{-l-m}\rho_{\infty,\loc}^{-m}L^\infty.
$$
On
the other hand, in the complement of the lift of fiber infinity (and,
technically, away from the b-zero section, but the blow-up did not
change the structure near the b-zero section), i.e.\ in the region
\eqref{eq:bsc-corner-coords} (but again as a union as $c_1,c_2$ vary)
$\rho_{\bl,\loc}=x(\taub^2+|\mub|^2)^{1/2}$ defines the lift of $x=0$, and
$(\taub^2+|\mub|^2)^{-1/2}$ defines the front face, so
$x=\rho_{\bl,\loc}|(\taub,\mub)|^{-1}$, so
$$
x^{-l}|(\taub,\mub)|^{m}L^\infty=\rho_{\bl,\loc}^{-l}|(\taub,\mub)|^{l+m}L^\infty.
$$
In the intersection
of these two regions, $x(\taub^2+|\mub|^2)^{1/2}$ is bounded both from
above and from below by positive constants so these two spaces are
indeed the same.

Finally, (lifts of) b-vector fields on $\overline{\Tb^*}X$ with coefficients in
$\CI([\overline{\Tb^*}X;\pa\overline{\Tb^*}_{\pa X}
X])$ span b-vector fields on $[\overline{\Tb^*}X;\pa\overline{\Tb^*}_{\pa X}
X]$; this is a standard fact for blowing up a boundary face of a
manifold with corners, which is very easily checked in local
coordinates, using projective coordinates on the blown-up space.

Thus
the iterative regularity statement for the symbols is unaffected by
the blow-up process, proving the lemma.
\end{proof}

An inspection in the various regions discussed above shows that
$$
\rho_\infty=(x^2(\taub^2+|\mub|^2)+1)^{-1/2}=(\tau^2+|\mu|^2+1)^{-1/2}
$$
is a global defining function of fiber infinity,
$$
\tilde\rho_{\bl}=x(1+\taub^2+|\mub|^2)^{1/2}=(x^2+\tau^2+|\mu|^2)^{1/2}
$$
is a defining function of the lift of $x=0$ away from fiber infinity,
and in the form
$$
\rho_{\bl}=x(1+\taub^2+|\mub|^2)^{1/2}\rho_\infty=(x^2+\tau^2+|\mu|^2)^{1/2}(\tau^2+|\mu|^2+1)^{-1/2}
$$
even at fiber infinity.

Correspondingly:

\begin{Def}
Symbolic 2-microlocal pseudodifferential operators
are defined as elements of $\Psibc^{m,l}(X)$, where the
subscript `c' stands for the symbolic (conormal) behavior at {\em
  both} boundary hypersurfaces of $\overline{\Tb^*}X$, and also
for Schwartz kernels at the b-diagonal (corresponding, under the Fourier
transform, to fiber infinity) and the b-front face.
\end{Def}

Thus, at this
level one cannot actually see any difference in the 2-microlocal vs.\
the b-framework, since the symbol spaces are the same. This definition
assures that these conormal 2-microlocal operators form a *-algebra,
with a {\em full symbol calculus modulo $S^{-\infty,l}$, and a
  principal symbol calculus modulo $S^{m-1,l}$}.

It is a simple
matter of checking the composition rules, which in turn rely on left-
and right- reductions of full symbols (amplitudes) to see that the
subset given by quantizing {\em classical} symbols on $[\overline{\Tb^*}X;\pa\overline{\Tb^*}_{\pa X}
X]$ is also a *-algebra. Note that these are certainly {\em much
  larger than}
the pullback of classical symbols on $\overline{\Tb^*}X$; for
instance, a classical symbol supported near the interior of the front face $[\overline{\Tb^*}X;\pa\overline{\Tb^*}_{\pa X}
X]$ cannot be classical on $\overline{\Tb^*}X$ unless it is
of order $-\infty$ at the front face. As an illustration, if $m=l=0$,
so classical symbols are smooth functions on $[\overline{\Tb^*}X;\pa\overline{\Tb^*}_{\pa X}
X]$, even the
principal symbol cannot be the pullback of a continuous function on
$\overline{\Tb^*}X$ unless it vanishes. To see that this space is
indeed closed under composition, we just need to consider,
say, left reduction, i.e.\ eliminating dependence on the input variables of
a joint amplitude (depending on both the input, primed, variables and
the output, unprimed, variables), as composition ultimately follows
from this. But this takes the
form of an asymptotic sum, of terms of the form (constant multiples
of) $(-\pa_{t'})^j\pa_{y'}^\beta\pa_{\taub}^j\pa_{\mub}^\beta
a(t,y,t',y',\taub,\mub)|_{t'=t,y'=y}$, i.e. with an abuse of notation $(x'\pa_{x'})^j\pa_{y'}^\beta\pa_{\taub}^j\pa_{\mub}^\beta
a(x,y,x',y',\taub,\mub)|_{x'=x,y'=y}$, so as these differentiations
preserve classicality even on the blown up space (since the
differential operators lift to be smooth b-operators), this symbolic
approximation of the left
reduction of a classical amplitude is classical. Finally, the
remainder term gives rise to a classical operator in
$\Psibc^{-\infty,l}$, i.e.\ one in $\Psib^{-\infty,l}$, for which the composition
property is easily checked explicitly.

The place where 2-microlocal considerations appear is that now one can
microlocalize both {\em at the lift of fiber infinity}, of
$\overline{\Tb^*}X$, to $[\overline{\Tb^*}X;\pa\overline{\Tb^*}_{\pa X}
X]$, which is naturally identified with fiber infinity of
$\overline{\Tsc^*}X$ and we simply refer to as {\em fiber infinity} below, {\em and at new front face}, which can naturally
be identified with $[\overline{\Tsc^*}_{\pa X}X;o_{\pa X}]$ on the one
hand, and $\overline{\RR^+}\times\Sb^*_{\pa X} X$ on the other, and we
simply refer to as the {\em scattering face} below. For
instance:

\begin{Def} One says that a pseudodifferential operator
$A\in\Psibc^{m,l}(X)$, written as the left quantization, say, of $a\in
S^{m,l}$ modulo $\Psibc^{-\infty,l}(X)$, is elliptic
at a point $\alpha$ either at fiber infinity or at the scattering face if
$\alpha$ has a neighborhood $O$ in $[\overline{\Tb^*}X;\pa\overline{\Tb^*}_{\pa X}
X]$ such that  $|a|_O|$
is bounded below by a positive constant multiple of the product of
defining functions $\rho_\infty$ of fiber infinity raised to the power
$-m$, $\rho_{\scl}$ of the scattering face raised to the power
$-(m+l)$ and $\rho_{\bl}$ of the b-face raised to the power $-l$.

Similarly, for such an $\alpha$ one says that $\alpha\notin\WFscb'(A)$
if $\alpha$ has a neighborhood $O$ in $[\overline{\Tb^*}X;\pa\overline{\Tb^*}_{\pa X}
X]$ such that $a|_O$
 {\em vanishes to infinite order at both of these
  hypersurfaces} (inside $O$).
\end{Def}

This makes these two boundary hypersurfaces the
locus of the property of ellipticity and operator wave front set, and thus in the standard way
also the locus
of the wave front set of distributions, which is called the second microlocal sc-wave
front set: in view of the above identifications this is a subset of the
lift of the boundary hypersurfaces of $\overline{\Tsc^*}X$ to
$[\overline{\Tsc^*}X;o_{\pa X}]$. This {\em refines} the standard $\WFb$, which is at
fiber infinity in $\overline{\Tb^*}X$, usually identified with
$\Sb^*X$, or, via the $\RR^+$-action in the fibers of $\Tb^*X$, with
conic subsets of $\Tb^*X\setminus o_\bl$.

In fact, this discussion indicates that it is useful to introduce a pseudodifferential operator space $\Psiscb^{s,r,l}(X)$
with three orders corresponding to the three boundary hypersurfaces $[\overline{\Tb^*}X;\pa\overline{\Tb^*}_{\pa X}
X]$: the original two, giving the differential and b-decay orders, and
the new one, at the front face, giving the sc-decay order; we order
these as the fiber infinity order, the sc-decay order and finally the
b-decay order. This thus
arises from symbols in the class
$$
\rho_{\bl}^{-l}\rho_{\scl}^{-r}\rho_\infty^{-s} S^{0,0}(\overline{\Tb^*}X)=\rho_{\bl}^{-l}\rho_{\scl}^{-r}\rho_\infty^{-s} S^{0,0,0}([\overline{\Tb^*}X;\pa\overline{\Tb^*}_{\pa X}
X])
$$
with $\rho_\infty$, resp.\ $\rho_\scl$, resp.\ $\rho_\bl$ the defining
functions of the lift of fiber infinity, the scattering face, and the lift
of $x=0$, respectively. This is a subset of
$$
x^{-l}(\taub^2+|\mub|^2)^{(s+\max(r-(s+l),0))/2}S^{0,0}(\overline{\Tb^*}X),
$$
for, as discussed before, the symbol space is insensitive to the
blowup, and the weights give symbolic orders $l$ at the the lift of
$x=0$, $s+\max(r-(s+l),0)\geq s$ at fiber infinity, and
$l+s+\max(r-(s+l),0)\geq r$ at the scattering face. Thus,
$$
\Psiscb^{s,r,l}(X)\subset\Psibc^{s+\max(r-(s+l),0),l}(X),
$$
and membership in this smaller subspace is characterized purely by
symbolic properties, namely finite order vanishing conditions within the
class, concretely of order $\max(r-(s+l),0)$ at fiber infinity and
$l+s-r+\max(r-(s+l),0)=\max((s+l)-r,0)$ at the scattering
face. Therefore, $\Psiscb^{s,r,l}(X)$ can be easily seen to be invariant under composition and
adjoints by the left-reduction formula discussed above. Notice that
one has
$$
\Psibc^{m,l}(X)=\Psiscb^{m,m+l,l}(X).
$$

Alternatively, though we do not give details here, this class can be characterized by considering operators of the form
$A=A_1+A_2$ where
$A_1\in\Psibc^{s,r-s}(X)$ and $A_2\in\Psibc^{r-l,l}$ and the amplitude
of $A_1$ (i.e.\ when $A_1$ is written as a, say, left quantization) is microlocalized away from the lift of
$\overline{\Tb^*}_{\pa X}X$, while, modulo $\Psibc^{-\infty,l}(X)$, the amplitude of $A_2$ is
microlocalized away from fiber infinity, i.e.\ the lift of
$\pa\overline{\Tb^*}X$. A priori it is not clear that this is a
well-behaved space of operators under composition. This is made clear
by using a decomposition $A_1+A_2+A_3$ where
$A_1,A_2$ as above, but with amplitudes supported in a pre-specified
(small) neighborhood of fiber infinity, resp.\ the lift of
$\overline{\Tb^*}_{\pa X}X$, while
$$
A_3\in\Psibc^{s,r-s}(X)\cap\Psibc^{r-l,l}(X),\ {\text{indeed}}\ A_3\in\Psiscc^{-\infty,r}(X)
$$
has amplitude microlocalized
away from both of these boundary hypersurfaces, i.e.\ the essential
support of the amplitude only intersecting the interior of the front
face of the blow up (the sc-face). Thus, $A_3$ can be combined with
either of the other two terms for the use of standard $\Psibc$
composition results, while $A_1 A_2\in\Psibc^{-\infty,-\infty}(X)$ in
fact as can be seen by first of all noting that the symbolic expansion
is trivial due to the disjoint support (in the compactified sense) of
the two amplitudes, which implies $A_1
A_2,A_2A_1\in\Psibc^{-\infty,r-s+l}(X)$, but in addition in fact the
error term of the asymptotic expansions, given by an integral Taylor
series remainder term also possesses
additional decay properties. (The modulo $\Psibc^{-\infty,l}(X)$ part
of $A_2$ is easily seen to be harmless.)

The second microlocal Sobolev spaces, $\Hscb^{s,r,l}(X)$, are then
based on these pseudodifferential spaces, e.g.\ if all indices are
$\geq 0$, the space simply consists of $u\in L^2$ such that for all
$A\in\Psiscb^{s,r,l}$ (or simply for one elliptic one) $Au\in L^2$. We
refer to \cite[Section~5.3.9]{Vasy:Minicourse} for a detailed discussion.
Thus, 
$$
\Hb^{\tilde r,l}=\Hscb^{\tilde r,\tilde r+l,l}. 
$$
Also, sc-microlocally away from the zero section at $\pa
X$, i.e.\ away from the lift of $x=0$ in the b-blow up perspective, $\Hscb^{s,r,l}$ is just $\Hsc^{s,r}$, and the estimate below is
just the
estimate leading to the (strictly positive spectral parameter) limiting absorption principle in the scattering setting. However, the inclusion of $\Hscb^{s,r,l}$
into $\Hscb^{s',r',l}$ is no longer compact since there is no
improvement of the weight sc-microlocally at the zero section at $\pa
X$. Notice also that elements of $\Psiscb^{\tilde s,\tilde r,\tilde
  l}(X)$ give continuous linear maps $\Hscb^{s,r,l}\to\Hscb^{s-\tilde
  s,r-\tilde r,l-\tilde l}$.

The symbolic estimate, cf.\ Section~\ref{sec:symbolic}, is then:

\begin{prop}\label{prop:sc-b-Riem}
Suppose $S\subset[0,\infty)$ compact.
With $s,r,l$ as above, with $r\in\CI(\Ssc^*X)=\CI(\Sb^*X)$ monotone
along the $\sH_p$-flow, satisfying the threshold inequalities relative
to $-\frac{1}{2}$,, $s',r'$ arbitrary, there exists $C>0$ such that
\begin{equation}\label{eq:basic-sc-b-est-2}
\|u\|_{\Hscb^{s,r,l}}\leq
C(\|P(\sigma)u\|_{\Hscb^{s-2,r+1,l+2}}+\|u\|_{\Hscb^{s',r',l}}),\qquad\sigma\in S,
\end{equation}
provided $u\in\Hscb^{s'',r'',l}$ for some $s'',r''$ with
$r''>-\frac{1}{2}$ at the high regularity radial point (for $r$).
\end{prop}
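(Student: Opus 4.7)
My plan is to prove Proposition~\ref{prop:sc-b-Riem} by a positive commutator argument in the second microlocal algebra $\Psiscb$, choosing a commutant $A\in\Psiscb^{s-1/2,r+1/2,l+1}(X)$ with $A^*=A$ and computing
\[
i(P(\sigma)^*A^*A-A^*AP(\sigma))=i(P(\sigma)^*-P(\sigma))A^*A+i[P(\sigma),A^*A],
\]
where as in Section~\ref{sec:symbolic} the twisted term carries the sign of $\im(\sigma^2)$, so for real $\sigma$ it disappears and for $\im(\sigma^2)\geq 0$ (resp.\ $\leq 0$) it has a favorable sign matching the $+$ (resp.\ $-$) choice of weight. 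Because $P(\sigma)\in\Psiscb^{2,0,-2}(X)$ uniformly in $\sigma$ in a compact set (with $\sigma^2$ commuting with everything), the commutator $[P(\sigma),A^*A]$ lies in $\Psiscb^{2s,2r,2l}(X)$, and its principal symbol splits into contributions at the three boundary hypersurfaces of $[\overline{\Tb^*}X;\pa\overline{\Tb^*}_{\pa X}X]$: fiber infinity, the scattering face, and the lift of $x=0$.

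The heart of the argument is to arrange a principal symbol $a$ for $A$ so that $2a\sH_p a$ has a definite sign modulo controllable errors at each face. At fiber infinity (away from the scattering face), $P(\sigma)$ is a standard sc-operator whose characteristic set lies away from the zero section for $\sigma$ in a compact subset of $[0,\infty)$ once one stays away from the scattering face; the variable order $r$, monotone along $\sH_p$ and satisfying the threshold conditions relative to $-1/2$ at the radial sets $L_\pm$, yields the standard variable-order radial point/real principal type estimate of Melrose and \cite{Vasy:Minicourse}, which is uniform in $\sigma$ bounded away from zero by ellipticity of $P(\sigma)-0$ in the sc-sense, and which extends uniformly down to $\sigma=0$ provided we work away from the scattering face. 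Near the scattering face, we take the restriction of $a$ to be exactly the principal symbol used in Proposition~\ref{prop:symbolic-b-est-more} (with the order $\tilde r$ determined by $r+l$ via the identification $\Hb^{\tilde r,l}=\Hscb^{\tilde r,\tilde r+l,l}$), so that the commutator computation at the sc-face reduces to the b-symbolic estimate \eqref{eq:basic-b-est-0-more}, uniformly in $\sigma$. The b-face is then handled by ellipticity of $P(0)\in x^2\Diffb^2(X)$, which persists uniformly as $\sigma\to 0$ since the sc-face defining function is itself small there.

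I would then carry out the standard regularization, inserting $\phi_\ep(\tilde\rho^{-1})$ (as in \eqref{eq:a-ep-regularize}) to lower the order of $A$ by $K$ at fiber infinity, so that pairings $\langle i(P(\sigma)^*A_\ep^*A_\ep-A_\ep^*A_\ep P(\sigma))u,u\rangle$ make sense under the a priori assumption $u\in\Hscb^{s'',r'',l}$ with $r''>-1/2$ at the high regularity radial point. Moving adjoints across the pairing, using Cauchy--Schwarz with an $\cX$-small constant to absorb the $A_\ep u$ term into the left side, and letting $\ep\to 0$ via the sequential weak-$*$ compactness of the $L^2$ unit ball, yields \eqref{eq:basic-sc-b-est-2} with error terms controlled by $\|u\|_{\Hscb^{s-1/2,r-1/2+\delta',l}}$; then as in Remark~\ref{rem:symbolic-b-est-improved-error} one iterates and interpolates to replace this error by $\|u\|_{\Hscb^{s',r',l}}$ with arbitrary $s',r'$, the $l$ being preserved since the b-decay order is not gained in a purely symbolic estimate.

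The main obstacle will be bookkeeping the three orders consistently through the commutator calculation, in particular ensuring that the positive commutator argument at the scattering face gives exactly the gain of one order in the sc-direction (matching $\Hscb^{s-2,r+1,l+2}$ versus $\Hscb^{s,r,l}$) that is required by the statement, and verifying that the commutator of $\sigma Q\in\sigma\,S^{-2-\delta}\Diffb^1(X)$ with $A^*A$ is subprincipal uniformly in $\sigma$ in the 2-microlocal sense so that it can be absorbed without degrading the estimate. Once the symbolic positivity is verified separately at fiber infinity (by the standard variable order/radial point argument) and at the scattering face (by direct comparison with the b-symbolic computation of Section~\ref{sec:symbolic}), the global estimate follows by patching with a suitable partition of unity on $[\overline{\Tb^*}X;\pa\overline{\Tb^*}_{\pa X}X]$.
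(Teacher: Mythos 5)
Your overall strategy is the paper's: microlocally away from fiber infinity one inserts a cutoff $\varphi(\tau^2+|\mu|^2)$ into the commutant of Section~\ref{sec:symbolic} (harmless because $\sH_p(\tau^2+|\mu|^2)$ vanishes at $\pa X$), so the positivity computation there literally reduces to the b-symbolic computation of Proposition~\ref{prop:symbolic-b-est-more}, and one combines the resulting estimate with a separate estimate near fiber infinity via a decomposition $\Id=\tilde B+B$. Your identification of the scattering-face contribution with the b-estimate, your treatment of the twisted term $i(P(\sigma)^*-P(\sigma))A^*A$, and the regularization scheme all match the paper.

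However, two of your geometric assignments are wrong, and the first would derail a literal execution. At fiber infinity $P(\sigma)$ is \emph{elliptic}: its differential-order principal symbol is the dual metric function, nonvanishing there uniformly for $\sigma$ in a compact set, so there are no radial sets and no characteristic set at fiber infinity, and the paper simply uses the elliptic estimate \eqref{eq:scb-elliptic} there — this is what produces the order $s-2$ on the right-hand side of \eqref{eq:basic-sc-b-est-2}. The radial sets $L_\pm$ and the characteristic set live on the \emph{scattering face}, approaching its corner with the b-face as $\sigma\to 0$; that is precisely why the threshold conditions are imposed on the sc-decay order $r$ and why the second microlocalization is needed at all. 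A variable-order radial point argument run at fiber infinity finds nothing to propagate. Second, the b-face is \emph{not} handled by ellipticity of $P(0)$: at finite b-frequency the b-principal symbol of $P(0)$ gives no symbolic ellipticity in the decay sense, and no gain is available there at the purely symbolic level — which is exactly why the error term in \eqref{eq:basic-sc-b-est-2} retains the \emph{same} weight $l$. The gain in $l$ is the content of the normal operator argument and Proposition~\ref{prop:impr-scb-est}, not of this proposition; here the b-face (and the b-zero section) simply feeds into the error term $\|u\|_{\Hscb^{s',r',l}}$. With these two corrections your argument coincides with the paper's proof.
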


\begin{proof}
This estimate again
arises from the `same' positive commutator argument as in the
b-algebra, with some slight modifications in the present second
microlocalized scattering algebra setting, combined with an elliptic
estimate.

Indeed, for
$\sigma$ in a fixed compact set,
the operator is elliptic near fiber infinity, so if $\tilde\varphi$ is
supported away from $0$, identically $1$ near infinity, and $B$ has
principal symbol $\tilde\varphi=\tilde\varphi(\tau^2+|\mu|^2)$ with wave front set on $\supp\tilde\varphi$ we have
elliptic estimates of the sort
\begin{equation}\label{eq:scb-elliptic}
\|\tilde Bu\|_{\Hscb^{s,r,l}}\leq C(\|P(\sigma)u\|_{\Hscb^{s-2,r,l}}+\|u\|_{\Hscb^{s',r',l'}})
\end{equation}
for any $s',r',l'$. (Notice that this in particular replaces the
elliptic estimate \eqref{eq:b-symbol-elliptic} in Section~\ref{sec:symbolic}.)

On the other hand, away from fiber infinity, as $\sH_p
(\tau^2+|\mu|^2) =0$ at $\pa X$ (and indeed one could simply use $p$
in place of $\tau^2+|\mu|^2$,
and then not just at $\pa X$), we can simply use a cutoff in $\tau^2+|\mu|^2$,
namely taking a function $\varphi$ identically $1$ near $0$, of
compact support,  and thus replace
\eqref{eq:b-symbol-choice} by
\begin{equation}\label{eq:scb-symbol-choice}
a=x^{-l-1}(\taub^2+|\mub|^2)^{(\tilde
  r-1/2)/2}\varphi(\tau^2+|\mu|^2)\psi(x),
\end{equation}
where $\tilde r=r-l$ satisfies the inequalities for $\tilde r$
discussed in Section~\ref{sec:symbolic}, adding also a regularizer as
in \eqref{eq:a-ep-regularize}. Thus, we
conclude, with $B$ as discussed around \eqref{eq:Hp-a-ep-pos} but
with a factor $\varphi(\tau^2+|\mu|^2)$ added to $b$, and with $\tilde
s,\tilde s'$ arbitrary (and irrelevant), that
\begin{equation}\label{eq:basic-sc-b-est-3}
\|Bu\|_{\Hscb^{\tilde s,r,l}}\leq
C(\|P(\sigma)u\|_{\Hscb^{\tilde s',r+1,l+2}}+\|u\|_{\Hscb^{s',r',l}}).
\end{equation}
Combining these two, with $\varphi=1-\tilde\varphi$ and $B=\Id-\tilde
B$, using
$$
\|u\|_{\Hscb^{s,r,l}}\leq \|\tilde Bu\|_{\Hscb^{s,r,l}}+\|Bu\|_{\Hscb^{s,r,l}}
$$
proves the proposition.
\end{proof}

Since microlocally near the scattering 0-section the second microlocal
Sobolev space is just the b-Sobolev space, the previous normal
operator estimate, \eqref{eq:eff-norm-op-b-est-base},
\begin{equation}
\|v\|_{\Hb^{\tilde r,l}}\leq C\|\tilde N(P(\sigma))v\|_{\Hb^{\tilde r',l+2}},
\end{equation}
can be used to improve (weaken the norm on) the error term on the
right hand side of \eqref{eq:basic-sc-b-est-2}. However, to avoid the technicalities which lengthened the
proof of Proposition~\ref{prop:impr-b-est}, we use the result of that
proposition directly.

One can then proceed as in Section~\ref{sec:normal} to prove an
analogue (and extension, since we use this proposition in the proof
below) of Proposition~\ref{prop:impr-b-est}:

\begin{prop}\label{prop:impr-scb-est}
Let $S\subset[0,\infty)$ compact. 

With $s,r,l$ as above, with $r\in\CI(\Ssc^*X)=\CI(\Sb^*X)$ monotone
along the $\sH_p$-flow, satisfying the threshold inequalities relative
to $-\frac{1}{2}$, $s',r'$ arbitrary, 
there exists $C$ such that
we have
the estimate
\begin{equation}\label{eq:impr-b-est-2}
\|u\|_{\Hscb^{s,r,l}}\leq
C(\|P(\sigma)u\|_{\Hscb^{s-2,r+1,l+2}}+\|u\|_{\Hscb^{s',
    r',l-\delta}}),\ \sigma\in S,
\end{equation}
provided $u\in\Hscb^{s'',r'',l}$ for some $s'',r''$ with
$r''>-\frac{1}{2}$ at the high regularity radial point (for $r$).
\end{prop}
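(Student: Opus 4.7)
The plan is to upgrade the symbolic estimate of Proposition~\ref{prop:sc-b-Riem}, whose error term $\|u\|_{\Hscb^{s',r',l}}$ has the same b-decay order $l$ as the left-hand side, to the desired estimate whose error term has b-decay $l-\delta$. The normal-operator positivity that drives this decay gain is already packaged in Proposition~\ref{prop:impr-b-est}, so no additional spectral analysis is required; the task is to transport it from the pure b-setting to the second-microlocal setting via a microlocal partition of unity adapted to the b-face of $[\overline{\Tb^*}X;\pa\overline{\Tb^*}_{\pa X}X]$.

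Concretely, I would first apply Proposition~\ref{prop:sc-b-Riem}, and then choose $B, B' \in \Psiscb^{0,0,0}(X)$ with $B + B' \equiv \Id$ modulo a smoothing residual, $\WFscb'(B)$ contained in a small neighborhood of the b-face, and $\WFscb'(B')$ disjoint from the b-face. For the complementary piece, since $B'$ has principal symbol vanishing at the b-face, in fact $B' \in \Psiscb^{0,0,-\delta'}(X)$ for any $\delta' > 0$, giving $\|B'u\|_{\Hscb^{s',r',l}} \leq C\|u\|_{\Hscb^{s',r',l-\delta}}$ directly. For the piece $Bu$ microlocalized near the b-face, the relation $\Hb^{\tilde r,l} = \Hscb^{\tilde r, \tilde r + l, l}$ together with the irrelevance of the $r$-order in the pure b-region implies $\|Bu\|_{\Hscb^{s',r',l}} \leq C \|Bu\|_{\Hb^{s', l}}$ modulo elliptic errors. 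Proposition~\ref{prop:impr-b-est} applied with $\tilde r = s'$ then yields
$$\|Bu\|_{\Hb^{s', l}} \leq C(\|P(\sigma)(Bu)\|_{\Hb^{s'-1, l+2}} + \|Bu\|_{\Hb^{s'-K, l-\delta}}),$$
where $P(\sigma)(Bu) = BP(\sigma)u + [P(0) + \sigma Q, B]u$, the $-\sigma^2$ contribution to the commutator vanishing since $\sigma^2$ is central.

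The main obstacle is the clean handling of the commutator $[P(0)+\sigma Q, B]$ in the second-microlocal calculus. Since $P(0) \in x^2\Diffb^2(X)$ and $Q \in S^{-2-\delta}\Diffb^1(X)$, standard commutator arguments give that this operator gains one order at fiber infinity over $P(0)+\sigma Q$ while inheriting the $x^2$ overall weight; moreover, since $B$ is microlocalized near the b-face, the commutator contributes a term bounded by $\|u\|_{\Hscb^{s'',r'',l-\delta'}}$ for suitable $s'',r''$, which is already of the desired form. This bookkeeping must be done uniformly in $\sigma$, which is ensured by the centrality of $\sigma^2$. Combining both pieces, absorbing the $\|Bu\|_{\Hb^{s'-K,l-\delta}}$ term into the right-hand side of the symbolic estimate, and using an iterative argument as in the conclusion of the proof of Proposition~\ref{prop:impr-b-est} to remove any remaining restrictions on orders coming from intermediate bounds completes the proof. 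The a priori assumption $u \in \Hscb^{s'',r'',l}$ with $r''>-\frac{1}{2}$ at the high regularity radial point translates, in the b-identification near the b-face, to the threshold condition $\tilde r - K > \frac{1}{2}-(l+1)$ required by Proposition~\ref{prop:impr-b-est}, so the two a priori hypotheses are compatible.
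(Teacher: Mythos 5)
Your strategy --- localize near and away from the b-face, feed the b-face piece into Proposition~\ref{prop:impr-b-est}, and handle the rest by the vanishing of the weight there --- is a reasonable alternative organization (the paper instead uses the global identification $\Hb^{\tilde r,l}=\Hscb^{\tilde r,\tilde r+l,l}$, so no partition of unity or commutator with $P(\sigma)$ is needed), but as written it has two genuine gaps.

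First, you cannot apply Proposition~\ref{prop:impr-b-est} ``with $\tilde r=s'$'': that proposition requires $\tilde r$ to be a variable order on $\Sb^*X$ that is monotone along the flow and lies strictly above the threshold $\frac{1}{2}-(l+1)$ at the source and strictly below it at the sink, which no constant order can do. You must instead dominate $\|Bu\|_{\Hscb^{s',r',l}}$ by $\|Bu\|_{\Hb^{\tilde r,l}}$ for a genuine variable order $\tilde r$ compatible with $r$ (essentially $\tilde r=r-l$).

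Second, and more seriously, once you do this the bookkeeping forces an order restriction you never remove. To control $\|BP(\sigma)u\|_{\Hb^{\tilde r-1,l+2}}=\|BP(\sigma)u\|_{\Hscb^{\tilde r-1,\tilde r+l+1,l+2}}$ by $\|P(\sigma)u\|_{\Hscb^{s-2,r+1,l+2}}$ you need both $\tilde r\leq s-1$ and $\tilde r\leq r-l$ on the relevant region, while Proposition~\ref{prop:impr-b-est} simultaneously needs $\tilde r>\frac{1}{2}-(l+1)+K$ at the source. These are compatible only when $s\geq r-l+1$ (or under an extra lower bound on $s$ at the source). The paper proves exactly this restricted case first and then removes the restriction by a duality argument: it establishes the analogous estimate for $P(\sigma)^*$ on the dual scale, converts it into a solvability statement modulo finitely many functionals, and recovers the missing range of $(s,r,l)$ by a pairing argument, exactly as in the proof of Proposition~\ref{prop:impr-b-est}. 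Your closing appeal to ``an iterative argument \dots to remove any remaining restrictions on orders'' does not supply this: iteration in the differential order cannot trade the inequality $s\geq r-l+1$ for its opposite, since the obstruction is a compatibility condition between the domain and target orders, not a finite loss that can be recouped step by step. Without the dual/adjoint step your argument proves the proposition only for $s\geq r-l+1$.
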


\begin{proof}
First we remark that if $r=(s-1)+l$, and hence $r+1=(s-2)+(l+2)=s+l$, then the
slightly weakened version of
\eqref{eq:impr-b-est-2} is equivalent to the estimate of
Proposition~\ref{prop:impr-b-est}, since in the second microlocal
notation that states
\begin{equation}\begin{aligned}\label{eq:impr-scb-est-base-1}
\|u\|_{\Hscb^{s-1,s+l-1,l}}\sim\|u\|_{\Hb^{s-1,l}}\leq
C&(\|P(\sigma)u\|_{\Hb^{s-2,l+2}}+\|u\|_{\Hb^{r',l-\delta}})\\
&\sim C(\|P(\sigma)u\|_{\Hscb^{s-2,s+l,l+2}}+\|u\|_{\Hscb^{r',r'+l-\delta,l-\delta}})
\end{aligned}\end{equation}
with $r'$ arbitrary (but membership of $u$ in a stronger space
required). The only sense in which this is weaker than
\eqref{eq:impr-b-est-2} is that the latter is an elliptic lossless
estimate in the first order, $s$. This is easily remedied by using
Proposition~\ref{prop:sc-b-Riem}, which in combination with \eqref{eq:impr-scb-est-base-1} gives
\begin{equation}\begin{aligned}\label{eq:impr-scb-est-base-2}
\|u\|_{\Hscb^{s,r,l}}&\leq
C(\|P(\sigma)u\|_{\Hscb^{s-2,r+1,l+2}}+\|u\|_{\Hscb^{s-1,r,l}})\\
&\leq C'(\|P(\sigma)u\|_{\Hscb^{s-2,r+1,l+2}}+\|u\|_{\Hscb^{r',r'+l-\delta,l-\delta}}),
\end{aligned}\end{equation}
which is the estimate of the present proposition in this special case
as $r'$ is arbitrary.

Indeed, the same argument works if $r\leq (s-1)+l$, i.e.\ $s\geq
r-l+1$, for first let $\tilde s=r-l+1\leq s$, and apply
\eqref{eq:impr-scb-est-base-1} with $s$ replaced by $\tilde s$ to get
$$
\|u\|_{\Hscb^{\tilde s-1,\tilde s+l-1,l}}\leq
C(\|P(\sigma)u\|_{\Hscb^{\tilde s-2,\tilde s+l,l+2}}+\|u\|_{\Hscb^{r',r'+l-\delta,l-\delta}})
$$
and now by Proposition~\ref{prop:sc-b-Riem} we have
\begin{equation*}\begin{aligned}
\|u\|_{\Hscb^{s,r,l}}&\leq
C(\|P(\sigma)u\|_{\Hscb^{s-2,r+1,l+2}}+\|u\|_{\Hscb^{\tilde s-1,r,l}})\\
&\leq C'(\|P(\sigma)u\|_{\Hscb^{s-2,r+1,l+2}}+\|P(\sigma)u\|_{\Hscb^{\tilde
    s-2,r+1,l+2}}+\|u\|_{\Hscb^{r',r'+l-\delta,l-\delta}})\\
&\leq C''(\|P(\sigma)u\|_{\Hscb^{s-2,r+1,l+2}}+\|u\|_{\Hscb^{r',r'+l-\delta,l-\delta}}),
\end{aligned}\end{equation*}
proving \eqref{eq:impr-b-est-2} under the additional assumption $s\geq
r-l+1$.

Now we dualize the argument, much as in the proof of
Proposition~\ref{prop:impr-b-est}. Namely, first re-write the existing
estimate, adding a subscript $+$ to denote that the order $r$ is
monotone increasing along the Hamilton flow; the estimate then holds
if $s_+\geq r_+-l_++1$. But then the analogous estimate also holds with
the monotonicity relative to the Hamilton flow reversed, and with
$P(\sigma)$ (potentially) replaced by $P(\sigma)^*$, namely
\begin{equation}\begin{aligned}\label{eq:scb-dual-est-1}
\|u\|_{\Hscb^{s_-,r_-,l_-}}\leq C(\|P(\sigma)^*u\|_{\Hscb^{s_--2,r_-+1,l_-+2}}+\|u\|_{\Hscb^{r'_-,r_-'+l-\delta,l_--\delta}}),
\end{aligned}\end{equation}
valid if $s_-\geq r_--l_-+1$. We now apply this with $r_-=-(1+r_+)$,
$l_-=-(l_++2)$,
since we need this dual estimate with
$\Hscb^{s_-,r_-,l}=(\Hscb^{s_+-2,r_++1,l_++2})^*$. The desired choice
of $s_-$ would be $2-s_+$, but as $s_+\geq r_+-l_++1$, $2-s_+\leq
-r_++l_+-1=r_--l_-$ which cannot be satisfied if $s_-=2-s_+$ and
$s_-\geq r_--l_-+1$. Thus, we choose
$s_-=r_--l_-+1=-1-r_++l_++2+1=3-(r_+-l_++1)\geq 2-s_+$ for applying
\eqref{eq:scb-dual-est-1}, which gives us a version of the dual
estimate we actually want
\begin{equation}\label{eq:scb-adjoint-est}
\|u\|_{\Hscb^{2-s_+,-1-r_+,-l_+-2}}\leq C(\|P(\sigma)^*u\|_{\Hscb^{-s_+,-r_+,-l_+}}+\|u\|_{\Hscb^{r'_-,r_-'+l-\delta,l_--\delta}}),
\end{equation}
with the proviso that the norms in \eqref{eq:scb-dual-est-1} are
stronger than in this estimate in the sc-differential sense. However,
we argue as in the proof of Proposition~\ref{prop:impr-b-est}. Namely,
ignoring finite dimensional solvability obstacles which are handled
exactly in the same way as in Proposition~\ref{prop:impr-b-est}, the
estimate \eqref{eq:scb-dual-est-1} lets us solve $P(\sigma)u=f$ when
$f\in\Hscb^{-s_-,-r_-,-l_-}=\Hscb^{(r_+-l_++1)-3,r_++1,l_++2}$, with
the resulting
$u\in\Hscb^{2-s_-,-1-r_-,-l_--2}=\Hscb^{(r_+-l_++1)-1,r_+,l_+}$. Now,
if $f$ has the additional regularity $f\in\Hscb^{s_+-2,r_++1,l_++2}$
(note that $s_+-2\geq (r_+-l_++1)-3$), the elliptic estimates (or
directly Proposition~\ref{prop:sc-b-Riem}) imply that $u\in
\Hscb^{s_+,r_+,l_+}$, with an estimate for
$\|u\|_{\Hscb^{s_+,r_+,l_+}}$, which via a pairing argument as before \eqref{eq:impr-b-est-dual-proved} also gives the adjoint estimate \eqref{eq:scb-adjoint-est}.

We finally eliminate the restriction on $s_+$ by noting that reversing
the role of $P(\sigma)$ and $P(\sigma)^*$ (i.e.\ obtaining the
original estimate for $P(\sigma)^*$, thus dual estimate
\eqref{eq:scb-adjoint-est} for $(P(\sigma)^*)^*=P(\sigma)$ in place of
$P(\sigma)^*$, gives the desired estimate when the orders satisfy
$2-s_+\leq 2-(r_+-l_++1)\leq (-1-r_+)-(-l_+-2)+1$, which, when
relabeling $2-s_+$ as $s_-$, $-1-r_+$ as $r_-$, $-l_+-2$ as $l_-$, is
exactly the desired estimate for (a bigger range than) $s_-\leq
r_--l_-+1$. Since the estimate for monotone weights with either
direction of monotonicity are completely analogous, this completes the
proof of the proposition.
\end{proof}

Again, this has the errorless consequence if $P(0)$ has trivial
nullspace, since the error term can be dropped:

\begin{thm}\label{thm:2-micro-spaces}
Suppose that $s,r,l$ are as in Proposition~\ref{prop:impr-scb-est}.
Suppose also that $P(0)$ has trivial nullspace on
$\Hscb^{\infty,\infty,l}=\Hb^{\infty,l}$. Then there are
$\sigma_0>0$, $C>0$ such that for $\sigma\in[0,\sigma_0]$,
$$
P(\sigma)^{-1}:\Hscb^{s-2,r+1,l+2}\to\Hscb^{s,r,l}
$$
has uniform bounds:
$$
\|P(\sigma)^{-1}f\|_{\Hscb^{s-2,r+1,l+2}}\leq C\|f\|_{\Hscb^{s,r,l}},\
\qquad \sigma\in S.
$$
\end{thm}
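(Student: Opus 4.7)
The plan is to deduce Theorem~\ref{thm:2-micro-spaces} from Proposition~\ref{prop:impr-scb-est} via essentially the same functional-analytic argument used to derive Proposition~\ref{prop:remove-cpt-error} (and hence Theorem~\ref{thm:main-improved}) from Proposition~\ref{prop:impr-b-est} in the purely b-setting. One removes the compact error term by a contradiction based on weak-$*$ compactness in $\Hscb^{s,r,l}$ and compactness of the inclusion into the space with strictly smaller b-decay order, and then invokes the triviality of $\Ker P(0)$ on $\Hb^{\infty,l}$ via b-elliptic regularity.

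First I would argue by contradiction: if the desired bound $\|u\|_{\Hscb^{s,r,l}}\leq C\|P(\sigma)u\|_{\Hscb^{s-2,r+1,l+2}}$ failed uniformly on every neighborhood of zero, one could produce $\sigma_j\to 0$ and $u_j$ of unit norm in $\Hscb^{s,r,l}$ with $P(\sigma_j)u_j\to 0$ in $\Hscb^{s-2,r+1,l+2}$. Feeding these into Proposition~\ref{prop:impr-scb-est} shows $\liminf\|u_j\|_{\Hscb^{s',r',l-\delta}}>0$, and weak-$*$ compactness of the unit ball in $\Hscb^{s,r,l}$ together with the compact inclusion $\Hscb^{s,r,l}\hookrightarrow\Hscb^{s'',r'',l-\delta}$ (whose compactness relies crucially on the strict gain in the b-decay order, since the paper explicitly warns there is no compactness when $l$ is held fixed at the sc-face) allows extraction of a subsequence converging weakly in $\Hscb^{s,r,l}$ and strongly in $\Hscb^{s'',r'',l-\delta}$ to some $u\neq 0$.

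Next I would identify the limit. Writing $P(\sigma_j)u_j-P(0)u=(\sigma_j Q-\sigma_j^2)u_j+P(0)(u_j-u)$ and using $\sigma_j\to 0$ together with strong convergence of $u_j$ in $\Hscb^{s'',r'',l-\delta}$, one obtains $P(\sigma_j)u_j\to P(0)u$ in a correspondingly shifted $\Hscb$-space; since the left side vanishes, $P(0)u=0$. Now $P(0)\in x^2\Diffb^2(X)$ is b-elliptic, so b-elliptic regularity promotes $u\in\Hscb^{s,r,l}\subset\Hb^{\tilde r,l}$ (for $\tilde r$ determined by $s,r,l$) to $u\in\Hb^{\infty,l}$, where by hypothesis $\Ker P(0)=\{0\}$; this contradicts $u\neq 0$ and establishes the desired uniform bound for $\sigma\in[0,\sigma_0]$ with $\sigma_0>0$ small.

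Finally, to upgrade the uniform a priori bound to invertibility I would run the same contradiction argument for the formal adjoint $P(\sigma)^*$ acting on the dual 2-microlocal spaces, whose variable orders inherit the hypotheses of Proposition~\ref{prop:impr-scb-est} with sources and sinks interchanged exactly as in the duality step of the proof of Proposition~\ref{prop:impr-b-est}. This yields a uniform estimate also for $P(\sigma)^*$, and a Hahn--Banach argument on the dense subspace $\dCI(X)$ then produces an inverse that agrees with the known $P(\sigma)^{-1}$ from Theorem~\ref{thm:main-improved}, giving the asserted uniform boundedness of $P(\sigma)^{-1}:\Hscb^{s-2,r+1,l+2}\to\Hscb^{s,r,l}$. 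The main obstacle is verifying that the inclusion $\Hscb^{s,r,l}\hookrightarrow\Hscb^{s'',r'',l-\delta}$ (for $s''<s$, $r''<r$) is genuinely compact as a map between the full 2-microlocal spaces: away from the scattering face this is the standard sc-compactness coming from strictly decreasing $s$ and $r$, but at the sc-face, where $\Hscb$ reduces to $\Hb^{\tilde r,l}$, one needs the standard b-compactness $\Hb^{\tilde r,l}\hookrightarrow\Hb^{\tilde r-K,l-\delta}$; sewing these pieces together requires a partition of unity in $\Psiscb$ of the type developed in Section~\ref{sec:2-micro}.
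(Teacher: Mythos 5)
Your proposal is correct and follows exactly the route the paper intends: the paper disposes of this theorem with the remark that "the error term can be dropped," i.e.\ by the same compactness/contradiction argument as in Proposition~\ref{prop:remove-cpt-error}, using the compact inclusion afforded by the $l-\delta$ gain in the b-decay order, identifying the weak limit as a nullspace element of $P(0)$, promoting it to $\Hb^{\infty,l}$ by b-ellipticity, and handling surjectivity via the dual estimate for $P(\sigma)^*$. Your write-up simply makes these steps explicit.
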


In particular, this proves Corollary~\ref{cor:2-micro-spaces}.

\section{The Kerr setting}\label{sec:Kerr}
In this section we concentrate on the analytic
aspects of the Kerr setting that extend those of the setting of the earlier parts of this
this paper, which describe the Kerr wave operator near the Minkowski
(or Euclidean) end.
In order to deal with the full Kerr setting, given the work already
done in this paper, we need to deal with a
neighborhood of the event horizon. All the relevant features there
were handled by the microlocal Fredholm analysis of
\cite{Vasy-Dyatlov:Microlocal-Kerr}, though here we frame the operator
as one equipped with a Cauchy hypersurface inside the black hole,
rather than complex absorption, following \cite{Hintz-Vasy:Semilinear}. One then considers a manifold
with boundary $X$ as before, except that $X$ has, in addition to the
asymptotically Euclidean boundary, which we denote $\pa_+ X$, an
`artificial boundary' $\pa_-X$, given by $\mu=\mu_0$, $\mu_0<0$, where $\mu$ is a
smooth function on $X$, with $\mu-\mu_0$ a defining function of
$\pa_-X$; this acts as a Cauchy hypersurface below. We still use $x=r^{-1}$ to denote a defining function of
$\pa_+X$, and assume that near $\pa_+ X$ the same structure as before
holds. In the actual Kerr spacetime, we have $x=r^{-1}$, $\mu=1-\frac{2m}{r}$, $m$
the mass of the black hole.

As $\pa_-X$ is not considered a `real' boundary, below the notation
$\Diffb$, etc., refers to the b-structure at $\pa_+X$, and the
standard space of differential operators at $\pa_-X$. We then consider
\begin{equation}\label{eq:P-sigma-form-Kerr}
P(\sigma)=P(0)+\sigma Q-\sigma^2,\qquad P(0)\in x^2\Diffb^2(X),\ Q\in
S^{-2-\delta}\Diffb^1(X)
\end{equation}
$P(\sigma)$ is symmetric for $\sigma$ real (so $P(0)$ and $Q$
symmetric; here we explicitly make $Q$ independent of $\sigma$), and
$P(0)-\Delta_g$ in $S^{-2-\delta}\Diffb^2(X)$ near $\pa_+X$, so in
particular $P(0)$ is elliptic in $\Psibc^{2,0}$ in $x<x_0$ for some
$x_0>0$, while $P(0)$ is a wave operator near $\pa_-X$ with $\mu$
timelike satisfying microlocal hypotheses on the structure of the
Hamilton flow that we describe next.

We remark that for the actual Kerr wave operator $\Box$, for suitable positive
$\alpha\in\CI(X)$, with $\alpha-1\in x\CI(X)$, $\alpha\Box$ is of this
form ($\alpha\sim 1-2m/r=1-2mx$ near $x=0$, $m$ the mass
of the black hole, $\sim$ in the sense
that the difference is $O(x^2)$), which then remains formally
self-adjoint relative to $\alpha^{-1}$ times the metric volume
density. This form is the consequence of a simple computation near $x=0$
using Boyer-Lindquist coordinates (which need to be modified away from
$x=0$ to deal with the horizon, much as in the Kerr-de Sitter setting) in which the dual metric is
\begin{equation*}\begin{aligned}
G=-\rho^{-2}\Big(&\Delta_r\pa_r^2
+\frac{1}{\sin^2\theta}(a \sin^2\theta\,\pa_{ t}+\pa_{ \phi})^2+\pa_\theta^2-\frac{1}{\Delta_r}
((r^2+a^2)\pa_{t}+a \,\pa_{\phi})^2\Big)
\end{aligned}\end{equation*}
with $m,a$ constants, $0\leq |a|<m$,
\begin{equation*}\begin{aligned}
&\rho^2=r^2+a^2\cos^2\theta,\ \Delta_r=r^2+a^2-2mr.
\end{aligned}\end{equation*}
Indeed, expanding the squares yields, with $h^{-1}_{S^2}$ the
spherical dual metric, $k_S$ a smooth 2-tensor family (in $r^{-1}$) on the sphere,
\begin{equation*}\begin{aligned}
&((1-2m/r)^{-1}+O(r^{-2}))\pa_t^2-(1-2m/r+O(r^{-2})\pa_r^2\\
&\qquad-r^{-2}(h^{-1}_{S^2}+O(r^{-1})k_S)-O(r^{-3})\pa_t \pa_\phi,
\end{aligned}\end{equation*}
where the cancellation
between the two $\pa_t \pa_\phi$ cross terms,
$$
(r^2+a^2\cos^2\theta)^{-1}(2a\pa_t\pa_\phi-2\Delta_r^{-1}a(r^2+a^2)\pa_t\pa_\phi),
$$
with $\Delta_r^{-1}(r^2+a^2)=1+O(r)$ giving the cancellation, is
crucial in the decay in $O(r^{-3})\pa_t\pa_\phi$ that we use
here. Moreover, in the actual wave operator, the $t$-translation
invariance means that there are no first or zeroth order terms arising
from the non-decaying $(1-2m/r)^{-1}+O(r^{-2}))\pa_t^2$ term of the dual
metric.
Thus, with $\alpha$ being the reciprocal of the coefficient of $\pa_t^2$
(i.e.\ the squared metric length of $dt$), this gives the desired form
near $\pa_+X$.

The description of the hypotheses away from $\pa_+X$ follows
\cite[Section~2.2]{Vasy-Dyatlov:Microlocal-Kerr} closely (with some
minor notational changes); indeed away from $\pa_+X$ behaves much like
a Kerr-de Sitter metric.
In order to deal with the structure of the operator away from $\pa_+X$
we also assume
that the principal symbol $p$ of $P(0)$
has a non-degenerate zero set $\Sigma=\Sigma_+\cup\Sigma_-$ with the
subscript denoting two connected components (i.e.\ $dp$ does not vanish when $p$
does), and has non-degenerate, in the sense of
\cite[Section~2.2]{Vasy-Dyatlov:Microlocal-Kerr}, generalized radial sets $\Lambda^-_\pm$ which are
submanifolds of $\Tb^*_{X^\circ}X$ (of course as $\Lambda^-$ lies
over the interior of $X$, there is no distinction between $\Tb^*$ and
$T^*$ near $\Lambda^-$, i.e.\ the `b' nature is irrelevant) (in the actual Kerr space these
would be the two halves of $N^*Y$, where $Y\subset X$ is the event
horizon), and $\Lambda^-_\pm$ are (normal) sources ($-$)/sinks ($+$) for $\sH_p$ within
$\Tb^*X$, with $\Lambda^-_\pm\subset\Sigma_\mp$, with the
superscript standing for the black hole end. We write $L^-$ for the
image of $\Lambda^-$, with the appropriate subscripts, in the
cosphere bundle $\Sb^*X$. The subprincipal symbol of $P(\sigma)$ at $L_\pm^-$ also
plays a role below; we assume that, with principal symbols considered
as homogeneous functions (of degree $1$) on $\Tb^*X\setminus o$, and
$\rho_\infty$ being a defining function of fiber infinity (with
asymptotic degree $-1$ homogeneity, and with the
particular choice not playing any role)
$$
\sigma_1\Big(\frac{1}{2i}(P(\sigma)-P(\sigma)^*)\Big)|_{\Lambda^-_\pm}=(\im\sigma)\sigma_1(Q)=\kappa(\im\sigma)
\rho_\infty^{-1}\sH_p \rho_\infty|_{\Lambda^-_\pm}
$$
for a function $\kappa>0$; for notational simplicity we take $\kappa$ constant.

Such a
Cauchy hypersurface plus radial point plus the b-infinity setting
is called {\em non-trapping} if (null)-bicharacteristics of $p$ have
the following behavior:
\begin{enumerate}
\item
All bicharacteristics in $\Sigma_+\setminus L_-^-$ tend to $L_-^-$ in the backward 
direction and to $\pa_- X$ (meaning $\Tb^*_{\pa_-X}X$) in the forward direction. 
\item
All bicharacteristics in $\Sigma_-\setminus L_+^-$ tend to $L_+^-$ in the forward 
direction and to $\pa_-X$ in the backward direction. 
\end{enumerate}
Thus, in $\Sigma_+$, $L^-$ can only be reached in the backward
direction and $\pa_- X$ in the forward direction; in $\Sigma_-$ forward and backward are reversed. (Microlocal analysis in $X^\circ$ is only at fiber
infinity, i.e.\ at infinite momentum, $\pa\overline{T^*}_{X^\circ}
X=S^*_{X^\circ} X$.) This reversal corresponds to the need for propagating
estimates forward in $\Sigma_+$ and backward in $\Sigma_-$ for the
operator, with the reverse for the adjoint, corresponding to the
`causal' (Cauchy problem) inverse we need.

Thus, with $x=r^{-1}$ as before, $\chi_-$ supported in $(0,\infty)$,
identically $1$ on $(x_1,\infty)$,
$x_1>0$, $\tilde\chi_-$ supported in $(0,\infty)$, identically $1$ on
$\supp\chi_-$, and propagating estimates forward along $\sH_p$ in
$\Sigma_+$, backwards in $\Sigma_-$, one has estimates
$$
\|\chi_- u\|_{\bar H^s}\leq C(\|\tilde\chi_- P(\sigma)u\|_{\bar
  H^{s-1}}+\|u\|_{\bar H^{-N}})
$$
holding if $s>\frac{1}{2}-\kappa\im\sigma$, and dually, reversing the
direction of propagation,
$$
\|\chi_- u\|_{\dot H^{\tilde s}}\leq C(\|\tilde\chi_-
P(\sigma)^*u\|_{\dot H^{\tilde s-1}}+\|u\|_{\dot H^{-N}}),
$$
holding if $\tilde s<\frac{1}{2}-\kappa\im\sigma$; for the Fredholm estimate
the relevant value of $\tilde s$ is $-(s-1)=1-s$, so the inequalities
for $s$ and $\tilde s$ are equivalent. Here $\bar H$ and $\dot H$ are,
respectively, the Sobolev spaces of extendible and supported
distributions, with this property referring to the Cauchy hypersurface; see \cite[Section~2.1]{Hintz-Vasy:Semilinear} in a
context closely related to the present one.

As the function spaces are simpler, we first consider the b-, rather than the second microlocal,
estimate on the asymptotically Euclidean end. Thus, we let $\tilde r$ be a
variable order as in the main theorem,
with $\beta>0$,
\begin{equation*}
\tilde
r=\frac{1}{2}-(l+1)\pm\beta\frac{\taub}{(\taub^2+|\mub|^2)^{1/2}},\qquad |l+1|<\frac{n-2}{2}.
\end{equation*}
The estimate on the asymptotically Euclidean end,
where $\chi_+$ can be taken to be $1-\chi_-$, while $\tilde\chi_+$ is
supported in $[0,x_0)$, identically $1$ on $\supp\chi_+$, is, with $0<K<\beta$,
$$
\|\chi_+ u\|_{\Hb^{\tilde r,l}}\leq C(\|\tilde\chi_+
P(\sigma)u\|_{\Hb^{\tilde r-1,l+2}}+\|u\|_{\Hb^{\tilde r-K,l'}})
$$
and dually
$$
\|\chi_+ u\|_{\Hb^{1-\tilde r,-l-2}}\leq C(\|\tilde\chi_+
P(\sigma)^*u\|_{\Hb^{-\tilde r,-l}}+\|u\|_{\Hb^{\tilde r-K,l'}})
$$
where $|l+1|<\frac{n-2}{2}$, and $\tilde r$ a variable order.

So let
$s$ be a variable order satisfying $s>\frac{1}{2}-\kappa\im\sigma$ near the
radial sets at the event horizon, indeed for convenience $s$ constant
outside $x<x_0$, and $s=\tilde r$ near $\pa_-X$, we can combine these
two (simply use a partition of unity: in the transitional region
$P(\sigma)$ is elliptic, so the order is irrelevant) to obtain, with $0<K<\beta$,
$$
\|u\|_{\bHb^{s,l}}\leq
C(\|P(\sigma)u\|_{\bHb^{s-1,l+2}}+\|u\|_{\bHb^{s-K,l'}}),
$$
and dually
$$
\|u\|_{\dHb^{1-s,-l-2}}\leq
C(\|P(\sigma)^*u\|_{\dHb^{-s,-l}}+\|u\|_{\dHb^{s-K,l'}}).
$$
We can then argue exactly as in Section~\ref{sec:normal} in the proof of
Proposition~\ref{prop:remove-cpt-error} to remove the compact error
term. We state the result both for $P(\sigma)$ and for its adjoint
$P(\sigma)^*$ since now the dual spaces are not of the same kind due
to the artificial boundary, as extendible and supported distributions
are interchanged in dualization:

\begin{thm}
Suppose $P(0):\bHb^{s,l}\to\bHb^{s-2,l+2}$ has trivial nullspace. Then
there exist $\sigma_0>0$, $C>0$ such that for $|\sigma|<\sigma_0$,
$$
\|u\|_{\bHb^{s,l}}\leq C\|P(\sigma)u\|_{\bHb^{s-1,l+2}}.
$$

An analogous statement holds for $P(\sigma)^*$ on the dual spaces,
provided $P(0):\dHb^{1-s,-l-2}\to\dHb^{-s,-l}$ has trivial nullspace.

As a consequence, if both $P(0)$ and $P(0)^*$ have trivial nullspace on the
respective spaces, then there is $\sigma_0>0$ such that for $|\sigma|<\sigma_0$
$$
P(\sigma):\{u\in \bHb^{s,l}: P(\sigma)u\in
\bHb^{s-1,l+2}\}\to \bHb^{s-1,l+2}
$$
is invertible, with the inverse possessing uniform bounds.
\end{thm}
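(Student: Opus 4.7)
The plan is to run the standard Fredholm contradiction argument of Proposition~\ref{prop:remove-cpt-error}, now in the extendible/supported setting appropriate to the Kerr-type boundary $\pa_- X$, and then to deduce invertibility by combining the estimate for $P(\sigma)$ with the dual estimate for $P(\sigma)^*$.

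First I would establish the estimate without compact error for $P(\sigma)$ on $\bHb^{s,l}$ by contradiction, assuming only the estimate with compact error proved in the preceding paragraphs:
\[
\|u\|_{\bHb^{s,l}}\leq C(\|P(\sigma)u\|_{\bHb^{s-1,l+2}}+\|u\|_{\bHb^{s-K,l'}}),\qquad 0<K<\beta,\ l'<l.
\]
If the conclusion fails, there is a sequence $\sigma_j\to 0$ and $u_j\in\bHb^{s,l}$ with $\|u_j\|_{\bHb^{s,l}}=1$ and $P(\sigma_j)u_j\to 0$ in $\bHb^{s-1,l+2}$. By weak-$*$ compactness of the unit ball in $\bHb^{s,l}$ and the compactness of the inclusion $\bHb^{s,l}\hookrightarrow \bHb^{s-K,l'}$ (which holds at both $\pa_+X$, via the weight loss, and at $\pa_-X$, since the extendible spaces inherit the usual Rellich property), we can pass to a subsequence with $u_j\rightharpoonup u$ weakly in $\bHb^{s,l}$ and strongly in $\bHb^{s-K,l'}$. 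The estimate then forces $\liminf\|u_j\|_{\bHb^{s-K,l'}}\geq C^{-1}>0$, so $u\neq 0$. Meanwhile
\[
P(\sigma_j)u_j - P(0)u = (P(\sigma_j)-P(0))u_j + P(0)(u_j - u)\to 0
\]
in $\bHb^{s-K-2,l'}$, using that $P(\sigma_j)\to P(0)$ in $\cL(\bHb^{s-K,l'},\bHb^{s-K-2,l'})$ (thanks to $\sigma Q-\sigma^2$ being lower order with a small prefactor) and $u_j\to u$ in $\bHb^{s-K,l'}$. Thus $P(0)u=0$, contradicting the hypothesis that $P(0)$ has trivial nullspace on $\bHb^{s,l}$ (using the standard observation, via elliptic theory at $\pa_+X$ and propagation to $\pa_-X$, that nullspace elements lie in $\bHb^{\infty,l}$, so the hypothesis is independent of $s$).

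The same argument, applied verbatim with $s,l$ replaced by $1-s,-l-2$, $P(\sigma)$ replaced by $P(\sigma)^*$, and the extendible spaces $\bHb$ replaced by the supported spaces $\dHb$, yields the dual estimate
\[
\|v\|_{\dHb^{1-s,-l-2}}\leq C\|P(\sigma)^*v\|_{\dHb^{-s,-l}}
\]
under the corresponding hypothesis that $P(0)^*$ has trivial nullspace on $\dHb^{1-s,-l-2}$; note that the threshold conditions at $L_\pm^-$ are invariant under this dualization because $s>\frac{1}{2}-\kappa\im\sigma$ for $P(\sigma)$ is equivalent to $1-s<\frac{1}{2}+\kappa\im\sigma$, matching the adjoint.

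For invertibility, the estimate for $P(\sigma)$ gives injectivity and closed range of
\[
P(\sigma):\{u\in \bHb^{s,l}: P(\sigma)u\in \bHb^{s-1,l+2}\}\to \bHb^{s-1,l+2},
\]
with uniform norm bound for $P(\sigma)^{-1}$. The dual estimate gives that the range is dense: indeed, if $f\in\bHb^{s-1,l+2}$ annihilates the range, then, interpreting $f$ via the $L^2$-pairing as an element of the dual $\dHb^{1-s,-l-2}$, the condition $\langle P(\sigma)u,f\rangle=0$ for all admissible $u$ says $P(\sigma)^*f=0$ in the appropriate weak sense, so by the dual estimate $f=0$. Combined with closed range, this yields surjectivity, hence invertibility with uniform bounds on $[0,\sigma_0)$.

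The main obstacle is the Fredholm perturbation step: propagating the $\sigma\to 0$ limit through the variable-order, mixed extendible/supported function spaces while keeping the threshold condition $s>\frac{1}{2}-\kappa\im\sigma$ at the generalized radial sets $L_\pm^-$ uniformly satisfied; this forces $\sigma_0$ to be small enough that the threshold remains on the correct side for all $|\sigma|<\sigma_0$, but is otherwise a bookkeeping matter since $s$ is fixed and $\im\sigma\to 0$.
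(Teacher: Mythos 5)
Your argument is correct and is essentially the paper's own proof: the paper simply says to repeat the contradiction/compactness argument of Proposition~\ref{prop:remove-cpt-error} on the combined estimate with compact error, and to pair it with the dual estimate for $P(\sigma)^*$ on $\dHb^{1-s,-l-2}$ to get surjectivity and hence uniform invertibility. The only caveat worth noting is that the sign of the $\kappa\im\sigma$ shift in the adjoint threshold should be tracked a little more carefully than your one-line equivalence suggests, but since the inequalities are strict and $\sigma_0$ is taken small this does not affect the conclusion.
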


Finally the second microlocal version is in some sense simpler, in
that the order $s$ can be taken to be constant. Namely, with
$s>\frac{1}{2}-\kappa\im\sigma$, and with
$$
r=-\frac{1}{2}\pm\beta\frac{\taub}{(\taub^2+|\mub|^2)^{1/2}}=-\frac{1}{2}\pm\beta\frac{\tau}{(\tau^2+|\mu|^2)^{1/2}},
$$
by Proposition~\ref{prop:sc-b-Riem}
one has, for $u\in\Hscb^{s',r-K,l'}$, $0<K<\beta$, $s',r',l'$ arbitrary
$$
\|\chi_+ u\|_{\Hscb^{s,r,l}}\leq C(\|\tilde\chi_+
P(\sigma)u\|_{\Hscb^{s-1,r+1,l+2}}+\|u\|_{\Hscb^{s',r',l'}})
$$
and dually
$$
\|\chi_+ u\|_{\Hscb^{1-s,-1-r,-l-2}}\leq C(\|\tilde\chi_+
P(\sigma)^*u\|_{\Hscb^{-s,-r,-l}}+\|u\|_{\Hscb^{s',r',l'}})
$$
where $|l+1|<\frac{n-2}{2}$, which together with the $\chi_-$
estimates above yields, for $u\in\bHscb^{s',r-K,l'}$, $0<K<\beta$, with $P(\sigma)u\in \bHscb^{s-1,r+1,l+2}$
$$
\|u\|_{\bHscb^{s,r,l}}\leq C(\|P(\sigma)u\|_{\bHscb^{s-1,r+1,l+2}}+\|u\|_{\bHscb^{-N,-N',l'}}),
$$
and dually (under analogous conditions)
$$
\|u\|_{\dHscb^{1-s,-1-r,-l-2}}\leq C(\|P(\sigma)^*u\|_{\dHscb^{-s,-r,-l}}+\|u\|_{\dHscb^{-N,-N',l'}}).
$$
Again, arguing exactly as in the proof of
Proposition~\ref{prop:remove-cpt-error} we deduce:

\begin{thm}
Suppose $P(0):\bHscb^{s,r,l}\to\bHscb^{s-2,r,l+2}$ has trivial nullspace. Then
there exist $\sigma_0>0$, $C>0$ such that for $|\sigma|<\sigma_0$,
$$
\|u\|_{\bHscb^{s,r,l}}\leq C\|P(\sigma)u\|_{\bHscb^{s-1,r+1,l+2}}.
$$

An analogous statement holds for $P(\sigma)^*$ on the dual spaces,
provided $P(0)^*:\dHscb^{1-s,-r-1,-l-2}\to\dHscb^{-s,-r,-l}$ has trivial
nullspace.

As a consequence, if both $P(0)$ and $P(0)^*$ have trivial nullspace on the
respective spaces, then there is $\sigma_0>0$ such that for $|\sigma|<\sigma_0$
$$
P(\sigma):\{u\in \bHscb^{s,r,l}: P(\sigma)u\in
\bHscb^{s-1,r+1,l+2}\}\to \bHscb^{s-1,r+1,l+2}
$$
is invertible.
\end{thm}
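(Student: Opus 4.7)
The proof will follow closely the pattern of Proposition~\ref{prop:remove-cpt-error} from Section~\ref{sec:normal}, since the semi-Fredholm estimates with compact error,
\begin{equation*}
\|u\|_{\bHscb^{s,r,l}}\leq C(\|P(\sigma)u\|_{\bHscb^{s-1,r+1,l+2}}+\|u\|_{\bHscb^{-N,-N',l'}}),
\end{equation*}
and the analogous estimate for $P(\sigma)^*$ on the supported dual spaces, have already been assembled just above the statement of the theorem by combining the forward/backward propagation estimates through the event horizon (with the threshold condition $s>\frac{1}{2}-\kappa\im\sigma$ at $L^-_\pm$) with the asymptotically Euclidean estimates from Proposition~\ref{prop:impr-scb-est}. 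For $l'<l$ (and $-N,-N'$ below the relevant thresholds), the inclusion $\bHscb^{s,r,l}\hookrightarrow\bHscb^{-N,-N',l'}$ is compact, which is the only input needed to remove the error term.

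First I would prove the estimate \eqref{eq:res-b-est-2}-type statement for $P(\sigma)$ by contradiction: assume a sequence $u_j\in\bHscb^{s,r,l}$ with $\|u_j\|_{\bHscb^{s,r,l}}=1$ and $\|P(\sigma_j)u_j\|_{\bHscb^{s-1,r+1,l+2}}\to 0$, where $\sigma_j\to 0$. By weak-$*$ compactness of the unit ball and compactness of the inclusion into $\bHscb^{-N,-N',l'}$, pass to a subsequence with $u_j\to u$ weakly in $\bHscb^{s,r,l}$ and strongly in $\bHscb^{-N,-N',l'}$. The semi-Fredholm estimate applied to $u_j$ forces $\liminf\|u_j\|_{\bHscb^{-N,-N',l'}}\geq C^{-1}>0$, so $u\neq 0$. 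On the other hand, $P(\sigma_j)-P(0)=\sigma_j Q-\sigma_j^2$ is bounded as an operator between the appropriate spaces with norm going to zero as $\sigma_j\to 0$, and $P(0)(u_j-u)\to 0$ in a weak enough space, yielding $P(0)u=0$, contradicting the hypothesis. The identical argument with $P(\sigma)^*$ in place of $P(\sigma)$ and the roles of $\bHscb$ and $\dHscb$ swapped (using the extendible/supported duality) gives the stated estimate for the adjoint.

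Given both estimates, invertibility follows by a standard Hahn-Banach argument, exactly as in the first part of the proof of Proposition~\ref{prop:impr-b-est}. The estimate for $P(\sigma)$ gives injectivity and closed range on $\bHscb^{s-1,r+1,l+2}$. To see that the range is all of $\bHscb^{s-1,r+1,l+2}$, given $f\in\bHscb^{s-1,r+1,l+2}$, define a linear functional on the range $P(\sigma)^*(\dHscb^{1-s,-r-1,-l-2})\subset\dHscb^{-s,-r,-l}$ by $P(\sigma)^*v\mapsto\langle f,v\rangle$; the adjoint estimate shows this is bounded in the target norm, and Hahn-Banach extension to all of $\dHscb^{-s,-r,-l}$, followed by representation via the $L^2$-pairing on the dual $\bHscb^{s,r,l}$, produces $u\in\bHscb^{s,r,l}$ with $P(\sigma)u=f$ and the desired bound. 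The uniform bound on the inverse in $|\sigma|<\sigma_0$ is inherited from the uniform constant in the errorless estimate.

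The only technical point requiring care is ensuring that the duality pairing between $\bHscb$ and $\dHscb$ at the Cauchy hypersurface $\pa_-X$ is the correct one to make the Hahn-Banach step rigorous; this is standard for extendible/supported Sobolev spaces as treated in \cite[Section~2.1]{Hintz-Vasy:Semilinear}, and the only mild novelty relative to Proposition~\ref{prop:remove-cpt-error} is bookkeeping the correct exchange of $\bar H$ and $\dot H$ under dualization. The convergence $P(\sigma_j)\to P(0)$ used in the contradiction step is also immediate from the form \eqref{eq:P-sigma-form-Kerr}, since $\sigma Q-\sigma^2$ is lower order and vanishes with $\sigma$.
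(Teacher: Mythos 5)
Your proposal is correct and follows essentially the same route as the paper, which at this point simply says to argue exactly as in the proof of Proposition~\ref{prop:remove-cpt-error}: the compactness/contradiction argument removes the error term from the already-established semi-Fredholm estimates for $P(\sigma)$ and $P(\sigma)^*$, and the standard Hahn--Banach duality argument (which you spell out and the paper leaves implicit) yields surjectivity and hence invertibility with uniform bounds.
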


\begin{rem}\label{rem:Kerr-perturb}
As noted in the elliptic setting in Remark~\ref{rem:bd-state-stable},
if $P_b(\sigma)$ is a family
of operators of the form discussed above for $P(\sigma)$ continuously
depending on a parameter $b\in\RR^k$, and $P_0(0)$ is invertible, then
there is $\ep>0$ such that
the same holds for $P_b(0)$ for $|b|<\ep$, i.e.\ the hypothesis of the
theorem is perturbation stable. In this setting this is an immediate consequence
of the stability discussion of
\cite[Section~2.7]{Vasy-Dyatlov:Microlocal-Kerr} for non-elliptic
Fredholm problems.

In particular, the Kerr family is a perturbation of the Schwarzschild
family in this sense. Thus, the invertibility of $P_0(0)$ for a
Schwarzschild metric of a given black hole mass $m>0$ implies the invertibility of
$P_b(0)$, $b=(m',a')$,
for Kerr metrics of black hole mass $m'$ close to $m$ and angular momentum $a'$
close to $0$. If $P_b(\sigma)$ is the $-t_*$-Fourier transformed wave
operator on scalar functions, $P_0(0)$ is well-known to have trivial
nullspace (scalar `mode stability' of Schwarzschild), going back to
Regge and Wheeler \cite{Regge-Wheeler:Schwarzschild}, and the the same
holds for the adjoint. The simplest way to see this is that by
a regularization (in terms of decay) argument one shows that for
functions in the relevant nullspace, $0=\langle
P_0(0)u,u\rangle_{r>r_0}=\|du\|^2_{r>r_0}$); here $r_0$ is the event horizon,
and there are no boundary terms at the event horizon since the
operator is characteristic there. (Notice that $du$ is in $L^2$
automatically since $d$ is a scattering differential operator, so in
terms of decay as a b-operator, it is order $-1$, and the relevant
nullspace is in $\Hb^{\infty,-\frac{1}{2}+\ep}$ for all $\ep>0$.) A different way of proceeding, using
spherical symmetry, is that the problem reduces to a one-dimensional
Schr\"odinger operator on $(r_0,\infty)$,
with a non-negative potential that decays inverse quadratically as
$r\to\infty$ (and smooth at the event horizon), and the relevant nullspace is
on functions that are smooth at $r_0$ and are in the b-Sobolev spaces discussed in this paper
at the other end. Thus, by the perturbation stability corresponding
results also hold for slowly rotating Kerr black holes. In particular, in this slowly rotating case
it is not necessary to use Whiting's transformation \cite{Whiting:Mode} and its extension by
Shlapentokh-Rothman \cite{Shlapentokh-Rothman:Quantitative}.
\end{rem}

\section{Bound states and half-bound states}\label{sec:resonances}
Finally we consider the case of $P(0)$ having a non-trivial nullspace,
at first in the elliptic, in the scattering sense, setting. Before
proceeding, we mention the references \cite{Jensen-Kato:Spectral} and
\cite[Section~3.3]{Dyatlov-Zworski:Scattering} for the precise
description of the resolvent near zero energy in potential scattering
for sufficiently rapidly decaying, resp.\ spatially compactly
supported, perturbations of the Euclidean Laplacian.

What
happens in this setting is highly dimension dependent for the following reason.
For the Fredholm theory, we considered
$P(\sigma)$ as a map
$$
\{u\in\Hb^{\tilde r,l}:\ P(\sigma)u\in \Hb^{\tilde r-1,l+2}\}=\cX_\sigma\to\Hb^{\tilde r-1,l+2},
$$
and the domain here depends on $\sigma$ (though the estimates employed utilize
standard variable order Sobolev spaces). Now, in general, in standard meromorphic Fredholm
theory, or indeed simply Fredholm theory depending continuously on a
parameter, one would decompose the domain as the direct sum of the
nullspace of $P(0)$ and a complementary space, and analogously the target space
would be decomposed into the range of $P(0)$ and a complementary
space, and consider $P(\sigma)$ as a `block matrix' with respect to
this decomposition. Here the choice of the complementary space is
flexible, but the nullspace and the range play a key role. Under our
assumptions, if $u\in\Ker P(0)$, then for $\sigma\neq 0$,
$u\in\cX_\sigma$ if and only if $P(\sigma)u\in\Hb^{\tilde r-1,l+2}$,
which is the case if and only if $\sigma^2 u\in \Hb^{\tilde r-1,l+2}$,
i.e.\ $ u\in \Hb^{\tilde r-1,l+2}$. Here the differential order is not
an issue, since elements of the nullspace are in $\Hb^{\infty,l'}$ for
all $l'<-1+\frac{n-2}{2}$, but decay is. Since we need
$l>-1-\frac{n-2}{2}$, the statement $u\in\cX_\sigma$ amounts to
$1-\frac{n-2}{2}<-1+\frac{n-2}{2}$,
i.e.\ $n>4$. Thus, for $n\geq 5$, which is exactly when it is
guaranteed that there are no
half-bound states, one can proceed rather directly with perturbation
theory in a Fredholm setting. Namely, choose a complementary subspace to $\Ker P(0)$ in
$\Hb^{\tilde r,l}$ (e.g.\ the orthocomplement), and a
complementary subspace to $\Ran P(0)$ in $\Hb^{\tilde
  r-1,l+2}$; consider the complement, resp.\ $\Ker P(0)$, intersected with $\cX_\sigma$ as
the actual decomposition of the domain, keeping in mind that $\Ker
P(0)\subset\cX_\sigma$ as we already noted (with $l$ chosen
appropriately, namely $l<-3+\frac{n-2}{2}$, which is possible for
$n\geq 5$). Assuming that $P(\sigma)=P(0)-\sigma^2$ with $P(0)=P(0)^*$ for simplicity,
with the decompositions being $\Ker P(0)^\perp\oplus\Ker P(0)$ and
$\Ran P(0)\oplus\Ran P(0)^\perp$, with $\perp$ relative to the
$L^2_g$-inner product to take advantage of the formal self-adjointness, using
that $\Ran P(0)^\perp=\Ker P(0)$ (as $\Ker
P(0)\subset\Hb^{\infty,l'}$, the $L^2$-orthocomplement makes
sense in $\Hb^{\tilde r-1,l+2}$ as $l+2+l'$ is taken $\geq 0$ for
$l'+2<1+\frac{n-2}{2}$, arbitrarily close to equality, so even with
$l$ barely greater than $-1-\frac{n-2}{2}$, the non-negativity of the
sum can be arranged),
$P(\sigma)$ becomes the `block matrix'
$$
\begin{pmatrix} P(0)-\sigma^2&0\\0&-\sigma^2\end{pmatrix}.
$$
Now the top left entry is invertible for sufficiently small $\sigma$
for the same reasons as in the case $\Ker P(0)$ and $\Ker P(0)^*$ were
trivial, namely the arguments of Section~\ref{sec:normal} apply. The
actual inverse is thus
$$
\begin{pmatrix} (P(0)-\sigma^2)^{-1}&0\\0&-\sigma^{-2}\end{pmatrix},
$$
and thus on the complement of $\Ker P(0)$, the resolvent remains
uniformly bounded as $\sigma\to 0$, while on $\Ker P(0)$ it has the
expected $O(|\sigma|^{-2})$ asymptotic behavior.

Notice that what we really needed above is not that $n\geq 5$, rather
that the nullspace of $P(0)$ is sufficiently decaying, so even if
$n\geq 3$ merely, and $\Ker P(0)\subset \Hb^{\infty,l+2}$ for some
$l>-1-\frac{n-2}{2}$, which is a strengthening of the a priori decay
order by $4-n+\ep$, $\ep>0$ arbitrarily small, i.e.\ a strengthening
by just a bit more than one order of decay if $n=3$, all the above arguments go through unchanged. Recall that
the asymptotic behavior of elements of the nullspace is given by an
expansion in terms of resonances, which means that the asymptotics are
given by
powers $\frac{n-2}{2}+\sqrt{(\frac{n-2}{2})^2+\lambda_j}$ of $x$ (plus
natural numbers, with possible logarithms), where $\lambda_j$ are the
eigenvalues of $\Delta_Y$, which is to say that these terms lie in
$$
\cap_{\ep>0}\Hb^{\infty,-1+\sqrt{(\frac{n-2}{2})^2+\lambda_j}-\ep}.
$$
For instance, for $n=3$ and $Y$ the standard sphere,
$\lambda_j=j(j+1)$, $j\geq 0$ integer, and the exponent is $-\frac{1}{2}+j-\ep$, which
satisfies the desired inequality for $j\geq 2$ (which is stronger than
the requirement for being in $L^2$: $j\geq 1$). Thus, the only
`non-trivial' part of the nullspace from this perspective is the one
with non-vanishing $j=0,1$ terms. We note that this also limits the
dimension of the more complicated part of the nullspace to the sum of
the dimension of the resonant states (thus of the eigenspaces of the
boundary Laplacian) for $j=0,1$.

In order to extend this result, we proceed from a somewhat different
perspective which is common in scattering theory. We consider a perturbation $\tilde P(\sigma)$ of $P(\sigma)$ such
that $\tilde P(\sigma)-P(\sigma)\in S^{-2-\delta}\Diffb^2(X)$ and such that
$\tilde P(0)$ is invertible as a map $\Hb^{\tilde r,l}\to\Hb^{\tilde
  r,l+2}$, $|l+1|<\frac{n-2}{2}$. Note that such $\tilde P(\sigma)$ exists
and
$$
V(\sigma)=\tilde P(\sigma)-P(\sigma) \in S^{-2-\delta}\Diffb^2(X)
$$
may even be taken to be compactly supported
(and even order $-\infty$) in $X^\circ$, one way of doing this is
approximating elements of the nullspaces of $P(0)$ and its adjoint by
compactly supported $\CI$ functions, another is to add to $P(0)$ a
positive `potential' which is sufficiently large in a sufficiently
large compact set. Notice that (even under the weaker assumption),
$\cX_\sigma$ is both the domain for $P(\sigma)$ and for $\tilde
P(\sigma)$ for all $\sigma$ (as Fredholm operators). Moreover, by our
main results, $\tilde P(\sigma)$ is invertible for sufficiently small
$|\sigma|$ (and $\im\sigma$ of the correct indefinite sign). Then to work on a
domain independent of $\sigma$ we proceed in a standard manner in
scattering theory, and consider $P(\sigma)\tilde
P(\sigma)^{-1}:\cY\to\cY$; inverting this is equivalent to
inverting $P(\sigma)$ in the desired manner.

At first we assume that
$$
V=V(\sigma)
$$
is independent of $\sigma$; later we will relax this assumption.
Now,
$$
P(\sigma)\tilde
P(\sigma)^{-1}=(\tilde
P(\sigma)-V) \tilde P(\sigma)^{-1}=\Id-V\tilde P(\sigma)^{-1} ,
$$
and $\Id-V\tilde P(\sigma)^{-1}$ is now a bounded, indeed compact, family of operators,
continuous in the weak operator topology. Moreover, its nullspace for $\sigma=0$ is
the image of that of $P(0)$ under $\tilde P(0)$. Thus, elements $v$
of the nullspace satisfy $0=P(0)\tilde P(0)^{-1} v=v-V\tilde
P(0)^{-1}v$, i.e.\ $v=V\tilde P(0)^{-1}v$, so $v$ is in fact compactly
supported and smooth if $V$ is arranged to be such. On the other hand,
the $L^2$-orthocomplement (annihilator) of the range is the nullspace
of $P(0)^*$ in $\cY^*$. We can now consider the decomposition of
$\Id-V\tilde P(\sigma)^{-1} $ with respect to the orthogonal
decomposition of $\cY$ into $\tilde P(0)\Ker P(0)$ and its
orthocomplement on the domain side (written in the reverse order,
compatibly with the preceding discussion), and $\Ran P(0)=(\Ker P(0)^*)^\perp$ and
$\Ker P(0)^*$ on the target space side. With respect to this
decomposition, the $00$ entry (from the orthocomplement of $\tilde
P(0)\Ker P(0)$ to $\Ran P(0)=(\Ker P(0)^*)^\perp$) of this operator is
invertible for $\sigma=0$, and thus for $|\sigma|$ small, with a
uniform bound as $\sigma\to 0$. For all other entries, either the
domain is restricted to $\tilde P(0)\Ker P(0)$ or the target space is
restricted to $\Ker P(0)^*$. In the former case, using that $P(0)u=0$
means $\tilde P(0)u=Vu$, this amounts to considering the
operator
\begin{equation}\begin{aligned}\label{eq:P-zero-Ker-side}
P(\sigma)\tilde P(\sigma)^{-1}\tilde P(0)|_{\Ker P(0)}&=\tilde
P(0)-V\tilde P(\sigma)^{-1}\tilde P(0)|_{\Ker P(0)}\\
&=V(\Id-\tilde P(\sigma)^{-1} \tilde P(0))|_{\Ker P(0)}\\
&=V(\tilde P(0)^{-1}-\tilde P(\sigma)^{-1}) V |_{\Ker P(0)}.
\end{aligned}\end{equation}
Similarly, taking adjoints, in the latter case, using that
$P(0)^*\phi=0$ means $\tilde P(0)^*\phi=V^*\phi$, we are considering
\begin{equation}\begin{aligned}\label{eq:P-zero-star-Ker-side}
(\tilde P(\sigma)^{-1})^*P(\sigma)^*|_{\Ker P(0)^*}&=\Id-(\tilde
P(\sigma)^{-1})^*V^*|_{\Ker P(0)^*}\\
&=((\tilde P(0)^{-1})^*-(\tilde P(\sigma)^{-1})^*)V^*|_{\Ker P(0)^*}
\end{aligned}\end{equation}

The key point is that one would like to say that $\tilde
P(0)^{-1}-\tilde P(\sigma)^{-1}$ is small, namely the same size as
$\tilde P(0)-\tilde P(\sigma)$. Since
$$
\tilde P(0)-\tilde P(\sigma)-\sigma^2
$$
is a smooth $\sigma$ dependent family of operators in
$S^{-2-\delta}\Diffb^2(X)$ vanishing at $\sigma=0$
by assumption, it is convenient to assume that $\tilde P(\sigma)$ is
chosen so that
\begin{equation}\label{eq:tilde-P-sigma-dep}
\tilde P(0)-\tilde P(\sigma)=\sigma^2,
\end{equation}
which is thus of $O(|\sigma|^2)$. (Together with the assumption that
$V$ is independent of $\sigma$, this means that we are making an
assumption on the $\sigma$ dependence of $P(\sigma)$; as already
indicated, we will remove this assumption on $V$ in due course, while
keeping the assumption \eqref{eq:tilde-P-sigma-dep} on $\tilde P(\sigma)$.) However, the expectation for $\tilde
P(0)^{-1}-\tilde P(\sigma)^{-1}$ is not always true
for domain reasons. Namely, the `resolvent identity'
\begin{equation}\begin{aligned}\label{eq:formal-res-id}
\tilde
P(0)^{-1}-\tilde P(\sigma)^{-1}&=\tilde P(0)^{-1}(\tilde
P(\sigma)-\tilde P(0))\tilde P(\sigma)^{-1}\\
&=-\sigma^2 \tilde 
P(0)^{-1}\tilde P(\sigma)^{-1}=-\sigma^2 \tilde 
P(\sigma)^{-1}\tilde P(0)^{-1}
\end{aligned}\end{equation}
holds for $\sigma\neq 0$, with the composition justified e.g.\ as $\tilde
P(\sigma)$ is elliptic near the 0-section of the scattering cotangent
bundle, but this is of course not a uniform statement as $\sigma\to
0$, hence this expression fails to be $O(|\sigma|^2)$ in
general. Indeed, the issue with the composition is that acting on
$\Hb^{\tilde r-1,l+2}$, $\tilde P(\sigma)^{-1}$ (resp.\ $\tilde P(0)^{-1}$) produces an element of
$\Hb^{\tilde r,l}$ (uniformly bounded in $\sigma$), but as
$l<-1+\frac{n-2}{2}$, this is in the domain of $\tilde P(0)^{-1}$ (resp.\ $\tilde P(\sigma)^{-1}$) only
if $-1+\frac{n-2}{2}>1-\frac{n-2}{2}$ (i.e.\ $n>4$), in which case
one can indeed make sense of the composition in a uniformly bounded
way. Notice that for $n=4$, the failure of the composition making
sense is just the lack of an additional $\ep$ order decay, while for
$n=3$ the lack of an additional $1+\ep$ order of decay, where $\ep>0$ can be taken
arbitrarily small.

However:

\begin{lemma}
For $0\leq s\leq 2$, $l$, $\tilde r$ as before, the operator $\sigma^2\tilde P(\sigma)^{-1}$ is bounded  by
$C|\sigma|^{s}$ as a map
$$
\Hb^{\tilde r-1,l+2}\to\Hb^{\tilde
  r-2+s,l+2-s},\ 0\leq s\leq 2.
$$
\end{lemma}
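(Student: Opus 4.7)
My plan is to prove the lemma by establishing the bound at the two endpoints $s=0$ and $s=2$ and then interpolating. The endpoint estimates come directly from the uniform resolvent bound of Theorem~\ref{thm:main-improved} applied to $\tilde P(\sigma)$ (which is invertible and uniformly bounded as $\tilde P(\sigma)^{-1}:\Hb^{\tilde r-1,l+2}\to\Hb^{\tilde r,l}$ in this regime by hypothesis), combined with the algebraic identity $\sigma^2=\tilde P(0)-\tilde P(\sigma)$ from \eqref{eq:tilde-P-sigma-dep}.

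At $s=2$, the statement $\|\sigma^2\tilde P(\sigma)^{-1}f\|_{\Hb^{\tilde r,l}}\leq C|\sigma|^2\|f\|_{\Hb^{\tilde r-1,l+2}}$ is immediate: $\tilde P(\sigma)^{-1}$ maps $\Hb^{\tilde r-1,l+2}\to\Hb^{\tilde r,l}$ with a uniform bound, and the prefactor $\sigma^2$ supplies the required $|\sigma|^2$ decay.

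At $s=0$, I would write
\[
\sigma^2\tilde P(\sigma)^{-1}=\bigl(\tilde P(0)-\tilde P(\sigma)\bigr)\tilde P(\sigma)^{-1}=\tilde P(0)\tilde P(\sigma)^{-1}-\Id.
\]
Since $\tilde P(0)\in x^2\Diffb^2(X)\subset\Psibc^{2,-2}(X)$ is bounded $\Hb^{\tilde r,l}\to\Hb^{\tilde r-2,l+2}$, composing with the uniform bound on $\tilde P(\sigma)^{-1}:\Hb^{\tilde r-1,l+2}\to\Hb^{\tilde r,l}$ gives a uniform bound for $\tilde P(0)\tilde P(\sigma)^{-1}:\Hb^{\tilde r-1,l+2}\to\Hb^{\tilde r-2,l+2}$, and the identity operator is trivially bounded $\Hb^{\tilde r-1,l+2}\to\Hb^{\tilde r-2,l+2}$ (in fact with a gain of one differential order, which is absorbed into the constant). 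Thus at $s=0$ we get a $\sigma$-independent bound.

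For intermediate $0<s<2$, I would interpolate using the complex interpolation scale of variable-order b-Sobolev spaces, which along the straight line from $(\tilde r-2,l+2)$ to $(\tilde r,l)$ in the $(\text{differential},\text{decay})$-plane passes through $(\tilde r-2+s,l+2-s)$ at interpolation parameter $\theta=s/2$; standard complex interpolation then yields the bound $C\cdot 1^{1-s/2}\cdot(|\sigma|^2)^{s/2}=C|\sigma|^s$ on $\Hb^{\tilde r-2+s,l+2-s}$. The one step that requires a brief verification — this is the only real obstacle, and it is routine — is that the pair of spaces $\Hb^{\tilde r-2,l+2}$ and $\Hb^{\tilde r,l}$ genuinely forms a complex interpolation scale with the expected intermediate spaces, which follows from realizing all these spaces as $\Lambda^{-1}L^2$ for suitable elliptic $\Lambda\in\Psibc$ (of varying orders), or equivalently by reducing to standard interpolation on $L^2$-scales via elliptic b-pseudodifferential conjugation; with variable orders the same holds since only a smooth function on $\Sb^*X$ is added to the differential order, and complex interpolation commutes with such conjugation.
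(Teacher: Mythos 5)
Your proposal is correct and follows essentially the same route as the paper: the identity $\sigma^2\tilde P(\sigma)^{-1}=-\Id+\tilde P(0)\tilde P(\sigma)^{-1}$ gives the uniform bound into $\Hb^{\tilde r-2,l+2}$ at $s=0$, the uniform resolvent estimate gives the $C|\sigma|^2$ bound into $\Hb^{\tilde r,l}$ at $s=2$, and interpolation along the line $(\tilde r-2+s,\,l+2-s)$ yields $C|\sigma|^s$. The paper justifies the interpolation step via a holomorphic family of the form $x^{2z}L^z$ (rather than complex powers of a single variable-order operator), which is the same reduction to an $L^2$-scale that you sketch.
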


\begin{rem}
Here the key part is the decay order as that is what prevents the
composition $\tilde P(0)^{-1}\tilde P(\sigma)^{-1}$ from being
uniformly bounded; the point is that we can improve the mapping
property (the target weight) by making $s$ smaller, i.e.\ by giving up
on decay as $|\sigma|\to 0$. Furthermore, since we are interested in
this expression either when applied to elements of $V\Ker P(0)$, or
paired with elements of $V^*\Ker P(0)^*$, both of which have infinite
b-differential regularity, the differential orders never matter for us.
\end{rem}

\begin{proof}
We have
\begin{equation}\label{eq:tilde-P-sigma-identity}
\sigma^2\tilde P(\sigma)^{-1}=(\tilde P(0)-\tilde P(\sigma))\tilde P(\sigma)^{-1}=-\Id+\tilde P(0) \tilde P(\sigma)^{-1}.
\end{equation}
Now, our results for $\tilde P(\sigma)$ imply that the left hand side
is bounded by $C|\sigma|^2$ as a map $\Hb^{\tilde r-1,l+2}\to\Hb^{\tilde
  r,l}$, while the right hand side is bounded by $C$ as a map
$\Hb^{\tilde r-1,l+2}\to\Hb^{\tilde r-2,l+2}$. By interpolation, this
proves the lemma since the interpolation spaces between $\Hb^{\tilde
  r,l}$ and $\Hb^{\tilde r-2,l+2}$ are exactly $\Hb^{\tilde
  r-2+s,l+2-s}$, $0\leq s\leq 2$. Recall that this can instead be
rephrased using a holomorphic family of operators which are an
isomorphism between $\Hb^{\tilde
  r-2+s,l+2-s}$, $0\leq s\leq 2$, and the base case, $s=0$. Following
a suggestion of Hintz, one can
achieve this particularly simply by {\em not} using complex powers of a
single operator rather a family of the form $x^{2z}L^z$, $L\in\Diffb^2(X)$ is formally self-adjoint, elliptic with normal
operator invertible on the reals, e.g.\ the Laplacian of a Riemannian
b-metric (asymptotically cylindrical metric)
plus $1$, with boundedness of $x^{2z}L^z \sigma^{-2z} \tilde P(\sigma)$
as a map $\Hb^{\tilde r-1,l+2}\to\Hb^{\tilde r-2,l+2}$ with $\re
z=0,1$ clear, and thus interpolation proving it for all $z$, and thus
the lemma for all $s$.
\end{proof}

This lemma gives, for $n=4$, that
\eqref{eq:formal-res-id} is bounded by $C|\sigma|^{2-\ep}$, while for
$n=3$ by $C|\sigma|^{1-\ep}$, $\ep>0$ arbitrary, between appropriate
spaces. In view of the presence of $V$ or $V^*$ in
\eqref{eq:P-zero-Ker-side} and \eqref{eq:P-zero-star-Ker-side}, the
precise spaces in between which we have the composition bound makes
little difference, provided that $V$ maps the full range of
$\cX_\sigma\subset\Hb^{\tilde r,l}$ to the full range of
$\cY=\Hb^{\tilde r-1,l'+2}$ of unrelated decay order $l'$ (in the
acceptable range of decay orders, so $|l'+1|<\frac{n-2}{2}$), that is
provided that $V\in S^{-\alpha}\Diffb^2(X)$ with $\alpha\geq (1+\frac{n-2}{2})-(-1-\frac{n-2}{2})=n$.

For later use, we note that assuming that we restrict to a subspace of
$\Ker P(0)$ or $\Ker P(0)^*$ with better decay properties, the
statement can be strengthened. For instance, writing
$$
(\tilde P(0)^{-1}-\tilde P(\sigma)^{-1})V|_{\Ker P(0)}=-\sigma^2\tilde
P(\sigma)^{-1}\tilde P(0)^{-1}V|_{\Ker P(0)}=-\sigma^2\tilde
P(\sigma)^{-1}|_{\Ker P(0)},
$$
if we restrict to a subspace of $\Ker P(0)$ which lies in
$\cap_{\ep>0}\Hb^{\infty,(n-4)/2+j-\ep}$, then we can reduce the loss
of order of vanishing by $j$ (but not to faster than quadratic
vanishing), i.e.\ when $n=3$ and $Y$ is the standard sphere we have an a priori bound of $C|\sigma|^{2-\ep}$ for
$j=1$, and (consistent with how $j\geq 2$ directly fit into
the perturbation theory discussion) a $C|\sigma|^2$ bound for $j\geq 2$.

From now on we concentrate on the most interesting case of $n=3$, also
because it is more singular than the $n=4$ case. For the sake of
definiteness, we will only consider
the case when $Y$ is the standard sphere. It is convenient to further
refine the block matrix decomposition by decomposing the 11 block into
the $j=0$ and the $j\geq 1$ parts, with the precise meaning that the
the $j\geq 1$ part lies in $\cap_{\ep>0}\Hb^{\infty,(n-4)/2+1-\ep}$,
while the $j=0$ is complementary to this in $\Ker P(0)$ (or $\Ker P(0)^*$). {\em From now on we change the
  indexing accordingly, the operator will be regarded as a 3-by-3
  block matrix, with the previous column, resp.\ row, 1 will be
  replaced by 2 columns, resp.\ rows, denoted by 1 and 2, with 1
  corresponding to $j=0$, 2 corresponding to $j\geq 1$.}

Our argument so far proves:

\begin{prop}\label{prop:a-priori-block-bounds}
Suppose that $V=V(\sigma)$ is independent of $\sigma$ and $n=3$.

The entries of the block matrix of $P(\sigma)\tilde
P(\sigma)^{-1}$ in the above sense have bounds
$$
\begin{pmatrix} O(1)&O(|\sigma|^{1-\ep})&O(|\sigma|^{2-\ep})\\O(|\sigma|^{1-\ep})&O(|\sigma|^{1-\ep})&O(|\sigma|^{2-\ep})\\O(|\sigma|^{2-\ep})&O(|\sigma|^{2-\ep})&O(|\sigma|^{2-\ep})\end{pmatrix},
$$
with $\ep>0$ arbitrary,
and furthermore the 00 entry has a bounded inverse.
\end{prop}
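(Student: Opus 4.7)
The plan is to read off each of the nine entries using the reduction formulas \eqref{eq:P-zero-Ker-side} and \eqref{eq:P-zero-star-Ker-side} developed just before the statement, combined with the interpolation bound of the preceding lemma.

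For the $(0,0)$ entry, I would first note that at $\sigma=0$ the operator $P(0)\tilde P(0)^{-1}=\Id-V\tilde P(0)^{-1}$ has nullspace exactly $\tilde P(0)\Ker P(0)$ and cokernel $\Ker P(0)^*$, so its restriction to the orthocomplement of $\tilde P(0)\Ker P(0)$, mapping to $\Ran P(0)$, is an isomorphism by Fredholm theory. Since $\sigma\mapsto V\tilde P(\sigma)^{-1}$ is norm continuous by our main theorem applied to the invertible family $\tilde P(\sigma)$, this $(0,0)$ block remains invertible with uniform bounds for $\sigma$ in a sufficiently small neighborhood of $0$.

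For columns $1$ and $2$, I would use \eqref{eq:P-zero-Ker-side} to rewrite each entry (before the target projection) as $V(\tilde P(0)^{-1}-\tilde P(\sigma)^{-1})Vu$, applied to $u$ in the appropriate part of $\Ker P(0)$ (the $j=0$ part for column $1$, the $j\geq 1$ part for column $2$), then invoke the resolvent identity $\tilde P(0)^{-1}-\tilde P(\sigma)^{-1} = -\sigma^2\tilde P(0)^{-1}\tilde P(\sigma)^{-1}$ together with the interpolation lemma. The latter gives $\sigma^2\tilde P(\sigma)^{-1}Vu\in\Hb^{\tilde r-2+s,l+2-s}$ of norm bounded by $C|\sigma|^s\|Vu\|_{\cY}$ for $s\in[0,2]$. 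The constraint for $\tilde P(0)^{-1}$ to act on this intermediate space is that its weight $l+2-s$ lie in the admissible Fredholm range, i.e.\ $s<l+\frac{3}{2}$. Choosing $l$ just below $-\frac{1}{2}$ then permits $s$ arbitrarily close to $1$, yielding $O(|\sigma|^{1-\epsilon})$ for column $1$. For column $2$, the extra order of decay of $u$, namely membership in $\bigcap_{\epsilon>0}\Hb^{\infty,1/2-\epsilon}$, shifts the admissible window for $s$ upward by one unit, letting $s$ approach the ceiling $2$ and delivering $O(|\sigma|^{2-\epsilon})$.

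Rows $1$ and $2$ are handled dually: by \eqref{eq:P-zero-star-Ker-side} and the same interpolation argument applied to $P(\sigma)^*\tilde P(\sigma)^{-*}$ with $\Ker P(0)^*$ in place of $\Ker P(0)$, row $1$ contributes $O(|\sigma|^{1-\epsilon})$ and row $2$ contributes $O(|\sigma|^{2-\epsilon})$. Each entry with at least one nonzero index then inherits the tighter of its row and column bounds, which reproduces the stated matrix. The main technical point I expect is the bookkeeping of the composition $V\tilde P(0)^{-1}\sigma^2\tilde P(\sigma)^{-1}V$ across weights $l+2-s$ and $l-s$ that temporarily leave the Fredholm range $|l'+1|<\frac{1}{2}$; I would handle this by exploiting that each factor $V\in S^{-2-\delta}\Diffb^2$ gains $2+\delta$ decay while losing only $2$ differential orders, so the two $V$'s sandwiching the composition reabsorb the weight excursion and land the output back inside $\cY=\Hb^{\tilde r-1,l+2}$ with the required uniform-in-$\sigma$ bound.
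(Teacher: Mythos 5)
Your overall architecture --- the block decomposition, the reduction formulas \eqref{eq:P-zero-Ker-side} and \eqref{eq:P-zero-star-Ker-side}, the resolvent identity combined with the interpolation lemma, and assigning each entry the better of its row and column gains --- is exactly the paper's argument, and your treatment of the $00$ entry and of the generic $O(|\sigma|^{1-\ep})$ bounds for row and column $1$ is correct.

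The gap is in the $O(|\sigma|^{2-\ep})$ bounds for column $2$ (and, dually, row $2$). You factor the resolvent difference as $-\sigma^2\tilde P(0)^{-1}\tilde P(\sigma)^{-1}$ and apply $\sigma^2\tilde P(\sigma)^{-1}$ to $Vu$ first. In that ordering the binding constraint is that the \emph{output} weight $l+2-s$ of $\sigma^2\tilde P(\sigma)^{-1}$ lie in the admissible range for $\tilde P(0)^{-1}$, i.e.\ $s<l+\frac{3}{2}<1$; this constraint is insensitive to the decay of $u$, since $Vu$ already lies in $\cY$ for every $u\in\Ker P(0)$ (it may even be taken compactly supported), and the output of $\tilde P(\sigma)^{-1}$ has weight only $l<-\frac{1}{2}$ no matter how rapidly its input decays. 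So the claim that the extra decay of the $j\geq 1$ part ``shifts the admissible window for $s$ upward by one unit'' does not follow from your setup: as written, your argument yields only $s<1$, hence $O(|\sigma|^{1-\ep})$, for column $2$ as well. The paper instead uses the other factorization, $-\sigma^2\tilde P(\sigma)^{-1}\tilde P(0)^{-1}Vu=-\sigma^2\tilde P(\sigma)^{-1}u$ (using $\tilde P(0)^{-1}V=\Id$ on $\Ker P(0)$), so that the decay of $u$ itself governs the \emph{input} weight of $\sigma^2\tilde P(\sigma)^{-1}$: generic $u\in\Hb^{\infty,-1/2-\ep}$ falls $1+\ep$ short of the required input weight $l+2>\frac{1}{2}$, costing $|\sigma|^{-1-\ep}$ off the nominal $|\sigma|^{2}$, while the $j\geq1$ part lies in $\cap_{\ep>0}\Hb^{\infty,1/2-\ep}$ and falls only $\ep$ short, giving $O(|\sigma|^{2-\ep})$. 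With that substitution the remainder of your argument (dualization for the rows, and combining row and column gains entrywise) goes through.
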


We next strengthen this if $\tilde P(\sigma)$ is the spectral family
of the Euclidean Laplacian:

\begin{prop}\label{prop:refined-block-bounds}
Suppose that $V=V(\sigma)$ is independent of $\sigma$, and  
that $\tilde P(\sigma)=\Delta_{\RR^n}-\sigma^2$, 
$n=3$. 

Consider the block matrix of
Proposition~\ref{prop:a-priori-block-bounds}. Suppose that the $L^2$
pairing between the $j\geq 1$ part of $\Ker P(0)$ and the
corresponding part of $\Ker P(0)^*$ is non-degenerate, and that the
$L^2(Y)$ pairing between the leading asymptotic terms of the $j=0$
parts of $\Ker P(0)$ and $\Ker P(0)^*$ is also non-degenerate.

Then for $\sigma\neq 0$, the 11, resp.\ 22 entries are invertible with
bounds $C|\sigma|$, resp.\ $C|\sigma|^{2}$, and with the respected
inverses bounded by $C|\sigma|^{-1}$, resp.\ $C|\sigma|^{-2}$, while
the 12 and 21 entries are bounded by $C|\sigma|^2$.

Here the lower right 2-by-2-block in fact has entries that are 
multiples of the indicated power of $\sigma$ up to a term with an
extra $O(|\sigma|^{1-\ep})$ vanishing, and the diagonal entries
have inverses that are multiples of the indicated power of
$\sigma^{-1}$, again up to a term with an
extra $O(|\sigma|^{1-\ep})$ vanishing. 
\end{prop}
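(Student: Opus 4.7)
The plan is to exploit the explicit Taylor expansion of the three-dimensional Euclidean free resolvent. Its Schwartz kernel $e^{i\sigma|z-z'|}/(4\pi|z-z'|)$ yields
\begin{equation*}
R_0(\sigma)=R_0(0)+\tfrac{i\sigma}{4\pi}\Pi+\sigma^2 G_2+O(|\sigma|^{3-\epsilon}),
\end{equation*}
where $\Pi f=\int_{\RR^3}f$ (times the constant function $1$) and $G_2$ has kernel $-|z-z'|/(8\pi)$; matching orders in $\tilde P(\sigma)R_0(\sigma)=I$ gives $\tilde P(0)\Pi=0$ and $\tilde P(0)G_2=R_0(0)$, so formally $G_2=R_0(0)^2$. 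Substituting into \eqref{eq:P-zero-Ker-side} and pairing with $\phi\in\Ker P(0)^*$ reduces the lower-right $2\times 2$ block of $P(\sigma)\tilde P(\sigma)^{-1}$ to explicit bilinear forms in $Vu$ and $V^*\phi$ modulo $O(|\sigma|^{3-\epsilon})$, and the refinements in the proposition all amount to identifying those two bilinears.

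Two elementary inputs do this identification. First (in the analyst's convention $\Delta=-\nabla^2$), Green's identity applied to $u\in\Ker P(0)$ and $1$ on $B_R$ with $R\to\infty$ gives $\int Vu=\int\Delta u=4\pi\alpha_u$ when $u\sim\alpha_u/r$ is in $K_1$, and $\int Vu=0$ when $u\in K_2$ since the boundary term $\int_{\partial B_R}\partial_r u$ is $O(R^{-1})$ for $u=O(r^{-2})$; the analogous vanishing holds for $V^*\phi$ with $\phi\in K_2^*$. Second, for every $u\in\Ker P(0)$ the function $R_0(0)Vu-u$ is a decaying harmonic function on $\RR^3$ (the leading $1/r$ coefficients match by Gauss's law), hence vanishes by Liouville, so $R_0(0)Vu=u$; consequently $G_2Vu=R_0(0)u$. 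The symmetry of the Newton kernel together with Fubini's theorem (whose hypotheses are easy to check because $Vu,V^*\phi\in\dCI(X)$) then gives
\begin{equation*}
\langle G_2 Vu,V^*\phi\rangle=\langle R_0(0)u,V^*\phi\rangle=\langle u,R_0(0)V^*\phi\rangle=\langle u,\phi\rangle
\end{equation*}
whenever at least one of $u,\phi$ lies in $K_2$ or $K_2^*$; in the $K_2$--$K_2^*$ case the final integral converges absolutely by the $r^{-4}$ decay of $u\phi$. Assembling these, the $11$-block is $-4\pi i\sigma\,\alpha_u\alpha_\phi+O(|\sigma|^{2-\epsilon})$, the $22$-block is $-\sigma^2\langle u,\phi\rangle+O(|\sigma|^{3-\epsilon})$, and the $12,21$-blocks have their $\sigma$-term killed (since $\int Vu=0$ or $\int V^*\phi=0$), leaving a $-\sigma^2\langle G_2Vu,V^*\phi\rangle+O(|\sigma|^{3-\epsilon})$ contribution.

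The two non-degeneracy hypotheses say exactly that the leading finite-dimensional matrices $c_1$ on $K_1$ (a multiple of the $L^2(Y)$-pairing $\alpha_u\alpha_\phi$ of leading constant asymptotic coefficients) and $c_2$ on $K_2$ (the $L^2$-pairing $\langle u,\phi\rangle$) are invertible; a Neumann series in the finite-dimensional lower-right block then yields invertibility of $A_{11}$ and $A_{22}$ for small $\sigma\neq 0$ together with $A_{11}^{-1}=c_1^{-1}\sigma^{-1}+O(|\sigma|^{-\epsilon})$ and $A_{22}^{-1}=c_2^{-1}\sigma^{-2}+O(|\sigma|^{-1-\epsilon})$. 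The main obstacle is justifying the Taylor expansion on the weighted b-Sobolev side: the kernels $|z-z'|^k$ of the higher-order terms $G_k$ grow at infinity, so the $G_k$ are not bounded globally as $\Hb^{\tilde r-1,l+2}\to\Hb^{\tilde r,l}$; with $V$ chosen compactly supported (as allowed in the discussion preceding the proposition), however, the sandwich $VG_kV$ is compact and the expansion converges on the range of $V|_{\Ker P(0)}$, with the stated $O(|\sigma|^{1-\epsilon})$ loss beyond $\sigma^2$ in the remainder coming from applying the interpolation bound of Section~\ref{sec:normal} to the abstract residual $\tilde P(\sigma)^{-1}-R_0(0)-\tfrac{i\sigma}{4\pi}\Pi-\sigma^2G_2$ between b-Sobolev spaces.
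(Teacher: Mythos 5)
Your route is viable and lands on the same leading coefficients as the paper, but the computational method is genuinely different. The paper never Taylor-expands the resolvent kernel: it writes $\tilde P(0)^{-1}-\tilde P(\sigma)^{-1}$ as the Fourier multiplier $-\sigma^2|\xi|^{-2}(|\xi|^2-(\sigma\pm i0)^2)^{-1}$, rescales $\xi=|\sigma|\eta$, and evaluates the limiting integral $\int|\eta|^{-2}(|\eta|^2-(\hat\sigma\pm i0)^2)^{-1}\,d\eta=2\pi^2 i\hat\sigma^{-1}$ explicitly, so that the $11$ entry comes out as $\frac{(4\pi)^2}{4\pi i}\sigma$ times the pairing of leading coefficients --- the same $-4\pi i\sigma\,\alpha_u\overline{\alpha_\phi}$ you obtain from the $\frac{i\sigma}{4\pi}\Pi$ term. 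For the $22$ block the paper avoids $G_2$ altogether via the operator identity \eqref{eq:res-pairing-22-block-compute}, which gives $-\sigma^2\langle u,\phi\rangle$ plus an $O(|\sigma|^{3-\ep})$ term bounded by the interpolation lemma; your identity $\langle G_2Vu,V^*\phi\rangle=\langle u,\phi\rangle$ via $R_0(0)Vu=u$ is the kernel-level shadow of the same computation. Your structural inputs ($\int Vu=4\pi\alpha_u$, its vanishing on $K_2$, $R_0(0)Vu=u$ by Liouville) all appear in the paper in equivalent form. What the Jensen--Kato-style expansion buys is transparency; what the Fourier rescaling buys is that it only needs $Vu,V^*\phi\in L^1$ with a modulus of continuity for their Fourier transforms at $0$, and it transfers to the Kerr setting of Theorem~\ref{thm:Kerr-resonances}.

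The one point you should repair: in this proposition $\tilde P(\sigma)$ is fixed to be $\Delta_{\RR^3}-\sigma^2$, so $V=\Delta_{\RR^3}-P(0)$ is \emph{determined} and is in general only in $S^{-2-\delta}\Diffb^2(X)$, not compactly supported --- the freedom to choose $V$ compactly supported, invoked before the proposition, is not available here. With $Vu,V^*\phi=O(r^{-3-\delta})$ rather than Schwartz, the double integral against the kernel $|z-z'|/(8\pi)$ of $G_2$ is not absolutely convergent for small $\delta$, the Fubini step in $\langle R_0(0)u,V^*\phi\rangle=\langle u,R_0(0)V^*\phi\rangle$ is no longer automatic (for $u\in K_2$, $\phi\in K_1^*$ the would-be absolute integral diverges logarithmically), and the $O(|\sigma|^{3-\ep})$ remainder of the kernel expansion, whose kernel grows like $|\sigma|^3|z-z'|^2$, needs the abstract interpolation bound rather than a pointwise estimate. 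None of this is fatal --- the identities can be regularized, or one can simply fall back on the paper's momentum-space computation for the first-order term and on \eqref{eq:res-pairing-22-block-compute} for the second-order term --- but as written your justification quietly assumes more decay of $V$ than the hypotheses provide.
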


Note that in particular the non-degeneracy of the pairing of $\Ker P(0)$ and $\Ker P(0)^*$ holds if $P(0)=P(0)^*$.

\begin{proof}
Notice that this proposition is
purely a statement involving columns and rows 1 and 2, thus in both
the `input' and the `output' slots we are restricted to elements of
(the image of) $\Ker P(0)$, resp. $\Ker P(0)^*$. 

The 11, 12, 21 and 22 entries are still higher (but finite) rank in general; we
evaluate them on an element $u$ of $\Ker P(0)$ paired with an element
$\phi$ of
$\Ker P(0)^*$. Thus, we need to evaluate
\begin{equation}\label{eq:Eucl-res-diff}
\langle (\tilde P(0)^{-1}-\tilde P(\sigma)^{-1})Vu,V^*\phi\rangle.
\end{equation}

Now $\tilde P(0)^{-1}-\tilde P(\sigma)^{-1}$ is a Fourier multiplier
by
$$
|\xi|^{-2}-(|\xi|^2-(\sigma\pm
i0)^2)^{-1}=-\sigma^2|\xi|^{-2}(|\xi|^2-(\sigma\pm i0)^2)^{-1},
$$
and thus \eqref{eq:Eucl-res-diff} is, with the hat denoting the
Fourier transform,
$$
-(2\pi)^{-3}\int_{\RR^3} \sigma^2|\xi|^{-2}(|\xi|^2-(\sigma\pm i0)^2)^{-1} \widehat{Vu}(\xi)\widehat{V^*\phi}(\xi)\,d\xi.
$$
Changing variables to $\eta=\xi/|\sigma|$, we obtain, with $\hat\sigma=\frac{\sigma}{|\sigma|}$,
$$
-(2\pi)^{-3}\sigma^2|\sigma|^{-1}\int_{\RR^3}
|\eta|^{-2}(|\eta|^2-(\hat\sigma\pm i0)^2)^{-1} \widehat{Vu}(|\sigma|\eta)\widehat{V^*\phi}(|\sigma|\eta)\,d\eta.
$$
Now, as $\eta\mapsto |\eta|^{-2}(|\eta|^2-(\hat\sigma\pm i0))^{-1}$ is $L^1$
away from $|\eta|=1$, and near $|\eta|=1$ it can be considered a
distribution of compact support (technically one uses a partition of
unity to combine these two results), the integral is uniformly
bounded as $\sigma\to 0$, and indeed converges as $\sigma\to 0$ to
$$
\widehat{Vu}(0)\widehat{V^*\phi}(0) \int_{\RR^3}
|\eta|^{-2}(|\eta|^2-(\hat\sigma\pm i0)^2)^{-1} \,d\eta.
$$
This new integral can be computed, for the sake of definiteness for
the $+$ sign in $\pm$ (the other case is the complex conjugate) by writing it as
\begin{equation}\begin{aligned}\label{eq:explicit-Euclidean-comp-2}
&\int_{\RR^3}
|\eta|^{-2}(|\eta|^2-(\hat\sigma+ i0)^2)^{-1} \,d\eta=4\pi\int_0^\infty(\rho^2-(\hat\sigma+ i0)^2)^{-1}
\,d\rho\\
&=2\pi\hat\sigma^{-1}\int_{0}^\infty ((\rho-(\hat\sigma+
i0))^{-1}-(\rho+(\hat\sigma+
i0))^{-1})\,d\rho\\
&=2\pi\hat\sigma^{-1}\log\frac{\rho-(\hat\sigma+i0)}{\rho+(\hat\sigma+i0)}\Big|_0^\infty=-2\pi(-\pi
i)\hat\sigma^{-1}=2\pi^2 i\hat\sigma^{-1},
\end{aligned}\end{equation}
which is non-zero.
Now, $\widehat{Vu}(0)$ is a non-vanishing multiple of the leading
asymptotic coefficient, namely that of $x=r^{-1}$, of $\tilde P(0)^{-1} Vu$ since the latter is
the inverse Fourier transform of $|\xi|^{-2}\widehat{Vu}$ (so the
multiple is actually $4\pi$ in view of the inverse Fourier transform of $|\xi|^{-2}$), but as
$Vu=\tilde P(0) u$, this is exactly $\tilde P(0)^{-1} Vu=u$.
Thus, the 11 entry is actually a constant ($\frac{1}{4\pi i} (4\pi)^2$) times $\sigma$ times the pairing between the leading
(constant!) coefficient of the expansion of $\Ker P(0)$ and $\Ker
P(0)^*$. Thus, the 11 entry is invertible for $\sigma\neq 0$, and the
inverse is bounded by $C|\sigma|^{-1}$. Correspondingly, in the
absence of the $j\geq 1$ parts (column and row 2 of the matrix)
$P(\sigma)\tilde P(\sigma)^{-1}$, and thus $P(\sigma)$, is invertible
for $\sigma\neq 0$, with a bound $C|\sigma|^{-1}$, and in the block
decomposition the inverse is
$$
\begin{pmatrix} O(1)&O(|\sigma|^{-\ep})\\O(|\sigma|^{-\ep})&O(|\sigma|^{-1})\end{pmatrix},
$$
with the leading term of the 11 entry being $4\pi i\sigma^{-1}$ if $u$ is
normalized to have leading term $\frac{x}{4\pi}$.

This computation also shows that if either $u$ or $\phi$ have
vanishing leading term, then indeed the pairing in
$O(|\sigma|^2)$. Hence it remains to compute the 22 block.
For this we note that
\begin{equation}\begin{aligned}\label{eq:res-pairing-22-block-compute}
&\langle (\tilde P(0)^{-1}-\tilde P(\sigma)^{-1})Vu,V^*\phi\rangle\\
&=-\sigma^2\langle \tilde P(\sigma)^{-1}Vu,(\tilde
P(0)^{-1})^*V^*\phi\rangle\\
&=-\sigma^2\langle \tilde P(0)^{-1}Vu,\phi\rangle-\sigma^2\langle (\tilde
P(\sigma)^{-1}-\tilde P(0))^{-1}Vu,\phi\rangle\\
&=-\sigma^2\langle u,\phi\rangle+\sigma^2\langle \sigma^2\tilde
P(\sigma)^{-1}\tilde P(0)^{-1}Vu,\phi\rangle\\
&=-\sigma^2\langle u,\phi\rangle+\sigma^2\langle \sigma^2\tilde
P(\sigma)^{-1}u,\phi\rangle
\end{aligned}\end{equation}
Now, the preceding discussion shows that the second term is
$O(|\sigma|^{3-\ep})$, so the 22 entry of the block matrix is simply
$-\sigma^2$ times the pairing between $\Ker P(0)$ and $\Ker P(0)^*$,
modulo $O(|\sigma|^2)$, so as long as this pairing is non-degenerate,
the block is invertible with an $O(|\sigma|^{-2})$ inverse, completing
the proof.
\end{proof}

From this proposition a standard
argument, using (block-)Gaussian elimination, immediately shows:

\begin{thm}\label{thm:asymp-Eucl-resonances}
Under the assumptions of Proposition~\ref{prop:refined-block-bounds},
the whole block
matrix of $P(\sigma)\tilde P(\sigma)^{-1}$ is invertible for
$\sigma\neq 0$ with inverse having block matrix with bounds
$$
\begin{pmatrix} O(1)&O(|\sigma|^{-\ep})&O(|\sigma|^{-\ep})\\O(|\sigma|^{-\ep})&O(|\sigma|^{-1})&O(|\sigma|^{-1})\\O(|\sigma|^{-\ep})&O(|\sigma|^{-1})&O(|\sigma|^{-2})\end{pmatrix}.
$$
Consequently, $P(\sigma)^{-1}$ has the same structure.

Here the lower right 2-by-2-block in fact has entries that are smooth 
multiples of the indicated power of $\sigma^{-1}$ up to a term with an
extra $O(|\sigma|^{1-\ep})$ vanishing.
\end{thm}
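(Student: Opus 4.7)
My plan is to prove this by block Gaussian elimination (equivalently, iterated Schur complements) applied to the 3-by-3 block matrix $M$ of $P(\sigma)\tilde P(\sigma)^{-1}$, using the refined bounds from Proposition~\ref{prop:refined-block-bounds}. Concretely, the entries of $M$ satisfy
\begin{equation*}
\begin{pmatrix} O(1)&O(|\sigma|^{1-\ep})&O(|\sigma|^{2-\ep})\\ O(|\sigma|^{1-\ep})&O(|\sigma|)&O(|\sigma|^2)\\ O(|\sigma|^{2-\ep})&O(|\sigma|^2)&O(|\sigma|^2)\end{pmatrix},
\end{equation*}
with the diagonal entries $M_{00},M_{11},M_{22}$ invertible with inverses of size $O(1)$, $O(|\sigma|^{-1})$, $O(|\sigma|^{-2})$ respectively. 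The plan is to perform block LDU factorization of $M$ in two stages, and then read off the inverse from $M^{-1}=U^{-1}D^{-1}L^{-1}$.

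First I would eliminate the $0$-row and $0$-column, using invertibility of $M_{00}$: form the Schur complement $M^{(1)}$ of the lower-right $2\times 2$ block, whose entries are $M^{(1)}_{ij}=M_{ij}-M_{i0}M_{00}^{-1}M_{0j}$ for $i,j\in\{1,2\}$. The correction terms have size $O(|\sigma|^{2-2\ep})$ for $i=j=1$, $O(|\sigma|^{3-2\ep})$ for $\{i,j\}=\{1,2\}$, and $O(|\sigma|^{4-2\ep})$ for $i=j=2$, hence are lower order than the original $M_{ij}$, so $M^{(1)}$ has the same matrix of bounds and the same invertibility of $M^{(1)}_{11}$ and $M^{(1)}_{22}$ as $M_{11},M_{22}$. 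Next, using invertibility of $M^{(1)}_{11}$, I would eliminate row/column $1$: the final Schur complement in the $22$ slot is
\[
M^{(2)}_{22}=M^{(1)}_{22}-M^{(1)}_{21}(M^{(1)}_{11})^{-1}M^{(1)}_{12}= O(|\sigma|^2)-O(|\sigma|^2)O(|\sigma|^{-1})O(|\sigma|^2)=O(|\sigma|^2),
\]
with correction $O(|\sigma|^3)$, so $M^{(2)}_{22}$ remains invertible with $(M^{(2)}_{22})^{-1}=O(|\sigma|^{-2})$ for sufficiently small $|\sigma|\ne 0$. This gives $M=LDU$ with $D=\operatorname{diag}(M_{00},M^{(1)}_{11},M^{(2)}_{22})$, the multipliers being $L_{10}=M_{10}M_{00}^{-1}=O(|\sigma|^{1-\ep})$, $L_{20}=M_{20}M_{00}^{-1}=O(|\sigma|^{2-\ep})$, $L_{21}=M^{(1)}_{21}(M^{(1)}_{11})^{-1}=O(|\sigma|)$, and analogously $U_{01}=O(|\sigma|^{1-\ep})$, $U_{02}=O(|\sigma|^{2-\ep})$, $U_{12}=O(|\sigma|)$.

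Then I would compute $M^{-1}=U^{-1}D^{-1}L^{-1}$ entry by entry. Since $L^{-1}$, $U^{-1}$ are unit triangular with the same individual bounds on their strictly off-diagonal entries (with the $(2,0)$ entry picking up at worst the sum of products, which is still $O(|\sigma|^{2-\ep})$), the $(i,j)$ entry of $M^{-1}$ is a sum $\sum_{k\ge\max(i,j)}(U^{-1})_{ik}D_k^{-1}(L^{-1})_{kj}$. A direct enumeration in each of the nine cases shows the dominant term matches the stated bound: e.g.\ $(M^{-1})_{11}$ is bounded by $O(|\sigma|^{-1})+O(|\sigma|)\cdot O(|\sigma|^{-2})\cdot O(|\sigma|)=O(|\sigma|^{-1})$; $(M^{-1})_{01}$ is $O(|\sigma|^{1-\ep})\cdot O(|\sigma|^{-1})+O(|\sigma|^{2-\ep})\cdot O(|\sigma|^{-2})\cdot O(|\sigma|)=O(|\sigma|^{-\ep})$; $(M^{-1})_{02}=O(|\sigma|^{2-\ep})\cdot O(|\sigma|^{-2})=O(|\sigma|^{-\ep})$; and so on. The final statement that $P(\sigma)^{-1}$ has the same structure follows because $P(\sigma)^{-1}=\tilde P(\sigma)^{-1}(P(\sigma)\tilde P(\sigma)^{-1})^{-1}$ and $\tilde P(\sigma)^{-1}$ is uniformly bounded by Theorem~\ref{thm:main-improved} (applied to $\tilde P$).

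The main obstacle is not conceptual — block elimination is entirely routine — but bookkeeping: one must verify both that at each Schur-complement stage the correction strictly beats the leading size (so invertibility and the inverse-size estimate survive), and that in the $9$-entry calculation above the various three-factor products combine to exactly the claimed orders, with no hidden logarithmic loss beyond the $|\sigma|^{-\ep}$ appearing in the off-diagonal rows and columns. For the refinement at the end, I would note that by \eqref{eq:res-pairing-22-block-compute} and the explicit computation \eqref{eq:explicit-Euclidean-comp-2} the entries of the lower right $2\times 2$ block of $M$ are smooth (in $\sigma$) multiples of $\sigma$ or $\sigma^2$ modulo errors with an extra factor of $O(|\sigma|^{1-\ep})$; since Schur complement, inversion and the triangular products all preserve smoothness modulo such relative errors (the $O(|\sigma|^{-\ep})$ terms in $L,U$ only enter the $0$-row/column and thus cannot contaminate the lower right $2\times 2$ block of the inverse at leading order), the corresponding entries of $M^{-1}$, and hence of $P(\sigma)^{-1}$, are smooth multiples of the indicated power of $\sigma^{-1}$ up to $O(|\sigma|^{1-\ep})$ relative errors, as claimed.
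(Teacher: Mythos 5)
Your proposal is correct and is exactly the argument the paper intends: the paper derives Theorem~\ref{thm:asymp-Eucl-resonances} from Proposition~\ref{prop:refined-block-bounds} with the one-line remark that ``a standard argument, using (block-)Gaussian elimination, immediately shows'' the result, and your two-stage Schur-complement bookkeeping (with the $O(|\sigma|^{2-2\ep})$, $O(|\sigma|^{3-2\ep})$, $O(|\sigma|^{3})$ corrections all subleading, and the nine-entry computation of $U^{-1}D^{-1}L^{-1}$) supplies precisely the omitted details. The passage to $P(\sigma)^{-1}$ via $P(\sigma)^{-1}=\tilde P(\sigma)^{-1}\bigl(P(\sigma)\tilde P(\sigma)^{-1}\bigr)^{-1}$ and the uniform boundedness of $\tilde P(\sigma)^{-1}$ likewise matches the paper's (implicit) reasoning.
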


Note that this theorem can be
applied if $P(\sigma)=P(0)-\sigma^2$ is a spectral family on
$X=\overline{\RR^n}$, with $Y$ the standard Riemannian sphere, {\em with no additional assumptions on $P(\sigma)$}.
A key consequence of this for applications is that when applied to
elements of the complement of $\Ker P(0)$, $P(\sigma)^{-1}$ is bounded
by $O(|\sigma|^{-\ep})$, and the same holds when $P(\sigma)^{-1}$ is
applied to any element of the domain, provided the result is projected
to $\Ran P(0)$.

Finally we consider the case of $P(\sigma)$ being a
Kerr-type operator, i.e.\ $X$ has two boundary hypersurfaces $\pa_+ X=Y$
and $\pa_-X$ as in the previous section, with the former being the
Euclidean end, and with $V(\sigma)$ not assumed to be independent of
$\sigma$. Notice that with this arrangement one may keep $\tilde
P(\sigma)-\tilde P(0)=-\sigma^2$, even if $P(\sigma)$ equals the Kerr
`spectral family' near the Euclidean end $\pa_+X$ and one wants $\tilde
P(\sigma)$ to be the spectral family of the Euclidean Laplacian near
$\pa_+X$.

We prove:

\begin{thm}\label{thm:Kerr-resonances}
Suppose that $\tilde P(\sigma)$ equals $\Delta_{\RR^n}-\sigma^2$, 
$n=3$, near $\pa_+X$.

Consider the block matrix of
Proposition~\ref{prop:a-priori-block-bounds}. Suppose that the $L^2$
pairing between the $j\geq 1$ part of $\Ker P(0)$ and the
corresponding part of $\Ker P(0)^*$ is non-degenerate, and that the
$L^2(Y)$ pairing between the leading $\pa_+X$-asymptotic terms of the $j=0$
parts of $\Ker P(0)$ and $\Ker P(0)^*$ is also non-degenerate.

Suppose also that $V(\sigma)-V(0)$ annihilates $\Ker P(0)$ and of $\Ker P(0)^*$.

Then
the whole block
matrix of $P(\sigma)\tilde P(\sigma)^{-1}$ is invertible for
$\sigma\neq 0$ with inverse having block matrix with bounds
$$
\begin{pmatrix} O(1)&O(|\sigma|^{-\ep})&O(|\sigma|^{-\ep})\\O(|\sigma|^{-\ep})&O(|\sigma|^{-1})&O(|\sigma|^{-1})\\O(|\sigma|^{-\ep})&O(|\sigma|^{-1})&O(|\sigma|^{-2})\end{pmatrix}.
$$
Consequently, $P(\sigma)^{-1}$ has the same structure.

Here the lower right 2-by-2-block in fact has entries that are smooth 
multiples of the indicated power of $\sigma^{-1}$ up to a term with an
extra $O(|\sigma|^{1-\ep})$ vanishing.
\end{thm}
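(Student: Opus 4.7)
The plan is to run the same three-step argument as in Theorem~\ref{thm:asymp-Eucl-resonances}: (i) establish the a priori block bounds of Proposition~\ref{prop:a-priori-block-bounds}, (ii) refine the $1,2$-rows and $1,2$-columns to the sharper structure of Proposition~\ref{prop:refined-block-bounds}, and (iii) conclude by block Gaussian elimination. The two new features to handle are the $\sigma$-dependence of $V$ and the fact that $\tilde P(\sigma)$ agrees with $\Delta_{\RR^n}-\sigma^2$ only near $\pa_+X$. Throughout, $\bHb$ and $\dHb$ replace $\Hb$, and one uses the uniform invertibility of $\tilde P(\sigma)$ between the appropriate extendible/supported b-Sobolev spaces supplied by the previous section.

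First, I would check that the interpolation lemma following \eqref{eq:formal-res-id}, and hence the a priori bounds of Proposition~\ref{prop:a-priori-block-bounds}, carry over verbatim. The identity \eqref{eq:tilde-P-sigma-identity} together with the uniform bounds on $\tilde P(\sigma)^{-1}$ in the Kerr setting gives, on the relevant $\bHb^{s,l}$ scale, an $O(|\sigma|^2)$ bound for $\sigma^2\tilde P(\sigma)^{-1}:\bHb^{s-1,l+2}\to\bHb^{s,l}$ and an $O(1)$ bound on the weaker target $\bHb^{s-2,l+2}$; interpolation yields the intermediate decay and the entire $3\times 3$ table.

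Second, I would show that the hypothesis on $V(\sigma)-V(0)$ collapses the computations of the $1,2$-rows and columns to the same pairings as in the Euclidean case. Writing $V=V(\sigma)=V(0)+(V(\sigma)-V(0))$ and using $\tilde P(0)u=V(0)u$ for $u\in\Ker P(0)$, the analogue of \eqref{eq:P-zero-Ker-side} becomes
\[
P(\sigma)\tilde P(\sigma)^{-1}\tilde P(0)u = V(\sigma)\bigl(\tilde P(0)^{-1}-\tilde P(\sigma)^{-1}\bigr)V(0)u + (V(0)-V(\sigma))u.
\]
Pairing with $\phi\in\Ker P(0)^*$ and using the annihilation of $\Ker P(0)$ by $V(\sigma)-V(0)$ kills the second term; splitting $V(\sigma)^*=V(0)^*+(V(\sigma)-V(0))^*$ and using the analogous annihilation of $\Ker P(0)^*$ reduces the first to $\langle(\tilde P(0)^{-1}-\tilde P(\sigma)^{-1})V(0)u, V(0)^*\phi\rangle$, which is exactly the quantity studied in Proposition~\ref{prop:refined-block-bounds}.

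The main obstacle is the third step: computing the leading small-$\sigma$ behavior of this pairing when $\tilde P(\sigma)$ is not globally Euclidean, so that the explicit Fourier computation \eqref{eq:explicit-Euclidean-comp-2} is not directly available. Using \eqref{eq:formal-res-id} and $(\tilde P(0)^{-1})^*V(0)^*\phi=\phi$, the pairing equals $-\sigma^2\langle\tilde P(\sigma)^{-1}V(0)u,\phi\rangle$, so the task is to extract the leading $\sigma$-asymptotics of $\tilde P(\sigma)^{-1}$ applied to the compactly supported $V(0)u$. I would proceed by a parametrix argument: choose a cutoff $\chi_+$ to a neighborhood of $\pa_+X$ where $\tilde P(\sigma)=\Delta_{\RR^n}-\sigma^2$, write $\tilde P(\sigma)^{-1}=\chi_+(\Delta_{\RR^n}-\sigma^2)^{-1}\chi_+ + R(\sigma)$ with $R(\sigma)$ bounded via the Kerr Fredholm theory of Section~\ref{sec:Kerr} on the appropriate $\bHscb$ scale, and feed this into the computation. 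The Euclidean part produces the explicit $\sigma^{-1}$ pole of \eqref{eq:explicit-Euclidean-comp-2}, whose coefficient is determined by the leading asymptotic terms of $u$ and $\phi$ at $\pa_+X$ (which are the Kerr analogues of $\widehat{V(0)u}(0)$ and $\widehat{V(0)^*\phi}(0)$), while $R(\sigma)$ contributes at the higher order claimed. The $22$-block is handled as in \eqref{eq:res-pairing-22-block-compute}, where the leading $-\sigma^2\langle u,\phi\rangle$ arises purely from the $L^2$ pairing on the $j\geq 1$ subspace and the residual term is $O(|\sigma|^{3-\ep})$ by the extra decay. With the $1,2$-block structure of Proposition~\ref{prop:refined-block-bounds} thus reproduced in the Kerr setting, the block Gaussian elimination of Theorem~\ref{thm:asymp-Eucl-resonances} applies verbatim and yields the asserted bounds on $P(\sigma)\tilde P(\sigma)^{-1}$ and hence on $P(\sigma)^{-1}$.
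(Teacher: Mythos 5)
Your steps (i) and (ii) are correct and match the paper: the interpolation bounds and Proposition~\ref{prop:a-priori-block-bounds} do carry over, and your algebraic reduction $P(\sigma)\tilde P(\sigma)^{-1}\tilde P(0)u = V(\sigma)(\tilde P(0)^{-1}-\tilde P(\sigma)^{-1})V(0)u+(V(0)-V(\sigma))u$, combined with the annihilation hypothesis on both $\Ker P(0)$ and $\Ker P(0)^*$, correctly collapses the $1,2$ rows and columns to the pairing $\langle(\tilde P(0)^{-1}-\tilde P(\sigma)^{-1})V(0)u,V(0)^*\phi\rangle$ studied in Proposition~\ref{prop:refined-block-bounds}.

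The gap is in step (iii). Writing $\tilde P(\sigma)^{-1}=\chi_+(\Delta_{\RR^n}-\sigma^2)^{-1}\chi_++R(\sigma)$, the remainder is $R(\sigma)=\tilde P(\sigma)^{-1}\bigl((1-\chi_+^2)-[\Delta,\chi_+](\Delta_{\RR^n}-\sigma^2)^{-1}\chi_+\bigr)$, i.e.\ $R(\sigma)V(0)u=\tilde P(\sigma)^{-1}w_\sigma$ with $w_\sigma$ supported in a fixed compact set away from $\pa_+X$. This is a quantity of \emph{exactly the same problematic type} as the one you are trying to compute: $\tilde P(\sigma)^{-1}w_\sigma$ has a nontrivial outgoing leading term $\sim a_\sigma(\omega)e^{i\sigma r}/r$ at $\pa_+X$ with $a_0\neq 0$ in general, and its pairing against $\phi\sim b(\omega)/r$ over $r^2\,dr\,d\omega$ diverges absolutely at $\sigma=0$ and is $O(|\sigma|^{-1})$ for $\sigma\neq 0$. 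Hence $-\sigma^2\langle R(\sigma)V(0)u,\phi\rangle=O(|\sigma|)$, the \emph{same} order as the main term, so it contributes to the leading $11$ coefficient rather than being negligible. Relatedly, the Euclidean piece alone does not produce the claimed coefficient: the $\sigma^{-1}$ pole of $\langle\chi_+(\Delta-\sigma^2)^{-1}\chi_+V(0)u,\phi\rangle$ is governed by $\widehat{\chi_+V(0)u}(0)$ and $\widehat{\chi_+V(0)^*\phi}(0)$, which are not the leading $\pa_+X$-asymptotic coefficients of $u$ and $\phi$ (those arise from the full inverse $\tilde P(0)^{-1}V(0)u=u$, not from $\chi_+E(0)\chi_+V(0)u$). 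The paper avoids this by cutting off the \emph{pairing} rather than the resolvent: the term $\langle(1-\chi^2)\tilde P(\sigma)^{-1}Vu,(\tilde P(0)^{-1})^*V^*\phi\rangle$ is uniformly bounded because the factor $(1-\chi^2)\tilde P(\sigma)^{-1}Vu$ is supported away from $\pa_+X$ so the integral converges absolutely, and the remaining term is rewritten via the \emph{exact} identity $\chi\tilde P(\sigma)^{-1}g=\tilde P_0(\sigma)^{-1}(\chi+[\tilde P_0(\sigma),\chi]\tilde P(\sigma)^{-1})g$, so that the Euclidean resolvent is applied to a modified input $f$ whose image $\tilde P_0(\sigma)^{-1}f$ \emph{literally agrees} with $\tilde P(\sigma)^{-1}V(\sigma)u$ near $\pa_+X$; since the Euclidean computation shows the pole coefficient depends only on those leading asymptotics, the correct coefficient comes out. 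You would need to replace your remainder estimate by an argument of this kind.
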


\begin{rem}
As follows from the proof below, it suffices to make the assumption on
$V(\sigma)-V(0)$ for the $j\geq 1$ part of $\Ker P(0)$ and of $\Ker
P(0)^*$, provided $V(\sigma)-V(0)$ has sufficiently small coefficients.
\end{rem}

\begin{rem}
Notice that if, within our framework, one perturbs an operator for which the non-degeneracy
of the pairings is known, the same non-degeneracy will hold for
sufficiently small perturbations.
\end{rem}

\begin{rem}
If one has a family of operators $P_b(\sigma)$ as in
Remark~\ref{rem:bd-state-stable}, cf.\ Remark~\ref{rem:Kerr-perturb},
and performs the block matrix decomposition for $P_0(0)$, then the 00
block remains invertible for $P_b(0)$, $|b|$ sufficiently small, by
perturbation stability. On the other hand, in general, $\Ker P_b(0)$ and
$\Ker P_b(0)^*$ may become lower dimensional. However, if one a priori
knows that there are subspaces of $\Ker P_b(0)$ and $\Ker P_b(0)^*$
which vary continuously with $b$, and for $b=0$ are $\Ker P_0(0)$ and
$\Ker P_0(0)^*$, then these are necessarily all of $\Ker P_b(0)$ and
$\Ker P_b(0)^*$ since the dimension of the latter spaces (for $b\neq
0$, $|b|$ small) is bounded by that
of that of the former spaces (by block-Gaussian elimination). In addition, in this case, the pairings
in Theorem~\ref{thm:Kerr-resonances}, themselves being continuous in
$b$, are non-degenerate as well for $b\neq 0$, i.e.\ the theorem is
equally applicable then.
\end{rem}

\begin{proof}
We start the proof with a simpler setting, when
$X=\overline{\RR^n}$ for now with $Y$ the standard sphere, but we
allow $V$ to depend on $\sigma$, so $V(\sigma)=\tilde
P(\sigma)- P(\sigma)$. (So this in particular already incorporates the
behavior of the Kerr spectral family near $\pa_+X$.)
In this case $\tilde P(0)^{-1}V(0)$
is the identity on $\Ker P(0)$. Moreover,
\begin{equation}\begin{aligned}\label{eq:P-zero-Ker-side-mod}
&P(\sigma)\tilde P(\sigma)^{-1}\tilde P(0)|_{\Ker P(0)}\\
&=\tilde
P(0)-V(0)\tilde P(\sigma)^{-1}\tilde P(0)-(V(\sigma)-V(0))\tilde P(\sigma)^{-1}\tilde P(0)|_{\Ker P(0)}\\
&=V(0)(\Id-\tilde P(\sigma)^{-1} \tilde P(0)) -(V(\sigma)-V(0))\tilde P(\sigma)^{-1}\tilde P(0)|_{\Ker P(0)}\\
&=V(0)(\tilde P(0)^{-1}-\tilde P(\sigma)^{-1}) V(0)
-(V(\sigma)-V(0))\tilde P(\sigma)^{-1} V(0)|_{\Ker P(0)}.
\end{aligned}\end{equation}
Now the first term behaves exactly as before, when $V$ was assumed to
be independent of $\sigma$, while for the second
term we can write
\begin{equation}\begin{aligned}\label{eq:P-zero-Ker-side-mod-2nd-term}
&(V(\sigma)-V(0))\tilde P(\sigma)^{-1} V(0)|_{\Ker P(0)}\\
&=(V(\sigma)-V(0))+(V(\sigma)-V(0)) (\tilde P(\sigma)^{-1}-\tilde P(0)^{-1}) V(0)|_{\Ker P(0)}.
\end{aligned}\end{equation}
The second term of this expression can be analyzed as before, with the
prefactor $V(\sigma)-V(0)$ giving an extra $\sigma$ vanishing, thus it
contributes $O(|\sigma|^{2-\ep})$ in all cases, indeed
$O(|\sigma|^{3-\ep})$ in the $j\geq 1$ input case (i.e.\ column 2). On the other hand, in
general, the first term has a non-trivial $O(|\sigma|)$ contribution.

Similarly, taking adjoints, in the latter case, using that
$P(0)^*\phi=0$ means $\tilde P(0)^*\phi=V(0)^*\phi$, we are considering
\begin{equation}\begin{aligned}\label{eq:P-zero-star-Ker-side-mod}
&(\tilde P(\sigma)^{-1})^*P(\sigma)^*|_{\Ker P(0)^*}=\Id-(\tilde
P(\sigma)^{-1})^*V(\sigma)^*|_{\Ker P(0)^*}\\
&=((\tilde P(0)^{-1})^*-(\tilde P(\sigma)^{-1})^*)V(0)^*-(\tilde
P(\sigma)^{-1})^*(V(\sigma)-V(0))^*|_{\Ker P(0)^*}
\end{aligned}\end{equation}
Again, the first term is as before, while for the second
\begin{equation}\begin{aligned}\label{eq:P-zero-star-Ker-side-mod-2nd-term}
&(\tilde
P(\sigma)^{-1})^*(V(\sigma)-V(0))^*|_{\Ker P(0)^*}\\
&=(\tilde
P(0)^{-1})^*(V(\sigma)-V(0))^*+(\tilde
P(\sigma)^{-1}-\tilde P(0)^{-1})^*(V(\sigma)-V(0))^*|_{\Ker P(0)^*},
\end{aligned}\end{equation}
and one again has an $O(|\sigma|^{2-\ep})$ estimate for the new second
term, while the first is $O(|\sigma|)$.

Now, $O(|\sigma|^{2-\ep})$ entries were negligible in our preceding discussion
{\em except} for the 22 entry, where $o(|\sigma|^2)$ entries are
negligible, thus $O(|\sigma|^{3-\ep})$ is in particular always negligible. Thus, the second term of
\eqref{eq:P-zero-Ker-side-mod-2nd-term} and
\eqref{eq:P-zero-star-Ker-side-mod-2nd-term} are both negligible for
our purposes, and the only potential issue is the behavior of the
first term in both of these, since it is only $O(|\sigma|)$. Such terms were negligible in our preceding discussion
{\em except} for the 02,20,11,12,21 and 22 entries, and in fact {\em small}
$O(|\sigma|)$ entries are negligible even there {\em except} for the
12, 21 (where small $O(|\sigma|^{3/2})$ works) and 22 entries (where
we need small $O(|\sigma|^2)$), by the Gaussian
elimination argument (though the inverse will have larger entries
in the 02, 12, 20 and 21 slots).

So if we assume that $V(\sigma)-V(0)$ annihilates the $j\geq 1$ part
of $\Ker P(0)$ and its adjoint of $\Ker P(0)^*$, and is small on the $j=0$ part (the 11 slot), we have
exactly the same result as if $V$ is independent of $\sigma$, namely
the analogue of Proposition~\ref{prop:refined-block-bounds}
holds. This proves Theorem~\ref{thm:Kerr-resonances} under the
stronger assumption on $X$.

It remains to deal with the Cauchy hypersurface, $\pa_-X$ and to
ensure that all of our arguments go through in this case as
well. All of the discussion concerning $V(\sigma)$ above remain valid,
i.e.\ the new terms with $V(\sigma)-V(0)$ can be handled as above. Moreover,
the computation leading to \eqref{eq:res-pairing-22-block-compute}
remains valid even in this case as well. Thus, in order to show the analogue
of Proposition~\ref{prop:refined-block-bounds} it remains to show that
even in this more general setting
the contribution of the first term of the right hand sides of
\eqref{eq:P-zero-Ker-side-mod} and \eqref{eq:P-zero-star-Ker-side-mod}
to the 12
and 21 blocks are $O(|\sigma|^2)$ and compute the 11 block.
For this we introduce an auxiliary differential operator $\tilde P_0(\sigma)$  that equals $\tilde
P(\sigma)$ near the Euclidean end, $\pa_+ X$, so that if $\chi\in\CI(X;\RR)$
identically $1$ near $\pa_+X$, supported away from $\pa_-X$, then
$\chi P(\sigma)=\chi \tilde P(\sigma)$. Here we can consider $\tilde
P_0(\sigma)$ as an operator on a different manifold; concretely, in
the case of interest, of $Y$ being the standard sphere, we replace $X$
by $\tilde X=\overline{\RR^n}$, and assume that $\tilde P_0(\sigma)$
is an operator on this space, and indeed that it is the spectral
family of the Euclidean Laplacian.

Then
$$
\tilde P_0(\sigma)\chi\tilde P(\sigma)^{-1}V(\sigma)u=(\chi +[\tilde P_0(\sigma),\chi]\tilde P(\sigma)^{-1})V(\sigma)u
$$
in $\cY$, thus
$$
\chi\tilde P(\sigma)^{-1}V(\sigma)u=\tilde P_0(\sigma)^{-1}(\chi +[\tilde P_0(\sigma),\chi]\tilde P(\sigma)^{-1})V(\sigma)u,
$$
and similarly for $\chi\tilde P(0)^{-1}V(\sigma)u$. Notice that
\begin{equation}\label{eq:tilded-Ps-same-asymp}
\tilde
P(\sigma)^{-1}V(\sigma)u,\ \chi\tilde P(\sigma)^{-1}V(\sigma)u,\ \tilde
P_0(\sigma)^{-1}(\chi +[\tilde P_0(\sigma),\chi]\tilde
P(\sigma)^{-1})V(\sigma)u
\end{equation}
thus have the same asymptotic behavior near $\pa_+ X$, since they are
actually equal there.

Now,
\begin{equation*}\begin{aligned}
&\langle (\tilde P(0)^{-1}-\tilde
P(\sigma)^{-1})V(\sigma)u,V(\sigma)^*\phi\rangle\\
&=-\sigma^2\langle \tilde 
P(\sigma)^{-1}V(\sigma)u,(\tilde P(0)^{-1})^*V(\sigma)^*\phi\rangle\\
&=-\sigma^2\langle \chi\tilde 
P(\sigma)^{-1}V(\sigma)u,\chi(\tilde
P(0)^{-1})^*V(\sigma)^*\phi\rangle\\
&\qquad-\sigma^2\langle (1-\chi^2)\tilde 
P(\sigma)^{-1}V(\sigma)u,(\tilde P(0)^{-1})^*V(\sigma)^*\phi\rangle\\
\end{aligned}\end{equation*}
Due to $1-\chi^2$ vanishing near $\pa_+X$, the composition $\tilde
P(0)^{-1}(1-\chi^2)\tilde P(\sigma)$ remains uniformly bounded as
$|\sigma|\to 0$, so the second term is $O(|\sigma|^2)$, so it remains
to analyze the first term, which is
$$
-\sigma^2\langle \tilde 
P_0(\sigma)^{-1}f,(\tilde P_0(0)^{-1})^*\psi\rangle
$$
with $f=(\chi +[\tilde P_0(\sigma),\chi]\tilde P(\sigma)^{-1})V(\sigma)u$,
etc. But now $\tilde P_0(\sigma)$ is the spectral family of the
Euclidean Laplacian, so our computation from the proof of Proposition~\ref{prop:refined-block-bounds} is applicable,
replacing $V(\sigma)u$ by $f$, and $V(\sigma)^*\phi$ by $\psi$. Now, as remarked after
\eqref{eq:explicit-Euclidean-comp-2}, the 11 entry is the pairing
between the leading asymptotic coefficients of $\tilde 
P_0(\sigma)^{-1}f$ and $(\tilde P_0(0)^{-1})^*\psi$, thus, in view of
\eqref{eq:tilded-Ps-same-asymp}, of $\tilde 
P(\sigma)^{-1}V(\sigma)u$ and $(\tilde P(0)^{-1})^*V(\sigma)^*\phi$, i.e.\ in terms of
the asymptotics, it is given
by exactly the same expression as beforehand! Similarly, the 12 and 21
entries are $O(|\sigma|^2)$ since this leading term vanishes. In
summary, the structure of our block matrix is the same even in the
presence of Cauchy hypersurfaces $\pa_-X$, provided the pairing between the
leading $\pa_+X$-asymptotic terms of the
$j=0$ resonant states and dual states is non-degenerate, and provided
that the $L^2$-pairing between the $j\geq 1$ resonant states and dual
states is non-degenerate, completing the proof.
\end{proof}

\begin{rem}
A simple extension of the proof of this theorem shows that if $\Ker
P(0)$ has a non-trivial $j=0$ component, but $\Ker P(0)^*$ does not,
choosing a non-degenerate dual decomposition of $\Ker P(0)^*$
(assuming the existence of this), the
structure of $P(\sigma)\tilde P(\sigma)^{-1}$
is
$$
\begin{pmatrix} O(1)&O(|\sigma|^{1-\ep})&O(|\sigma|^{2-\ep})\\O(|\sigma|^{2-\ep})&O(|\sigma|^{2})&O(|\sigma|^{2})\\O(|\sigma|^{2-\ep})&O(|\sigma|^{2})&O(|\sigma|^{2})\end{pmatrix},
$$
where non-degeneracy means that the 2-by-2 lower right block is
$\sigma^2$ times invertible. Notice that in this form the 10 and 11 entries vanish to one
order higher than before. Then $P(\sigma)\tilde P(\sigma)^{-1}$
is invertible for
$\sigma\neq 0$, and the inverse has block matrix with bounds
$$
\begin{pmatrix} O(1)&O(|\sigma|^{-1-\ep})&O(|\sigma|^{-\ep})\\O(|\sigma|^{-\ep})&O(|\sigma|^{-2})&O(|\sigma|^{-2})\\O(|\sigma|^{-\ep})&O(|\sigma|^{-1})&O(|\sigma|^{-2})\end{pmatrix}.
$$

This structure can happen in interesting examples. For instance, for
the wave equation on
1-forms on Kerr space, 0-resonances can be read off from
\cite[Theorem~4.4, Lemma~4.6, Section~5]{Hintz-Vasy:Kerr-forms} (while
this paper is in the Kerr-de Sitter setting,
appropriate explicit resonant and dual resonant states persist in
Kerr, with the appropriateness coming from only considering solutions
with desired asymptotics as $r\to\infty$). In particular, for
Schwarzschild there is a  resonant state which is a suitable linear
combination of $dr$ and $dt$, with non-vanishing $O(r^{-1})$ leading term, but the dual state is a delta
distribution on the horizon, thus is identically zero near the
Euclidean end. Indeed, this delta distributional nature of the dual
state persists for slowly rotating Kerr space, see
\cite[Section~5]{Hintz-Vasy:Kerr-forms}, so in fact this structure is
stable within the family.
\end{rem}

\bibliographystyle{plain}
\bibliography{sm}

\end{document}